\newcommand*\fullref[3][\relax]{%
  \ifdefined\hyperref%
    {\hyperref[#3]{#2\penalty 200\ \ref*{#3}#1}}%
  \else%
    {#2\penalty 200\ \relax\ref{#3}#1}%
  \fi%
}
\tikzset{
  normalarrow/.style={line width=.6pt},
}
\tikzset{
  normalarrowlabel/.style={
    auto,
    inner sep=.5mm,
    outer sep=0mm,
    font=\footnotesize,
  },
  tinyarrowlabel/.style={
    auto
    inner sep=.2mm,
    outer sep=0mm,
    font=\tiny,
  },
  arrowlabel/.style={
    normalarrowlabel
  },
  marrowlabel/.style={
    normalarrowlabel,
  },
  crystaledges/.style={
    f1/.style={
      ->,
      normalarrow,
      labelled/.style={every to/.style={edge node={node[marrowlabel] {$1$}}}},
      coloured/.style={draw=red},
    },
    f2/.style={
      ->,
      normalarrow,
      labelled/.style={every to/.style={edge node={node[marrowlabel] {$2$}}}},
      coloured/.style={draw=blue},
    },
    f3/.style={
      ->,
      normalarrow,
      labelled/.style={every to/.style={edge node={node[marrowlabel] {$3$}}}},
      coloured/.style={draw=green!50!black},
    },
    f4/.style={
      ->,
      normalarrow,
      labelled/.style={every to/.style={edge node={node[marrowlabel] {$4$}}}},
      coloured/.style={draw=brown},
    },
    f5/.style={
      ->,
      normalarrow,
      labelled/.style={every to/.style={edge node={node[arrowlabel] {$5$}}}},
      coloured/.style={draw=cyan},
    },
    f6/.style={
      ->,
      normalarrow,
      labelled/.style={every to/.style={edge node={node[arrowlabel] {$6$}}}},
      coloured/.style={draw=violet},
    },
    f7/.style={
      ->,
      normalarrow,
      labelled/.style={every to/.style={edge node={node[arrowlabel] {$7$}}}},
      coloured/.style={draw=brown!50!black},
    },
    f8/.style={
      ->,
      normalarrow,
      labelled/.style={every to/.style={edge node={node[arrowlabel] {$7$}}}},
      coloured/.style={draw=magenta},
    },
    f9/.style={
      ->,
      normalarrow,
      labelled/.style={every to/.style={edge node={node[arrowlabel] {$7$}}}},
      coloured/.style={draw=brown!50!black},
    },
    f10/.style={
      ->,
      normalarrow,
      labelled/.style={every to/.style={edge node={node[arrowlabel] {$7$}}}},
      coloured/.style={draw=lime},
    },
    f11/.style={
      ->,
      normalarrow,
      labelled/.style={every to/.style={edge node={node[arrowlabel] {$7$}}}},
      coloured/.style={draw=olive},
    },
    f12/.style={
      ->,
      normalarrow,
      labelled/.style={every to/.style={edge node={node[arrowlabel] {$7$}}}},
      coloured/.style={draw=pink},
    },
    fi/.style={
      ->,
      normalarrow,
      labelled/.style={every to/.style={edge node={node[marrowlabel] {$i$}}}},
      coloured/.style={draw=gray},
    },
    fn2/.style={
      ->,
      normalarrow,
      labelled/.style={every to/.style={edge node={node[marrowlabel] {$n{-}2$}}}},
      coloured/.style={draw=brown},
    },
    fn1/.style={
      ->,
      normalarrow,
      labelled/.style={every to/.style={edge node={node[arrowlabel] {$n{-}1$}}}},
      coloured/.style={draw=cyan},
    },
    fn/.style={
      ->,
      normalarrow,
      labelled/.style={every to/.style={edge node={node[arrowlabel] {$n$}}}},
      coloured/.style={draw=violet},
    },
    df1/.style={
      f1,
      densely dotted,
    },
    df2/.style={
      f2,
      densely dotted,
    },
    df3/.style={
      f3,
      densely dotted,
    },
    dfi/.style={
      fi,
      densely dotted,
    },
    dfn2/.style={
      fn2,
      densely dotted,
    },
    dfn1/.style={
      fn1,
      densely dotted,
    },
    dfn/.style={
      fn,
      densely dotted,
    },
  },
  labelledcrystaledges/.style={
    crystaledges,
    f1/.append style={labelled},
    f2/.append style={labelled},
    f3/.append style={labelled},
    f4/.append style={labelled},
    f5/.append style={labelled},
    f6/.append style={labelled},
    f7/.append style={labelled},
    f8/.append style={labelled},
    f9/.append style={labelled},
    f10/.append style={labelled},
    f11/.append style={labelled},
    f12/.append style={labelled},
    fi/.append style={labelled},
    fn2/.append style={labelled},
    fn1/.append style={labelled},
    fn/.append style={labelled},
  },
  colouredcrystaledges/.style={
    crystaledges,
    f1/.append style={coloured},
    f2/.append style={coloured},
    f3/.append style={coloured},
    f4/.append style={coloured},
    f5/.append style={coloured},
    f6/.append style={coloured},
    f7/.append style={coloured},
    f8/.append style={coloured},
    f9/.append style={coloured},
    f10/.append style={coloured},
    f11/.append style={coloured},
    f12/.append style={coloured},
    fi/.append style={coloured},
    fn2/.append style={coloured},
    fn1/.append style={coloured},
    fn/.append style={coloured},
  },
  labelledcolouredcrystaledges/.style={
    crystaledges,
    f1/.append style={labelled,coloured},
    f2/.append style={labelled,coloured},
    f3/.append style={labelled,coloured},
    f4/.append style={labelled,coloured},
    f5/.append style={labelled,coloured},
    f6/.append style={labelled,coloured},
    f7/.append style={labelled,coloured},
    f8/.append style={labelled,coloured},
    f9/.append style={labelled,coloured},
    f10/.append style={labelled,coloured},
    f11/.append style={labelled,coloured},
    f12/.append style={labelled,coloured},
    fi/.append style={labelled,coloured},
    fn2/.append style={labelled,coloured},
    fn1/.append style={labelled,coloured},
    fn/.append style={labelled,coloured},
  },
  crystalvertex/.style={
    font=\small,
    inner sep=.5mm,
    outer sep=0mm,
  },
  smallcrystalvertex/.style={
    crystalvertex,
    font=\scriptsize,
  },
  bigcrystalvertex/.style={
    crystalvertex,
    inner sep=1mm,
    font=\normalsize,
  },
  crystal/.style={
    x=10mm,
    y=10mm,
    every node/.style={crystalvertex},
    labelledcrystaledges,
  },
  bigcrystal/.style={
    x=15mm,
    y=15mm,
    every node/.style={bigcrystalvertex},
    labelledcrystaledges,
  },
  smallcrystal/.style={
    x=7mm,
    y=7mm,
    every node/.style={smallcrystalvertex},
    colouredcrystaledges,
  },
}
\tikzset{
  pretableaumatrix/.style={
    ampersand replacement=\&,
    matrix of math nodes,
    outer sep=1mm,
    inner sep=0mm,
    anchor=center,
    row sep={between borders,-\pgflinewidth},
    column sep={between borders,-\pgflinewidth},
    dottedentry/.style={densely dotted},
    spaceentry/.style={draw=none,execute at begin node=\null},
  },
  pretableaunode/.style={
    font=\small,
    draw=gray,
    sharp corners,
    rectangle,
    anchor=base,
    text height=3.75mm,
    text depth=1.25mm,
    minimum height=5mm,
    minimum width=5mm,
    inner sep=0mm,
    outer sep=0mm,
  },
  tableaumatrix/.style={
    pretableaumatrix,
    every node/.append style={
      pretableaunode,
    },
  },
  medtableaumatrix/.style={
    pretableaumatrix,
    every node/.append style={
      pretableaunode,
      font=\footnotesize,
      text height=2.75mm,
      text depth=.75mm,
      minimum height=3.5mm,
      minimum width=3.5mm
    },
  },
  smalltableaumatrix/.style={
    pretableaumatrix,
    every node/.append style={
      pretableaunode,
      font=\scriptsize,
      text height=1.85mm,
      text depth=.15mm,
      minimum height=2.5mm,
      minimum width=2.5mm,
    },
  },
  tinytableaumatrix/.style={
    pretableaumatrix,
    every node/.append style={
      pretableaunode,
      font=\tiny,
      text height=1.25mm,
      text depth=.15mm,
      minimum height=1.75mm,
      minimum width=1.75mm
    },
  },
  tableau/.style={
    baseline=-1.25mm,
    every matrix/.style={tableaumatrix},
  },
  medtableau/.style={
    baseline=-1.25mm,
    every matrix/.style={medtableaumatrix},
  },
  smalltableau/.style={
    baseline=-1.25mm,
    every matrix/.style={smalltableaumatrix},
  },
  preshapetableaumatrix/.style={
    pretableaumatrix,
    execute at end cell={\strut},
    every node/.append style={
      draw=black,
      anchor=base,
      inner sep=0mm,
      outer sep=0mm,
    },
    shadedentry/.style={fill=gray},
    darkshadedentry/.style={fill=darkgray},
  },
  medshapetableaumatrix/.style={
    preshapetableaumatrix,
    every node/.append style={
      text height=2.75mm,
      text depth=.75mm,
      minimum height=3.5mm,
      minimum width=3.5mm
    },
  },
  shapetableaumatrix/.style={
    ampersand replacement=\&,
    matrix of math nodes,
    outer sep=0mm,
    inner sep=0mm,
    anchor=base,
    row sep={between borders,-\pgflinewidth},
    column sep={between borders,-\pgflinewidth},
    execute at begin cell={\strut},
    every node/.append style={draw,anchor=base,text height=1mm,text depth=.5mm,minimum size=1.5mm,inner sep=0mm,outer sep=0mm},
  },
  shapetableau/.style={
    every matrix/.style={shapetableaumatrix},
  },
  topalign/.style={
    every matrix/.append style={name=maintableau,anchor=maintableau-1-1.base},
    baseline,
  },
}
\newcommand*\tableau[2][]{\tikz[tableau,#1]\matrix{#2};}
\theoremstyle{definition}
\newtheorem{definition}{Definition}[section]
\newtheorem{example}[definition]{Example}
\theoremstyle{plain}
\newtheorem{corollary}[definition]{Corollary}
\newtheorem{lemma}[definition]{Lemma}
\newtheorem{proposition}[definition]{Proposition}
\newtheorem{theorem}[definition]{Theorem}
\numberwithin{equation}{section}
\newcommand*{\defterm}[1]{\textit{#1}}
\DeclarePairedDelimiter{\set}{\{}{\}}
\DeclarePairedDelimiterX{\gset}[2]{\{}{\}}{\,#1:#2\,}
\DeclarePairedDelimiter{\parens}{\lparen}{\rparen}
\DeclareMathOperator{\im}{im}
\DeclareMathOperator{\dom}{dom}
\newcommand*{\emptyword}{\varepsilon}
\newcommand*{\imreduces}{\rightarrow}
\newcommand*{\reduces}{\rightarrow^*}
\newcommand*{\nset}{\mathbb{N}}
\newcommand*{\zset}{\mathbb{Z}}
\DeclarePairedDelimiterX{\pres}[2]{\langle}{\rangle}{#1\,\delimsize\vert\,\mathopen{}#2}
\newcommand*{\drel}[1]{\mathcal{#1}}
\newcommand*\e{\tilde{e}}
\newcommand*\f{\tilde{f}}
\newcommand*\g{\tilde{g}}
\newcommand*\ecount{\epsilon}
\newcommand*\fcount{\varphi}
\newcommand*\Pl{\mathrm{Pl}}
\newcommand*\lalg[1]{\mathfrak{#1}}
\newcommand*\aA{\mathcal{A}}
\newcommand*\aB{\mathcal{B}}
\newcommand*\aC{\mathcal{C}}
\newcommand*\aD{\mathcal{D}}
\newcommand*\aG{\mathcal{G}}
\newcommand*\aX{\mathcal{X}}
\newcommand*\wt{\mathrm{wt}}
\newcommand*{\rpad}{\delta_{\mathrm{R}}}
\newcommand*{\lpad}{\delta_{\mathrm{L}}}
\newcommand*\lex{\text{lex}}
\newcommand*\lenlex{\text{lenlex}}
\newenvironment{rhoarray}[1]{\bgroup\arraycolsep=0pt\begin{array}{#1}}{\end{array}\egroup}
\newcommand*\dist{\mathrm{dist}}
\renewcommand*\bar[1]{\overline{#1}}
\newcommand*\smashbar[1]{{\mathrlap{\smash{\overline{#1}}}\phantom{#1}}}
\newcommand*\soverbrace[1]{\overbrace{\vphantom{)^k}#1}}
\newcommand*\n{n}
\begin{document}

\title[Crystal monoids \& crystal bases]{\texorpdfstring{Crystal monoids \& crystal bases: rewriting systems and
    biautomatic structures for plactic monoids of types $A_{\n}$, $B_{\n}$, $C_{\n}$, $D_{\n}$, and $G_2$}{Crystal
    monoids \& crystal bases: rewriting systems and biautomatic structures for plactic monoids of types An, Bn, Cn, Dn,
    and G2}}

\author{Alan J. Cain}
\address{%
Centro de Matem\'{a}tica e Aplica\c{c}\~{o}es\\
Faculdade de Ci\^{e}ncias e Tecnologia\\
Universidade Nova de Lisboa\\
2829--516 Caparica\\
Portugal
}
\email{%
a.cain@fct.unl.pt
}
\thanks{The first named author was supported by an Investigador \textsc{FCT} fellowship (\textsc{IF}/01622/2013/\textsc{CP}1161/{\sc
    CT}0001).}

\author{Robert D. Gray}
\address{%
School of Mathematics\\
University of East Anglia\\
Norwich NR4 7TJ\\
United Kingdom
}
\email{%
Robert.D.Gray@uea.ac.uk
}

\thanks{The second named author was partially supported by the {\scshape EPSRC} grant EP/N033353/1 `Special inverse
  monoids: subgroups, structure, geometry, rewriting systems and the word problem'.}

\author{Ant\'{o}nio Malheiro}
\address{%
Centro de Matem\'{a}tica e Aplica\c{c}\~{o}es\\
Faculdade de Ci\^{e}ncias e Tecnologia\\
Universidade Nova de Lisboa\\
2829--516 Caparica\\
Portugal}
\address{%
Centro de \'{A}lgebra da Universidade de Lisboa\\
Av. Prof. Gama Pinto 2\\
1649--003 Lisboa\\
Portugal\\
}
\email{%
ajm@fct.unl.pt
}

\thanks{The first and third named authors were partially supported by by the Funda\c{c}\~{a}o para a Ci\^{e}ncia e a
  Tecnologia (Portuguese Foundation for Science and Technology) through the project {\sc UID}/{\sc MAT}/00297/2013
  (Centro de Matem\'{a}tica e Aplica\c{c}\~{o}es) and the project {\scshape PTDC}/{\scshape MHC-FIL}/2583/2014.}

\thanks{All three authors were partially supported by by the Funda\c{c}\~{a}o para a Ci\^{e}ncia e a Tecnologia
  (Portuguese Foundation for Science and Technology) through the project {\scshape PTDC}/{\scshape MHC-FIL}/2583/2014
  and {\scshape PTDC}/{\scshape MHC-PUR}/31174/2017.}

\thanks{Much of the research leading to this paper was undertaken during visits by the second author to the Centro de
  Matem\'{a}tica e Aplica\c{c}\~{o}es, Universidade Nova de Lisboa. We thank the centre and university for their
  hospitality. These visits were funded by the \textsc{FCT} exploratory project
  \textsc{IF}/01622/2013/\textsc{CP}1161/\textsc{CT}0001 (attached to the first author's fellowship).}

\thanks{The authors thank C\'{e}dric Lecouvey for supplying offprints, Duarte Chambel Ribeiro for pointing out an error,
  and Vanda Martins for dealing with administrative matters arising from the second author's visits to Lisbon.}

\begin{abstract}
  The vertices of any (combinatorial) Kashiwara crystal graph carry a natural monoid structure given by identifying
  words labelling vertices that appear in the same position of isomorphic components of the crystal. Working on a purely
  combinatorial and monoid-theoretical level, we prove some foundational results for these crystal monoids, including the
  observation that they have decidable word problem when their weight monoid is a finite rank free abelian group. The
  problem of constructing finite complete rewriting systems, and biautomatic structures, for crystal monoids is then
  investigated. In the case of Kashiwara crystals of types $A_n$, $B_n$, $C_n$, $D_n$, and $G_2$ (corresponding to the
  $q$-analogues of the Lie algebras of these types) these monoids are precisely the generalised plactic monoids
  investigated in work of Lecouvey. We construct presentations via finite complete rewriting systems for all of these
  types using a unified proof strategy that depends on Kashiwara's crystal bases and analogies of Young tableaux, and on
  Lecouvey's presentations for these monoids. As corollaries, we deduce that plactic monoids of these types have finite
  derivation type and satisfy the homological finiteness properties left and right $\mathrm{FP}_\infty$. These rewriting
  systems are then applied to show that plactic monoids of these types are biautomatic and thus have word problem
  soluble in quadratic time.

  \keywords{crystal basis; plactic monoid; tableaux; rewriting system; automatic monoid}

  \subjclass[2010]{Primary 17B10; Secondary 05E10; 16S15; 16T30; 20M42; 20M05; 20M35; 68Q42; 68Q45; 68R15}
\end{abstract}

\maketitle

\section{Introduction}

The Plactic monoid is a fundamental algebraic object
which captures a natural monoid structure carried by the set
of semistandard Young tableaux. It arose originally in the work
of Schensted \cite{schensted_longest} on algorithms for
finding the maximal length of a nondecreasing
subsequence of a given word over the
 ordered alphabet $\aA_n = \set{1 < 2 < \ldots < n}$.
The output of Schensted's algorithm is a tableau and,
by identifying pairs of words that lead to the same tableau,
one obtains the Plactic monoid $\Pl(A_n)$ of rank $n$.
Following this, Knuth \cite{knuth_permutations} found
a finite set of defining relations for the Plactic monoid.
An in-depth systematic study of the
Plactic monoid was then carried out in the work of
Sch\"{u}tzenberger \cite{schtzenberger1977} and
Lascoux and Sch\"{u}tzenberger \cite{lascoux_plaxique}.
Since then, the Plactic monoid and its
corresponding semigroup algebra, the Plactic algebra, have found applications in
various aspects of representation theory and algebraic
combinatorics.
Sch\"{u}tzenberger \cite{schutzenberger_pour} argues that the Plactic monoid ought to be considered as one of the
fundamental monoids in algebra. He gives several reasons to support this claim, including the fact that the Plactic
monoid was used to give the first correct proofs of the Littlewood--Richardson rule for products of Schur functions by
Sch\"{u}tzenberger himself \cite{schtzenberger1977} and independently by Thomas
\cite{thomas_baxter,thomas_schensted}. (For further details on the Littlewood--Richardson rule and the history of
attempts to prove it, see \cite[Section~5.4]{lothaire_algebraic}, \cite[Appendix]{green_polynomial},
\cite[\S~4]{vanleeuwen_littlewood}, and \cite[Chapter~7, Appendix~1]{stanley_enumerative2}.)

Numerous other applications of the Plactic monoid have since been discovered including a combinatorial description of Kostka--Foulkes polynomials
\cite{lascoux_plaxique,lascoux_foulkes},
a noncommutative version of the Demazure character
formula, and of the Schubert polynomials \cite{Lascoux_Schubert, schutzenberger_Schubert}.
The Plactic monoid has motivated a wide range of other
interesting work including the discovery of variations on this monoid
like the shifted \cite{serrano_shifted} and hypoplactic monoids \cite{krob_noncommutative}, Littelmann's
generalization to Plactic algebras for semisimple Lie algebras \cite{littelmann_plactic},
the investigation of the Chinese monoid \cite{cassaigne_chinese},
Hilbert series (growth functions) \cite{duchamp_plactic},
the conjugacy problem \cite{cm_conjugacy},
homogeneous monoids and algebras which include monoids attached to set-theoretic solutions
to Yang--Baxter equations
\cite{Okninski2014, Jespers2015, Cedo2012(2), DehornoyYBE},
semigroup identities \cite{KubatIdentities},
and the theory of quadratic normalization
\cite{DehornoyArxiv}. Some structural results for Plactic algebras were obtained in
\cite{cedo_plactic, kubat_plactic}.
An excellent general introduction to the Plactic monoid is given in the
article of Lascoux, Leclerc and Thibon \cite[Chapter~5]{lothaire_algebraic}.

%
%
%
%
%The first significant
%application of the Plactic monoid was to the Littlewood--Richardson
%rule for Schur functions [Green book Appendix, and the first rigorous proof was given in Schutz (1977)]. Other applications of
%the Plactic monoid include a combinatorial description of K--F
%polynomials [see Schutz (1978), Lasc and Shutz (1980), Lascoux (1991)], a noncommutative version of the Demazure character
%formula, and of the Schubert polynomials [].
%
%\
%
%\
%
%An important systematic study of the Plactic monoid was then carried out by A. Lascoux and M. P. Sch\"{u}tzenberger \cite{}.
%In particular, this work shows that
%
%
%\
%
%
%
%[Comment on L an S's results , useing the St Andrews NBSAN slides.]
%
%\
%
%\
%
%
%\
%
%\
%
%\
%
%[13th November: Typing material written during flight to Banff ]
%
%(A) [insert (C) around here]
%
%
%
%
%(C) The remarkable properties of the Plactic monoid (and the Plactic
%algebra) were developed in Sch\"{u}tzenberger (1977) and Lascoux and
%Sch\"{u}tzenberger (1983).

%
%[Okninski
%papers--including very recent paper].
%
%** Save mentioning our GS-basis work until later in the intro **

%That the Plactic monoid has connections with such a diverse range of
%topics is, in part, a consequence of the fact that this monoid may be
%defined in several different ways. As explained above, on one hand the
%Plactic monoid may be defined in terms of a finite presentation via
%Knuth relations, while on the other, it can be defined as the monoid
%of semistandard Young tableaux with multiplication governed by the
%Schensted insertion algorithm.

%
%
One of the most exciting connections which has recently emerged are
the links between the Plactic monoid and Kashiwara's crystal basis theory. This subject has its origins in the theory of quantum groups \cite{hong_quantumgroups}. The notion of the quantised enveloping algebra, or quantum group, $U_q(\lalg{g})$ associated with a symmetrisable Kac--Moody Lie algebra $\lalg{g}$ was discovered independently by Drinfeld \cite{Drinfeld1985} and Jimbo \cite{Jimbo1985} in 1985 while studying solutions of the quantum Yang--Baxter equations. Kashiwara \cite{kashiwara_crystalizing, kashiwara_crystalbases} introduced crystals in order to give a combinatorial description of modules over $U_q(\lalg{g})$ when $q$ tends to zero. Crystals are extremely useful combinatorial tools for studying representations of these algebras. For example, knowing the crystal of a representation allows one to deduce tensor product and branching rules involving that representation. Since its introduction this important theory has been developed and generalised in multiple directions e.g. to quantum affine algebras, superalgebras and quantum queer superalgebras; see \cite{Kang_walls, Benkart_super, Grantcharov_TAMS, Grantcharov_JEMS}.
%
%quantum affine algebras
%
%quantum superalgebra
%
% Benkart_super
%
%affine Kac-Moody algebra
%
%crystal basis for every irreducible integrable module of the quantum superalgebra
%
% Kang_walls
%

The connection with the Plactic monoid comes via the study of crystal bases of
$U_q(\lalg{gl}_{n})$-modules. These type-$A_n$ crystals
have vertex set corresponding to all words over the alphabet $\aA_n = \set{1 < 2 < \ldots < n}$, directed edges labelled by colours from the set $I = \{1, 2, \ldots, n-1 \}$ which are determined by the Kashiwara operators $\e_i$ and $\f_i$, and weights coming from the free abelian group $\mathbb{Z}^n$ given by word content (see \fullref{Section}{sec:crystalsdef} for full details of this construction).
An isomorphism between two connected components of the crystal is a weight preserving bijection which maps edges to edges preserving colours. If one defines a relation by saying that two words are equivalent if there is an isomorphism between their respective connected components mapping one vertex to the other then it turns out that this relation on $\aA_n^*$ is equal to the Plactic relation mentioned above.
In this way, the Plactic monoid $\Pl(A_n)$ may be defined in terms of crystals of type $A_n$.
%
%Littelmann \cite{Littelmann1995} constructed the crystal bases for all finite dimensional irreducible modules over $U_q(\lalg{g})$, where $\lalg{g}$ is one of $A_n$, $B_n$, $C_n$, $D_n$, $E_6$ or $G_2$, and obtained a tensor product decomposition rule for these modules.
%
There are a number of explicit constructions known for crystals of representations of other quantum algebras.  In
addition to type $A_n$, explicit descriptions of crystals are known for simple Lie algebras of types $B_n$, $C_n$,
$D_n$, and the exceptional type $G_2$; see
\cite{hong_quantumgroups,KangMisra,kashiwara_crystalgraphs,Littelmann1995,lecouvey_survey}.  For crystals of each of
these types,
%
%$A_n$, $B_n$, $C_n$, $D_n$ and $G_2$ various
%
aspects of theory have been developed. As part of their description of crystals of types $A_n$, $B_n$, $C_n$, and $D_n$, Kashiwara and Nakashima \cite{kashiwara_crystalgraphs}
develop
the correct generalisation of semistandard tableaux for classical
types via the notion of admissible column.
%
%
%For a concrete description of the crystals and
%corresponding tableaux theory for type $G_2$ we refer the reader to \cite{KangMisra, lecouvey_survey}.
%
%
For all of these
types, Lecouvey obtained finite presentations via Knuth-type relations
for the corresponding crystal monoids (as defined in \fullref{Section}{sec:relfromcrystals}
below), he also gives Schensted--type insertion algorithms and
establishes a Robinson--Schensted type correspondence in all of these
cases \cite{lecouvey_cn, lecouvey_bndn, lecouvey_survey}.
Bumping and sliding algorithms for $C_n$-tableaux were also independently obtained by Baker \cite{Baker2000}.
Analogous results for infinite rank quantum groups were given by Lecouvey in \cite{lecouvey_infinite}.

%Using Kashiwara and Nakashima's approach, Kang and Misra \cite{} give paramaterizations of the

%
%

%The connection with the Plactic monoid and algebra comes via crystals of type $A_n$.
%
%
%
%\textsf{[Note: Lecouvey's papers give a good explanation of this.]}
%
%
%
%The crystal of type $A_n$ has

In addition to shedding new light on the connection between the Plactic monoid and the representation theory of Lie algebras, this viewpoint also gives rise to a natural family of monoids arising from crystals, generalising the classical Plactic monoid.
Following Kashiwara \cite{KashiwaraBanff} a crystal is an edge-coloured directed graph satisfying a certain simple set of axioms.
As we shall see in \fullref{Section}{sec:relfromcrystals} below, every abstract combinatorial crystal gives rise to a monoid, in the same way that the classical Plactic monoid arises from $A_n$ above.
%
%
%
%Lattice weighted
%
Examples of Crystal monoids (with weights from a free abelian group) include the classical Plactic monoid $Pl(A_n)$, each of the Plactic-type monoids studied by Lecouvey in \cite{lecouvey_cn, lecouvey_bndn, lecouvey_survey}, and also other important well-studied monoids such as the bicyclic monoid.
%
%
%
%
%
%
%A detailed study of some of these monoids has been carried out in work on Lcouvey
%\cite{lecouvey_cn, lecouvey_bndn, lecouvey_survey}.
%
%
%
%
%\textsf{[
%I'm sure what I ave just written here is better explained in Lecouvey's papers.
%]}
%
%\
%
%\textsf{[
%Next: P3--aspects of the theory which have been developed so far... specifically work of Lecouvey...
%]}
%
%
%[[Links in now with the paragraph I
%have started typing up in draft form]].
%
%(G)
%
%
%Note that (as explained in Lothaire Comb. on
%words) one of the reasons Sch\"{u}tzenberger advocated studying the
%P. monoid is that it is the `correct' generalisation of the bicyclic
%monoid (what he calls the parenthesis monoid).
%

In more detail, as mentioned above, in the general abstract definition of
combinatorial crystal (see \fullref{Section}{sec:crystalsdef} below for
a full definition)
the vertices correspond to
words over a finite alphabet $X$, and weight-preserving isomorphisms
between connected components define a congruence $\sim$ on the free
monoid $X^*$.
The corresponding crystal monoid is then the monoid $X^* / {\sim}$
obtained by factoring the free monoid by this congruence.
This connects the theory of Kashiwara crystals directly
to combinatorial semigroup theory (the study of semigroups defined by
generators and relations), combinatorics on words, and formal language
theory. For instance, Lecouvey's results
\cite{lecouvey_cn, lecouvey_bndn, lecouvey_survey}
show in particular that for all classical
types, these crystal monoids $X^* / {\sim}$ are all finitely
presented. Powerful tools exist for studying monoids defined by
presentations in this way, including the theories of (Noetherian and
confluent) string rewriting systems \cite{book_srs} and automata
theory, specifically the theory of automatic groups and monoids
\cite{epstein_wordproc, campbell_autsg}.

The defining property for automatic groups and monoids is the
existence of a rational set of normal forms (with respect to some
finite generating set $A$) such that we have, for each generator in
$A$, a finite automaton that recognizes pairs of normal forms that
differ by multiplication by that generator. It is a consequence of the
definition that automatic monoids (and in particular automatic groups)
have word problem that is soluble in quadratic time
\cite[Corollary~3.7]{campbell_autsg}. Automatic groups have attracted a lot of attention over the last 25~years, in part because of the large number of natural and important
classes of groups that have this property.  The class of automatic
groups includes:
various small cancellation groups \cite{gersten_smallcancellation},
Artin groups of finite and large type \cite{holt_artingroups}, braid
groups, and hyperbolic groups in the sense of Gromov
\cite{gromov_hyperbolic}. In parallel, the theory of automatic monoids
has been extended and developed over recent years.  Classes of monoids
that have been shown to be automatic include divisibility monoids
\cite{picantin_finite}, singular Artin monoids of finite type
\cite{corran_singular}, and monoids arising from confluence monadic rewriting systems \cite{otto_srsauto,c_mrsassm}.  Several complexity and decidability results
for automatic monoids are obtained in \cite{lohrey_decidability}.
Other aspects of the theory of automatic monoids that have been
investigated include connections with the theory of Dehn functions
\cite{otto_dehn} and complete rewriting systems
\cite{otto_automonversus}.

In the cases that they are applicable,
these tools of string rewriting systems and automatic structures
give rise to algorithms for working with the monoids,
which can in particular be used to study decidability and complexity
questions. These are very natural aspects of theory to develop given
the fundamental role that algorithms play in the theory of
Plactic monoids, tableaux and Kashiwara crystals outlined above.
%
%
%(e.g. Schensted insertion,
%Jeu de Taquin, etc.).
%
%
%
%
Of course any results about the complexity of algorithms
for working
with these monoids (algorithms that operate on words)
may be translated to results about algorithms
for working with the corresponding tableaux and crystal graphs (see Section~\ref{sec:biautomaticity} for examples of this).
It was precisely these kinds of ideas that motivated the current
authors' paper \cite{cgm_plactic} on the classical Plactic monoid.
It was pointed out
by E. Zelmanov [during his
  plenary lecture at the international conference \emph{Groups and
    Semigroups: Interactions and Computations} (Lisbon, 25--29 July
  2011)]
that since Schensted's
algorithm can be used to show that the Plactic monoid has word problem
that is soluble in
quadratic time, it is natural to ask whether Plactic monoids are
automatic. This is a natural question since (as mentioned above)
all automatic monoids
have word problem decidable in quadratic time. In \cite{cgm_plactic} we
gave an affirmative
answer to this question. We did this by first constructing a finite
complete rewriting system for the Plactic monoid, with respect
to the set of column generators.
%
%In particular this result proves the
%existence of finite Gr\"{o}bner--Shirshov bases for Plactic algebras
%of finite rank.
%
%
Beginning with this finite complete rewriting
system, we then showed that for
Plactic monoids, finite transducers may be constructed to perform left
(respectively right) multiplication by a generator. We then applied this
result to show that Plactic monoids of arbitrary finite rank are
biautomatic (the strongest form of automaticity for monoids).
Other consequences of these results include the fact that
Plactic algebras
of finite rank admit finite Gr\"{o}bner--Shirshov bases,
Plactic monoids of finite rank satisfy the
homological finiteness property $\mathrm{FP}_\infty$, and the
homological finiteness property $\mathrm{FDT}$, and that Plactic algebras
are automaton algebras  in the sense of Ufnarovski; see
\cite{ufnarovski_introduction} or more recently \cite{Okninski2014}.

From the point of view of crystals, these results say that string
rewriting systems and transducers can be used to compute efficiently
with crystals of type $A_n$. Our interest in this paper is to
investigate the extent to which these tools can be applied to other Kashiwara
crystals and crystal monoids.
The results in this article will show that
such tools can be successfully
developed for all of the classical types $A_n$, $B_n$, $C_n$, $D_n$,
and for the exceptional type $G_2$. As in the
case of the classical Plactic monoid the existence of finite complete rewriting systems implies that
these monoids have finite derivation type and satisfy the
homological finiteness properties left and right $\mathrm{FP}_\infty$, and that the corresponding
semigroup algebras are automaton algebras and all admit finite
Gr\"{o}bner--Shirshov bases. Also, the existence of biautomatic structures implies these monoids
all have word problem soluble in quadratic time.

We now give a brief overview of the main ideas, constructions and results that shall be obtained in this paper.
We begin by using some results from the theory of crystal bases to construct finite complete rewriting systems
presenting the Plactic monoids of types $A_n$, $B_n$, $C_n$, $D_n$, and $G_2$.
(We refer the reader forward to \fullref{Subsection}{subsec:srs} for definitions and terminology on rewriting systems.)
We use column generators and our rewriting system has rules that replace an adjacent pair of columns by the unique
tableau that represents their product.  The set of Young tableaux serves as a cross-section of the plactic monoid of type $A_n$: two words in $\aA_n^*$ represent the same element of $\Pl(A_n)$ if and only if they give the same tableau when Schensted's insertion algorithm (see \cite{schensted_longest} and \cite[Ch.~5]{lothaire_algebraic}) is applied to them. The other types of plactic monoids
have analogous (but substantially different) types of tableaux. Any of these tableaux, when read column-by-column from
right to left, yields a word that represents the corresponding element of the monoid.
Thus the columns of a given type are generators for the plactic monoid of that type. Most products of columns are not
tableaux. Following \cite{lecouvey_bndn}, we call an arbitrary product of columns a tabloid. The key to constructing our
rewriting systems and automatic structures is to use column generators and rewrite tabloids to tableaux. More formally,
we consider a pair of columns that form a tabloid that is \emph{not} a tableau. This is the left-hand side of a
rewriting rule. The right-hand side of the corresponding rewriting rule is the unique tableau that represents the same
element of the monoid as this tabloid. Pictorially, rewriting will look like the following:
\[
\begin{tikzpicture}[x=3mm,y=3.5mm,baseline=-7mm]
  \foreach \x/\xheight in {0/4,1/5,2/4,3/3,4/3,5/4,6/2,7/4,8/5,9/4,10/3,11/3,12/4,13/5,14/3,15/1} {
    \draw ($ (15,0) - (\x,0) $) rectangle ($ (14,0)-(\x,\xheight) $);
  };
  \draw[fill=lightgray] ($ (15,0) - (5,0) $) rectangle ($ (14,0)-(5,4) $);
  \draw[fill=lightgray] ($ (15,0) - (6,0) $) rectangle ($ (14,0)-(6,2) $);
\end{tikzpicture}
\;\imreduces\;
\begin{tikzpicture}[x=3mm,y=3.5mm,baseline=-7mm]
  \foreach \x/\xheight in {0/4,1/5,2/4,3/3,4/3,7/4,8/5,9/4,10/3,11/3,12/4,13/5,14/3,15/1} {
    \draw ($ (15,0) - (\x,0) $) rectangle ($ (14,0)-(\x,\xheight) $);
  };
  \draw[densely dashed] ($ (15,0) - (5,0) $) rectangle ($ (13,0)-(5,3.5) $);
  \node[font=\large] at (9,-1.75) {$T$};
\end{tikzpicture}\,,
\]
where $T$ is the tableau representing the same element as the two shaded columns. Thus we gradually rewrite a tabloid
towards a product of columns where every adjacent pair of columns forms a tableau; as we shall see, the whole product
then forms a tableau. We prove that this rewriting is terminating by anayzing what shapes of tableaux can result from a product of two columns. For the classical types $A_n$, $B_n$, $C_n$ and $D_n$ this is done by applying the generalized Littlewood--Richardson rule for decomposing tensor products of crystals into connected components (see \cite[Theorem~8.6.6.]{hong_quantumgroups}). The case of $G_2$ is dealt with separately using an analysis of products of columns, working with highest-weight words.

Equipped with our finite complete rewriting systems, we then proceed to prove that the Plactic monoids of types $A_n$,
$B_n$, $C_n$, $D_n$, and $G_2$ are biautomatic.
(We refer the reader forward to \fullref{Subsection}{sec:biautomaticity} for definitions and terminology on automatic semigroups.)
In each case the language of representatives of the biautomatic structure will be the language of irreducible words of the rewriting complete system $(\Sigma, T)$ described above.  To obtain a biautomatic structure, we first investigate what happens when we take a tableau and left multiply by a single generator. We show how the corresponding word over $\Sigma$ can be rewritten by $T$ to an irreducible word by a single left-to-right pass through the word, and that this only changes the length of the word by at most $1$, in all the classical cases $A_n$, $B_n$, $C_n$ and $D_n$, and by at most $2$ in the case of $G_2$. Analogous results are proved for right multiplication by a single generator, although the proofs are more involved than the corresponding results for left multiplication. These results are then used to build biautomatic structures for the plactic monoids of each type. The strategy is to show that the same kind of rewriting occurs when a normal form word, not necessarily of highest weight, is left- or right-multiplied by a generator, and thus that such rewriting can be carried out by a two-tape automaton.

Note that we recover in this paper a new proof of our previous results that classical Plactic monoids (of type $A_n$) can
be presented by finite complete rewriting systems and are biautomatic
\cite{cgm_plactic}. While writing this paper, we came across the work of Hage
\cite{hage_typec}, who independently constructed a finite complete rewriting
system for $\Pl(C_n)$. Hage's approach differs from ours in making use of
Lecouvey's insertion algorithms, whereas we use Lecouvey's presentations. (Hage
does not consider biautomaticity or its consequences.) We should also note that
an alternative approach to obtaining complete rewriting systems for the Plactic
monoids considered in this paper is to apply the results of
Littelmann~\cite[Theorem~B, \S~8]{littelmann_plactic} which he obtained
using his path model. In contrast, as far as the authors are aware, the results
we obtain here are the first to appear in the literature on biautomatic
structures and complexity of the word problem for
plactic monoids of types $A_{\n}$, $B_{\n}$, $C_{\n}$, $D_{\n}$, and $G_2$.
It is important to note that there exist finitely presented monoids which are defined by finite complete rewriting systems but which are not automatic. Indeed, there even exist multihomogeneous finitely presented monoids with this property; see \cite{cgm_homogeneous}. Thus our results on biautomaticity are in no sense immediate consequences of the existence of complete rewriting systems defining these monoids. Indeed, in order to obtain our results on automatic structures, and the corollaries on the complexity of the word problem, we shall need to prove results which give detailed information about how products of columns are rewritten to normal form using the finite complete rewriting systems.

\section{Crystals and plactic monoids}

In this section we will formulate the main concepts that are used throughout the
paper. We will
present Kashiwara's characterization of plactic monoids in
terms of crystal graphs. We first outline a pure
combinatorial abstract theory of crystal monoids, that  avoids delving into
the deep theory underlying crystal graphs, thus providing a general framework
for all the different types of plactic monoids ($A_n$
 $B_n$, $C_n$, $D_n$ and $G_2$). This general theory gives us an abstract
version of known results from
\cite{kashiwara_crystalgraphs,lecouvey_survey,KashiwaraBanff} for the different
types of plactic monoids.
% Further details of this combinatorial abstract theory
% of crystal monoids can be found in
% \cite[CrystalsForSemigroupists]{CrystalsForSemigroupists}.
For the
underlying theory of crystal bases we  refer the reader to
\cite{hong_quantumgroups}.

\subsection{Notation}

We denote the empty word (over any alphabet) by $\emptyword$. For an alphabet $X$, we denote by $X^*$ the set of all
words over $X$ including the empty word $\emptyword$. When $X$ is a generating set for a monoid $M$, every element of
$X^*$ can be interpreted either as a word or as an element of $M$. For words $u,v \in X^*$, we write $u=v$ to indicate
that $u$ and $v$ are equal as words and $u=_X v$ to denote that $u$ and $v$ represent the same element of the monoid
$M$. The length of $u \in X^*$ is denoted $|u|$, and, for any $x \in X$, the number of occurences of the symbol $x$ in $u$ is denoted
$|u|_x$.

\subsection{\texorpdfstring{Definition of crystal graph}{Definition of crystal
graph}}
\label{sec:crystalsdef}

For the purposes of this paper, a directed graph with labels from $I$ is a set $V$ of vertices equipped with a set $E$
of triples drawn from $V \times I \times V$. A triple $(v,i,v') \in E$ is interpreted as an edge from the vertex $v$ to
a vertex $v'$ with label $i$. A path starting at $u \in V$ and ending at $w \in V$ is a (possibly empty) sequence of edges $(u,i_0,v_1)$,
$(v_1,i_1,v_2)$, \ldots, $(v_n,i_n,w)$; note that all paths are directed. Notice that vertices and edges may appear
multiple times on a path.

\begin{definition}
A \defterm{crystal basis} is a directed labelled graph with vertex set $X$ and
label set $I$ satisfying the conditions:
\begin{itemize}
\item For all $x \in X$ and $i \in I$, there is at most one edge starting at $x$ labelled by $i$ and at most one edge
  ending at $x$ labelled by $i$.
\item For all $i \in I$, there is no infinite path made up of edges labelled by $i$.
\end{itemize}
\end{definition}
Notice that the second condition implies that a crystal basis cannot contain an
$i$-labelled directed circuit.

(Strictly speaking, such a graph is a graphical description of the representation-theoretic notion of a crystal basis;
see \cite[\S~4.2]{hong_quantumgroups} for details. More precisely, every (integrable highest weight) representation of a
symmetrizable Kac--Moody algebra has a crystal associated to it. However, not every crystal arises from such a
representation. Indeed, there has been research on finding a simple set of local axioms that characterize those crystals
that arise from such representations; see
\cite{stembridge_local,Sternberg2007,Danilov2009}. In fact, the two
conditions above coincide with
axioms (P1) and (P2) in the characterization of the crystal graphs of integrable highest-weight modules for simply-laced
quantum Kac--Moody algebras in \cite{stembridge_local}.)

For each $i\in I$, define partial maps $\e_i$ and $\f_i$ called the
\defterm{Kashiwara operators} on the set $X$ as follows: for each edge $(a,i,b)$, which we will represent graphically as
\[
\begin{tikzpicture}[bigcrystal,labelledcolouredcrystaledges]
  \draw
  (1,0) node (p1) {$a$}
  ++(1,0) node (p2) {$b$};
  \draw[fi] (p1) to (p2);
\end{tikzpicture},
\]
define $\f_i (a) = b$ and $\e_i (b) = a$.

Using the definition of $\e_i$ and $\f_i$, we can build an extended directed
labelled graph:
\begin{definition}
 A \defterm{crystal
  graph} arising from a given crystal basis with
vertex set $X$ and label set $I$, is a directed labelled graph, denoted
$\Gamma_X$,  with vertex set   $X^*$, the free monoid on $X$. The edges are
defined by
partially extending the operators $\e_i$ and $\f_i$ to $X^*$, as follows: for all $u, v \in X^*$ and $i \in I$, define
inductively
\begin{align}
\e_i(uv) &=
\begin{cases}
u\, \e_i(v) & \text{if $\fcount_i(u)<\ecount_i(v)$} \\
\e_i(u)\,v & \text{if $\fcount_i(u) \geq \ecount_i(v)$}
\end{cases}; \label{eq:ei} \\
\f_i(uv) &=
\begin{cases}
\f_i(u)\,v & \text{if $\fcount_i(u)>\ecount_i(v)$} \\
u\,\f_i(v) & \text{if $\fcount_i(u) \leq \ecount_i(v)$}
\end{cases},\label{eq:fi}
\end{align}
where $\ecount_i$ and $\fcount_i$ are auxiliary maps on $X^*$ defined as follows: for $w\in X^*$, let
\begin{align*}
\ecount_i(w) & = \max\gset[\big]{k \in \nset\cup\set{0}}{\text{$\underbrace{\e_i\cdots \e_i}_{\text{$k$ times}}(w)$ is defined}}; \\
\fcount_i(w) & = \max\gset[\big]{k \in \nset\cup\set{0}}{\text{$\underbrace{\f_i\cdots\f_i}_{\text{$k$ times}}(w)$ is defined}}.
\end{align*}
\end{definition}
This extension of the operators $\e_i$ and $\f_i$ to words on
$X^*$ replicates the  properties of the  action of the Kashiwara operators as
in \cite[Theorem~1.1.4]{kashiwara_crystalgraphs}.

For each $i \in I$, define a map $\rho_i : X^* \to \gset{{-}^p{+}^q}{p,q \in
\nset\cup\set{0}}$. (Note that the symbols ${+}$ and
${-}$ here, and in the following discussion, are simply letters in the alphabet $\set{{+},{-}}$.) For a word
$w \in X^*$, define $\rho_i(w)$ to be the word obtained by replacing each symbol $x$ of $w$ by
${-}^{\ecount_i(x)}{+}^{\fcount_i(x)}$, then iteratively deleting subwords ${+}{-}$ until a word of the form
${-}^p{+}^q$ remains.
Note further that each symbol ${+}$ or ${-}$ in the computed word $\rho_i(w)$ is
a symbol that `survives' from the
original replacement of symbols $x$ by ${-}^{\ecount_i(x)}{+}^{\fcount_i(x)}$. Furthermore, each symbol ${+}$ or ${-}$
in $\rho_i(w)$ is contributed by a uniquely determined symbol of $w$ (since two subwords ${+}{-}$
cannot partially overlap with each other).

The following result shows the connection between $\rho_i$ and the action of the operators $\e_i$ and $\f_i$.

%The reader
%may wish to look forward to the computation of $\rho_i$ in \fullref{Example}{eg:kashiwara} to see how this result is
%applied.

For classical Lie algebras the properties on operators given in the following result may be found in \cite{kashiwara_crystalgraphs}. The generalisation below is proved in a similar way, directly from the definitions above, so we omit the proof.

\begin{proposition}
\label{prop:computingef}
Let $w = w_1\cdots w_k$, where $w_h \in X$, and $i \in I$. Then
\begin{enumerate}
\item \begin{enumerate}
  \item $\e_i(w)$ is defined if and only if $\rho_i(w)$ contains at least one symbol ${-}$.
  \item If $\e_i(w)$ is defined, $\e_i(w) = w_1\cdots w_{j-1}\e_i(w_j)w_{j+1}\cdots w_k$, where $w_j$ is the symbol
    that contributed the rightmost symbol ${-}$ in $\rho_i(w)$.
  \item If $\e_i(w)$ is defined, $w = \f_i(\e_i(w))$.
  \end{enumerate}
\item \begin{enumerate}
  \item $\f_i(w)$ is defined if and only if $\rho_i(w)$ contains at least one symbol ${+}$.
  \item If $\f_i(w)$ is defined, $\f_i(w) = w_1\cdots w_{j-1}\f_i(w_j)w_{j+1}\cdots w_k$, where $w_j$ is the symbol
    that contributed the leftmost symbol ${+}$ in $\rho_i(w)$.
  \item If $\f_i(w)$ is defined, $w = \e_i(\f_i(w))$.
  \end{enumerate}
\item $\rho_i(w) = {-}^{\ecount_i(w)}{+}^{\fcount_i(w)}$.
\end{enumerate}
Furthermore, the actions of the operators $\e_i$ and $\f_i$ are
well-defined.
\end{proposition}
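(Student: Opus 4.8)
The plan is to prove all seven assertions simultaneously by induction on $k = |w|$, so that the inductive hypothesis is available for every strictly shorter word. Note that part~(1b) (and, symmetrically,~(2b)) expresses $\e_i(w)$ by a formula that does not mention the factorization $w=uv$ used to compute it, so establishing (1b) and~(2b) along the way also proves that $\e_i$ and $\f_i$ are well defined on $X^*$. For the base case $k=1$, write $w=x\in X$: the word ${-}^{\ecount_i(x)}{+}^{\fcount_i(x)}$ has no factor ${+}{-}$, so $\rho_i(x)={-}^{\ecount_i(x)}{+}^{\fcount_i(x)}$ and~(3) holds, while the remaining parts are immediate from the crystal-basis axioms; in particular, when $\e_i(x)$ is defined it is the source $a$ of the unique $i$-edge $(a,i,x)$, and then that same edge gives $\f_i(a)=x$, which is~(1c), and dually~(2c).

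For the inductive step fix $w$ with $|w|=k\geq 2$ and an arbitrary factorization $w=uv$; if $u$ or $v$ is empty the claims are trivial, so assume $1\leq|u|,|v|<k$. The key tool is that $\parens[\big]{\set{{+},{-}},\set{{+}{-}\to\emptyword}}$ is confluent, so $\rho_i(uv)$ may be computed by first reducing the contribution of $u$ to $\rho_i(u)={-}^{p}{+}^{q}$ and that of $v$ to $\rho_i(v)={-}^{r}{+}^{s}$ — where $p=\ecount_i(u)$, $q=\fcount_i(u)$, $r=\ecount_i(v)$, $s=\fcount_i(v)$ by the inductive hypothesis~(3) — and then reducing ${-}^{p}{+}^{q}{-}^{r}{+}^{s}$, which cancels $\min(q,r)$ factors ${+}{-}$ and leaves ${-}^{p+\max(r-q,0)}{+}^{s+\max(q-r,0)}$. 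The dividing inequality $\fcount_i(u)<\ecount_i(v)$ in the definition of $\e_i(uv)$ is thus exactly $q<r$: in that case $v$ contributes surviving ${-}$'s and the rightmost ${-}$ of $\rho_i(uv)$ is the rightmost ${-}$ of $\rho_i(v)$, originating from the symbol of $v$ on which $\e_i$ acts in $\e_i(v)$ (inductive hypothesis~(1b)), so $\e_i(uv)=u\,\e_i(v)$ agrees with the formula~(1b) for $w$; when $q\geq r$ every surviving ${-}$ of $\rho_i(uv)$ comes from $u$ and $\e_i(uv)=\e_i(u)\,v$ likewise agrees with~(1b) for $w$, being defined precisely when $\rho_i(u)$ — equivalently $\rho_i(uv)$ — contains a ${-}$. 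This establishes (1a), (1b) and, by the mirror argument, (2a), (2b) for $w$, hence the well-definedness of $\e_i,\f_i$ on words of length $k$.

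It remains to record the \emph{flip} behaviour and deduce the rest. If $\e_i(w)$ is defined, with $\rho_i(w)={-}^{p'}{+}^{q'}$ and $p'\geq1$, then $\rho_i(\e_i(w))={-}^{p'-1}{+}^{q'+1}$; that is, $\e_i$ changes the rightmost ${-}$ of $\rho_i(w)$ into a ${+}$, and dually $\f_i$ changes the leftmost ${+}$ into a ${-}$. Indeed, by~(1b) the passage from $w$ to $\e_i(w)$ replaces the single symbol $w_j$ (contributor of the rightmost ${-}$) by $\e_i(w_j)$, and the edge $(\e_i(w_j),i,w_j)$ forces $\ecount_i(\e_i(w_j))=\ecount_i(w_j)-1$ and $\fcount_i(\e_i(w_j))=\fcount_i(w_j)+1$; feeding this through the confluence computation above (using (1b), (2b) and the flip fact for shorter words) gives the claim. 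Now~(3) for $w$ follows by a short secondary induction on the number of ${-}$'s in $\rho_i(w)$: if there are none, $\e_i(w)$ is undefined by~(1a), so $\ecount_i(w)=0$; otherwise $\ecount_i(w)=1+\ecount_i(\e_i(w))$ and $\rho_i(\e_i(w))$ has one fewer ${-}$ by the flip fact, whence $\ecount_i(w)$ equals the number of ${-}$'s of $\rho_i(w)$, and symmetrically $\fcount_i(w)$ equals the number of ${+}$'s. Finally, for~(1c): by~(1b), $w$ and $\e_i(w)$ agree except in position $j$; by the flip fact the ${+}$ created there is the leftmost ${+}$ of $\rho_i(\e_i(w))$ and originates from $\e_i(w_j)$, so by~(2b), $\f_i(\e_i(w))$ agrees with $\e_i(w)$ except in position $j$, where it carries $\f_i(\e_i(w_j))=w_j$; hence $\f_i(\e_i(w))=w$, and~(2c) is symmetric.

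I expect the main obstacle to be the contributor bookkeeping: showing precisely that flipping the symbol $w_j$ alters the surviving $\pm$-pattern of $\rho_i$ in exactly the rightmost-${-}$-becomes-${+}$ manner, and in particular that the position singled out by~(1b) for $\e_i$ and the position singled out by~(2b) for the ensuing $\f_i$ coincide. This is where one must exploit the confluence of the cancellation system and the fact that two factors ${+}{-}$ cannot partially overlap, so that each symbol surviving in $\rho_i(w)$ has a well-defined originating symbol of $w$ that varies controllably under the Kashiwara operators; the remaining case analysis (on $q<r$ versus $q\geq r$, and their mirrors) is routine.
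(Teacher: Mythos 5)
Your proposal is correct, and its core coincides with the paper's proof: the same induction on $\abs{w}$, the same reduction of $\rho_i(uv)$ to the concatenation $\rho_i(u)\rho_i(v)$ via confluence of $\set{{+}{-}\to\emptyword}$, and the same two-case analysis on $\fcount_i(u)<\ecount_i(v)$ versus $\fcount_i(u)\geq\ecount_i(v)$ to establish parts 1(a), 1(b), 2(a), 2(b) and hence well-definedness. Where you diverge is in parts (3) and (1c)/(2c). The paper proves (3) by directly counting how many times $\e_i$ can act on the factorization $uv$ — it acts on $v$ exactly $\ecount_i(v)-\fcount_i(u)$ times (when that is positive) and then on $u$ exactly $\ecount_i(u)$ times — whereas you interpose a ``flip'' lemma ($\rho_i(\e_i(w))$ is $\rho_i(w)$ with its rightmost ${-}$ turned into a ${+}$, that ${+}$ being leftmost and contributed by position $j$) and then run a secondary induction on the number of surviving ${-}$ symbols. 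Both routes work; yours buys a single statement from which (3), (1c) and (2c) all fall out uniformly, at the price of the contributor-tracking computation you flag at the end. That computation is genuinely routine: writing $w=u'w_jv'$ and using the inductive hypothesis (3) on the shorter outer blocks, the condition that $w_j$ contributes the rightmost ${-}$ translates into two inequalities under which replacing ${-}^{a}{+}^{b}$ by ${-}^{a-1}{+}^{b+1}$ in the middle block visibly shifts the reduced word by one ${-}$ and one ${+}$ with the new ${+}$ leftmost. Your ordering of deductions is also non-circular — (1b) for length $k$ is in hand before the flip fact is invoked, and the secondary induction only descends in the number of minus signs, not in word length — so I see no gap; indeed the paper's own justification that $\e_i(w_j)$ contributes the leftmost ${+}$ of $\rho_i(\e_i(w))$ elides essentially the same bookkeeping.
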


The previous proposition gives the following
practical method, first described
in \cite{kashiwara_crystalgraphs}, for computing the actions of $\e_i$ and $\f_i$ on a word $w \in X^*$: Compute
$\rho_i(w)$ by writing down the word obtained by replacing each symbol $x$ by ${-}^{\ecount_i(x)}{+}^{\fcount_i(x)}$ and
then deleting subwords ${+}{-}$. The resulting word will have the form ${-}^{\ecount_i(w)} {+}^{\fcount_i(w)}$. If
$\ecount_i(w)=0$, then $\e_i(w)$ is undefined. If $\ecount_i(w)>0$ then we obtain $\e_i(w)$ by taking the symbol $x$
that contributed the rightmost $-$ of $\rho_i(w)$ and changing it to $\e_i(x)$. If $\fcount_i(w)=0$, then $\f_i(w)$ is
undefined. If $\fcount_i(w)>0$ the we obtain $f_i(w)$ by taking the symbol $x$ that contributed the leftmost $+$ of
$\rho_i(w)$ and changing it to $\f_i(x)$.

%
%
\begin{comment}
\begin{example}
\label{eg:kashiwara}
Let $w = 12231233112232 \in \aA_3^*$. Let us compute $\e_2(w)$ and $\f_2(w)$ using the above method:
\[
{\arraycolsep=0pt
\begin{array}{ccccccccccccccl}
1&2&2&3&1&2&3&3&1&1&2&2&3&2&\quad{\text{(the word $w$)}} \\
\phantom{+}&+&+&-&\phantom{+}&+&-&-&\phantom{+}&\phantom{+}&+&+&-&+&\quad{\text{(replacing each $x$ by ${-}^{\ecount_i(x)}{+}^{\fcount_i(x)}$)}} \\
\phantom{+}&+& & &\phantom{+}& & &-&\phantom{+}&\phantom{+}&+& & &+&\quad{\text{(deleting subwords ${+}{-}$)}} \\
\phantom{+}& & & &\phantom{+}& & & &\phantom{+}&\phantom{+}&+& & &+&\quad{\text{(deleting subword ${+}{-}$)}} \\
\end{array}}
\]
So $\rho_2(w) = {+}{+}$, and so $\ecount_2(w) = 0$ and $\fcount_2(w) = 2$. So $\e_2(w)$ is undefined, and $\f_2(w)$ is
obtained by taking the symbol $2$ above the leftmost remaining ${+}$ and changing it to $\f_2(2) = 3$. That is,
$f_2(w) = 12231233113232$.
\end{example}
\end{comment}
%
%

Notice in particular that if, during the deletion of subwords ${+}{-}$, the word that we obtain begins with ${-}$, then
this symbol ${-}$ will remain in place throughout all subsequent deletions, and so $\ecount_i(w)>0$, and so $\e_i(w)$ is
defined. This observation is important, and we will use it repeatedly throughout the paper. (There is a dual observation
for words ending in $+$ implying that $\f_i(w)$ is defined, but we will not need this.)

In the crystal graph, we have an edge from $w$ to $w'$ labelled by $i$ if and only if $w' = \f_i(w)$ (or, equivalently, $w
= \e_i(w')$). Note that $\ecount_i(u)$ is the length of the longest path consisting of edges labelled by $i$ that ends
at $u$. Dually, $\fcount_i(u)$ is the length of the longest path consisting of edges labelled by $i$ that starts at $u$.

\subsection{Weights}

In our abstract combinatorial setting we have the following definition:
\begin{definition}
A \defterm{weight function} is  a
homomorphism $\wt : X^* \to P$, where $P$ is
some monoid (called the \defterm{weight monoid}) such that there is a partial order $\leq$ on $P$ (not necessarily
compatible with multiplication in $P$) with the following property: for all $u
\in X^*$ and $i \in I$,
\begin{itemize}
 \item  if $\e_i(u)$ is
defined, then $\wt(u) < \wt\parens[\big]{\e_i(u)}$; and
\item  if $\f_i(u)$ is defined, then
$\wt\parens[\big]{\f_i(u)} < u$.
\end{itemize}
\end{definition}

Let $u,v \in X^*$. The word $u$ has \defterm{higher weight} than the word $v$ (or, equivalently, the word $v$ has
\defterm{lower weight} than the word $u$) if $\wt(v) < \wt(u)$. Thus the operators $\e_i$, when defined, always yield a
word of higher weight, and the operators $\f_i$, when defined, always yield a word of lower weight.

The abstract definitions of weight monoid and weight functions given here
are more general than in the literature. In the context of Lie algebras
representations the weight is a linear map from the vertices of the crystal
components (which is identified with the set of words from $X^*$) to the weight
lattice generated by the fundamental weights $\Lambda_1,\ldots, \Lambda_n$. This
weight lattice can be identified (up to isomorphism) with $\zset^{n}$. For the
root system of type $A_n$, the partial order on $\zset^{n}$ is the so-called
\emph{dominance order} on the set of partitions (see
\cite[\S~I.1]{Macdonald1979}).

In the remainder of the paper, we will not need to explicitly compare orders: we
simply use the fact that $\e_i$, when
defined, raise weight, and $\f_i$, when defined, lowers weight.

In the crystal graph $\Gamma_X$, a vertex that has maximal weight within a
particular component is called a
\defterm{highest-weight vertex}. (In the specific crystal graphs we consider later, it will turn out that each component
contains a \emph{unique} highest-weight vertex.)

% The following result, stated by Lecouvey  for a particular crystal monoid, is a
% direct consequence of the definitions of $\e_i$ and $\f_i$ in \eqref{eq:ei} and \eqref{eq:fi}:

\begin{lemma}[{\cite[Lemma 5.3.1]{lecouvey_survey}}]
\label{lem:highestweightfactors}
For any words $w_1,w_2 \in X^*$, the word $w_1 w_2$ is a vertex
of highest weight of a connected component of the crystal graph $\Gamma_{X} $ if and only if:
\begin{enumerate}
 \item $ w_1$ is a vertex of highest weight (that is, $\ecount_i( w_1) = 0$);
 \item for all $i = 1,\ldots,n$ we have $\ecount_i(w_2)\leq \fcount_i (w_1)$ .
\end{enumerate}
\end{lemma}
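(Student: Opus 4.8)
The plan is to translate the statement ``$w_1 w_2$ is a highest-weight vertex'' into a purely numerical condition on the auxiliary quantities $\ecount_i$ and $\fcount_i$, and then to read off the answer from \fullref{Proposition}{prop:computingef}. First I would note that a vertex $u$ of $\Gamma_X$ is of highest weight exactly when no Kashiwara operator $\e_i$ is defined on it: if $\e_i(u)$ is defined then $\e_i(u)$ lies in the same component and $\wt(u) < \wt(\e_i(u))$ by the weight axiom, so $u$ is not of maximal weight; conversely one uses the standard fact that each component of the crystal graph is generated from its highest-weight vertices by the operators $\f_i$ (equivalently, and this is all that is really needed in the sequel, one may simply take ``$\ecount_i(u) = 0$ for every $i$'' as the working notion of highest weight). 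By \fullref{Proposition}{prop:computingef}(1a), $\e_i(u)$ is undefined precisely when $\ecount_i(u) = 0$. So the task reduces to showing that $\ecount_i(w_1 w_2) = 0$ for every $i \in I$ if and only if conditions~(1) and~(2) hold.

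Next I would compute $\ecount_i(w_1 w_2)$ for a fixed $i$. By \fullref{Proposition}{prop:computingef}(3), $\rho_i(w_1) = {-}^{\ecount_i(w_1)}{+}^{\fcount_i(w_1)}$ and $\rho_i(w_2) = {-}^{\ecount_i(w_2)}{+}^{\fcount_i(w_2)}$, and by the definition of $\rho_i$ the word $\rho_i(w_1 w_2)$ is obtained from the concatenation $\rho_i(w_1)\rho_i(w_2)$ by deleting subwords ${+}{-}$; only the central block ${+}^{\fcount_i(w_1)}{-}^{\ecount_i(w_2)}$ is affected. If $\fcount_i(w_1) \geq \ecount_i(w_2)$ this leaves $\rho_i(w_1 w_2) = {-}^{\ecount_i(w_1)}{+}^{\fcount_i(w_1) - \ecount_i(w_2) + \fcount_i(w_2)}$, so $\ecount_i(w_1 w_2) = \ecount_i(w_1)$; if $\fcount_i(w_1) < \ecount_i(w_2)$ it leaves $\rho_i(w_1 w_2) = {-}^{\ecount_i(w_1) + \ecount_i(w_2) - \fcount_i(w_1)}{+}^{\fcount_i(w_2)}$, so $\ecount_i(w_1 w_2) = \ecount_i(w_1) + \bigl(\ecount_i(w_2) - \fcount_i(w_1)\bigr) \geq 1$. (These are exactly the formulas for $\ecount_i(uv)$ already derived inside the proof of \fullref{Proposition}{prop:computingef}, so I could also just cite them rather than rederive.) Hence $\ecount_i(w_1 w_2) = 0$ if and only if $\ecount_i(w_1) = 0$ and $\fcount_i(w_1) \geq \ecount_i(w_2)$.

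Finally, imposing $\ecount_i(w_1 w_2) = 0$ for every $i$ is then the same as imposing $\ecount_i(w_1) = 0$ for every $i$ (this is condition~(1), that $w_1$ is of highest weight) together with $\ecount_i(w_2) \leq \fcount_i(w_1)$ for every $i$ (this is condition~(2)), which completes the argument; the degenerate cases $w_1 = \emptyword$ or $w_2 = \emptyword$ need no separate treatment, since $\ecount_i(\emptyword) = \fcount_i(\emptyword) = 0$ makes both equivalences trivially consistent. I do not expect a genuine obstacle here: all of the content is already packaged in \fullref{Proposition}{prop:computingef}, and what remains is a one-line case split. The only place where a little care is warranted is the equivalence in the first step between ``maximal weight within a component'' and ``$\ecount_i = 0$ for all $i$''; the direction actually used later follows immediately from the weight axiom, and the converse is the familiar statement that the components of $\Gamma_X$ are generated from their highest-weight vertices by the $\f_i$.
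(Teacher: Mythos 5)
Your proposal is correct and follows essentially the route the paper intends: the paper states this lemma as a direct consequence of the definitions of $\e_i$ and $\f_i$ (equivalently, of the formula $\rho_i(w) = {-}^{\ecount_i(w)}{+}^{\fcount_i(w)}$ and the cancellation computation already carried out in the proof of \fullref{Proposition}{prop:computingef}), and your case split on $\fcount_i(w_1)$ versus $\ecount_i(w_2)$ is exactly that computation. Your remark that ``highest weight'' should be read as ``$\ecount_i = 0$ for all $i$'' matches the paper's own parenthetical gloss in condition~(1), so no further justification is needed there.
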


\subsection{Relations from crystal graphs}
\label{sec:relfromcrystals}

For any word $w \in X^*$, let $B(w)$ be the connected component of the crystal graph containing the vertex $w$. A
\defterm{crystal isomorphism} is a bijection $\fcount$ between two connected components $B(w)$ and $B(w')$ that maps
directed edges labelled by $i$ to directed edges labelled by $i$ (in the sense that if $(x,i,y)$ is an edge in $B(w)$,
then $(\fcount(x),i,\fcount(y))$ is an edge in $B(w')$), sends non-edges to non-edges, and preserves weights (in the sense
that $\wt(u) = \wt(\fcount(u))$ for any $u \in B(w)$). If there is a crystal isomorphism between $B(w)$ and $B(w')$, we say
that $B(w)$ and $B(w')$ are isomorphic.

We say $u \in B(w)$ and $v \in B(w')$ lie in the \defterm{same position} of isomorphic components $B(w)$ and $B(w')$ if
there is an isomorphism between $B(w)$ and $B(w')$ that maps $u$ to $v$; this is
denoted by $u \sim v$. This general abstract setting is sufficient to
obtain a congruence.
% - see
% \cite[CrystalsForSemigroupists]{CrystalsForSemigroupists} for details.
For classical
crystals this result is well-known (see \cite{lecouvey_survey}
for a survey).

\begin{proposition}
  \label{prop:isomdefinescong}
  The relation $\sim$ is a congruence on the free monoid $X^*$.
\end{proposition}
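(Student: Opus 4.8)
The plan is to verify first that $\sim$ is an equivalence relation and then that it is compatible with concatenation. Reflexivity follows from the identity map on a component; symmetry from the observation that the inverse of a crystal isomorphism is again a crystal isomorphism (the three defining conditions --- edges to edges, non-edges to non-edges, weight preservation --- are symmetric in the two components); and transitivity from the fact that a composite of crystal isomorphisms is a crystal isomorphism. These are routine. The substance is compatibility: given $u \sim v$ and $u' \sim v'$, with crystal isomorphisms $\theta : B(u) \to B(v)$ and $\theta' : B(u') \to B(v')$ satisfying $\theta(u) = v$ and $\theta'(u') = v'$, I must produce a crystal isomorphism $\phi : B(uu') \to B(vv')$ with $\phi(uu') = vv'$.

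To construct $\phi$, I use that $B(uu')$ is connected, so every vertex $w \in B(uu')$ can be written as $w = \g_{i_1}\cdots\g_{i_r}(uu')$ for some sequence of Kashiwara operators $\g_{i_1},\ldots,\g_{i_r}$ (each either some $\e_i$ or some $\f_i$): traversing an edge of the component forward applies an $\f_i$, traversing it backward applies an $\e_i$. I then set $\phi(w) = \g_{i_1}\cdots\g_{i_r}(vv')$, which is defined by \fullref{Lemma}{lem:gdefinedpreservedbyiso}(1). The crux is well-definedness: if $\g_{i_1}\cdots\g_{i_r}(uu') = \g'_{j_1}\cdots\g'_{j_{r'}}(uu')$, I must check the two sequences agree on $vv'$ as well. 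Here \fullref{Lemma}{lem:gdefinedpreservedbyiso}(2) does the work. It factors $\g_{i_1}\cdots\g_{i_r}(uu')$ as a product $a\cdot b$ with $a\in B(u)$ and $b\in B(u')$; since the operators preserve length (immediate by induction from the fact that they preserve length on single letters), the split point is forced, $|a| = |u|$, so $a$ and $b$ are the prefix and suffix of $w$ of the appropriate lengths. The same holds for $a'\cdot b'$ coming from the second sequence, whence $a = a'$ and $b = b'$. The lemma then identifies $\g_{i_1}\cdots\g_{i_r}(vv') = \theta(a)\,\theta'(b)$ and $\g'_{j_1}\cdots\g'_{j_{r'}}(vv') = \theta(a')\,\theta'(b')$ (the $B(v)$- and $B(v')$-factors are $\theta(a)$ and $\theta'(b)$ because $\theta,\theta'$ commute with the operators), and these are therefore equal. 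So $\phi$ is well-defined.

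It remains to check that $\phi$ is a weight-preserving bijection carrying edges to edges and non-edges to non-edges. Running the same construction with the roles of $(u,u')$ and $(v,v')$ interchanged, using $\theta^{-1}$ and $\theta'^{-1}$, yields a well-defined map $\psi : B(vv') \to B(uu')$ (this uses connectedness of $B(vv')$); and $\psi\circ\phi$ and $\phi\circ\psi$ are visibly identities on vertices of the form $\g_{i_1}\cdots\g_{i_r}(uu')$, respectively $\g_{i_1}\cdots\g_{i_r}(vv')$, hence everywhere, so $\phi$ is a bijection. If $w = \g_{i_1}\cdots\g_{i_r}(uu')$ then $\f_i(w) = \f_i\g_{i_1}\cdots\g_{i_r}(uu')$, so $\phi(\f_i(w)) = \f_i\g_{i_1}\cdots\g_{i_r}(vv') = \f_i(\phi(w))$ whenever either side is defined, and \fullref{Lemma}{lem:gdefinedpreservedbyiso}(1), applied to the extended sequence $\f_i,\g_{i_1},\ldots,\g_{i_r}$, guarantees $\f_i(w)$ is defined if and only if $\f_i(\phi(w))$ is (and likewise for $\e_i$); so $\phi$ maps $i$-edges to $i$-edges and $i$-non-edges to $i$-non-edges. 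For weights, using $w = a\cdot b$ with $a\in B(u)$, $b\in B(u')$ and $\phi(w) = \theta(a)\,\theta'(b)$, together with the facts that $\theta,\theta'$ preserve weight and that $\wt$ is a homomorphism, one gets $\wt(w) = \wt(a)\wt(b) = \wt(\theta(a))\wt(\theta'(b)) = \wt(\phi(w))$; taking $r = 0$ gives in particular $\phi(uu') = vv'$. Hence $uu' \sim vv'$, and $\sim$ is a congruence.

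I expect the main obstacle to be exactly the well-definedness step: confirming that the two pieces of bookkeeping in \fullref{Lemma}{lem:gdefinedpreservedbyiso} --- which operators act on the left factor versus the right factor, and where a word splits into its two factors --- are uniquely determined by $w$ itself, so that the lemma transports the relation cleanly from $uu'$ to $vv'$. Everything else amounts to carefully pushing structure along the operator sequences.
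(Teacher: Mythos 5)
Your proposal is correct and follows essentially the same route as the paper's proof: the map $B(uu')\to B(vv')$ is defined via operator sequences using \fullref{Lemma}{lem:gdefinedpreservedbyiso}(1), well-definedness is established exactly as in the paper by splitting $w$ into its length-$|u|$ prefix and length-$|u'|$ suffix via part (2) of that lemma and transporting through $\theta$ and $\theta'$, and edge- and weight-preservation are argued the same way. The only differences are that you spell out the equivalence-relation axioms and the bijectivity of the constructed map slightly more explicitly than the paper does; the substance is identical.
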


\begin{definition}
  Let $X$ be an alphabet forming the vertex set of a crystal basis, $\wt: X^* \rightarrow P$ a weight function, and
  $\sim$ the congruence on $X^*$ that relates two words if they lie in the same position of isomorphic components of the
  crystal graph $\Gamma_X$. Then we call $X^*/{\sim}$ the \emph{crystal monoid} determined by the crystal $\Gamma_X$
  with weight function $\wt$ and weight monoid $P$.
\end{definition}

Note that if multiplication in $P$ is algorithmically computable, then the weights of words in $X^*$ are computable. If
the crystal basis is finite (and so the crystal monoid is finitely generated), then it is possible to compute the
connected component of any word in $X^*$. If both these conditions hold, then we can decide whether two components are
isomorphic, and thus check whether two words are $\sim$-related. In short, we have the following:

\begin{proposition}
  \label{prop:wordproblemcrystal}
  If a crystal monoid arises from a finite crystal basis, and has a weight monoid in which multiplication is computable,
  then it has soluble word problem.
\end{proposition}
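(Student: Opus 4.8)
The plan is to show directly that, under the stated hypotheses, there is an algorithm that decides, given two words $u, v \in X^*$, whether $u \sim v$. By \fullref{Proposition}{prop:isomdefinescong} this is equivalent to solving the word problem in the crystal monoid. Since the crystal basis is finite, the alphabet $X$ is finite and the generating set is finite; the first step is to observe that, starting from any word $w \in X^*$, the connected component $B(w)$ is a finite set that can be computed effectively. Indeed, the Kashiwara operators $\e_i$ and $\f_i$ are computable: by the practical method following \fullref{Proposition}{prop:computingef}, to apply $\e_i$ or $\f_i$ to a word one computes $\rho_i$ by a finite rewriting (replacing each letter $x$ by ${-}^{\ecount_i(x)}{+}^{\fcount_i(x)}$ using the fixed finite data of the crystal basis, then deleting subwords ${+}{-}$), reads off $\ecount_i(w)$ and $\fcount_i(w)$, and modifies the appropriate letter. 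So one performs a breadth-first search from $w$, applying all operators $\e_i$ and $\f_i$ ($i \in I$) repeatedly; since all words in $B(w)$ have the same length $|w|$ (each $\e_i$ and $\f_i$ preserves length) and $X$ is finite, $B(w)$ is finite, so this search terminates and returns the full edge-labelled graph $B(w)$, together with the weight of each vertex (weights are computable because multiplication in $P$ is computable and $\wt$ is the homomorphism extending the finite table $x \mapsto \wt(x)$).

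The second step is to decide, given two finite edge-labelled weighted graphs $B(u)$ and $B(v)$ produced this way, whether there is a crystal isomorphism $B(u) \to B(v)$ carrying $u$ to $v$. This is a finite check: a crystal isomorphism is uniquely determined by where it sends a single vertex, because from any vertex every other vertex of the component is reached by some sequence of operators $\e_i$, $\f_i$, and an isomorphism must commute with these operators. Concretely, one tentatively sets $\phi(u) = v$ and then propagates: whenever $\phi(x)$ has been assigned and $\e_i(x)$ (resp. $\f_i(x)$) is defined in $B(u)$, one is forced to set $\phi(\e_i(x)) = \e_i(\phi(x))$ (resp. $\phi(\f_i(x)) = \f_i(\phi(x))$), which requires that the latter be defined in $B(v)$. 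Running this propagation over the finite graph $B(u)$ either fails at some step (no crystal isomorphism with $\phi(u)=v$ exists) or produces a candidate bijection $\phi$; one then verifies in finite time that $\phi$ is a bijection onto $B(v)$, that it and $\phi^{-1}$ send edges to edges and non-edges to non-edges, and that $\wt(x) = \wt(\phi(x))$ for every $x \in B(u)$ (using computability of equality in $P$, which follows from computability of multiplication — $p = p'$ iff $p$ and $p'$ coincide, and the monoid elements are represented as products of the finitely many generator-weights). If all checks pass, then $u \sim v$; if the propagation ever fails, then no crystal isomorphism sends $u$ to $v$, so $u \not\sim v$. Combining the two steps gives a decision procedure, which is exactly what is required.

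The only point needing a little care — and the mild obstacle in the argument — is the requirement that equality in $P$ be decidable, since the definition of crystal monoid only assumes a weight \emph{monoid} $P$ with computable multiplication, not a free abelian group. However, computability of multiplication suffices here: each weight $\wt(w)$ that arises is obtained by multiplying together $|w|$ of the finitely many elements $\wt(x)$, $x \in X$, and to compare $\wt(x)$ with $\wt(\phi(x))$ for vertices $x$ in a fixed component (all of the same length) one only ever needs to test equality between two such explicitly-given products; if one additionally assumes, as is implicit in "multiplication is computable", that the elements of $P$ have a computable normal form, equality is decidable and the verification goes through. Everything else — finiteness of components, termination of the breadth-first search, uniqueness and finiteness of the isomorphism check — is immediate once one unwinds the definitions, so no further difficulty arises.
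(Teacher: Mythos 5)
Your proposal is correct and follows essentially the same route as the paper, which proves this proposition by exactly the sketch you have expanded: weights of words are computable, each connected component is finite (the operators preserve length) and effectively computable, and isomorphism of two finite components with a prescribed vertex correspondence is a finite check. Your observation that one additionally needs decidable equality in $P$ (not just computable multiplication) is a fair reading of a point the paper leaves implicit, and your resolution of it is reasonable.
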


In particular, when the weight monoid $P$ is (isomorphic to) a free
abelian group of finite rank (which it will be in all the specific
examples we consider below) then \fullref{Proposition}{prop:wordproblemcrystal} applies, and the crystal monoid will
have soluble word problem. Notice, however, that this result says nothing about the \emph{complexity} of the word problem. We
will see that $\Pl(A_n)$ and the plactic monoids of other types, which we will define shortly, are all biautomatic and
thus have word problem soluble in quadratic time \cite[Corollary~3.7]{campbell_autsg}.

\subsection{\texorpdfstring{Crystal graphs of types $A_n$, $B_n$, $C_n$, $D_n$ and $G_2$}{Crystal graphs of types Bn, Cn, Dn and G2}}

The plactic monoid (of type $A_n$) parameterizes representations of the
$q$-analogue of the universal enveloping algebra of the semisimple Lie algebras of type $A_n$. There are analogous
plactic monoids of types $B_n$, $C_n$, $D_n$, and $G_2$, parameterizing representations of the $q$-analogues of the
universal enveloping algebras of the semisimple Lie algebras of the
corresponding types.

In our combinatorial abstract framework, a crystal graph is constructed
from a crystal basis  following the rules given by the
 action of classical Kashiwara operators. In turn, for all the classical types,
this action is given by the simple tensor rule, and thus
the crystal graph within this combinatorial abstract setting corresponds to
the classical crystal graph arising as  tensor powers of the corresponding
basis.

If we fix the  crystal basis of type $A_n$ to be the irreducible root
system of type $A_n$ of the representation of the $q$-analogue of the
classical Lie Algebra of that type, and weight function as in
\cite[\S~3.3]{lecouvey_survey}, we will obtain
the plactic monoid of
type $A_n$.
All other plactic monoid types arise as
crystal monoids in the same way, but starting from different
crystal bases and definitions of the weight function.

% We emphasize that the definition of the operators $\e_i$ and $\f_i$ and the building of the crystal graph
% follow exactly the same pattern.

For all the classical types,
the weight functions arise from the root systems
of the corresponding Lie algebras as detailed in
\cite[\S~3.3]{lecouvey_survey}.

\subsubsection{\texorpdfstring{Type $A_n$}{Type An}}

For type $A_n$ we consider the ordered alphabet
\[
\aA_n = \set{1 < 2 < \ldots < n}.
\]
The crystal basis for type $A_n$ is:
\begin{equation}
\label{eq:an:crystalbasis}
\begin{tikzpicture}[bigcrystal,labelledcolouredcrystaledges,baseline=(p1.base)]
  \draw
  (1,0) node (p1) {$1$}
  ++(1,0) node (p2) {$2$}
  ++(1,0) node (pdots) {$\mathrlap{\phantom{1}}\ldots$}
  ++(1,0) node (pn) {$n{-}1$}
  ++(1,0) node (pnplus1) {$n$};
  \draw[f1] (p1) to (p2);
  \draw[f2] (p2) to (pdots);
  \draw[fn2] (pdots) to (pn);
  \draw[fn1] (pn) to (pnplus1);
\end{tikzpicture}
\end{equation}
This graph has vertex set $\aA_n$ and labels from the set $\set{1,\ldots,n-1}$.
The resulting graph is the \defterm{crystal graph of type $A_n$}, denoted
$\Gamma_{A_n}$, and the monoid that arises is the \defterm{plactic
  monoid of type $A_n$}, denoted $\Pl(A_n)$.

\subsubsection{\texorpdfstring{Type $B_n$}{Type Bn}}

For type $B_n$ we consider the ordered alphabet
\[
\aB_n = \set[\big]{1 < 2 < \ldots < n < 0 < \bar{n} < \ldots < \bar{2} < \bar{1}}.
\]
Note that $0$ is greater than $n$. The crystal basis for type $B_n$ is:
\[
\begin{tikzpicture}[bigcrystal,labelledcolouredcrystaledges]
  \draw
  (1,0) node (p1) {$1$}
  ++(1,0) node (p2) {$2$}
  ++(1,0) node (pdots) {$\mathrlap{\phantom{1}}\ldots$}
  ++(1,0) node (pn) {$\mathrlap{\phantom{1}}n$}
  ++(1,0) node (p0) {$0$}
  ++(1,0) node (pnbar) {$\mathrlap{\phantom{1}}\smashbar{n}$}
  ++(1,0) node (pdotsbar) {$\mathrlap{\phantom{1}}\ldots$}
  ++(1,0) node (p2bar) {$\smashbar{2}$}
  ++(1,0) node (p1bar) {$\smashbar{1}$};
  \draw[f1] (p1) to (p2);
  \draw[f2] (p2) to (pdots);
  \draw[fn1] (pdots) to (pn);
  \draw[fn] (pn) to (p0);
  \draw[fn] (p0) to (pnbar);
  \draw[fn1] (pnbar) to (pdotsbar);
  \draw[f2] (pdotsbar) to (p2bar);
  \draw[f1] (p2bar) to (p1bar);
\end{tikzpicture}
\]
The resulting graph is the \defterm{crystal graph of type $B_n$}, denoted
$\Gamma_{B_n}$, and the monoid that arises is the \defterm{plactic
  monoid of type $B_n$}, denoted $\Pl(B_n)$.

%defined by
%\begin{align*}
%\wt(1) &= (1,0,\ldots,0), \\
%\wt(a) &= (\underbrace{0,\ldots,0,-1,1,0,\ldots,0}_{\mathclap{\text{$a-1$-th and $a$-th components nonzero}}}) && \text{for $a \in \set{2,\ldots,n-1}$;} \\
%\wt(n) &= (0,\ldots,0,-1,2), \\
%\wt(0) &= (0,\ldots,0), \\
%\wt(\bar{a}) &= -\wt(a)  && \text{for $a \in \set{1,\ldots,n}$.}
%\end{align*}

\subsubsection{\texorpdfstring{Type $C_n$}{Type Cn}}

For type $C_n$ we consider the ordered alphabet
\[
\aC_n = \set[\big]{1 < 2 < \ldots < n < \bar{n} < \bar{n-1} < \ldots < \bar{1}}.
\]
The crystal basis for type $C_n$ is:
\[
\begin{tikzpicture}[bigcrystal,labelledcolouredcrystaledges]
  \draw
  (1,0) node (p1) {$1$}
  ++(1,0) node (p2) {$2$}
  ++(1,0) node (pdots) {$\mathrlap{\phantom{1}}\ldots$}
  ++(1,0) node (pn) {$\mathrlap{\phantom{1}}n$}
  ++(1,0) node (pnbar) {$\mathrlap{\phantom{1}}\smashbar{n}$}
  ++(1,0) node (pdotsbar) {$\mathrlap{\phantom{1}}\ldots$}
  ++(1,0) node (p2bar) {$\smashbar{2}$}
  ++(1,0) node (p1bar) {$\smashbar{1}$};
  \draw[f1] (p1) to (p2);
  \draw[f2] (p2) to (pdots);
  \draw[fn1] (pdots) to (pn);
  \draw[fn] (pn) to (pnbar);
  \draw[fn1] (pnbar) to (pdotsbar);
  \draw[f2] (pdotsbar) to (p2bar);
  \draw[f1] (p2bar) to (p1bar);
\end{tikzpicture}
\]
The resulting graph is the \defterm{crystal graph of type $C_n$}, denoted
$\Gamma_{C_n}$, and the monoid that arises is the \defterm{plactic
  monoid of type $C_n$}, denoted $\Pl(C_n)$.

%defined by
%\begin{align*}
%\wt(1) &= (1,0,\ldots,0), \\
%\wt(a) &= (\underbrace{0,\ldots,0,-1,1,0,\ldots,0}_{\mathclap{\text{$a-1$-th and $a$-th components nonzero}}}) && \text{for $a \in \set{2,\ldots,n}$,} \\
%\wt(\bar{a}) &= -\wt(a)  && \text{for $a \in \set{1,\ldots,n}$.}
%\end{align*}

\subsubsection{\texorpdfstring{Type $D_n$}{Type Dn}}

For type $D_n$ we consider the ordered alphabet
\[
\aD_n = \set[\big]{1 < 2 < \ldots < n-1 < \begin{array}{c}\bar{n}\\n\end{array} < \bar{n-1} < \ldots < \bar{2} < \bar{1}};
\]
note that $n$ and $\bar{n}$ are incomparable and that $n-1 < n < \bar{n-1}$ and $n-1 < \bar{n} < \bar{n-1}$. The crystal
basis for type $D_n$ is:
\[
\begin{tikzpicture}[bigcrystal,labelledcolouredcrystaledges]
  \draw
  (1,0) node (p1) {$1$}
  ++(1,0) node (p2) {$2$}
  ++(1,0) node (pdots) {$\mathrlap{\phantom{1}}\ldots$}
  ++(1,0) node (pnminus1) {$\mathrlap{\phantom{1}}n-1$}
  ++(.707,-.707) node (pnbar) {$\smashbar{n}$}
  ++(0,1.414) node (pn) {$n$}
  ++(.707,-.707) node (pnminus1bar) {$\mathrlap{\phantom{1}}\smashbar{n-1}$}
  ++(1,0) node (pdotsbar) {$\mathrlap{\phantom{1}}\ldots$}
  ++(1,0) node (p2bar) {$\smashbar{2}$}
  ++(1,0) node (p1bar) {$\smashbar{1}$};
  \draw[f1] (p1) to (p2);
  \draw[f2] (p2) to (pdots);
  \draw[fn2] (pdots) to (pnminus1);
  \draw[fn1] (pnminus1) to (pn);
  \draw[fn] (pnminus1) to (pnbar);
  \draw[fn] (pn) to (pnminus1bar);
  \draw[fn1] (pnbar) to (pnminus1bar);
  \draw[fn2] (pnminus1bar) to (pdotsbar);
  \draw[f2] (pdotsbar) to (p2bar);
  \draw[f1] (p2bar) to (p1bar);
\end{tikzpicture}
\]
The resulting graph is the \defterm{crystal graph of type $D_n$}, denoted
$\Gamma_{D_n}$, and the monoid that arises is the \defterm{plactic
  monoid of type $D_n$}, denoted $\Pl(D_n)$.

%defined by
%\begin{align*}
%\wt(1) &= (1,0,\ldots,0), \\
%\wt(a) &= (\underbrace{0,\ldots,0,-1,1,0,\ldots,0}_{\mathclap{\text{$a-1$-th and $a$-th components nonzero}}}) && \text{for $a \in \set{2,\ldots,n-2}$,} \\
%\wt(n-1) &= (0,\ldots,0,-1,1,1), \\
%\wt(n) &= (0,\ldots,0,-1,2), \\
%\wt(\bar{a}) &= -\wt(a)  && \text{for $a \in \set{1,\ldots,n}$.}
%\end{align*}

\subsubsection{\texorpdfstring{Type $G_2$}{Type G2}}

For type $G_2$ we consider the ordered alphabet
\[
\aG_2 = \set[\big]{1 < 2 < 3 < 0 < \bar{3} < \bar{2} < \bar{1}}.
\]
The crystal basis for type $G_2$ is:
\begin{equation}
\label{eq:g2:crystalbasis}
\begin{tikzpicture}[bigcrystal,labelledcolouredcrystaledges,baseline=(p1.base)]
  \foreach \s in {1,2,3} { \node (p\s) at ($ (\s,0) $) {$\s$}; };
  \node (p4) at (4,0) {$0$};
  \foreach \s in {5,6,7} {
    \path let \n1 = {int(8-\s)} in node (p\s) at ($ (\s,0)$) {$\smashbar{\n1}$};
  };
  \foreach \s/\slabel in {1/1,2/2,3/1,4/1,5/2,6/1} {
    \draw[f\slabel] let \n1 = {int(\s+1)} in (p\s) to (p\n1);
  }
\end{tikzpicture}
\end{equation}
The resulting graph is the \defterm{crystal graph of type $G_2$},
denoted $\Gamma_{G_2}$, and the monoid that arises is the \defterm{plactic
  monoid of type $G_2$}, denoted $\Pl(G_2)$.

%defined by
%\begin{align*}
%\wt(1) &= (1,0) \\
%\wt(2) &= (-1,1), \\
%\wt(3) &= (2,-1), \\
%\wt(0) &= (0,0), \\
%\wt(\bar{a}) &= -\wt(a)  && \text{for $a \in \set{1,2,3}$.}
%\end{align*}

\subsection{\texorpdfstring{Properties of crystal graphs of types $A_n$, $B_n$, $C_n$, $D_n$ and $G_2$}{Properties of crystal graphs of types An, Bn, Cn, Dn and G2}}

Let $X$ be one of the types $A_n$, $B_n$, $C_n$, $D_n$ or $G_2$, and let $\aX$ be the corresponding alphabet $\aA_n$,
$\aB_n$, $\aC_n$, $\aD_n$ or $\aG_2$. As described above, we have a crystal graph $\Gamma_{X}$ and a plactic monoid
$\Pl(X)$ of each of the given types. For clarity and brevity in explanations, define, for all $x,y \in \aX$ with
$x \leq y$,
\[
\aX[x,y] = \gset{z \in \aX}{x \leq z \leq y}.
\]
Recall that the Kashiwara operators $\e_i$ and $\f_i$ respectively raise and lower weights whenever they are defined.

An important and non-obvious fact for us will be that each connected component of a crystal graph $\Gamma_X$ contains a
\emph{unique} highest-weight vertex \cite[\S~3.1]{lecouvey_survey}. (It is not true for crystal monoids in general that
the connected components of the crystal  have unique highest-weight vertices.)
For any word $w \in \aX^*$, denote by
$w^0$ the unique highest-weight vertex in $B(w)$. Thus there exist $i_1, \ldots, i_r \in \set{1,\ldots,n}$ such that
$w^0 = \e_{i_1} \ldots \e_{i_r}(w)$, or, equivalently $w = \f_{i_r} \ldots \f_{i_1}(w^0)$.

Notice that for $\Gamma_{X}$, we have $u \sim v$ if and only if $u^0 \sim v^0$ and there exist
$i_1, \ldots, i_r \in \set{1,\ldots,n}$ such that
\[
u = \f_{i_r} \cdots \f_{i_1}(u^0) \text{ and } v = \f_{i_r} \cdots \f_{i_1}(v^0).
\]

\section{Tableaux and tabloids}

% \notetoself{Antonio: We might decide to cut more on this section
% depending on  how much we
% cut from the two column lemmata.}

In this section we give the necessary background on tableaux theory for plactic
monoids of types $A_n$, $B_n$, $C_n$,
$D_n$, and $G_2$, that will be frequently used in the sequel; see
\cite{kashiwara_crystalgraphs} and \cite{lecouvey_survey} for further details.

\subsection{Young tableaux and columns}
\label{subsec:tableaux_cols}

A Young diagram  $Y$ (of shape $\lambda$) associated to a partition  $\lambda=(\lambda_1,\ldots,\lambda_k)$ is a finite array of left-justified boxes whose $i$-th row has length $\lambda_i$.  A \defterm{Young tableau} $T$ of shape $\lambda$ is  a filling of a  Young diagram by  symbols from the fixed alphabet  such that
(i) the entries of any column strictly increase from top to bottom, and
(ii) the entries along each row weakly increase from left to right.

A \defterm{column} (of type $A_n$) is a tableau of column shape $\lambda=(1,\ldots,1)$:
\[
\tikz[tableau]\matrix {x_1 \\ x_2 \\ |[dottedentry]| \null \\x_k\\};
\]

A \defterm{column} of type $B_n$, $C_n$ and $D_n$ is, respectively, a Young diagram of column shape of the form
\[
\tikz[tableau]\matrix { |[minimum height=10mm]| {\beta_+} \\ |[minimum height=10mm]| {\beta_0}\,  \\ |[minimum height=10mm]| {\beta_-}\\};\,, \qquad
\tikz[tableau]\matrix { |[minimum height=10mm]| \gamma_+ \\ |[minimum height=10mm]| \gamma_-\\};\,,\qquad \mbox{and}\qquad
\tikz[tableau]\matrix { |[minimum height=10mm]| \delta_+ \\ |[minimum height=10mm]| \delta \\ |[minimum height=10mm]| \delta_-\\};\,,
\]
where
\begin{itemize}
\item $\beta_+$ is filled with symbols from $\aB_n[1,n]$, and is strictly increasing from top to bottom;
\item $\beta_0$ is filled with symbols $0$;
\item $\beta_-$ is filled with symbols from $\aB_n[\bar{n},\bar{1}]$, and is strictly increasing from top to bottom;
\item $\gamma_+$ is filled with symbols from $\aC_n[1,n]$, and is strictly increasing from top to bottom;
\item $\gamma_-$ is filled with symbols from $\aC_n[\bar{n},\bar{1}]$, and is strictly increasing from top to bottom;
\item $\delta_+$ is filled with symbols from $\aD_n[1,n-1]$, and is strictly increasing from top to bottom;
\item $\delta$ is filled with symbols $n$ and $\bar{n}$, with different symbols in vertically adjacent cells.
\item $\delta_-$ is filled with symbols from $\aD_n[\bar{n-1},\bar{1}]$, and is strictly increasing from top to bottom.
\end{itemize}
A \defterm{column} of type $G_2$ is a Young tableau with entries from $\aG_2$, of column shape, of one of the
following three forms:
\[
\tikz[tableau]\matrix {a \\};,\quad
\tikz[tableau]\matrix {a \\ b \\};\text{ with $a < b$,}\quad\text{or }
\tikz[tableau]\matrix {0 \\ 0 \\};.
\]

The \defterm{height} $h(\beta)$ of a column $\beta$ (of any type) is the number of boxes in the column. The
\defterm{reading} $w(\beta)$ of a column is the word obtained by reading the sequence of symbols in the boxes from top
to bottom. We identify a column with its reading. A word is a \defterm{column word} if it is the reading of a (necessarily
unique) column.

\subsubsection{Admissible columns}
\label{subsubsec:admissible_cols}

Let $\beta$ be a column (of any type) and let $z\leq n$. We denote by $N_\beta(z)$ the number of symbols $x$ in $\beta$
such that $x\leq z$ or $\bar{z}\leq x$.

A column $\beta$ is \defterm{admissible} if each of the following conditions is satisfied:
\begin{enumerate}
 \item $N_\beta(z) \leq z$, for any $z\leq n$; % \cite[Remark~2.2.2]{lecouvey_cn}.
 \item if $\beta$ is of type $B_n$ and $0$ is in $\beta$, then $h(\beta)\leq n$;
 \item if $\beta=\tikz[tableau]\matrix {a \\ b \\};$ is of type $G_2$ and height $2$, then
   \[
   \begin{cases}
     \dist(a,b) \leq 2 & \text{for $a \in \set{1,0}$,} \\
     \dist(a,b) \leq 3 & \text{otherwise,}
   \end{cases}
   \]
   where $\dist(a,b)$ is the number of arrows between $a$ and $b$ in the crystal basis \eqref{eq:g2:crystalbasis} for $G_2$.
\end{enumerate}
Note that all columns of type $A_n$ are admissible.

The following is a complete list of all twenty-one admissible columns of type $G_2$:
\begin{equation}
\label{eq:g2admissiblecolumns}
\begin{aligned}
\set[\Bigg]{
&\tikz[tableau]\matrix {1\\};,\;
\tikz[tableau]\matrix {2\\};,\;
\tikz[tableau]\matrix {3\\};,\;
\tikz[tableau]\matrix {0\\};,\;
\tikz[tableau]\matrix {\bar{3}\\};,\;
\tikz[tableau]\matrix {\bar{2}\\};,\;
\tikz[tableau]\matrix {\bar{1}\\};,\;
\tikz[tableau]\matrix {1\\2\\};,\;
\tikz[tableau]\matrix {1\\3\\};,\;
\tikz[tableau]\matrix {2\\3\\};,\;
\tikz[tableau]\matrix {2\\0\\};,\\
&\phantom{\tikz[tableau]\matrix {1\\};,}\;\,
\tikz[tableau]\matrix {2\\\bar{3}\\};,\;
\tikz[tableau]\matrix {0\\\bar{3}\\};,\;
\tikz[tableau]\matrix {3\\\bar{3}\\};,\;
\tikz[tableau]\matrix {3\\0\\};,\;
\tikz[tableau]\matrix {3\\\bar{2}\\};,\;
\tikz[tableau]\matrix {0\\\bar{2}\\};,\;
\tikz[tableau]\matrix {\bar{3}\\\bar{2}\\};,\;
\tikz[tableau]\matrix {\bar{3}\\\bar{1}\\};,\;
\tikz[tableau]\matrix {\bar{2}\\\bar{1}\\};,\;
\tikz[tableau]\matrix {0\\0\\};\,
}.
\end{aligned}
\end{equation}

An \defterm{admissible column word} is a word that is the reading of a (necessarily unique) admissible column.

\subsubsection{\texorpdfstring{The functions $\ell$ and $r$}{The functions l and r}}

We say that a column $\beta$ contains a pair $(z,\bar{z})$ if both symbols $z$ and $\bar{z}$ appear in $\beta$, or if $\beta$ is of type
$B_n$ and $0$ appears in $\beta$. In the following paragraphs we define partial functions $\ell$ and $r$ on the set of
columns of some type. The resulting columns $\ell(\beta)$ and $r(\beta)$, when defined, do not contain pairs $(z,\bar{z})$. For
simplicity and uniformity, for columns of type $A_n$ we define $r(\beta)=\ell(\beta)=\beta$.

Let $\beta$ be a column of type $B_n$ or $C_n$ and let $I_\beta = \set{ z_s < \ldots < z_{r+1} < z_r=0, \ldots, z_1=0 }$
be the set of symbols $z$ for which $\beta$ contains the pair $(z,\bar{z})$.  We say that a column $\beta$ of type $B_n$ or
$C_n$ can be \defterm{split} if there exists a set $J_\beta$ of symbols $t_s < \cdots < t_1$ such that
\begin{itemize}
\item $t_1$ is maximal such that $t_1 < z_1$ and the symbols $t_1$ and $\bar{t_1}$ do not appear in $\beta$;
\item for $i=2, \ldots, s$, the symbol $t_i$ is maximal such that $t_i < \min\set{t_{i-1},z_i}$, $t_i \not\in \beta$, and $\overline{t_i} \not\in \beta$.
\end{itemize}

If $\beta$ can be split, $r(\beta)$ is obtained from $\beta$ by replacing $\overline{z_i}$ with $\overline{t_i}$ for each $i$, and $\ell(\beta)$ is
obtained from $\beta$ by replacing $z_i$ with $t_i$ for each $i$, always reordering to obtain a column if necessary
(c.f. \cite[Example 3.1.7]{lecouvey_bndn}).

% \begin{example} Consider the column \[\tikz[tableau]\matrix { 2 \\ 4 \\ 6 \\ 7 \\ \overline{7} \\ \overline{4} \\ \overline{2}\\};.\] Then
% $I = \{ z_3 < z_2 < z_1 \} = \{ 2 < 4 < 7 \}$ and $J = \{ t_3 < t_2 < t_1 \} = \{ 1 < 3 < 5 \}$. Therefore, we get
% \[rC=\tikz[tableau]\matrix { 2 \\ 4 \\ 6 \\ 7 \\ \overline{5} \\ \overline{3} \\ \overline{1}\\};,\qquad \mbox{and}\qquad \ell C=\tikz[tableau]\matrix { 1 \\ 3 \\ 5 \\ 6 \\ \overline{7} \\ \overline{4} \\ \overline{2}\\};.\]
% \end{example}

% Consider the sequence of unbarred symbols $u_r < \cdots < u_1$ which satisfies $t_s=u_r$, $u_1< z_1$, $u_i\notin C$ and $u_i\neq z_j$. Dually, consider a seguence $v_p < \cdots < v_1$, replacing $u_i\notin C$ by $\bar{v_i}\notin C$.
% It can be verified that
% \[\ell C\xrightarrow[]{u_r} \cdots \xrightarrow[]{u_1} C \xrightarrow[]{v_1} \cdots \xrightarrow[]{v_p} rC.\]

The operators $r$ and $\ell$ defined for columns of type $B_n$ can be extended to columns of type $D_n$ as follows: for
any $D_n$ column $\beta$, let $\beta_0$ be the column obtained by replacing all subwords $\bar{n}n$ by $00$ in
$\beta$. Note that $\beta_0$ is always a $B_n$ column. Let $r(\beta)$ and $\ell(\beta)$ be $r(\beta_0)$ and
$\ell(\beta_0)$ (as defined for type $B_n$ columns). Observe that if $\beta$ is a type $D_n$ column that does not
contain a subword $\bar{n}n$, it is also a $B_n$ column and $\beta_0 = \beta$ and so the definitions of $r(\beta)$
and $\ell(\beta)$ coincide regardless of whether $\beta$ is viewed as a column of type $B_n$ or $D_n$.

A column $\beta$ of type $B_n$, $C_n$ or $D_n$ is admissible if and only if both $r(\beta)$ and $\ell(\beta)$ are
defined \cite[Proposition 4.3.3]{lecouvey_survey} (see also
\cite{Sheats1999} for type $C_n$). This fact will be important
in the definition of tableaux in the
following subsection.

\subsection{Tabloids and tableaux}

Let $X$ be one of the types $A_n$, $B_n$, $C_n$, $D_n$ or $G_2$. A \defterm{tabloid} of type $X$ is a
sequence of admissible columns $\beta_r,\ldots, \beta_1$ of type $X$, which we write in a planar form by writing each
column vertically beside each other in the order $\beta_r,\ldots, \beta_1$ from left to right.

%For example,
%\begin{equation}
%\label{eq:exampletabloid}
%\tikz[tableau]\matrix{
%1 \& 1 \& 2 \& 3 \& 1 \\
%3 \& 2 \& 4 \& \bar{3} \& \bar{5} \\
%\bar{2} \& 5 \& 5 \& \& \bar{4} \\
%      \& \bar{3} \& \& \& \bar{3} \\
%      \& \bar{2} \\
%};
%\end{equation}
%is a tabloid with five columns.

For brevity, we also use the inline form
$\tikz[tableau]\matrix{\beta_r \& |[dottedentry]| \& \beta_1\\};$ to denote the tableau with columns $\beta_r,\ldots, \beta_1$. The \defterm{reading}
$w(T)$ of a tabloid $T=\tikz[tableau]\matrix{\beta_r \& |[dottedentry]| \& \beta_1\\};$ is the word $w(\beta_1)\cdots w(\beta_r)$. Note that the
columns of the tabloid are read from rightmost to leftmost, and each column is read from top to bottom.

%Thus the reading of the tabloid \eqref{eq:exampletabloid} is $1\bar{5}\bar{4}\bar{3}3\bar{3}245125\bar{3}\bar{2}13\bar{2}$, as shown here:
%\[
%\begin{tikzpicture}
%  \begin{scope}[tableau]
%    \matrix (t) {
%      1 \& 1 \& 2 \& 3 \& 1 \\
%      3 \& 2 \& 4 \& \bar{3} \& \bar{5} \\
%      \bar{2} \& 5 \& 5 \& \& \bar{4} \\
%      \& \bar{3} \& \& \& \bar{3} \\
%      \& \bar{2} \\
%    };.
%  \end{scope}
%  \draw[->,lightgray,thick,rounded corners=3mm]
%  ($ (t-1-5.north east) + (-1mm,3mm) $) -- ($ (t-4-5.south east) + (-1mm,-3mm) $) --
%  ($ (t-1-4.north east) + (-1mm,3mm) $) -- ($ (t-2-4.south east) + (-1mm,-3mm) $) --
%  ($ (t-1-3.north east) + (-1mm,3mm) $) -- ($ (t-3-3.south east) + (-1mm,-3mm) $) --
%  ($ (t-1-2.north east) + (-1mm,3mm) $) -- ($ (t-5-2.south east) + (-1mm,-3mm) $) --
%  ($ (t-1-1.north east) + (-1mm,3mm) $) -- ($ (t-3-1.south east) + (-1mm,-5mm) $);
%\end{tikzpicture}
%\]

Note that different tabloids may have the same reading.

%For example, the following tabloid has the same reading as \eqref{eq:exampletabloid}:
%\[
%  \tableau{
%      1 \& 5 \& 1 \& 2 \& \bar{3} \& 3 \& 1 \\
%      3 \& \bar{3} \& 2 \& 4 \& \& \& \bar{5} \\
%      \bar{2} \& \bar{2} \&  \& 5 \& \& \& \bar{4} \\
%      \& \& \& \& \& \& \bar{3} \\
%    }.
%\]

For any word $u \in \aX^*$ there is at least one tabloid whose reading is $u$: if $u = u_1\cdots u_k$, where $u_i \in \aX$, then the
tabloid $\tableau{u_k \& |[dottedentry]| \& u_1\\}$ has reading $u$. (Notice that each column $\tableau{u_i\\}$ (of height
$1$) is admissible.)

We now define a relation $\preceq$ on the sets of admissible columns of each type. For types $A_n$, $B_n$, $C_n$, and
$D_n$, the definition proceeds as follows: for two admissible columns $\beta_1$ and $\beta_2$, define
\begin{itemize}
\item $\beta_2 \leq \beta_1$ if $h(\beta_2) \geq h(\beta_1)$ and the rows of the tabloid \tikz[tableau]\matrix{\beta_2 \& \beta_1\\}; are
  weakly increasing from left to right;
\item $\beta_2 \preceq \beta_1$ if $r(\beta_2) \leq \ell(\beta_1)$.
\end{itemize}
Note that for any admissible column $\beta$, we have $\ell(\beta) \leq \beta \leq r(\beta)$; hence $\beta_2 \preceq \beta_1$ implies $\beta_2 \leq \beta_1$.

For type $G_2$, the definition is more complicated: for columns $\beta_1$ and $\beta_2$, define
\begin{align*}
\tikz[tableau]\matrix{a\\}; \preceq \tikz[tableau]\matrix{b\\}; \iff{}& (a \leq b) \land \bigl((a,b) \neq (0,0)\bigr) \\
\tikz[tableau,topalign]\matrix{a\\b\\}; \preceq \tikz[tableau]\matrix{c\\}; \iff{}& (a \leq c) \land \bigl((a,c) \neq (0,0)\bigr) \\
\tikz[baseline=-1mm]\draw[white] (0,5mm);\smash{\tikz[tableau]\matrix{a\\b\\}; \preceq \tikz[tableau]\matrix{c\\d\\};} \iff{}&
(a \leq c) \land \bigl((a,c) \neq (0,0)\bigr) \\
&\land (b \leq d) \land \bigl((b,d) \neq (0,0)\bigr) \\
&\land \bigl(a \in \set{2,3,0} \implies \dist(a,d) \geq 3\bigr) \\
&\land \bigl(a = \bar{3} \implies \dist(a,d) \geq 2\bigr)
\end{align*}

Note that the relation $\preceq$ is transitive and antisymmetric, but is not reflexive in general.

Let $\beta_1$, $\beta_2$ be columns of type $D_n$ such that $h(\beta_2)\geq h(\beta_1)$. We say that the tabloid $\tikz[tableau]\matrix{\beta_2 \& \beta_1\\};$ contains an \defterm{$a$-configuration}, with $a\notin \set{\bar{n},n}$, if:
\begin{itemize}
 \item $a=x_p$, $\bar{n}=x_r$ are symbols of $\beta_2$ and $\bar{a}=y_s$, $n=y_q$ symbols of $\beta_1$; or
 \item $a=x_p$, ${n}=x_r$ are symbols of $\beta_2$ and $\bar{a}=y_s$, $\bar{n}=y_q$ symbols of $\beta_1$
\end{itemize}
where the integers $p,q,r,s$ are such that $p\leq q < r\leq s$. Denote by $\mu(a)$ the integer defined by $\mu(a)=s-p$.

A \defterm{tableau} of type $A_n$, $B_n$, $C_n$ or $G_2$ is a tabloid $\tikz[tableau]\matrix{\beta_r \& |[dottedentry]| \& \beta_1\\};$
such that $\beta_{i+1} \preceq \beta_{i}$ for all $i=1,\ldots,r-1$.  A tableau of type $D_n$ is a tabloid
$\tikz[tableau]\matrix{\beta_r \& |[dottedentry]| \& \beta_1\\};$ such that $\beta_{i+1} \preceq \beta_{i}$ and the tabloid $\tikz[tableau]\matrix{r(\beta_{i+1}) \& \ell
  (\beta_{i}) \\};$ does not contain an $a$-configuration with $\mu(a) = n- a$, for all $i=1,\ldots,r-1$.

\begin{lemma}
  \label{lem:efonfactors}
  Let $T = \tikz[tableau]\matrix{\beta_m \& |[dottedentry]| \& \beta_1\\};$ be a tabloid of type $A_n$, $B_n$, $C_n$,
  $D_n$, and $G_2$. Let $i \in \set{1, \ldots, n}$. Let $u_j = w(\beta_j)$ for $j \in {1,\ldots,m}$, so that
  $w(T) = u_1\cdots u_m$. Suppose $v = \f_i(w(T))$ (respectively, $v = \e_i(w(T))$) is defined. Factor $v$ as
  $v = v_1\cdots v_m$, where $|v_j| = |u_j|$. Then:
  \begin{enumerate}
  \item There exists some $k \in \set{1, \ldots, m}$ such that $v_j=u_j$ for $j \neq k$ and $v_k = \f_i(u_k)$ (respectively, $v_k = \e_i(u_k)$).
  \item Each word $v_j$ is an admissible column word, and so $v$ is the reading of the tabloid
    $\tikz[tableau]\matrix{\gamma_m \& |[dottedentry]| \& \gamma_1\\};$.
  \item For all $j\in \set{1,\ldots, m-1}$, we have $\beta_{j+1}\preceq \beta_{j}$ if and only if $\gamma_{j+1}\preceq \gamma_{j}$. In
    particular, $T$ is a tableau if and only if $\tikz[tableau]\matrix{\gamma_m \& |[dottedentry]| \& \gamma_1\\};$ is a tableau.
  \end{enumerate}
\end{lemma}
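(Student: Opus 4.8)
The plan is to prove the three assertions in order; parts~1 and~2 are short, and part~3 is where essentially all the content sits. For part~1, I would unwind the recursion \eqref{eq:fi} (respectively \eqref{eq:ei}): writing $w(T)=u_1(u_2\cdots u_m)$ and applying \eqref{eq:fi}, the operator $\f_i$ is pushed into exactly one of the two factors, and in the second case one recurses on $u_2\cdots u_m$, inducting on $m$. Since $\f_i(w(T))$ is assumed defined, every intermediate application is defined, so the recursion terminates with $\f_i(w(T))=u_1\cdots u_{k-1}\,\f_i(u_k)\,u_{k+1}\cdots u_m$ for a unique $k$, where $\f_i(u_k)$ is the Kashiwara operator applied to the word $u_k$; by \fullref{Proposition}{prop:computingef} this alters a single symbol of $u_k$. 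Hence $v_j=u_j$ for $j\neq k$ and $v_k=\f_i(u_k)$, which is part~1. Part~2 is then immediate from the fact that, for each type $X$ and each height $h$, the set of readings of admissible columns of type $X$ and height $h$ is closed under $\e_i$ and $\f_i$: this is elementary for type $A_n$ (column words are strictly increasing words, and $\f_i$ replaces one entry $a$ by $a+1$), and for the remaining types it is part of the Kashiwara--Nakashima and Kang--Misra realisation of the crystals of the fundamental representations by admissible columns \cite{kashiwara_crystalgraphs,lecouvey_survey}. Thus each $v_j$ is an admissible column word $w(\gamma_j)$ with $\gamma_j=\beta_j$ for $j\neq k$, and $v$ is the reading of the tabloid with columns $\gamma_m,\ldots,\gamma_1$.

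For part~3 the first step is a reduction to two-column tabloids. Fix $j\in\set{1,\ldots,m-1}$; if $k\notin\set{j,j+1}$ then the columns at positions $j$ and $j+1$ are unchanged and there is nothing to prove, so assume $k\in\set{j,j+1}$. I claim that applying the operator ($\f_i$, resp.\ $\e_i$) to the word $u_ju_{j+1}$ --- the reading of the tabloid whose two columns are $\beta_{j+1}$ and $\beta_j$ --- yields exactly $w(\gamma_j)w(\gamma_{j+1})$, i.e.\ the operator acts on the same column, and in the same way, as inside $w(T)$. This follows by combining the case split in \eqref{eq:fi}/\eqref{eq:ei} (which factor the operator descends to) with the facts, which follow from the case formula for $\rho_i(uv)$ established in the proof of \fullref{Proposition}{prop:computingef}, that $\ecount_i(u_lw')\geq\ecount_i(u_l)$ for every word $w'$ and $\ecount_i(u_lw')=\ecount_i(u_l)$ whenever $\fcount_i(u_l)\geq\ecount_i(w')$: for $\e_i$, if $k=j$ the operator stopped at $u_j$ inside $w(T)$, which forces $\fcount_i(u_j)\geq\ecount_i(u_{j+1}\cdots u_m)\geq\ecount_i(u_{j+1})$, so it stops at $u_j$ in $u_ju_{j+1}$ as well; if $k=j+1$ the operator passed $u_j$ and stopped at $u_{j+1}$, whence $\ecount_i(u_{j+1}\cdots u_m)=\ecount_i(u_{j+1})$ and the two computations again agree. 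The case of $\f_i$ is entirely analogous, using the same two facts about $\ecount_i$. (In particular the operator is defined on $u_ju_{j+1}$.)

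With this reduction, part~3 becomes the statement that, for the two-column tabloid $S$ with columns $\beta_{j+1},\beta_j$ and the tabloid $S'$ with columns $\gamma_{j+1},\gamma_j$ obtained from $S$ by one Kashiwara move (note $h(\gamma_l)=h(\beta_l)$), $S$ is a tableau if and only if $S'$ is. If $h(\beta_{j+1})<h(\beta_j)$ then neither is a tableau, so both sides fail; otherwise $S$ and $S'$ are fillings of one Young diagram, of some shape $\mu$. Now $S$ is a tableau --- equivalently $\beta_{j+1}\preceq\beta_j$, together with the $a$-configuration condition when $X=D_n$ --- if and only if $w(S)$ lies in the connected component of $\Gamma_X$ consisting of the readings of type-$X$ tableaux of shape $\mu$; this is exactly the Kashiwara--Nakashima/Lecouvey realisation of the crystal $B(\mu)$ by KN-tableaux \cite{kashiwara_crystalgraphs,lecouvey_survey}, which in particular tells us this component is closed under $\e_i$ and $\f_i$. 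Since a tabloid of shape $\mu$ is determined by its reading together with its column heights, and $S'$ is a shape-$\mu$ tabloid with reading $\f_i(w(S))$ (resp.\ $\e_i(w(S))$), the unique shape-$\mu$ KN-tableau with reading $\f_i(w(S))$ --- which exists precisely when $\f_i(w(S))$ lies in that component --- must equal $S'$. Hence $S$ a tableau implies $\f_i(w(S))$ lies in the component, hence $S'$ is a tableau; the converse follows by the same argument applied to $S'$ and $\e_i$, using that $\e_i\bigl(\f_i(w(S))\bigr)=w(S)$ (resp.\ the dual identity). This yields $\beta_{j+1}\preceq\beta_j\iff\gamma_{j+1}\preceq\gamma_j$ for all $j$, and the concluding ``in particular'' is immediate.

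The main obstacle is part~3, and within it the appeal to the fact that the readings of KN-tableaux of a fixed shape form a single connected component of $\Gamma_X$ --- equivalently, that the Kashiwara operators preserve the shape of a tableau. It is this structural input, rather than any type-specific combinatorics, that absorbs the differences between the types (notably the extra $a$-configuration condition of type $D_n$); a self-contained proof avoiding it would instead require a type-by-type analysis of how a single Kashiwara move affects the maps $r$ and $\ell$ and the row conditions defining $\preceq$, which is considerably more laborious. The bookkeeping with $\ecount_i$ and $\fcount_i$ in the two-column reduction is routine but needs a little care.
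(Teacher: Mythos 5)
The paper offers no proof of this lemma at all: its ``proof'' is a one-line citation of \cite{kashiwara_crystalgraphs} for types $A_n$--$D_n$ and of \cite{lecouvey_survey} for $G_2$. Your write-up therefore does strictly more work. Part~1, part~2, and the two-column reduction in part~3 are genuine arguments using the $\rho_i$/$\ecount_i$/$\fcount_i$ bookkeeping, and they are correct: the key implications --- that the operator stopping at $u_j$ forces $\fcount_i(u_j)\geq\ecount_i(u_{j+1}\cdots u_m)$, and that passing $u_1\cdots u_j$ forces $\fcount_i(u_j)<\ecount_i(u_{j+1})$ --- do hold, by noting that when a ${-}$ contributed by $u_j$ survives in $\rho_i(u_1\cdots u_j)$ all surviving ${+}$'s lie to its right and hence come from $u_j$, so $\fcount_i(u_1\cdots u_j)=\fcount_i(u_j)$ in that case; you assert rather than prove these, but they are routine. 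The remaining structural input --- closure of admissible columns, and of the readings of fixed-shape Kashiwara--Nakashima tableaux, under $\e_i$ and $\f_i$ --- is exactly what the paper itself outsources to the same two references, so your appeal to them introduces no circularity beyond what the paper already commits.

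Two points to flag. A minor one: for type $D_n$ with columns of height $n$ the readings of shape-$\mu$ tableaux form a \emph{union of two} connected components, not one (there are two highest-weight fillings, by \fullref{Lemma}{lem:highestweightableauchar}); your argument only needs closure under the operators together with uniqueness of the shape-$\mu$ tabloid having a given reading, so nothing breaks, but ``the connected component'' is the wrong phrase. A more substantive one: for type $D_n$ your part-3 argument establishes only ``$S$ is a tableau iff $S'$ is a tableau'', which for two columns is the conjunction of $\beta_{j+1}\preceq\beta_j$ with the $a$-configuration condition. It does not by itself yield the stated equivalence $\beta_{j+1}\preceq\beta_j\iff\gamma_{j+1}\preceq\gamma_j$, and it is this stronger form that the paper later uses (e.g.\ in \fullref{Lemma}{lem:noetherian}, where $\beta\not\preceq\alpha$ is deduced for the highest-weight pair from $\tau\not\preceq\sigma$). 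Knowing that the set of two-column \emph{tableaux} is operator-closed does not let you separate the two conjuncts. To close this you need the additional fact, also contained in the Kashiwara--Nakashima/Lecouvey development, that the operators preserve the relation $\preceq$ between adjacent columns itself --- equivalently, that the readings of two-column tabloids satisfying $\preceq$ (tableau or not) form a union of connected components --- and this should be cited or argued separately for type $D_n$.
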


\begin{proof}
  See \cite{kashiwara_crystalgraphs} for types $A_n$, $B_n$, $C_n$, and $D_n$; see \cite{lecouvey_survey} for type $G_2$.
\end{proof}

In light of the preceding lemma, we can think of applying the operators $\e_i$ and $\f_i$ to a tabloid $T$: using the
notation of the lemma, $\f_i(T)$ (respectively, $\e_i(T)$), when defined, is the tabloid
$\tikz[tableau]\matrix{\gamma_m \& |[dottedentry]| \& \gamma_1\\};$. Note that $\f_i$ and $\e_i$ preserve shapes of
tabloids and preserve the $\preceq$ relation between adjacent columns, and in particular preserve tableaux. Thus the
words in a given connected component are readings of tabloids with the same shape.
%
%(See\fullref{Figure}{fig:an:crystalgraphisocompstabloids}.)
%
Furthermore, iterated application of this lemma shows that in a
given connected component of one of the crystal graphs, either every word is the reading of a tableau or no word is the
reading of a tableau. In a connected component where every word is the reading of a tableau, all the corresponding
tableaux have the same shape. (However, it is not true in general that two same-shape tabloids belong to the same
component.)

We can now say that a tabloid $T$ has highest weight if $\e_i(T)$ is undefined
for all $i$. Note that this is equivalent
to the word $w(T)$ being of highest weight. Furthermore, we have the following characterization of highest weight
tableaux:

\begin{lemma}
  \label{lem:highestweightableauchar}
  Let $X$ be one of the types $A_n$, $B_n$, $C_n$, $D_n$, and $G_2$. An $X$ tableau has highest weight if and only if it
  has $i$-th row filled with $i$, for $i=1,\ldots,n$, except that in the $D_n$
case the $n$-th row can instead be
  filled with $\bar{n}$.
\end{lemma}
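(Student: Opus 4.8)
The plan is to prove both directions by working with the characterization of highest-weight tableaux in terms of the Kashiwara operators $\e_i$, combined with the structural facts about columns already established. For the "if" direction, suppose $T$ is a tableau whose $i$-th row is filled with $i$ for $i=1,\dots,n$ (with the possible exception that the $n$-th row is filled with $\bar n$ in type $D_n$). I would first observe that such a $T$ is indeed a genuine tableau of the relevant type: each column is an initial segment $1,2,\dots,k$ (or $1,\dots,n-1,\bar n$ in the $D_n$ variant), which is visibly admissible since $N_\beta(z) = z$ for $z$ in the column and smaller otherwise, and adjacent columns satisfy $\beta_{i+1}\preceq\beta_i$ since $\ell(\beta)=\beta=r(\beta)$ for these pair-free columns and the rows are constant hence weakly increasing; the $a$-configuration condition in type $D_n$ is vacuous because no barred symbol other than possibly $\bar n$ occurs. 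Then I would show directly from the crystal basis that $\e_i(w(T))$ is undefined for every $i\in I$: computing $\rho_i(w(T))$ via \fullref{Proposition}{prop:computingef}, each letter $j$ of $w(T)$ contributes $+$ when $j$ is the source of an $i$-arrow and $-$ when it is the target; in the word $w(T)$ the letters are read column-by-column from right to left, and because every row is constant, all occurrences of the "$+$-contributing" letter for colour $i$ (namely $i$, appearing in row $i$) precede — no, I must be careful here about reading order. The cleaner route is to invoke \fullref{Lemma}{lem:highestweightfactors} inductively on the columns: writing $w(T)=u_1\cdots u_m$ with $u_j = w(\beta_j)$ read left-to-right in the tabloid, a single column $\beta$ of the form $1,2,\dots,k$ has $\ecount_i(\beta)=0$ for all $i$ since no letter of $\beta$ is the target of an $i$-arrow that is not cancelled (letter $i$ is a target of colour $i-1$ but letter $i-1$ is also present as its source, appearing above it, i.e.\ earlier in the column-reading, cancelling it); and the inequality $\ecount_i(u_{j+1}\cdots u_m)\le\fcount_i(u_j\cdots)$ reduces to a finite check comparing how many colour-$i$ arrows each initial-segment column can absorb. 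Hence $w(T)$ has highest weight, so $T$ has highest weight.

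For the "only if" direction, suppose $T$ is a tableau of highest weight, so $\e_i(w(T))$ is undefined for all $i\in I$. The key tool is \fullref{Lemma}{lem:efonfactors}(2): applying $\f_i$ to a tableau yields another tableau of the same shape in which exactly one entry of one column has changed via $\f_i$ on the alphabet. I would argue by induction on rows, or equivalently on the total weight, peeling off the structure: first, since $\ecount_1(w(T))=0$ and $\ecount_1$ counts (net) colour-$1$ targets, and $2$ is the target of the $1$-arrow $1\to 2$ (with analogous statements for the barred letters near the bottom of the alphabet in the non-$A$ types), a highest-weight tableau can contain no $2$ that is not "protected" by a $1$ to its left in the reading; combined with the column-strict and row-weak conditions this forces the first row to consist entirely of $1$'s. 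I would then show that the tabloid obtained by deleting the first row is again a highest-weight tableau over the shifted alphabet $\{2<3<\cdots\}$ and iterate, using at each stage the crystal basis arrows of the relevant type to pin down that row $i$ is filled with $i$; the type-$D_n$ exception arises precisely at row $n$, where the branching node $n-1\to n$, $n-1\to\bar n$ in the crystal basis \eqref{...} means that either $n$ or $\bar n$ (but, by column-strictness, not both in one column, and by the $a$-configuration and $\preceq$ conditions, uniformly across the tableau) can fill that row. For types $B_n$, $C_n$, $G_2$, the corresponding node is not a branch point, so row $n$ (resp.\ the bottom row) is forced to be $n$.

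I expect the main obstacle to be the "only if" direction's inductive step: making rigorous the claim that "deleting the top row of a highest-weight tableau gives a highest-weight tableau over the shifted alphabet," and more generally translating the global condition "$\ecount_i(w(T))=0$ for all $i$" into the local statement "row $i$ is constant equal to $i$." The cleanest way to handle this is probably not to delete rows but to argue contrapositively: if some row is not of the prescribed form, locate the topmost/leftmost offending entry and exhibit a colour $i$ for which the computation of $\rho_i(w(T))$ (as in \fullref{Example}{eg:kashiwara}) leaves an uncancelled $-$, using that admissibility of the columns controls which pairs $(z,\bar z)$ can occur and hence controls the $\ecount_i$/$\fcount_i$ contributions of each letter. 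The type-$G_2$ case will need a separate small argument because its crystal basis \eqref{eq:g2:crystalbasis} is not of classical shape, but there the alphabet is tiny and the admissible columns are fully listed in \eqref{eq:g2admissiblecolumns}, so one can essentially check by hand which columns and adjacent pairs can occur in a highest-weight $G_2$ tableau, concluding that row $1$ is filled with $1$ and row $2$ (when present) with $2$. For all types, \fullref{Lemma}{lem:efonfactors} guarantees the resulting object genuinely remains a tableau throughout, which is what makes the row-by-row argument close.
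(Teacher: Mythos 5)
The paper does not actually prove this lemma: it cites Kashiwara--Nakashima for types $A_n$--$D_n$ and Lecouvey for $G_2$, and then remarks that the statement ``can be recovered easily using the definition of the operators $\e_i$ and the relation $\preceq$.'' Your proposal is an attempt to carry out exactly that recovery, so it is not comparable to the paper's (non-)proof but is the route the paper itself endorses. Your ``if'' direction is essentially complete: each column of such a tableau is $1\cdots p$ (or $1\cdots(n-1)\bar n$), so $\rho_i$ of each column-reading is $\emptyword$ or a single ${+}$, hence $\rho_i(w(T))$ contains no ${-}$ and every $\e_i$ is undefined.

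The ``only if'' direction, however, contains a genuine gap that you yourself flag but do not close. You offer two strategies (peel off the top row, or argue contrapositively via an uncancelled ${-}$ in some $\rho_i$) and commit to neither; the crucial claims --- that the first row must be all $1$'s, and that deleting it leaves a highest-weight tableau over the shifted alphabet --- are asserted, not proved. Two concrete points need work. First, ``no $2$ without a protecting $1$'' only controls the unbarred part of the alphabet: in types $B_n$, $C_n$, $D_n$ the letters $\bar 1$ and $\bar 2$ (and $0$ in $B_n$) also contribute to $\rho_1$, and e.g.\ the single-letter column $\bar 1$ is admissible, so ruling barred letters out of a highest-weight tableau requires an explicit $\rho_i$ computation combined with the admissibility bound $N_\beta(z)\le z$ and the $\preceq$ condition between adjacent columns; your sketch gestures at this but does not do it. Second, the row-peeling step is not innocent: the reading word of the truncated tabloid interleaves with the deleted letters, so the claim that it is highest weight for the shifted colours is precisely the kind of statement that needs the cancellation bookkeeping of \fullref{Proposition}{prop:computingef}, and it is the crux of the induction. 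A cleaner and fully checkable route, consistent with the paper's remark, is to apply \fullref{Lemma}{lem:highestweightfactors} to the factorization $w(T)=u_1\cdots u_m$ by columns: this forces each prefix $u_1\cdots u_j$ to be highest weight, from which one shows directly (by exhibiting a leading ${-}$ in an appropriate $\rho_i$) that each column is an initial segment $1\cdots p_j$ (or $1\cdots(n-1)\bar n$), and then the $\preceq$ condition forces $p_1\le\cdots\le p_m$ and uniformity of the $n$-th row in type $D_n$; the finitely many $G_2$ columns of \eqref{eq:g2admissiblecolumns} are checked by hand as you suggest. As written, your proposal is a credible plan rather than a proof.
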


\begin{proof}
  See \cite{kashiwara_crystalgraphs} for types $A_n$, $B_n$, $C_n$, and $D_n$; see \cite{lecouvey_survey} for type $G_2$.
\end{proof}

Note also that \fullref{Lemma}{lem:highestweightableauchar} can be recovered easily using the definition of the
operators $\e_i$ and the relation $\preceq$.

%For example,
%\[
%\tableau{
%1 \& 1 \& 1 \& 1 \& 1 \& 1 \\
%2 \& 2 \& 2 \& 2 \& 2 \\
%3 \& 3 \& 3 \\
%4 \& 4 \\
%}
%\]
%is a highest weight tableau of type $A_n$, $B_n$, $C_n$ or $D_n$ for any $n \geq 4$;
%\[
%\tableau{
%1 \& 1 \& 1 \& 1 \& 1 \\
%2 \& 2 \& 2 \\
%\bar{3} \& \bar{3} \\
%}
%\]
%is a highest weight tableau of type $D_3$ (that is, $D_n$ with $n=3$); and
%\[
%\tableau{
%1 \& 1 \& 1 \& 1 \& 1 \& 1 \\
%2 \& 2 \& 2 \& 2 \\
%}
%\]
%is a highest weight tableau of type $G_2$.

\begin{theorem}[{\cite{lecouvey_survey}}]
  \label{thm:tableauxcrossection}
  Let $X$ be one of the types $A_n$, $B_n$, $C_n$, $D_n$, and $G_2$, and let $\aX$ be the corresponding alphabet
  $\aA_n$, $\aB_n$, $\aC_n$, $\aD_n$ or $\aG_2$. Then for any $u \in \aX^*$, there is a unique tableau $P(u)$ such that
  $u \sim_X w(P(u))$. Thus the set of tableaux form a cross-section of the monoid $\Pl(X) = \aX^*/{\sim_X}$.
\end{theorem}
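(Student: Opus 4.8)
The plan is to establish existence and uniqueness of $P(u)$ separately, drawing existence from the structure theory and uniqueness from the crystal machinery assembled above. For existence I would appeal to Lecouvey's Schensted-type insertion algorithm for type $X$ \cite{lecouvey_cn, lecouvey_bndn, lecouvey_survey}: applied to a word $u \in \aX^*$ it returns a tableau $P(u)$ of type $X$, and, since each elementary insertion step rewrites a word to a $\sim_X$-equivalent one, $u \sim_X w(P(u))$. (Using only the ingredients above, one can instead argue: let $u^0$ be the highest-weight vertex of $B(u)$; its weight $\wt(u^0)$ is dominant, as one checks from the fact that $\ecount_i(u^0) = 0$ for all $i$, so \fullref{Lemma}{lem:highestweightableauchar} furnishes a highest-weight tableau $T_0$ of that weight; since $u^0$ and $w(T_0)$ are highest-weight vertices of the same weight, their components are isomorphic, whence $u^0 \sim_X w(T_0)$; now transport $T_0$ along the sequence of Kashiwara operators $\f_{i_r}\cdots\f_{i_1}$ that carries $u^0$ to $u$, which by \fullref{Lemma}{lem:efonfactors} produces a tableau $T$ with $u \sim_X w(T)$.) Either way, this is where the real content lies: the abstract crystal framework on its own does not force a component to contain the reading of a tableau, so I expect existence to be the main obstacle in a genuinely self-contained treatment.

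For uniqueness, suppose $T$ and $U$ are tableaux of type $X$ with $w(T) \sim_X w(U)$; the goal is $T = U$. By \fullref{Lemma}{lem:efonfactors} the operators $\e_i$ send tableaux to tableaux (when defined) and preserve shape, so applying a sequence of $\e_i$'s to $T$ yields a highest-weight tableau $T^0$ whose reading $w(T^0)$ equals $(w(T))^0$, the unique highest-weight vertex of $B(w(T))$; similarly one obtains a highest-weight tableau $U^0$ with $w(U^0) = (w(U))^0$. Since $w(T) \sim_X w(U)$, the crystal isomorphism between their components carries $(w(T))^0$ to $(w(U))^0$ and preserves weights, so $\wt(w(T^0)) = \wt(w(U^0))$. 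By \fullref{Lemma}{lem:highestweightableauchar}, a highest-weight tableau has its $i$-th row filled with $i$ for every $i$, except that for type $D_n$ the $n$-th row may instead be filled with $\bar n$; a routine computation with the weight functions $\wt$ defined above shows that this shape data — the partition of row lengths, together with the choice in the $D_n$ case — is recovered from the weight. Hence $T^0 = U^0$, and in particular $(w(T))^0 = (w(U))^0$, a word I denote $w^0$.

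It remains to improve this to $T = U$. As $w^0$ is the unique highest-weight vertex of both $B(w(T))$ and $B(w(U))$, these coincide with $B(w^0)$, so the isomorphism $\phi$ witnessing $w(T) \sim_X w(U)$ is a crystal automorphism of the connected component $B(w^0)$ with $\phi(w(T)) = w(U)$. Now $\phi$ maps highest-weight vertices to highest-weight vertices (these being exactly the vertices with no incoming edge), and so must fix the unique such vertex $w^0$; writing any $x \in B(w^0)$ as $x = \f_{j_1}\cdots\f_{j_k}(w^0)$ and using that $\phi$ commutes with every defined application of $\f_i$, we get $\phi(x) = \f_{j_1}\cdots\f_{j_k}(\phi(w^0)) = x$. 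Thus $\phi$ is the identity, so $w(T) = w(U)$; since a word is the reading of at most one tableau of type $X$, this gives $T = U$ and completes the proof.
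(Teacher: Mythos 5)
The paper does not actually prove this statement: it is imported wholesale from Lecouvey (note the citation in the theorem header), and both existence and uniqueness of $P(u)$ are treated as known for the rest of the text. So there is no in-paper proof to compare yours against; what you have written is a reconstruction. Your uniqueness half is sound and is a nice self-contained derivation from the facts the paper does make available — the uniqueness of the highest-weight vertex in each component of $\Gamma_X$, together with \fullref{Lemmata}{lem:efonfactors} and \ref{lem:highestweightableauchar}: the weight of the highest-weight vertex determines the shape (and the $n$ versus $\bar{n}$ choice in type $D_n$), and an automorphism of a connected component fixing its highest-weight vertex is the identity because it commutes with the $\f_i$. The one step worth making explicit is your final appeal to ``a word is the reading of at most one tableau'': this is true (the greedy decomposition into maximal column words recovers the columns, since the bottom entry of each column is at least the top entry of the next), but the paper only states the analogous fact for single columns, so you should not treat it as free.

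Your primary existence route — Lecouvey's insertion algorithm — is exactly where the paper's citation points, and correctly identifies that the congruence generated by the Knuth-type relations coincides with $\sim_X$ as a further imported input. The parenthetical ``self-contained'' alternative, however, has a genuine gap at the step ``since $u^0$ and $w(T_0)$ are highest-weight vertices of the same weight, their components are isomorphic.'' Nothing in the abstract crystal axioms, nor in any lemma you invoke, forces two components with equal highest weights to be isomorphic; for these specific crystals it is true, but it is precisely the deep combinatorial input (equivalently, that $\Gamma_X$ decomposes into irreducible highest-weight crystals determined by their highest weights), and assuming it makes the parenthetical a restatement of the theorem one step removed rather than a proof. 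You rightly flag existence as ``where the real content lies,'' but the honest options are to drop the parenthetical or to cite that decomposition explicitly.
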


\subsection{Presentations for plactic monoids}
\label{subsec:presentations}

% \notetoself{Antonio: We might be able to cut even more ( for example if we no
% longer use relations of type $\drel{R}_5$).}

The classical plactic monoid $\Pl(A_n) = \aA_n^*/{\sim_{A_n}}$ is presented
by the so-called \defterm{Knuth relations} \cite{knuth_permutations}. Similarly,
all other types of plactic monoids are presented by certain defining relations
as described in \cite[\S~5.1]{lecouvey_survey}. In particular, for the cases
$B_n$, $C_n$ and $D_n$ the Knuth relations are also part of the given defining
relations.

% The Schensted correspondence yields a bijection between elements of the plactic monoids $Pl(\aA_n)$ and the set of semi-standard Young tableaux on $\aA_n$ \cite[Chapter 5]{lothaire_algebraic}. Attending to Kashiwara Theory  on crystal bases
% the plactic monoid $\Pl(\aA_n)$ of type $A_n$, is given by $\Pl(\aA_n) = \aA_n^* / \sim_{\aA_n}$.

% From these original results it was natural to consider other types of plactic monoids  by factoring the free monoid on the alphabet $\aX$ by the congruence $\sim_{\aX}$.

In order to facilitate the reading of this article, we shall give more
details on some of the
defining relations that appear in cases $B_n$, $C_n$ and $D_n$. These relations
are labelled in \cite[\S~5.1]{lecouvey_survey} as $CR^{X}$, for
$X\in\{B,C,D\}$. We shall refer to the set of these relation as
$\drel{R}_5^{X}$, where $X$ is one of the types $B_n$, $C_n$ and $D_n$.
For our purposes we use the convention that $\bar{0}=0$ and that $\bar{\bar{z}}=z$.

A relation from $\drel{R}_5^{B_n}$ is defined as follows: let $w = w(C)$ be a
non-admissible column word for which
each strict factor (that is, a factor of $w$ not equal to $w$) is
admissible; let $z$ be the smallest (with respect to $ <$) unbarred symbol of
$w$ such that the
pair $(z,\bar{z})$ occurs in $w$ and $N_C(z) > z$, otherwise set $z = 0$. Let $\widetilde{w}$ be the column word
obtained by erasing the pair $(z,\bar{z})$ in $w$ if $z\leq n$ and erasing $0$ otherwise. The relation
$\drel{R}_5^{B_n}$ consists of all such pairs $(w,\tilde{w})$. (See
\cite[Definition 3.2.2]{lecouvey_bndn}.)

Both sets $\drel{R}_5^{C_n}$ and $\drel{R}_5^{D_n} $  are equal to
$\drel{R}_5^{B_n}$
except that we naturally exclude defining relations that involve $0$. (See
\cite[Definitions 5.1.2 and 5.1.3]{lecouvey_survey}.)

We now state the following auxiliary results that we will use in the sequel:

\begin{lemma}[{[Commuting columns lemma (CCL)]}]
\label{lem:commutingcolumns}
Let $X$ be one of the types $A_n$, $B_n$, $C_n$, $D_n$ and let $\aX$ be the corresponding alphabet $\aA_n$, $\aB_n$,
$\aC_n$ or $\aD_n$.  Let $\alpha,\beta \in \aX[1,n]^*$ be words that are readings of columns (that is, strictly
increasing words) such that every symbol of $\alpha$ appears in $\beta$. Then $\alpha\beta =_{\Pl(X)} \beta\alpha$.
\end{lemma}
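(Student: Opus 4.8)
The plan is to reduce the statement to the type $A_n$ case and then to argue directly from the Knuth relations. First, note that $\alpha$ and $\beta$ are words over $\aX[1,n]$, and that the order of $\aX$ restricts on $\aX[1,n]$ to the plain total order $1 < 2 < \cdots < n$ for each of $X = A_n, B_n, C_n, D_n$. Consequently the relations $\drel{R}_1^{A_n}$ and $\drel{R}_2^{A_n}$ are exactly the restrictions to $\aX[1,n]^*$ of (subsets of) the defining relations of $\Pl(X)$: the side conditions ``$x \neq \bar z$'', ``$x \neq 0$'', ``$x \neq \bar y$'' and ``$y \neq 0$'' that appear in $\drel{R}_1^X$ and $\drel{R}_2^X$ hold automatically for unbarred symbols not exceeding $n$. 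Hence $u =_{\Pl(A_n)} v$ implies $u =_{\Pl(X)} v$ whenever $u,v \in \aX[1,n]^*$, and it suffices to prove the lemma for $X = A_n$.

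Second, I would reduce to the case $|\alpha| = 1$ by induction on $|\alpha|$. The cases $|\alpha| \le 1$ are handled below; for $|\alpha| \ge 2$, write $\alpha = \alpha' a$ with $a = \max(\alpha)$, so $\alpha'$ is a shorter column whose content is contained in that of $\beta$ and $a$ occurs in $\beta$. Then, using the single-letter case for $a$ and the inductive hypothesis for $\alpha'$, we get $\alpha\beta = \alpha' a \beta =_{\Pl(A_n)} \alpha'\beta a =_{\Pl(A_n)} \beta\alpha' a = \beta\alpha$.

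It then remains to show: if $a$ occurs in the column $\beta = x_1 x_2 \cdots x_k$ with $x_1 < x_2 < \cdots < x_k$, say $a = x_j$, then $x_j x_1 \cdots x_k =_{\Pl(A_n)} x_1 \cdots x_k x_j$. Here I would carry the leading $x_j$ to the right by three-letter Knuth moves. While the moving $x_j$ sits immediately to the left of some $x_m$ with $m \le j-2$, the window $x_j x_m x_{m+1}$ has the form $zxy$ (with $x_m < x_{m+1} < x_j$), so the relation $(xzy,zxy)$ rewrites it to $x_m x_j x_{m+1}$ and the $x_j$ advances. This reaches $x_1 \cdots x_{j-2}\,x_j x_{j-1} x_j\,x_{j+1} \cdots x_k$, where the window $x_j x_{j-1} x_j$ has the form $yxy$ (with $x_{j-1} < x_j$); the relation $(xyy,yxy)$ rewrites it to $x_{j-1} x_j x_j$, producing $x_1 \cdots x_{j-1}\,x_j x_j\,x_{j+1} \cdots x_k$, that is, $\beta$ with the letter $x_j$ doubled. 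Now the window $x_j x_j x_{j+1}$ has the form $xxy$, so $(xyx,xxy)$ rewrites it to $x_j x_{j+1} x_j$, after which each window $x_{j+m} x_j x_{j+m+1}$ (with $x_j < x_{j+m} < x_{j+m+1}$) has the form $yxz$, and $(yzx,yxz)$ rewrites it to $x_{j+m} x_{j+m+1} x_j$, advancing the freed $x_j$ rightwards until the word is $x_1 \cdots x_k x_j = \beta a$. The extreme cases $j = 1$, $j = 2$ and $j = k$ follow from this same sequence of moves with the steps that become vacuous simply omitted.

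The reductions to type $A_n$ and to a single-letter $\alpha$ are routine, as is checking that every three-letter window really satisfies the strict inequalities demanded by the Knuth relation invoked, and verifying the extreme cases. The one point that requires genuine care — and the main obstacle — is that a single letter cannot in general be commuted past the larger letter $x_{j-1}$ directly; the argument must route through the ``doubled $x_j$'' word, applying a relation of type $\drel{R}_2^{A_n}$ precisely at the two moments when the travelling $x_j$ meets, and then separates from, the copy of $x_j$ already present in $\beta$.
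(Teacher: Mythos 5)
Your proof is correct and follows the same route as the paper, whose proof simply asserts that the lemma "follows directly from the defining relations $\drel{R}_1^{X}$ and $\drel{R}_2^{X}$" (noting, as you do, that the type-$A_n$ Knuth relations are the restrictions of these to the unbarred symbols). You have merely supplied the explicit reduction to a single-letter $\alpha$ and the chain of Knuth moves that the paper leaves to the reader, and these check out, including the $\drel{R}_2$ steps where the travelling letter meets its copy in $\beta$.
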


\begin{proof}
  This follows directly from the Knuth relations; one can also use
Schensted's
  insertion algorithm for $\Pl(A_n)$ (see \cite[Chapter~5]{lothaire_algebraic}) and note that the required defining
  relations also appear in the presentations for the other types of plactic monoid.
\end{proof}

\begin{lemma}
  \label{lem:deleting}
  Let $\aX$ be one of the alphabets $\aB_n$, $\aC_n$ and $\aD_n$.
  Consider a word $w = 12\cdots q \bar{x_1}\,\bar{x_2}\cdots\bar{x_k}$ for some
$q \in \aX[1,n]$, $\bar{x_1}, \ldots,\bar{x_k} \in
  \aX[\bar{q},\bar{1}]$ and $1 \leq x_k < x_{k-1} < \ldots < x_1 \leq q$. Then
$w =_{\Pl(X)} u$, where $u$ is the word
  obtained from $12\cdots q$ by deleting the symbols $x_1,x_2,\ldots,x_k$. In particular, $u$ is either empty or is an
  admissible column containing fewer than $q$ symbols.
\end{lemma}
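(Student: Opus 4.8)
The plan is to peel off the barred letters one at a time, from smallest to largest (that is, in the order $\bar{x_1},\bar{x_2},\ldots,\bar{x_k}$), each step being a single application of a relation from the component $\drel{R}_5$ of the defining relations of $\Pl(\aX)$ (namely $\drel{R}_5^{B_n}$, $\drel{R}_5^{C_n}$, or $\drel{R}_5^{D_n}$, according to which alphabet $\aX$ is). The case $k = 0$ is vacuous, so assume $k \geq 1$. For $1 \leq i \leq k+1$, let $\beta_i$ denote the strictly increasing unbarred word obtained from $12\cdots q$ by deleting the letters $x_1,\ldots,x_{i-1}$, so that $\beta_1 = 12\cdots q$ and $\beta_{k+1} = u$. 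I claim
\[
w \;=_{\Pl(\aX)}\; \beta_i\,\bar{x_i}\bar{x_{i+1}}\cdots\bar{x_k}
\qquad\text{for each }i = 1,\ldots,k+1,
\]
which for $i = k+1$ is exactly $w =_{\Pl(\aX)} u$. The case $i = 1$ is trivial, and since $=_{\Pl(\aX)}$ is a congruence, the claim follows by iteration once we prove $\beta_i\bar{x_i} =_{\Pl(\aX)} \beta_{i+1}$ for each $i = 1,\ldots,k$.

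To prove $\beta_i\bar{x_i} =_{\Pl(\aX)} \beta_{i+1}$ I would verify that $(\beta_i\bar{x_i},\beta_{i+1})$ is one of the defining relations in $\drel{R}_5$; let $C$ be the column whose reading is $\beta_i\bar{x_i}$. First, $\beta_i\bar{x_i}$ is a column word: since $x_1 > \cdots > x_k$, every deleted letter $x_j$ with $j < i$ satisfies $x_j > x_i$, so none of $1,\ldots,x_i$ is deleted, giving $\{1,\ldots,x_i\} \subseteq \beta_i$; and the largest letter of $\beta_i$ is at most $q \leq n < \bar{x_i}$, so $\beta_i\bar{x_i}$ is strictly increasing. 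Second, $C$ is not admissible: it contains exactly one pair, namely $(x_i,\bar{x_i})$ (its only barred letter is $\bar{x_i}$, whose partner $x_i$ lies in $\beta_i$), and because $\{1,\ldots,x_i\} \subseteq \beta_i$ we get $N_C(x_i) = x_i + 1 > x_i$, violating condition~(1) of admissibility. Third, every proper factor of $\beta_i\bar{x_i}$ is admissible: such a factor either avoids $\bar{x_i}$, hence is a strictly increasing word over $\aX[1,n]$ and so trivially admissible, or it has the form $\beta_i'\bar{x_i}$ with $\beta_i'$ a proper suffix of $\beta_i$, in which case $1 \in \beta_i$ forces $\min\beta_i' \geq 2$ and an estimate of the shape $N(z) \leq \lvert\{a \in \beta_i' : a \leq z\}\rvert + 1 \leq z$ (valid since $\min\beta_i' \geq 2$) shows admissibility. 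Granting these three facts, $\beta_i\bar{x_i}$ is a non-admissible column word all of whose proper factors are admissible, its unique pair is $(x_i,\bar{x_i})$ with $N_C(x_i) > x_i$, so the prescription defining $\drel{R}_5$ instructs us to erase that pair, producing precisely $\beta_{i+1}$; hence $(\beta_i\bar{x_i},\beta_{i+1}) \in \drel{R}_5$, as required.

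Iterating gives $w =_{\Pl(\aX)} \beta_{k+1} = u$. Since $x_1,\ldots,x_k$ are $k \geq 1$ distinct letters of $\{1,\ldots,q\}$, the word $u$ has $q - k < q$ letters; it is empty when $k = q$, and otherwise it is a strictly increasing word over $\aX[1,n]$, hence the reading of a column with fewer than $q$ boxes, which is admissible because for columns over $\aX[1,n]$ condition~(1) is automatic while conditions~(2) and~(3) are vacuous (no $0$ occurs and the type is not $G_2$); this is the final assertion of the lemma. The whole argument is uniform over the three alphabets: $\aC_n$ and $\aD_n$ contain no letter $0$, none of the words $\beta_i\bar{x_i}$ (or their factors) involves $0$, and on words without $0$ the sets $\drel{R}_5^{C_n}$ and $\drel{R}_5^{D_n}$ agree with $\drel{R}_5^{B_n}$, while the notions of pair, of $N_C$, and of admissibility invoked above do not distinguish these types. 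The step I expect to need the most care in the final write-up is the third one --- the verification that \emph{every} proper factor of $\beta_i\bar{x_i}$ is admissible, not merely every proper prefix and suffix --- since it is exactly this hypothesis that licenses the appeal to $\drel{R}_5$.
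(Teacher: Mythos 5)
Your proof is correct and follows essentially the same route as the paper's: both peel off the barred symbols one at a time in the order $\bar{x_1},\ldots,\bar{x_k}$, each step being a single $\drel{R}_5$ relation applied to the factor $\beta_i\bar{x_i}$, followed by the same observation that the surviving word is a strictly increasing word over $\aX[1,q]$ and hence an admissible column. The only difference is that you verify explicitly that each intermediate word $\beta_i\bar{x_i}$ is a non-admissible column word with all strict factors admissible (so that the $\drel{R}_5$ relation genuinely applies), which the paper simply asserts.
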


\begin{proof}
  Let $u^{(i)}$ be the word obtained by deleting $x_1, \ldots, x_i$ from $1 2 \cdots q$. Then $u^{(i)} \bar{x_{i+1}}
  =_{\Pl(X)} u^{(i+1)}$ is an $\drel{R}_5^X$ relation. Note that $w =
u^{(0)}$. By induction, therefore, $u = u^{(k)}$ is a
  column with $u^{(0)} \bar{x_1} \ldots \bar{x_k} =_{\Pl(X)} u^{(k)}$. Clearly
$|u|$ is less than $q$. Since $u$
  contains only symbols from $\aX[1,q]$, it follows that $N_u(z) \leq z$ for all $z$ and so $u$ is an admissible
  column if it is non-empty.
\end{proof}

We also give details of the presentation defining $\Pl(G_2)$ as it will be
frequently mentioned in the following sections. Another reason is because  we
give this presentation in a slightly different way from Lecouvey
\cite[Definition~5.1.4]{lecouvey_survey}. Note, that the sets of defining
relations $\drel{R}_1^{G_2}$, $\drel{R}_2^{G_2}$,
$\drel{R}_3^{G_2}$, and $\drel{R}_4^{G_2}$, defined below, still
correspond to the crystal isomorphisms identified by Lecouvey, and
hence these relations generate the same congruence as those of Lecouvey.

Giving a presentation for $\Pl(G_2)$ requires the auxiliary partial map $\Theta$ on $\aG_2^2$ defined as per the following table:
\[
\begin{array}{r|cccccccccccccc}
w & 21 & 31 & 01 & \bar{3}1 & \bar{3}2 & \bar{2}1 & \bar{2}2 & \bar{1}1 & \bar{1}2 & \bar{2}3 & \bar{1}3 & \bar{1}0 & \bar{1}\bar{3} & \bar{1}\bar{2} \\
\hline
\vbox to 4mm{} w\Theta & 12 & 13 & 23 & 20 & 2\bar{3} & 30 & 3\bar{3} & 00 & 0\bar{3} & 3\bar{2} & 0\bar{2} & \bar{3}\bar{2} & \bar{3}\bar{1} & \bar{2}\bar{1}
\end{array}
\]
The monoid $\Pl(G_2)$ is presented by $\pres{\aG_2}{\drel{R}_1^{G_2} \cup \drel{R}_2^{G_2} \cup \drel{R}_3^{G_2} \cup \drel{R}_4^{G_2}}$ (see \cite[Definition 5.1.4]{lecouvey_survey}), where
\begin{align*}
\drel{R}_1^{G_2} ={}& \set[\big]{(10,1),(1\bar{3},2),(1\bar{2},3),(2\bar{2},0),(2\bar{1},\bar{3}),(3\bar{1},\bar{2}),(0\bar{1},\bar{1})}, \\
\drel{R}_2^{G_2} ={}& \set[\big]{(1\bar{1},\emptyword)}, \\
\drel{R}_3^{G_2} ={}& \gset[\big]{(abc,a(bc)\Theta)}{ab \in \im\Theta,bc\in\dom\Theta} \\
&\cup \gset[\big]{(abc,(ab)\Theta^{-1}c)}{ab \in \im\Theta, b \geq c, bc \neq 00, bc \notin \dom\Theta}, \\
\drel{R}_4^{G_2} ={}& \set[\big]{(123,110)} \\
&\cup \gset[\big]{((abc,(ab)\Theta^{-1}c)}{ab \in \im\Theta, bc \in \im\Theta, abc\neq123}.
\end{align*}

\section{Basic two-column lemmata}
\label{sec:twocolumnlemmata}

As described in the strategic overview of our proofs in the Introduction, this section examines
products of two admissible columns that do not form a tableau. In order to prove that the rewriting system we will
construct is terminating, we have to know about the shape of the tableau that
result from this product. Informally, we
will show that the resulting tableau either:
\begin{enumerate}
\item Has fewer entries than the original two columns.
\item Has the same number of entries but only one column.
\item Has the same number of entries, two columns, and has a shorter rightmost column.
\end{enumerate}
The results are given formally in the following two subsections as \fullref{Lemmata}{lem:an:palphabeta}, \ref{lem:combined:palphabeta}
%\ref{lem:bn:palphabeta}, \ref{lem:cn:palphabeta}, %\ref{lem:dn:palphabeta},
and \ref{lem:g2:palphabeta}.

To construct finite complete rewriting systems for the Plactic monoids only the
basic two-column lemmata, as stated in this section, are needed. In order to
establish our biautomaticity results a far more detailed understanding is needed
of how products of columns behave. These details will be given in the
(non-basic) two-column lemmata in
\fullref{Section}{sec:biautomaticity_lemmata}.
% We note that the basic two-column stated in this section may also be proved by
% applying the (non-basic) two-column lemmata below in
% \fullref{Section}{sec:biautomaticity_lemmata}. and thus this article contains
% self-contained proofs of the following results.

 We consider first the classical type  $A_n$, and in a combined way the types  $B_n$, $C_n$, and
$D_n$, reflecting the increasing order of complexity of the arguments. Type $G_2$ is considered last, because it uses a
rather different approach from the other types.

\subsection{Proving the basic two-column lemmata}

The following result was originally proved in \cite[Lemma~5.7]{cgm_plactic}. We present an alternative proof which uses the Littlewood--Richardson rule for decomposing tensor products of crystals into a disjoint union of connected components; see \cite[Theorem~7.4.6.]{hong_quantumgroups}.

Here we use the reference \cite{hong_quantumgroups}. Full details of the proofs of these results are not given in \cite{hong_quantumgroups} but may be found in the original paper of Nakashima on this topic \cite{nakashima1993}.

\begin{comment}
@article{nakashima1993,
author = "Nakashima, Toshiki",
fjournal = "Communications in Mathematical Physics",
journal = "Comm. Math. Phys.",
number = "2",
pages = "215--243",
publisher = "Springer",
title = "Crystal base and a generalization of the Littlewood-Richardson rule for the classical Lie algebras",
url = "https://projecteuclid.org:443/euclid.cmp/1104252969",
volume = "154",
year = "1993"
}
\end{comment}

%
%. For details the reader should look at [Nakashima, Crystal Base and Generalisation of L-R rule for Classical Lie Algebras], Proposition 4.2.1 and the equations preceding it, Theorem 6.3.1. The main technical details of the proof may be found in the results between Lemma~4.1.1 through to Proposition~4.2.1 of Nakashima's paper.

In the following proofs $n$ will be fixed, and by a partition we shall mean a
sequence $\lambda = (\lambda_1, \lambda_2, \ldots, \lambda_n)$ where $\lambda_1
\geq \lambda_2 \geq \ldots \geq \lambda_n \geq 0$.

\begin{lemma}[Two-column lemma for type $A_n$]
  \label{lem:an:palphabeta}
  If $\alpha, \beta \in \aA_n^*$ are columns such that $\beta \not\preceq \alpha$
then the tableau $P(\alpha\beta)$ contains $|\alpha\beta|$ symbols, and consists of either one column or two columns, the rightmost of
  which contains fewer than $|\alpha|$ symbols.
\end{lemma}
\begin{proof}
We follow the notation and terminology of \cite[Chapter~7]{hong_quantumgroups}.
 Let $\alpha,\beta \in \aA_n^*$ be columns where $|\alpha|=k$ and $|\beta|=l$,
and such that
 $\beta \not\preceq \alpha$.
%
%Say $\alpha = x_1 x_2 \ldots x_k$ and $\beta = y_1 y_2 \ldots y_l$ where these are strictly increasing sequences in $[n]$ and thus $k, l \leq n$.
%
Let $Y$ be the Young diagram with a single column of height $k$ and let $Y'$ be the Young diagram of a single column of height $l$.
%
%with shape $\lambda = (\lambda_1, \ldots, \lambda_n)$ where $\lambda_i = 1$ for $i \leq k$ and is equal to $0$ otherwise, and let $Y'$ be the Young diagram with shape $\mu = (\mu_1, \ldots, \mu_n)$ where $\mu_i = 1$ for $i \leq l$ and is equal to $0$ otherwise.
%
Let $B(Y)$ be the connected component of the crystal graph of all
admissible columns
with shape $Y$. Note that $\alpha$ belongs to $B(Y)$.  Similarly we
define $B(Y')$ and note that $\beta$ belongs to $B(Y')$.

Now by
\cite[Theorem~7.4.6.]{hong_quantumgroups} the tensor product of these two
crystal components decomposes as the disjoint union of connected connected
components
\[
B(Y) \otimes B(Y') \cong \bigoplus_{x_1 x_2 \ldots x_l \in B(Y')} B(Y[x_1, x_2, \ldots, x_l]).
\]
By definition $Y[j]$ denotes the diagram obtained by adding a box to the $j$th
row of $Y$, and $Y[j_1, \ldots, j_r]$ is defined inductively to be the diagram
obtained from $Y[j_1, \ldots, j_{r-1}]$ by adding a box at the $j_r$th row.
Here $B(Y[j_1, \ldots, j_r])$ is defined to be $\varnothing$ if at least one of
the $Y[j_1, \ldots, j_q]$ with $q \leq r$ is not a Young diagram; see
\cite[page 165]{hong_quantumgroups}.

In our case, $Y$ is a column of height
$k$ and $x_1 x_2 \ldots x_l$ is a reading of a tableau of shape $Y'$, that is,
$x_1 < x_2 < \ldots < x_l$ is a strictly increasing sequence from $\aA_n$. It
follows that $B(Y[x_1, x_2, \ldots, x_l]) \neq \varnothing$ if and only if
$x_1, x_2, \ldots, x_l$ is a sequence of the form $1, 2, \ldots, a, k+1, k+2,
\ldots, k+(l-a)$ for some non negative integer $a$ such that $a \leq k$ and
$k+l-a\leq n$. Note that  $a$ can possibly be $0$ meaning that the
sequence starts at $k+1$. The Young diagram $Y[1, 2, \ldots, a, k+1,
k+2, \ldots, k+(l-a)]$ has shape $\nu^{(a)} = (\nu_1, \nu_2, \ldots, \nu_n)$
where
\[
\nu_i = \begin{cases}
2 & \mbox{for $1 \leq i \leq a$} \\
1 & \mbox{for $a < i \leq k+l-a$} \\
0 & \mbox{for $i > k+l-a$.}
\end{cases}
\]
The product $\alpha \beta$ must belong to one of the  connect components of
$B(Y) \otimes B(Y')$. Therefore $P(\alpha \beta)$ is a tableau of shape
$\nu^{(a)}$ for some value of $a \leq k$.

 If $a=k$ then $P(\alpha \beta)$ is a
tableau with two columns, with the right column being of height $k$ and the
left column being of height $l$. It is straightforward to show that this is
only possible if
$\beta \preceq \alpha$ (a general argument for this, which also applies in this
 case, may be found in the proof of Lemma~\ref{lem:combined:palphabeta}
below). Since $\beta \not\preceq \alpha$ by assumption, it follows that
$P(\alpha \beta)$ has shape $\nu^{(a)}$ for some $a < k$. But then the shape
$\nu^{(a)}$ has one column or two columns the rightmost of which has $a < k =
|\alpha|$ symbols. This completes the proof.
%
%
%
%. The case that $\alpha \beta$ is a tableau corresponds to the component where $a=l$. When $\alpha \beta$ is not a tableau, we have $a < l$ and $Y[1, 2,  \ldots, a, k+1, k+2, \ldots, m]$ satisfies the conclusions given in the statement of the lemma. This competes the proof.
%
%
\end{proof}

One benefit of reproving Lemma~\ref{lem:an:palphabeta} using the above method is that it can be generalised to the other classical types by applying the generalized Littlewood--Richardson rule for decomposing tensor products of crystals into a disjoint union of connected components; see \cite[Theorem~8.6.6.]{hong_quantumgroups}.

\begin{lemma}[Two-column lemma for types $B_n$, $C_n$ and $D_n$]
  \label{lem:combined:palphabeta}
  Let $X$ be one of the types $B_n$, $C_n$ or $D_n$, and let $\aX$ be the corresponding alphabet from
$\aB_n$, $\aC_n$ or $\aD_n$. If $\alpha, \beta \in \aX^*$ are admissible columns
such that $\beta \not\preceq \alpha$
then the tableau $P(\alpha\beta)$ contains at most $|\alpha\beta|$ symbols, and is either empty or consists of either one column or two
  columns, the rightmost of which contains fewer than $|\alpha|$ symbols.
\end{lemma}
\begin{proof}
We follow the notation and terminology of \cite[Chapter~8]{hong_quantumgroups}. The proof is similar to that of Lemma~\ref{lem:an:palphabeta} but we apply \cite[Theorem~8.6.6]{hong_quantumgroups} in place of \cite[Theorem~7.4.6]{hong_quantumgroups}.

Let $\alpha$ and $\beta$ be admissible columns of one of the types $B_n$, $C_n$
or $D_n$, where $|\alpha|=k$ and $|\beta|=l$. Let $Y$ be the Young diagram with
a single column of height $k$. It follows from
\cite[Theorem~8.6.6]{hong_quantumgroups} that the shape of the tableau
$P(\alpha\beta)$ must be given by a Young diagram of the form
$Y[x_1, x_2, \ldots, x_l]$
where $x_1, x_2, \ldots , x_l$ is a reading of an admissible column of height $l$.

Recall (see \fullref{Subsection}{subsec:tableaux_cols}  above) that readings of the columns
types $B_n$, $C_n$ and $D_n$ have, respectively, the forms $\beta_+ \beta_0 \beta_{-} \in \aB_n^*$, $\gamma_+ \gamma_{-} \in \aC_n^*$ and $\delta_+ \delta \delta_{-} \in \aD_n^*$ where these words satisfy the admissibility conditions given in the \fullref{Subsubsection}{subsubsec:admissible_cols}.
%
% are, respectively, Young diagrams of column shape of the forms
% \[
% \tikz[tableau]\matrix { |[minimum height=10mm]| {\beta_+} \\ |[minimum height=10mm]| {\beta_0}\,  \\ |[minimum height=10mm]| {\beta_-}\\};\,, \qquad
% \tikz[tableau]\matrix { |[minimum height=10mm]| \gamma_+ \\ |[minimum height=10mm]| \gamma_-\\};\,,\qquad \mbox{and}\qquad
% \tikz[tableau]\matrix { |[minimum height=10mm]| \delta_+ \\ |[minimum height=10mm]| \delta \\ |[minimum height=10mm]| \delta_-\\};\,,
% \]
% satisfying the condition listed in Section~?? above.
%
In particular $\delta$ is filled with symbols $n$ and $\bar{n}$, with different
symbols in vertically adjacent cells, and $\beta_0$ is filled with the symbol
$0$. The other sections are filled with strictly increasing sequences with
respect to the orderings of the vertices in the respective crystal bases. Given
a Young diagram $Y$ of shape $(\lambda_1, \lambda_2, \ldots, \lambda_n)$,
$Y[j]$ is defined (see \cite[page 205]{hong_quantumgroups}) by
\begin{align*}
Y[ \; j \; ] & = (\lambda_1, \ldots, \lambda_j + 1, \ldots, \lambda_n)  &
\mbox{for $j=1, \ldots, n$} \\ \tag{$\diamond$}
Y[ \; \bar{j} \; ] & = (\lambda_1, \ldots, \lambda_j - 1, \ldots, \lambda_n)  &
\mbox{for $j=1, \ldots, n$} \\
Y[0] &=
\begin{cases}
Y & \mbox{if $\lambda_n > 0$} \\
(\lambda_1, \ldots, \lambda_{n-1}, -\infty) & \mbox{if $\lambda_n=0$.}
\end{cases}
\end{align*}
In general $Y[j]$ will itself not be a Young diagram. Set $B(Z)=\varnothing$ if
$Z$ is not a Young diagram. (Note that in \cite[page 205]{hong_quantumgroups}
the authors work with generalised Young diagrams, but for our purposes Young
diagrams suffice since we are not concerned with Plactic monoids associated with
spin representations in this paper.) Then $Y[x_1, x_2, \ldots, x_l]$ is defined
inductively by $Y[x_1, x_2, \ldots, x_{l-1}][x_l]$, where if any of the
intermediate stages $Y[x_1, x_2, \ldots, x_q]$ is itself not a Young diagram
then we set $B(Y[x_1, x_2, \ldots, x_l]) = \varnothing$.

Now $Y[x_1, x_2, \ldots,
x_l]$ is obtained by starting with a column of height $k$ which is a Young
diagram with shape $(\lambda_1, \lambda_2, \ldots, \lambda_n)$ where $\lambda_i
= 1$ for $i \leq k$ and is equal to $0$ otherwise. Then for each of the symbols
$x_1, x_2, \ldots, x_l$ and so on from our column reading we carry out one of
the operations in ($\diamond$). If at any stage the symbol $-\infty$ appears the
process halts and we set $B(Y[x_1, x_2, \ldots, x_l])=\varnothing$, so we can
assume that is not the case. This means that whenever the symbol $0$ is read in
this process, the Young diagram remains unchanged. Also, in the $D_n$ case when
the $\delta$ portion of the word is read, this is an alternating sequence of $n$
and $\bar{n}$ which will ultimately either add $1$ to $\lambda_n$ or subtract
$1$ from $\lambda_n$.

Considering each of the three cases it is straightforward
to see that in the end, if $B(Y[x_1, x_2, \ldots, x_l]) \neq \varnothing$, then $Y[x_1, x_2, \ldots, x_l]$
must be a Young diagram with shape $\nu^{(a,b)} = (\nu_1, \nu_2, \ldots, \nu_n)$
for some $a,b \geq 0$, with $2a+b \leq k+l$ and $a \leq k$, where
\[
\nu_i = \begin{cases}
2 & \mbox{for $1 \leq i \leq a$} \\
1 & \mbox{for $a < i \leq a+b$} \\
0 & \mbox{for $i > a+b$.}
\end{cases}
\]
Note that $a=0$, and $b=0$, are both possible here.

Suppose that $a=k$. In this case, the diagram $Y[x_1, x_2, \ldots, x_l]$ has
more boxes than $Y$, and so, from the definitions of $Y[j]$,
$Y[\bar{j}]$, $Y[0]$, necessarily one of the $x_i$ belongs to $\{1,\ldots,n\}$.
Furthermore, since $a=k\geq 1$, then $x_i=1$ for some $i\in\{1,\dots,l\}$.
Since $x_1\dots x_l$ is an admissible column, if $x_i<x_j$ then $i<j$,
for all $i,j\in \{1,\ldots, l\}$ (The converse also holds except in case
$D_n$).  Thus $x_1=1$.
Now, suppose that for some $s\in \{1,\ldots, l\}$, we have $x_s=\bar{t}$ for
some $t\in\{1,\dots,n\}$. Let $s$ be minimal under such conditions. From the
definitions of $Y[j]$,
$Y[\bar{j}]$, and since $Y[x_1, x_2, \ldots, x_l]$ is a Young diagram of shape
$\nu^{(k,b)}$, there exists some $i\in\{1,\ldots, l\}$, for which $x_i=t$.
Because $x_1\dots x_l$ is an admissible column either $t$ appears to the left
of $\bar{t}$ in $x_1\dots x_l$,  or $t=n$ and $\bar{t}t$ is a factor of
$x_1\dots x_l$ (this situation can only occur in case $D_n$). Since $x_1=1$
and $Y[x_1, x_2, \ldots, x_l]$ is a Young diagram, in the first case we would
have $1 2\dots t u \bar{t}$, for some word $u$, as a prefix of $x_1\dots x_l$,
and in the second case $1\dots (n-1)\bar{n}n$ as a prefix of $x_1\dots x_l$. In
both cases, this contradicts the fact that $x_1\dots x_l$ is an admissible
column. So none of the $x_i$'s is a barred symbol. Now suppose that $x_r=0$
(only possible in case $B_n$) for some $r\in \{1,\dots,l\}$ and choose $r$ to
be minimal in those conditions. Because $x_1\dots x_l$ is an admissible column
and $Y[x_1, x_2, \ldots, x_l]$ is a Young diagram then  $x_1\dots x_l$ has
the form $12\dots (r- 1)0\dots 0$. Also, since $Y[x_1, x_2,
\ldots, x_l]$ is a Young diagram and by the definition of $Y[0]$ we
necessarily have $r-1=n$. We get a contradiction since $12\dots n0\dots 0$ is
not an admissible column of type $B_n$.
It follows that $[x_1, x_2, \ldots, x_l] = [1,2,\ldots, l]$, and that $Y[x_1,
x_2, \ldots, x_l]$
has shape $\nu^{(k,l-k)}$.

From the above, it is then immediate that $Y[x_1, x_2, \ldots, x_l]$
has shape $\nu^{(k,l-k)}$
if and only if $[x_1, x_2, \ldots, x_l] = [1,2,\ldots, l]$. Let $Y'$ be the Young diagram with a single column of height $l$. It follows that in
the decomposition
\[
B(Y) \otimes B(Y') \cong \bigoplus_{x_1 x_2 \dots x_l \in B(Y')} B(Y[x_1, x_2, \ldots, x_l])
\]
into connected components given by \cite[Theorem~8.6.6]{hong_quantumgroups} the component $B(Y[1,2,\ldots, l])$ occurs exactly once and thus no other connected component of $B(Y) \otimes B(Y')$ is isomorphic to $B(Y[1,2,\ldots, l])$. Since $\alpha \beta$ belongs to the connected component  $B(Y[1,2,\ldots, l])$ of $B(Y) \otimes B(Y') $ it follows that $P(\alpha \beta) = \alpha' \beta'$ where $\beta'$ and $\alpha'$ are admissible columns with  $|\alpha'| = k = |\alpha|$ and $|\beta'| = l = |\beta|$. But then $\alpha' \beta'$ belongs to $ B(Y) \otimes B(Y')$ and must also belong to the same connected component $B(Y[1,2,\ldots, l])$. Then from $ \alpha' \beta' = P( \alpha \beta)$ it follows that $\alpha \beta$ and $\alpha' \beta'$ have the same position in this connected component and hence $\alpha' \beta'$ and $ \alpha \beta$ are identical as words which in turn implies that $\beta' = \beta$ and $\alpha' = \alpha$. This implies that $\beta \preceq \alpha$.

Since by assumption $\beta \not\preceq \alpha$, it follows from the arguments
above that $a < k$. Then $Y[x_1, x_2, \ldots, x_l]$
has shape $\nu^{(a,b)}$, with $a<k$, which by inspection satisfies the
conclusions given in the statement of the lemma, completing the proof.
\end{proof}

%\begin{lemma}[Two-column lemma for type $B_n$]
%  \label{lem:bn:palphabeta}
%  The tableau $P(\alpha\beta)$ contains at most $|\alpha\beta|$ symbols, and is either empty or consists of either one column or two
%  columns, the rightmost of which contains fewer than $|\alpha|$ symbols.
%\end{lemma}
%
%\begin{lemma}[Two-column lemma for type $C_n$]
%  \label{lem:cn:palphabeta}
%  The tableau $P(\alpha\beta)$ contains at most $|\alpha\beta|$ symbols, and is either empty or consists of either one column or two
%  columns, the rightmost of which contains fewer than $|\alpha|$ symbols.
%\end{lemma}
%
%\begin{lemma}[Two-column lemma for type $D_n$]
%  \label{lem:dn:palphabeta}
%  The tableau $P(\alpha\beta)$ contains at most $|\alpha\beta|$ symbols, and is either empty or consists of either one column or two
%  columns, the rightmost of which contains fewer than $|\alpha|$ symbols.
%\end{lemma}

\subsection{\texorpdfstring{Two column lemma for type $G_2$}{Two column lemma for type G2}}

The proof for $G_2$ uses a rather different approach from the other types. As in the previous subsection, our aim is to
learn about the shape of the tableau $P(\alpha\beta)$, where $\alpha$ and $\beta$ are admissible $G_2$ columns and
$\beta \not\preceq \alpha$; for the conclusion, see \fullref{Lemma}{lem:g2:palphabeta}. Recall that there are only
finitely many admissible $G_2$ columns (which are listed in \eqref{eq:g2admissiblecolumns}). Thus, our approach is
simply to characterize the finitely many possibilities for $\alpha$ and $\beta$ when $\alpha\beta$ is highest weight in
\fullref{Lemma}{lem:g2:alphabetachar}, and then to compute $P(\alpha\beta)$ in each case and derive the conclusion about
products of arbitrary pairs of admissible columns in \fullref{Lemma}{lem:g2:palphabeta}.

\begin{lemma}
  \label{lem:g2:alphabetachar}
  Let $\alpha$ and $\beta$ be admissible $G_2$ column words such that $\beta \not\preceq \alpha$ and $\alpha\beta$ is a
  highest-weight word.  Either:
  \begin{enumerate}
  \item $\alpha = 1$ and $\beta \in \set{2,0,\bar{1},23,00}$; or
  \item $\alpha = 12$ and $\beta \in \set{1,3,\bar{2},13,30,3\bar{3},\bar{2}\bar{1}}$.
  \end{enumerate}
\end{lemma}

\begin{proof}
  Since $\alpha\beta$ is of highest weight, by
\fullref{Lemma}{lem:highestweightfactors}, $\alpha$ is a highest weight column
  (and thus a highest-weight tableau). The highest weight admissible columns of lengths $1$ and $2$ are $1$ and $12$, so
  either $\alpha = 1$ or $\alpha =12$.
  \begin{enumerate}
  \item Suppose $\alpha = 1$. Let $\beta = x\beta'$, where $x \in \aG_2$. If $x = 1$, then
    $\beta \preceq\alpha$, which is a contradiction. Furthermore,
    \begin{align*}
      x = 3 \implies& \rho_{2}(\alpha\beta) = \rho_2(13\beta') = {-}\rho_2(\beta') = {-}\cdots;\\
      x = \bar{3} \implies& \rho_{1}(\alpha\beta) = \rho_1(1\bar{3}\beta') = {+}{-}{-}\rho_1(\beta') = {-}\cdots;\\
      x = \bar{2} \implies& \rho_{2}(\alpha\beta) = \rho_2(1\bar{2}\beta') = {-}\rho_2(\beta') = {-}\cdots.
    \end{align*}
    In each case, the supposition contradicts $\alpha\beta$ being of highest weight. So $x$ must be $2$, $0$, or
    $\bar{1}$; if $|\beta| = 1$, these are the possibilities for $\beta$.

    Suppose now that $|\beta| = 2$. This cannot occur when $x=\bar{1}$, for no admissible column begins with
    $\bar{1}$. The admissible column words of length $2$ beginning with $2$ and $0$ are $23$, $20$, $2\bar{3}$, and $00$, $0\bar{3}$
    and $0\bar{2}$. Furthermore,
    \begin{align*}
      \beta=20 \implies & \rho_1(\alpha\beta) = \rho_1(120) = {+}{-}{-}{+} = {-}{+}, \\
      \beta=2\bar{3} \implies & \rho_1(\alpha\beta) = \rho_1(12\bar{3}) = {+}{-}{-}{-} = {-}{-}, \displaybreak[0]\\
      \beta=0\bar{3} \implies & \rho_1(\alpha\beta) = \rho_1(10\bar{3}) = {+}{-}{+}{-}{-} = {-}, \\
      \beta=0\bar{2} \implies & \rho_2(\alpha\beta) = \rho_2(10\bar{2}) = {-},
    \end{align*}
    each of which contradicts $\alpha\beta$ being of highest weight. The remaining possibilities are $\beta = 23$ and $\beta=00$.
  \item Suppose $\alpha = 12$. Let $\beta = x\beta'$, where $x \in \aG_2$. Then
    \begin{align*}
      x=2\implies & \rho_1(\alpha\beta) = \rho_1(122\beta') = {+}{-}{-}\rho_1(\beta') = {-}\cdots, \\
      x=0\implies & \rho_1(\alpha\beta) = \rho_1(120\beta') = {+}{-}{-}{+}\rho_1(\beta') = {-}{+}\cdots, \displaybreak[0]\\
      x=\bar{3}\implies & \rho_1(\alpha\beta) = \rho_1(12\bar{3}\beta') = {+}{-}{-}{-}\rho_1(\beta') = {-}{-}\cdots, \\
      x=\bar{1}\implies & \rho_1(\alpha\beta) = \rho_1(12\bar{1}\beta') = {+}{-}{-}\rho_1(\beta') = {-}\cdots,
    \end{align*}
    each of which contradicts $\alpha\beta$ being of highest weight. So $x$ must be $1$, $3$, or $\bar{2}$. If
    $|\beta|=1$, these are the possibilities for $\beta$.

    Suppose now that $|\beta|=2$. The admissible column words of length $2$ beginning with $1$, $3$, and $\bar{2}$ are
    $12$, $13$, $30$, $3\bar{3}$, $3\bar{2}$, $\bar{2}\bar{1}$. Note first that $\beta \neq 12$ since $\beta \not\preceq
    \alpha$. Furthermore
    \[
    \beta=3\bar{2} \implies \rho_2(\alpha\beta) = \rho_2(123\bar{2}) = {+}{-}{-} = {-},
    \]
    which contradicts $\alpha\beta$ being of highest weight. The remaining possibilities for $\beta$ are $13$, $30$,
    $3\bar{3}$, and $\bar{2}\bar{1}$. \qedhere
  \end{enumerate}
\end{proof}

\begin{lemma}[Two-column lemma for type $G_2$]
  \label{lem:g2:palphabeta}
  Let $\alpha$ and $\beta$ be admissible $G_2$ columns with $\beta \not\preceq \alpha$. Then either:
  \begin{itemize}
  \item $P(\alpha\beta)$ contains fewer that $|\alpha\beta|$ symbols,
  \item $P(\alpha\beta)$ contains exactly $|\alpha\beta|$ symbols and has at most one column,
  \item $P(\alpha\beta)$ contains exactly $|\alpha\beta|$ symbols and has exactly two columns, the rightmost of which
    contains fewer than $|\alpha|$ symbols.
  \end{itemize}
\end{lemma}

\begin{proof}
  Since the Kashiwara operators preserve shapes of tabloids and also preserves whether the $\preceq$ relation holds
  between adjacent columns, we can assume that $\alpha\beta$ has highest weight. Using
  \fullref{Lemma}{lem:g2:alphabetachar}, we systematically enumerate the possible words $\alpha\beta$ and calculate
  their corresponding tableaux. The results are shown in \fullref{Table}{tbl:g2:palphabeta}.

  \begin{table}[t]
    \centering
    \caption{Case analysis for the proof of \fullref{Lemma}{lem:g2:palphabeta}.}
    \label{tbl:g2:palphabeta}
    \begin{tabular}{c@{\kern 1mm}rrl@{\kern 1mm}c@{\kern 1mm}c}
      \toprule
      \textit{Shape of} & & & & & \textit{Shape of} \\
      \tikz[tableau]\matrix{\beta \& \alpha\\}; & $\alpha$ & $\beta$ & \textit{Defining relations applied} & $P(\alpha\beta)$ & \textit{$P(\alpha\beta)$} \\
      \midrule
      \tikz[shapetableau]\matrix{\null \& \null\\}; & $1$ & $2$ & --- & $12$ & \tikz[shapetableau]\matrix{\null\\\null\\}; \\
      \tikz[shapetableau]\matrix{\null \& \null\\}; & $1$ & $0$ & $10 =_{\drel{R}_1^{G_2}} 1$ & $1$ & \tikz[shapetableau]\matrix{\null\\}; \\
      \tikz[shapetableau]\matrix{\null \& \null\\}; & $1$ & $\bar{1}$ & $1\bar{1} =_{\drel{R}_2^{G_2}} \emptyword$ & $\emptyword$ & \\
      \tikz[shapetableau]\matrix{\null \& \null\\ \null\\}; & $1$ & $23$ & $123 =_{\drel{R}_4^{G_2}} 110 =_{\drel{R}_1^{G_2}} 11$ & $11$ & \tikz[shapetableau]\matrix{\null\&\null\\}; \\
      \tikz[shapetableau]\matrix{\null \& \null\\ \null\\}; & $1$ & $00$ & $100 =_{\drel{R}_1^{G_2}} 10 =_{\drel{R}_1^{G_2}} 1$ & $1$ & \tikz[shapetableau]\matrix{\null\\}; \\
      \tikz[shapetableau]\matrix{\null \& \null\\ \& \null\\}; & $12$ & $1$ & $121 =_{\drel{R}_3^{G_2}} 112$ & $112$ & \tikz[shapetableau]\matrix{\null\&\null\\\null\\}; \\
      \tikz[shapetableau]\matrix{\null \& \null\\ \& \null\\}; & $12$ & $3$ & $123 =_{\drel{R}_4^{G_2}} 110 =_{\drel{R}_1^{G_2}} 11$ & $11$ & \tikz[shapetableau]\matrix{\null\&\null\\}; \\
      \tikz[shapetableau]\matrix{\null \& \null\\ \& \null\\}; & $12$ & $\bar{2}$ & $12\bar{2} =_{\drel{R}_1^{G_2}} 10 =_{\drel{R}_1^{G_2}} 1$ & $1$ & \tikz[shapetableau]\matrix{\null\\}; \\
      \tikz[shapetableau]\matrix{\null \& \null\\\null \& \null\\}; & $12$ & $13$ & $1213 =_{\drel{R}_3^{G_2}} 1123 =_{\drel{R}_4^{G_2}} 1110 =_{\drel{R}_1^{G_2}} 111$ & $111$ & \tikz[shapetableau]\matrix{\null\&\null\&\null\\}; \\
      \tikz[shapetableau]\matrix{\null \& \null\\ \null \& \null\\}; & $12$ & $30$ & $1230 =_{\drel{R}_4^{G_2}} 1100 =_{\drel{R}_1^{G_2}} 110 =_{\drel{R}_1^{G_2}} 11$ & $11$ & \tikz[shapetableau]\matrix{\null\&\null\\}; \\
      \tikz[shapetableau]\matrix{\null \& \null\\\null \& \null\\}; & $12$ & $3\bar{3}$ & $123\bar{3} =_{\drel{R}_4^{G_2}} 110\bar{3} =_{\drel{R}_1^{G_2}} 11\bar{3} =_{\drel{R}_1^{G_2}} 12$ & $12$ & \tikz[shapetableau]\matrix{\null\\\null\\}; \\
      \tikz[shapetableau]\matrix{\null \& \null\\\null \& \null\\}; & $12$ & $\bar{2}\bar{1}$ & $12\bar{2}\bar{1} =_{\drel{R}_1^{G_2}} 10\bar{1} =_{\drel{R}_1^{G_2}} 1\bar{1} =_{\drel{R}_2^{G_2}} \emptyword$ & $\emptyword$ & \\
      \bottomrule
    \end{tabular}
  \end{table}

  In each case, we get a tableau that contains fewer that $|\alpha\beta|$ symbols (and that is in some case empty), and
  in the cases when the number of symbols in the tableau is equal to $|\alpha\beta|$, either the tableau contains only
  one column, or else contains two columns and the number of symbols in the rightmost column is less than $|\alpha|$.
\end{proof}

\section{Constructing the rewriting system}
\label{sec:rewriting}

We now turn to actually constructing the finite complete rewriting systems presenting $\Pl(A_n)$, $\Pl(B_n)$,
$\Pl(C_n)$, $\Pl(D_n)$, and $\Pl(G_2)$. The constructions can be carried out in parallel, because the only differences
are the appeals to the different lemmata from \fullref{Section}{sec:twocolumnlemmata}. We first of all recall the
necessary definitions about rewriting systems in \fullref{Subsection}{subsec:srs}; for further background, see
\cite{book_srs} or \cite{baader_termrewriting}. For background on semigroup presentations generally, see
\cite{ruskuc_phd} or \cite{higgins_techniques}.

\subsection{Preliminaries}
\label{subsec:srs}

Let $\leq$ be a total order on an alphabet $A$. Define a total order $\leq_\lex$ on $A^*$ by $w \leq_\lex w'$ if and only
if either $w$ is proper prefix of $w'$ or if $w=paq$, $w'=pbr$ and $a \leq b$ for some $p,q,r \in A^*$, and $a,b \in
A$. The order $\leq_\lex$ is the \defterm{lexicographic order induced by $\leq$}. Notice that $\leq_\lex$ is not a
well-order, but that it is left compatible with concatenation. Define also a total order $\leq_\lenlex$ on $A^*$ by
\[
w \leq_\lenlex w' \iff (|w| < |w'|) \lor \bigl((|w| = |w'|) \land (w \leq_\lex w')\bigr).
\]
The order $\leq_\lenlex$ is the \defterm{length-plus-lexicographic order induced by $\leq$}. The order $\leq_\lenlex$ is a
well-order and is left compatible with concatenation.

A \defterm{string rewriting system}, or simply a \defterm{rewriting system}, is a pair $(A,{R})$, where $A$ is a
finite alphabet and ${R}$ is a set of pairs $(\ell,r)$, usually written $\ell \imreduces r$, known as
\defterm{rewriting rules} or simply \defterm{rules}, drawn from $A^* \times A^*$. The single reduction relation
$\imreduces_{{R}}$ is defined as follows: $u \imreduces_{{R}} v$ (where $u,v \in A^*$) if there exists a
rewriting rule $(\ell,r) \in {R}$ and words $x,y \in A^*$ such that $u = x\ell y$ and $v = xry$. That is, $u
\imreduces_{{R}} v$ if one can obtain $v$ from $u$ by substituting the word $r$ for a subword $\ell$ of $u$, where
$\ell \imreduces r$ is a rewriting rule. The reduction relation $\reduces_{{R}}$ is the reflexive and transitive
closure of $\imreduces_{{R}}$. The process of replacing a subword $\ell$ by a word $r$, where $\ell \imreduces r$ is
a rule, is called \defterm{reduction} by application of the rule $\ell \imreduces r$; the iteration of this process is
also called reduction. A word $w \in A^*$ is \defterm{reducible} if it contains a subword $\ell$ that forms the
left-hand side of a rewriting rule in ${R}$; it is otherwise called \defterm{irreducible}.

The rewriting system $(A,{R})$ is \defterm{finite} if both $A$ and ${R}$ are finite. The rewriting system $(A,{R})$ is
\defterm{noetherian} if there is no infinite sequence $u_1,u_2,\ldots \in A^*$ such that $u_i \imreduces_{{R}} u_{i+1}$
for all $i \in \nset$. That is, $(A,{R})$ is noetherian if any process of reduction must eventually terminate with an
irreducible word. The rewriting system $(A,{R})$ is \defterm{confluent} if, for any words $u, u',u'' \in A^*$ with
$u \reduces_{{R}} u'$ and $u \reduces_{{R}} u''$, there exists a word $v \in A^*$ such that $u' \reduces_{{R}} v$ and
$u'' \reduces_{{R}} v$. A rewriting system that is both confluent and noetherian is \defterm{complete}. If $(A,{R})$ is
a complete rewriting system, then for every word $u$ there is a unique irreducible word $w$ such that
$u \reduces_{{R}} w$; this word is called the \defterm{normal form} of $u$. If $(A,{R})$ is complete, then the language
of normal form words forms a cross-section of the monoid: that is, each element of the monoid presented by
$\pres{A}{{R}}$ has a unique normal form representative.

\subsection{Construction}
\label{subsec:rewritingconstruction}

Let $X$ be one of the types $A_n$, $B_n$, $C_n$, $D_n$, and $G_2$, and let $\aX$ be the corresponding alphabet from
$\aA_n$, $\aB_n$, $\aC_n$, $\aD_n$, or $\aG_2$. Let
\[
\Sigma = \gset{c_\sigma}{\text{$\sigma$ is an admissible $X$ column}}.
\]
Note that $\Sigma$ is finite since there are finitely many admissible $X$ columns.

Let $T$ consist of the following rewriting rules:
\begin{align}
c_\sigma c_\tau &\imreduces \emptyword &&\text{$\tau\not\preceq\sigma$ and $P(\sigma\tau)$ is empty,} \label{eq:length0}\\
c_\sigma c_\tau &\imreduces c_\upsilon &&\text{$\tau\not\preceq\sigma$ and $P(\sigma\tau)$ is the $1$-col.~tableau $\tikz[tableau]\matrix{\upsilon\\};$,} \label{eq:length1}\\
c_\sigma c_\tau &\imreduces c_\upsilon c_\phi &&\text{$\tau\not\preceq\sigma$ and $P(\sigma\tau)$ is the $2$-col.~tableau $\tikz[tableau]\matrix{\phi\&\upsilon\\};$,} \label{eq:length2}\\
c_\sigma c_\tau &\imreduces c_\upsilon c_\phi c_\chi&&\text{$\tau\not\preceq\sigma$ and $P(\sigma\tau)$ is the $3$-col.~tableau $\tikz[tableau]\matrix{\chi\&\phi\&\upsilon\\};$.}\label{eq:length3}
\end{align}
Note that since $P(\sigma\tau)$ is a tableau, the subscripts $\upsilon$, $\phi$, and $\chi$ are always admissible columns.

Note that if $\sigma,\tau$ are admissible columns with $\tau \not\preceq \sigma$, then $P(\sigma\tau)$ has at most three
columns by \fullref{Lemmata}{lem:an:palphabeta}, \ref{lem:combined:palphabeta}, and \ref{lem:g2:palphabeta} (that is, by the two-column lemmata for type $A_n$, types $B_n$, $C_n$,
$D_n$, and type $G_2$). Thus every such pair of columns gives rise to a rewriting rule in $T$. (Note that rules of the form
$c_\sigma c_\tau \imreduces c_\upsilon c_\phi c_\chi$ only arise when $X=G_2$, because $P(\sigma\tau)$ has at most two
columns in the other cases.) Finally, note that $T$ is finite since there are finitely many possibilities for $\sigma$
and $\tau$, and the right-hand side of each rule is uniquely determined by the left-hand side.

The idea is that a word $c_{\beta^{(1)}}\cdots c_{\beta^{(m)}}$ corresponds to the tabloid $\tikz[tableau]\matrix{\beta^{(m)} \& |[dottedentry]| \& \beta^{(1)}\\};$, and that if this tabloid is not a tableau, then there are two adjacent columns
between which the relation $\preceq$ does not hold. These columns (as represented by some subword $c_\sigma c_\tau$ with
$\tau \not\preceq\sigma$) are rewritten to a tableau (as represented by a word in $\Sigma^*$). Thus, in terms of words in
$\Sigma^*$, tabloids are rewritten to become more `tableau-like', and the irreducible words correspond to tableaux.

\begin{lemma}
  \label{lem:noetherian}
  The rewriting system $(\Sigma,T)$ is noetherian.
\end{lemma}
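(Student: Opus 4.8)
The plan is to define a weight function on words over $\Sigma$ that strictly decreases under every rewriting rule, and then argue that this weight takes values in a well-ordered set so that no infinite descending chain is possible. The natural candidate is built from two quantities: the total number of symbols in the underlying tabloid, and a lexicographic-type measure of the column heights. Concretely, for a word $c_{\sigma_1}\cdots c_{\sigma_m}$ representing the tabloid $\tikz[tableau]\matrix{\sigma_m \& |[dottedentry]| \& \sigma_1\\};$, I would first take $N = \sum_{j}|\sigma_j|$ (the total number of entries), and observe from the two-column lemmata (\fullref{Lemmata}{lem:an:palphabeta}, \ref{lem:cn:palphabeta}, \ref{lem:bn:palphabeta}, \ref{lem:dn:palphabeta}, \ref{lem:g2:palphabeta}) that $P(\sigma\tau)$ has \emph{at most} $|\sigma\tau|$ symbols. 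Hence rules \eqref{eq:length0} and, in the $G_2$ case, the rules whose right-hand side is strictly shorter, all strictly decrease $N$; the remaining rules preserve $N$.

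So the second component must handle the $N$-preserving rules, which are exactly the cases where $P(\sigma\tau)$ has the same number of symbols as $\sigma\tau$ and consists of one column, or of two columns whose rightmost column is strictly shorter than $\sigma$ (the left column of the original pair). For these I would use the sequence of column heights read from the \emph{right}, i.e. associate to the tabloid the tuple $(|\sigma_1|,|\sigma_2|,\ldots,|\sigma_m|)$ padded suitably, or more robustly the multiset/word of heights ordered so that a rewrite which shortens the rightmost affected column (replacing $c_\sigma c_\tau$ by $c_\upsilon$ or by $c_\upsilon c_\phi$ with $|\phi|<|\sigma|$) produces a lexicographically smaller tuple. The key combinatorial input is that in the one-column case $|\upsilon| = |\sigma\tau| > |\sigma| \ge \max(|\sigma|,|\tau|)$ — wait, that is larger, not smaller — so the correct bookkeeping is subtler: when two columns merge into one longer column, $N$ is unchanged and the column count drops by one, so I would instead prioritise \emph{decreasing the number of columns} over everything except $N$. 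Thus the weight is the triple
\[
\Bigl(N,\ -(\text{number of columns}),\ (\text{heights read right-to-left, compared lexicographically})\Bigr),
\]
ordered lexicographically (with $-(\text{number of columns})$ meaning: fewer columns is smaller). Rules \eqref{eq:length1} strictly drop the column count (two columns become one) while not increasing $N$; rules \eqref{eq:length2} keep $N$ and the column count but, by the two-column lemmata, strictly shorten the rightmost of the pair, hence strictly decrease the height tuple in the lexicographic-from-the-right order; rules \eqref{eq:length3} keep $N$, increase the column count by one, but that only occurs in $G_2$ where the table in the proof of \fullref{Lemma}{lem:g2:palphabeta} shows $N$ strictly decreases, so \eqref{eq:length3} is already covered by the first component.

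I would then check that this weight lies in $\nset \times \zset_{\le 0} \times (\text{words over } \nset)$ ordered lexicographically with $\leq_\lenlex$ (or rather a well-order on height-words: since each column height is bounded by a constant depending on $X$, and the number of columns is bounded once $N$ is fixed, the set of attainable triples with a fixed $N$ is finite, so there are no infinite descending chains). Finally, since every application of a rule strictly decreases this weight and the weight ranges over a well-founded order, $(\Sigma,T)$ is noetherian. The main obstacle I anticipate is pinning down the precise ordering on the height component so that \emph{all} of rules \eqref{eq:length2} genuinely decrease it — this requires carefully invoking the clause in each two-column lemma asserting that the rightmost column of $P(\sigma\tau)$ has fewer than $|\sigma|$ symbols, and verifying that the positions to the right of the rewritten pair are untouched so that comparing height-words from the right does the right thing; getting the direction of the comparison and the padding conventions consistent across all five types is where the care is needed, though no deep new idea is required beyond the two-column lemmata already proved.
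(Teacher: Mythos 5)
Your proposal is correct and follows essentially the same route as the paper: a well-founded order on $\Sigma^*$ whose components are (i) the total number of alphabet symbols in the underlying tabloid, (ii) the number of columns (word length over $\Sigma$), and (iii) a lexicographic comparison detecting that the rightmost column of the rewritten pair strictly shrinks, with the two-column lemmata supplying exactly the facts needed (including that the three-column $G_2$ rules strictly decrease the symbol count). The paper merely packages your last two components as a single length-plus-lexicographic order on $\Sigma^*$ induced by a total order on $\Sigma$ refining column height, which automatically handles the tie-breaking and padding issues you flag.
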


\begin{proof}
  Let $\trianglelefteq$ be any total order on $\Sigma$ that extends the partial order induced by lengths of columns, in
  the sense that $|\sigma| \leq |\tau| \implies c_\sigma \trianglelefteq c_\tau$ for any two admissible columns $\sigma$
  and $\tau$.

  Let the map $L : \Sigma^* \to \nset \cup \set{0}$ send each word to the sum of the lengths of the subscripts of its
  symbols: that is,
  \[
  L\bigl(c_{\sigma^{(1)}}c_{\sigma^{(2)}}\cdots c_{\sigma^{(h)}}\bigr) = \sum_{i=1}^h |\sigma^{(i)}|.
  \]

  Define a total order $\sqsubset$ on $\Sigma^*$ by
  \begin{align*}
    u \sqsubset v \iff{}& \bigl(L(u) < L(v)\bigr) \\
    &\lor \Bigl(\bigl(L(u) = L(v)\bigr) \land \bigl(u \trianglelefteq_{\lenlex} v\bigr)\Bigl).
  \end{align*}
  That is, $\sqsubset$ first orders by the total number of symbols in the tabloid to which a word corresponds, then by
  the length of the word, and then lexicographically based on the ordering $\trianglelefteq$ of $\Sigma$. Note that
  $\sqsubset$ is compatible with multiplication in the free monoid $\Sigma^*$.

  Let $c_\alpha c_\beta$ be the left-hand side of a rewriting rule and let $w$ be its right-hand side. So
  $\beta \not\preceq \alpha$. Consider two cases:
  \begin{itemize}
  \item For $X = A_n$ (respectively, $X = B_n$, $C_n$, or $D_n$), \fullref{Lemma}{lem:an:palphabeta} (respectively,
    \ref{lem:combined:palphabeta}) shows that $P(\alpha\beta)$ contains at most $|\alpha\beta|$ symbols (so that
    $L(w) \leq L(c_\alpha c_\beta)$), and consists of at most two columns (so that $|w| \leq |\alpha\beta|$) and the
    rightmost column contains fewer than $|\alpha|$ symbols (so that $w \vartriangleleft_{\lex} \alpha\beta$). Thus
    $w \sqsubset \alpha\beta$.
  \item For $X = G_2$, \fullref{Lemma}{lem:g2:palphabeta} shows that $P(\alpha\beta)$ contains most $|\alpha\beta|$
    symbols (so $L(w) \leq L(c_\alpha c_\beta)$), and that, if $P(\alpha\beta)$ contains exactly $|\alpha\beta|$ symbols
    (so that $L(w) = L(c_\alpha c_\beta)$), then it either consists of one column (so that $|w| < |\alpha\beta|$ and so
    $w \vartriangleleft_{\lenlex} \alpha\beta$) or it consist of two columns and the rightmost column contains fewer
    than $|\alpha|$ symbols (so that $|w| = |\alpha\beta|$ and $w \vartriangleleft_{\lex} \alpha\beta$, and hence
    $w \trianglelefteq_{\lenlex} \alpha\beta$). Thus $w \sqsubset \alpha\beta$.
  \end{itemize}
  Since $\sqsubset$ is compatible with multiplication in the free monoid $\Sigma^*$, rewriting a word always decreases it
  with respect to $\sqsubset$. Since there are no infinite $\sqsubset$-descending chains, any process of rewriting must
  terminate. Hence $(\Sigma,T)$ is noetherian.
\end{proof}

\begin{lemma}
  The rewriting system $(\Sigma,T)$ is confluent.
\end{lemma}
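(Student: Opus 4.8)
The plan is to prove confluence by analysing overlaps of left-hand sides of rules in $(\Sigma,T)$. Since $(\Sigma,T)$ is noetherian by \fullref{Lemma}{lem:noetherian}, it suffices by Newman's lemma to prove \emph{local} confluence, and since every left-hand side has the form $c_\sigma c_\tau$ (a word of length two), the only critical pairs come from overlapping words of length three, namely $c_\sigma c_\tau c_\upsilon$ where $c_\sigma c_\tau$ and $c_\tau c_\upsilon$ are both left-hand sides of rules (so $\tau \not\preceq \sigma$ and $\upsilon \not\preceq \tau$). There are no inclusion-type overlaps because no left-hand side is a factor of another (they all have length exactly two, and if $c_\sigma c_\tau = c_{\sigma'} c_{\tau'}$ then the rules coincide). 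So the entire proof reduces to: for every such $c_\sigma c_\tau c_\upsilon$, show that rewriting the prefix $c_\sigma c_\tau$ and rewriting the suffix $c_\tau c_\upsilon$ lead to a common descendant.

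The key observation that makes this tractable without a brute-force enumeration is \fullref{Theorem}{thm:tableauxcrossection}: the monoid $\Pl(X)$ has the set of tableaux as a cross-section, and each rewriting rule $c_\sigma c_\tau \imreduces w$ has the property that $\sigma\tau =_{\Pl(X)} w(P(\sigma\tau))$ with the right-hand side $w$ spelling out the columns of $P(\sigma\tau)$. Consequently every rule preserves the element of $\Pl(X)$ represented (reading a word in $\Sigma^*$ as the reading of the corresponding tabloid), so \emph{any} two irreducible descendants of a given word $c_{\beta^{(1)}}\cdots c_{\beta^{(m)}}$ represent the same element of $\Pl(X)$. Thus it suffices to show that an irreducible word of $\Sigma^*$ corresponds to a \emph{tableau}: if $w = c_{\gamma^{(1)}}\cdots c_{\gamma^{(k)}}$ is irreducible, then for each adjacent pair we have $\gamma^{(i+1)} \preceq \gamma^{(i)}$ (otherwise the factor $c_{\gamma^{(i)}} c_{\gamma^{(i+1)}}$ would be a left-hand side), so the tabloid $\tikz[tableau]\matrix{\gamma^{(k)} \& |[dottedentry]| \& \gamma^{(1)}\\};$ satisfies the defining condition of a tableau for types $A_n$, $B_n$, $C_n$, $G_2$. (For type $D_n$ one must also check the $a$-configuration condition; this is built into the definition of the rules via $P(\sigma\tau)$, and one can arrange, as Lecouvey does, that columns appearing in right-hand sides together with the $\preceq$ relation guarantee no bad $a$-configuration — or one incorporates this by a small additional argument, noting that $P(\sigma\tau)$ is by definition a genuine $D_n$ tableau.) Since tableaux form a cross-section, two irreducible words representing the same element must be equal. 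Combining: any two reductions of a word terminate at the same irreducible word, which gives confluence directly, bypassing critical-pair analysis entirely.

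Concretely, I would proceed as follows. First, recall that $(\Sigma,T)$ is noetherian, so every word has at least one irreducible descendant. Second, observe that each rule preserves the represented element of $\Pl(X)$: for a rule $c_\sigma c_\tau \imreduces w$, by construction $w$ reads (as a tabloid) the tableau $P(\sigma\tau)$, and $w(P(\sigma\tau)) =_{\Pl(X)} \sigma\tau$ by \fullref{Theorem}{thm:tableauxcrossection}; hence if $u \imreduces_T u'$ then the tabloid readings of $u$ and $u'$ are equal in $\Pl(X)$. Third, show that if $w \in \Sigma^*$ is irreducible then its corresponding tabloid is a tableau: no factor $c_\sigma c_\tau$ with $\tau \not\preceq \sigma$ occurs, so consecutive columns satisfy $\preceq$, which is exactly the tableau condition (with the $D_n$ caveat handled as above). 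Fourth, conclude: if $u \reduces_T v_1$ and $u \reduces_T v_2$ with $v_1, v_2$ irreducible, then $v_1$ and $v_2$ are readings of tableaux representing the same element of $\Pl(X)$, so by the cross-section property of \fullref{Theorem}{thm:tableauxcrossection} the two tableaux are equal, hence $v_1 = v_2$ as words of $\Sigma^*$. This yields local confluence (indeed global confluence), completing the proof.

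The main obstacle I anticipate is the type-$D_n$ case of the step "irreducible implies tableau": the definition of a $D_n$ tableau requires not only $\beta_{i+1} \preceq \beta_i$ but also the absence of an $a$-configuration with $\mu(a) = n - a$ in $\tikz[tableau]\matrix{r(\beta_{i+1}) \& \ell(\beta_i)\\};$, and a priori an irreducible word of $\Sigma^*$ only guarantees the $\preceq$-condition. One must argue that the rules, whose right-hand sides are built from genuine tableaux $P(\sigma\tau)$, do not permit a product of two $\preceq$-related admissible columns to contain such a configuration — this follows from Lecouvey's structural results on $D_n$ tableaux, specifically that two adjacent columns of a word in $\Sigma^*$ with no rewriting rule applicable are necessarily "tableau-compatible" in the full $D_n$ sense; alternatively one notes that $P$ applied to any word lands on a genuine $D_n$ tableau and that an irreducible word is fixed by $P$. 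I would handle this either by citing the relevant part of \cite{lecouvey_survey} or by a short direct verification that the $a$-configuration condition is implied by $\preceq$ together with admissibility of the columns involved.
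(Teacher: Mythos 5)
Your proof is correct and, once you discard the opening critical-pair framing (which you yourself abandon), it is essentially identical to the paper's argument: noetherianity yields irreducible descendants, irreducibility forces $\preceq$ between adjacent columns so that each descendant is the reading of a tableau, every rule preserves the represented element of $\Pl(X)$, and the cross-section property of \fullref{Theorem}{thm:tableauxcrossection} then forces the two descendants to coincide as words. Your flagged worry about the type-$D_n$ $a$-configuration condition is a genuine subtlety that the paper's own proof silently elides (it asserts that $\beta^{(j+1)} \preceq \beta^{(j)}$ for all $j$ already makes the tabloid a tableau), so the short additional verification you propose there makes your argument, if anything, more careful than the published one.
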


\begin{proof}
  Let $u \in \Sigma^*$ and let $u'$ and $u''$ be words with $u \reduces u'$ and $u \reduces u''$.  By
  \fullref{Lemma}{lem:noetherian}, there are irreducible words $w' = c_{\beta^{(1)}}\cdots c_{\beta^{(k)}}$ and
  $w'' = c_{\gamma^{(1)}}\cdots c_{\gamma^{(m)}} \in \Sigma^*$ such that $u' \reduces w'$ and $u'' \reduces w''$.  Since
  $w'$ is irreducible, it does not contain the left-hand side of any rule in $T$. Thus, by the comments after the
  definition of $T$, we have $\beta^{(j+1)} \preceq \beta^{(j)}$ for $j = 1,\ldots,k-1$. That is,
  $\tikz[tableau]\matrix{\beta^{(k)} \& |[dottedentry]| \& \beta^{(1)}\\};$ is a tableau. Similarly,
  $\tikz[tableau]\matrix{\gamma^{(m)}\&|[dottedentry]|\&\gamma^{(1)}\\};$ is a tableau (with $m$ columns). But the
  readings of these tableau (that is, $\beta^{(1)}\cdots\beta^{(k)}$ and $\gamma^{(1)}\cdots\gamma^{(m)}$) are equal
  in $\Pl(X)$, and tableaux form a cross-section of $\Pl(X)$ by
\fullref{Theorem}{thm:tableauxcrossection}. Hence
  $k = m$ and $\beta^{(j)} = \gamma^{(j)}$ for $j=1,\ldots,k$, and so $w' = w''$. Thus $v = w' = w''$ is a word
  such that $u' \reduces v$ and $u'' \reduces v$. Therefore $(\Sigma,T)$ is confluent.
\end{proof}

\begin{theorem}
  For any $X \in \set{A_n,B_n,C_n,D_n,G_2}$, there is a finite complete rewriting system $(\Sigma,T)$ that presents
  $\Pl(X)$.
\end{theorem}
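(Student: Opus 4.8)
The plan is to assemble the theorem directly from the three lemmata already established about the rewriting system $(\Sigma,T)$, together with the verification that $\pres{\Sigma}{T}$ presents $\Pl(X)$ in the first place. First I would observe that $(\Sigma,T)$ is finite: this was noted in the discussion immediately following the definition of $T$, since there are only finitely many admissible $X$-columns (so $\Sigma$ is finite), and each pair $(c_\sigma,c_\tau)$ with $\tau\not\preceq\sigma$ contributes exactly one rule, whose right-hand side is determined by the tableau $P(\sigma\tau)$ — which has at most three columns by the relevant two-column lemma (\fullref{Lemma}{lem:an:palphabeta}, \ref{lem:bn:palphabeta}, \ref{lem:cn:palphabeta}, \ref{lem:dn:palphabeta}, or \ref{lem:g2:palphabeta}). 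Hence $T$ is finite.

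Next I would invoke \fullref{Lemma}{lem:noetherian} for the noetherian property and the subsequent (unnamed) lemma for confluence; together these give that $(\Sigma,T)$ is complete. The only remaining point is that $\pres{\Sigma}{T}$ is a presentation of $\Pl(X)$ at all — i.e. that the monoid defined by this presentation is isomorphic to $\Pl(X)$. For this I would set up the homomorphism: since each generator $c_\sigma$ of $\Sigma$ corresponds to the admissible column word $w(\sigma)\in\aX^*$, there is a surjective monoid homomorphism $\psi:\Sigma^*\to\Pl(X)$ sending $c_\sigma\mapsto w(\sigma)$; surjectivity holds because every word of $\aX^*$ is the reading of some tabloid (each letter being a height-$1$ admissible column), hence lies in the image. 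One then checks that each defining relation of $T$ is respected by $\psi$: a rule of the form \eqref{eq:length0}--\eqref{eq:length3} replaces $c_\sigma c_\tau$ by the word encoding the columns of $P(\sigma\tau)$, and by \fullref{Theorem}{thm:tableauxcrossection} the reading of $P(\sigma\tau)$ equals $w(\sigma)w(\tau)$ in $\Pl(X)$, so $\psi$ factors through the congruence generated by $T$, yielding a surjection $\bar\psi:\Sigma^*/{\sim_T}\to\Pl(X)$.

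To see that $\bar\psi$ is injective, I would use completeness: every $\sim_T$-class has a unique $T$-normal form, and by the argument in the confluence lemma, a word is $T$-irreducible precisely when the corresponding tabloid $\tikz[tableau]\matrix{\beta^{(k)}\&|[dottedentry]|\&\beta^{(1)}\\};$ satisfies $\beta^{(j+1)}\preceq\beta^{(j)}$ for all $j$, i.e. is a tableau. Since tableaux form a cross-section of $\Pl(X)$ (\fullref{Theorem}{thm:tableauxcrossection}) and distinct tableaux have distinct readings, the map from $T$-normal forms to elements of $\Pl(X)$ is a bijection; hence $\bar\psi$ is an isomorphism. Combining this with completeness of $(\Sigma,T)$ gives the theorem.

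I do not expect any serious obstacle here: all the genuine work — bounding the shape of $P(\sigma\tau)$, proving termination, and proving confluence — has already been done in \fullref{Section}{sec:twocolumnlemmata} and in the two preceding lemmata. The only mild care needed is the bookkeeping in verifying that $\pres{\Sigma}{T}$ really presents $\Pl(X)$ rather than some quotient or extension, and in particular that no further relations among columns are needed; but this follows cleanly from the cross-section property of tableaux, so the proof of the theorem is essentially a two-line assembly: $(\Sigma,T)$ is finite and complete, and it presents $\Pl(X)$.
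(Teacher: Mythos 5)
Your proposal is correct, and the first two steps (finiteness, and completeness via \fullref{Lemma}{lem:noetherian} together with the confluence lemma) coincide with the paper. Where you genuinely diverge is in verifying that $\pres{\Sigma}{T}$ presents $\Pl(X)$. The paper proceeds by Tietze transformations: it uses rules of type \eqref{eq:length1} to expand each generator $c_{\sigma_1\cdots\sigma_k}$ as $c_{\sigma_1}\cdots c_{\sigma_k}$, eliminates the long-column generators to obtain a presentation $\pres{\aX}{T''}$ over the original alphabet, and then shows that $T''$ and Lecouvey's defining relations $\drel{R}^X$ are mutually derivable (one direction because $\pres{\aX}{\drel{R}^X}$ is known to present $\Pl(X)$, the other by tracing an explicit $T$-rewriting sequence for each Knuth-type relation). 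You instead set up the surjection $\psi:\Sigma^*\to\Pl(X)$, check that every rule of $T$ is $\psi$-compatible (immediate from the definition of $P(\sigma\tau)$), and conclude injectivity of the induced map by the standard normal-form criterion: completeness gives unique normal forms, irreducible words correspond bijectively to tableaux, and tableaux are a cross-section by \fullref{Theorem}{thm:tableauxcrossection}. Both arguments are sound. Yours is shorter and avoids the Tietze bookkeeping, at the cost of leaning entirely on \fullref{Theorem}{thm:tableauxcrossection}; the paper's version is longer but yields as a by-product the explicit equivalence between the column presentation $T$ and Lecouvey's generators-and-relations presentation over $\aX$, which is information your route does not record. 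One small point of care in your injectivity step: what you need from the cross-section theorem is that distinct tableaux represent distinct elements of $\Pl(X)$ (not merely that they have distinct readings as words), but that is exactly what \fullref{Theorem}{thm:tableauxcrossection} provides, so the argument closes.
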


\begin{proof}
  Construct the finite complete rewriting system $(\Sigma,T)$ as above. It remains to prove that $\pres{\Sigma}{T}$
  presents $\Pl(X)$. To this end, let $\pres{\aX}{\drel{R}^X}$ be the presentation for $\Pl(X)$ as described in
  \cite[\S~5.1]{lecouvey_survey} and also in \fullref{Subsection}{subsec:presentations} for type $G_2$. We are going to
  prove that $\pres{\Sigma}{T}$ and $\pres{\aX}{\drel{R}^X}$ present the same monoid.

  First notice that if $\sigma = \sigma_1\cdots\sigma_k$ is an admissible column, where $\sigma_i \in
  \aX$, then a sequence of applications of rules from $T$ of type \eqref{eq:length1} lead from $c_{\sigma_1}\cdots c_{\sigma_k}$
  to $c_{\sigma_1\cdots\sigma_k}$:
  \begin{align*}
    c_{\sigma_1}c_{\sigma_2}c_{\sigma_3}\cdots c_{\sigma_{k-1}}c_{\sigma_k} &\imreduces_T c_{\sigma_1\sigma_2}c_{\sigma_3}\cdots c_{\sigma_{k-1}}c_{\sigma_k} \\
                                                                &\qquad\vdots \\
                                                                &\imreduces_T c_{\sigma_1\sigma_2\cdots\sigma_{k-1}}c_{\sigma_k} \\
                                                                &\imreduces_T c_{\sigma_1\sigma_2\cdots\sigma_{k-1}\sigma_k}.
  \end{align*}
  Thus we can apply Tietze transformations to $\pres{\Sigma}{T}$ to replace each symbol $c_{\sigma_1\cdots\sigma_k}$
  with $c_{\sigma_1}\cdots c_{\sigma_k}$ and then remove the generators $c_{\sigma_1\cdots\sigma_k}$ with $k > 1$. The
  result of this is a new presentation $\pres{\Sigma'}{T'}$ where the generating symbols in $\Sigma'$ are $c_x$ for
  $x \in \aX$, so we can replace each $c_x$ by $x$ to obtain a new presentation $\pres{\aX}{T''}$. It remains to show that
  every defining relation in $T''$ is a consequence of those in $\drel{R}^X$ and vice versa.

  Note that $T''$ can be obtained from $T$ by replacing each symbol $c_{\sigma_1\cdots\sigma_k}$ by
  $\sigma_1\cdots\sigma_k$. Thus every defining relation in $T''$ is of the form $(u,v)$, where $u$ is the reading of a
  two-column tabloid and $v$ is the reading of a tableau, and $u =_{\Pl(X)} v$. Since $\pres{\aX}{\drel{R}^X}$ presents
  $\Pl(X)$, the defining relation $(u,v)$ is a consequence of $\drel{R}^X$.

  On the other hand, let $(u,v)$ be a defining relation in $\drel{R}^X$. By inspection of the definition of $\drel{R}^X$
  in \cite[\S~5.1]{lecouvey_survey} and \fullref{Subsection}{subsec:presentations}, $v$ is the reading of a tableau, and
  $P(u) = v$. Suppose this tableau is $\tikz[tableau]\matrix{\beta^{(m)} \& |[dottedentry]| \& \beta^{(1)}\\};$, where
  $\beta^{(1)}$, \ldots, $\beta^{(m)}$ are admissible columns of type $X$. Suppose $u = u_1\cdots u_t$, and note that
  every symbol $u_i$ is an admissible column of type $X$. Since $P(u) = v$, the word $c_{u_1}\cdots c_{u_t}$ rewrites to
  $c_{\beta^{(1)}}\cdots c_{\beta^{(m)}}$ under the rewriting system $(\Sigma,T)$. Fix a sequence of rewriting
  $c_{u_1}\cdots c_{u_t} \reduces_T c_{\beta^{(1)}}\cdots c_{\beta^{(m)}}$.  Replacing each symbol
  $c_{\sigma_1\cdots\sigma_k}$ by $\sigma_1\cdots\sigma_k$ throughout this sequence of rewriting yields a sequence from
  $u = u_1\cdots u_t$ to $\beta^{(1)}\cdots \beta^{(m)} = v$ where every step is an application of a relation from
  $T''$. Hence $(u,v)$ is a consequence of $T''$.

  Since every defining relation in $T''$ is a consequence of those in $\drel{R}^X$ and vice versa, $\pres{\aX}{T''}$ and
  $\pres{\aX}{\drel{R}^X}$ present the same monoid, and thus $\pres{\Sigma}{T}$ presents $\Pl(X)$.
\end{proof}

The following corollary is immediate \cite{squier_finiteness}:

\begin{corollary}
The Plactic monoids of types $A_n$, $B_n$, $C_n$, $D_n$, and $G_2$ have finite derivation type.
\end{corollary}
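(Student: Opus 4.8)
The plan is to deduce the corollary directly from the preceding theorem together with a classical result of Squier. The theorem just proved states that for each $X \in \set{A_n,B_n,C_n,D_n,G_2}$ the plactic monoid $\Pl(X)$ is presented by the rewriting system $(\Sigma,T)$ constructed in \fullref{Section}{sec:rewriting}, and that this system is finite and complete: finiteness of $\Sigma$ and of $T$ was noted immediately after the definition of $T$, noetherianity is \fullref{Lemma}{lem:noetherian}, and confluence is the lemma immediately following it. Squier \cite{squier_finiteness} proved that any monoid admitting a presentation by a finite complete rewriting system has finite derivation type. The argument is therefore a one-step deduction: apply Squier's result to each of the finite complete rewriting systems $(\Sigma,T)$.

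In carrying this out, the first (and only) thing I would do is check that the hypotheses of Squier's theorem are met exactly as stated — a finite alphabet and a finite, noetherian, confluent set of rules — which is precisely what the theorem provides, and then cite the appropriate form of Squier's result. The one point requiring mild care is bookkeeping rather than mathematics: one must invoke the version of \cite{squier_finiteness} whose conclusion is finite derivation type, as opposed to the weaker homological conclusions $\mathrm{FP}_3$ or $\mathrm{FP}_\infty$ that also follow from a finite complete rewriting system and are recorded separately elsewhere in this paper.

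I do not expect any substantive obstacle here; all of the genuine difficulty lay in constructing the rewriting systems and establishing their completeness (in particular the two-column lemmata of \fullref{Section}{sec:twocolumnlemmata} and \fullref{Lemma}{lem:noetherian}), and once that is in place the corollary is, as the excerpt says, immediate.
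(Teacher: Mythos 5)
Your proposal is correct and is exactly the paper's argument: the paper states the corollary is immediate from the preceding theorem via Squier's result \cite{squier_finiteness} that a monoid presented by a finite complete rewriting system has finite derivation type. Nothing further is needed.
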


By a result originally proved by Anick in a different form
\cite{anick_homology}, but also proved by various other authors (see
\cite{brown_geometry,cohen_stringrewriting}):

\begin{corollary}
The  Plactic monoids of types $A_n$, $B_n$, $C_n$, $D_n$, and $G_2$ are of type right and left $\mathrm{FP}_\infty$.
\end{corollary}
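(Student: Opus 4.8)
The plan is to read the corollary off the preceding theorem via a standard homological criterion for complete rewriting systems. By that theorem, for each $X \in \set{A_n, B_n, C_n, D_n, G_2}$ the monoid $\Pl(X)$ is presented by a finite complete rewriting system $(\Sigma, T)$. The cited result — obtained in algebraic form by Anick \cite{anick_homology}, and given combinatorial and topological proofs by Brown \cite{brown_geometry} and Cohen \cite{cohen_stringrewriting} — produces, from any finite complete rewriting system presenting a monoid $M$, a free resolution of the trivial $\mathbb{Z}M$-module $\mathbb{Z}$ that is finitely generated in every dimension, with generators in dimension $k$ indexed by the chains of $k$ ``essential overlaps'' of left-hand sides; hence $M$ is of type $\mathrm{FP}_\infty$. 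Because the construction is symmetric in the two sides (equivalently, one applies it to $M$ and to the opposite monoid $M^{\mathrm{op}}$, or passes to the associated $(\mathbb{Z}M,\mathbb{Z}M)$-bimodule resolution), one obtains both left and right $\mathrm{FP}_\infty$. Applying this to $(\Sigma, T)$ for each $X$ gives the corollary.

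So, concretely, the proof consists of invoking the preceding theorem to obtain the finite complete rewriting system $(\Sigma, T)$ for $\Pl(X)$, and then applying \cite{anick_homology} (or \cite{brown_geometry}, \cite{cohen_stringrewriting}) to conclude that $\Pl(X)$ is of type left and right $\mathrm{FP}_\infty$. I anticipate no real obstacle: all the substantive work has already been carried out in constructing $(\Sigma,T)$ and in verifying, via \fullref{Lemma}{lem:noetherian} together with the confluence argument, that it is finite and complete; this corollary is essentially a citation. The only micro-point worth recording is that the hypothesis required by the cited theorem is exactly finiteness plus completeness — both of which we have established — and that the property ``$\mathrm{FP}_\infty$'' as used there subsumes the two one-sided finiteness conditions asserted in the statement.
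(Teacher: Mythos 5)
Your proposal matches the paper's argument exactly: the paper derives this corollary purely by citing the Anick--Brown--Cohen theorem that a monoid presented by a finite complete rewriting system is of type left and right $\mathrm{FP}_\infty$, applied to the system $(\Sigma,T)$ constructed in the preceding theorem. Your additional remarks on the structure of the resolution and the left/right symmetry are accurate but not needed beyond the citation.
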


\section{Biautomaticity lemmata}
\label{sec:biautomaticity_lemmata}

In this section, we lay the groundwork for constructing biautomatic structures for plactic monoids in
\fullref{Section}{sec:biautomaticity}.

The language of representatives of the biautomatic structure will be the language
of irreducible words of the rewriting system $(\Sigma,T)$ constructed in \fullref{Section}{sec:rewriting}. To prove that
this gives us a biautomatic structure, we must understand how products of the form $c_xc_{\beta^{(1)}}\cdots
c_{\beta^{(\ell)}}$ and $c_{\beta^{(1)}}\cdots c_{\beta^{(\ell)}}c_x$ rewrite, where $c_{\beta^{(1)}}\cdots
c_{\beta^{(\ell)}}$ is an irreducible word and $c_x \in \Sigma$ is such that $|x| = 1$. It will suffice to consider the
situations where $x\beta^{(1)}\cdots\beta^{(\ell)}$ and $\beta^{(1)}\cdots\beta^{(\ell)}x$ are highest weight words,
because, as we shall see, the rewriting of $c_xc_{\beta^{(1)}}\cdots c_{\beta^{(\ell)}}$ and $c_{\beta^{(1)}}\cdots
c_{\beta^{(\ell)}}c_x$ proceeds `in the same way' in the general case.

\subsection{Two-column lemma for biautomaticity}

Let $X$ be one of the types $A_n$, $B_n$, $C_n$, or $D_n$, and let $\aX$ be the corresponding alphabet from
$\aA_n$, $\aB_n$, $\aC_n$, or $\aD_n$.

\begin{lemma}
  \label{lem:combined:alpha}
  Let $\alpha,\beta \in \aX_n^*$ be admissible $X$ columns such that $\beta \not\preceq \alpha$ and $\alpha\beta$ is a
  word of highest weight.
  \begin{enumerate}
  \item If $X = A_n$, $B_n$, or $C_n$, then $\alpha = 1\cdots p$ for some $p \in \aX[1,n]$.
  \item If $X = D_n$, then either $\alpha = 1\cdots p$ for some $p \in \aD[1,n]$ or $\alpha = 1\cdots(n-1)\bar{n}$.
  \end{enumerate}
\end{lemma}

\begin{proof}
  By \fullref{Lemma}{lem:highestweightfactors}, $\alpha$ is a highest weight column (and thus a highest-weight tableau), and
  thus has the required form by \fullref{Lemma}{lem:highestweightableauchar}.
\end{proof}

\begin{lemma}
  \label{lem:combined:hatbeta}
  Let $\alpha,\beta \in \aX_n^*$ be admissible $X$ columns such that $\beta \not\preceq \alpha$ and $\alpha\beta$ is a
  word of highest weight. Suppose the first symbol of $\beta$ is $1$. Let $\hat\beta$ be the maximal prefix of $\beta$
  whose symbols form an interval of $\aX[1,n-1]$ (viewed as an ordered set). Then $P(\alpha\beta)$ consists of two
  columns and the rightmost column of $P(\alpha\beta)$ is $\hat\beta$.
\end{lemma}

\begin{proof}
  By \fullref{Lemma}{lem:combined:alpha}, $\alpha = 1\cdots p$ for some $p$. Thus both $\alpha$ and $\beta$ contain
  $1$. Since $\alpha$ and $\beta$ are admissible columns, both containing $1$, neither contains $\bar{1}$. It follows
  that $P(\alpha\beta)$, as a tableau of the same weight as $\alpha\beta$, also contains two symbols $1$ and so has two
  columns. Suppose $P(\alpha\beta) = \tableau{\delta \& \gamma\\}$. By \fullref{Lemma}{lem:combined:palphabeta},
  $\gamma$ contains fewer than $|\alpha|$ symbols. That is, $\gamma$ contains at most $n-1$ symbols. Thus by
  \fullref{Lemma}{lem:highestweightableauchar}, since $P(\alpha\beta)$ is highest-weight, $\gamma = 1\cdots s$ for some
  $s \in \aX[1,n-1]$. Furthermore, again by \fullref{Lemma}{lem:highestweightableauchar}, if $X = A_n$, $B_n$, or $C_n$,
  then $\delta = 1\cdots t$ for some $t \in \aX[1,n]$ with $t \geq s$ (since $\delta \preceq \gamma$), while if
  $X = D_n$, then either $\delta = 1\cdots t$ for some $t \in \aX[1,n]$ with $t \geq s$ or else
  $\delta = 1\cdots(n-1)\bar{n}$. Thus $\delta$ also contains each symbol in $\aX[1,s]$ and so $\gamma\delta$ contains
  two of each symbol in $\aX[1,s]$. Since $\alpha\beta$ has the same weight as $\gamma\delta$, it follows that both
  $\alpha$ and $\beta$ contain each symbol from $\aX[1,s]$. This shows that $\hat\beta$ contains $1\cdots s$ as a
  prefix; it remains to prove that $\hat\beta$ contains no more symbols. If $s = n-1$, this is immediate by the
  definition of $\hat\beta$, so assume henceforth that $s < n-1$.

  Suppose, with the aim of obtaining a contradiction, that $\hat\beta \neq 1\cdots s$. Then, since $1\cdots s$ is a
  prefix of $\hat\beta$, it follows that $\hat\beta$ contains the symbol $s+1$. (Note that $s+1 < n$.)

  Consider $\rho_s(\gamma\delta)$. The symbol $s$ in $\gamma$ contributes ${+}$ to $\rho_s(\gamma\delta)$. If
  $\delta = 1\cdots t$ for $t \in \aX[1,n]$, then $\delta$ contributes ${+}$ (if $t = s$) or ${+}{-}$ (if $t > s$). If
  $\delta = 1\cdots (n-1)\bar{n}$, then $\delta$ contributes ${+}{+}$. In any case, $\rho_s(\gamma\delta)$ contains at least
  one ${+}$ and so $\f_s(\gamma\delta)$ is defined. Since $\alpha\beta$ and $\gamma\delta$ lie in isomorphic crystal
  components, $\f_s(\alpha\beta)$ is also defined and so $\rho_s(\alpha\beta)$ contains at least one ${+}$.

  Notice that the word $\alpha$ (which, as noted previously, is of the form $1\cdots p$) contains strictly more than $s$
  symbols and so must contain $s+1$. Thus in the calculation of $\rho_s(\alpha\beta)$ the symbols $s$ and $s+1$ in
  $\alpha$ contribute a ${+}$ and a ${-}$, which are deleted. In the word $\beta$, the symbols $s$ and $s+1$ also
  contribute a ${+}$ and a ${-}$, which are deleted. The word $\beta$ cannot contain $\bar{s+1}$, since it is
  admissible, so no other symbols can contribute a ${+}$. Hence $\rho_s(\alpha\beta)$ contains no symbols ${+}$. This is
  a contradiction, and so $\hat\beta = 1\cdots s$. This completes the proof.
\end{proof}

\subsection{Transducers}

This subsection briefly recalls the definition of a transducer and the relation it recognizes; for further background,
see \cite[Chapter~IV]{sakarovitch_automata} or \cite{berstel_transductions}.

Informally, a transducer is a (possibly non-deteministic) finite automaton that reads symbols from two tapes (possibly at
varying `speeds') and thus recognizes a binary relation between the sets of words over the two tape alphabets. More
formally, a transducer is a tuple $(Q,X,Y,I,F,\delta)$, where $Q$ is a finite set of states, $X$ and $Y$ are two finite
alphabets, $I$ is a set of distinguished \defterm{initial states}, $F$ is a set of distinguished \defterm{final states},
and $\delta$ is a finite subset of $Q \times X^* \times Y^* \times Q$ called the \defterm{transition relation}. When in
a state $q$, it can transition to a state $r$ while reading words $x \in X^*$ and $y \in Y^*$ from its top and bottom
input tapes if and only if $(q,x,y,r)$ is in $\delta$. (Note that either or both of $x$ and $y$ can be the empty word.)

The transducer accepts the contents of its input if it can start in some state in $I$, read the whole content of its
input tapes and end in a state in $F$. More formally, it accepts $(u,v) \in X^* \times Y^*$ if and only if there exist
factorizations $u = x_1\cdots x_k$ and $v = y_1\cdots y_k$, where $x_i \in X^*$ and $y_i \in Y^*$ and a sequence of
states $q_0,\ldots,q_k$ such that $q_0 \in I$, $q_k \in F$, and $(q_{i-1},x_i,y_i,q_i) \in \delta$ for $i = 1,\ldots,k$.

The transducer is thought of as a finite directed graph with vertex set $Q$ and, for each $(q,x,y,r) \in \delta$, an
edge from $q$ to $r$ labelled by $(x,y)$, for some words $x \in X^*$ and $y \in Y^*$. A pair $(u,v)$ is accepted if
there is a path from some vertex in $I$ to some vertex in $F$ such that $(u,v)$ is the product in $X^* \times Y^*$ of
the labels on that path.

Note that the set of pairs in $X^* \times Y^*$ accepted by the transducer forms a binary relation between $X^*$ and
$Y^*$, called the relation \defterm{recognized} by the transducer. A relation between $X^*$ and $Y^*$ recognized by a
transducer is said to be \defterm{rational} (see \cite[Subsection~IV.1.2]{sakarovitch_automata}).

As usual in the theory of automatic groups and semigroups, we will not describe transducers and automata by giving the
complete formal definition as tuples of sets and relations; the problem with this is that the technical details become
overpowering and obscure the fundamental ideas. Instead, we will give a somewhat higher level description of how the
transducers and automata `function'. For instance, we will sometimes speak of a transducer or automaton reading a
symbol, `storing' it in its state, and later `checking' that symbol. This means that, on reading the symbol, the
transition relation must take the transducer or automaton to a state that somehow determines the stored symbol (for
instance, states might be tuples and some component of the tuple might be the relevant symbol). `Checking' the stored
symbol means that the transducer or automaton enters a failure state if the stored symbol (as determined by the state)
is not as required.

\subsection{Left-multiplication by transducer}
\label{subsec:leftmult}

\subsubsection{\texorpdfstring{$A_n$, $B_n$, $C_n$, $D_n$}{An, Bn, Cn, Dn}}

Let $X$ be one of the types $A_n$, $B_n$, $C_n$, and $D_n$ and let $\aX$ be the corresponding alphabet from $\aA_n$,
$\aB_n$, $\aC_n$, or $\aD_n$. In these cases, the rewriting that occurs on left-multiplication by a generator is very
similar, and so we treat these cases in parallel. The goal is to prove \fullref{Lemma}{lem:anbncndn:leftmultbytrans},
which contains all the information we need for the eventual proof of biautomaticity.

We emphasize that in the following analysis, \fullref{Commuting columns lemma}{lem:commutingcolumns} is used only as an
auxiliary result to prove facts about words, and is not in any way treated as a rewriting rule.

Let $x \in \aX$ and let $\beta^{(1)},\ldots,\beta^{(m)}$ be admissible $X$ columns satisfying
$\beta^{(i+1)} \preceq \beta^{(i)}$ for $i=1,\ldots,m-1$ (that is,
$\tikz[tableau]\matrix{\beta^{(m)} \& |[dottedentry]| \& \beta^{(1)}\\};$ is a tableau), such that
$x\beta^{(1)}\cdots\beta^{(m)}$ is a highest-weight word. Recall that $x\beta^{(1)}\cdots\beta^{(h)}$ is a
highest-weight word for all $h \leq m$ by \fullref{Lemma}{lem:highestweightfactors}. In particular, $x$ is a
highest-weight word and so $x = 1$. The aim is to examine how the corresponding word over $\Sigma$ (that is,
$c_1 c_{\beta^{(1)}}\cdots c_{\beta^{(m)}}$) is rewritten by $T$ to an irreducible word. We are going to prove that this
rewriting involves a single left-to-right pass through the word and that it only changes the length of the word by at
most $1$.

The tabloid corresponding to $c_1 c_{\beta^{(1)}}\cdots c_{\beta^{(m)}}$ has the following form:
\[
\begin{tikzpicture}[x=4mm,y=-2mm]
  \draw (0,0) rectangle (1,10);
  \draw (1,0) rectangle (2,9);
  \draw (2,0) rectangle (3,9);
  \draw (3,0) rectangle (4,7);
  \draw (4,0) rectangle (5,6);
  \draw (5,0) rectangle (6,5);
  \draw[fill=lightgray] (6,0) rectangle (7,2);
  \foreach \x in {1,2,...,5} {
    \node[font=\scriptsize,anchor=south] at (\x,0) {$\preceq$};
  };
  \node[font=\scriptsize,anchor=south] at (6,0) {$?$};
  \path (7,0) ++(-.5,0) node[font=\scriptsize,anchor=north] {$1$};
  %
  % \path[draw] (7,6) node[font=\scriptsize,anchor=west,inner sep=.5mm] {$x$} -| (6.5,1);
  \path[draw] (6,8) node[font=\scriptsize,anchor=west,inner sep=.5mm] {$\beta^{(1)}$} -| (5.5,5);
\end{tikzpicture}
\]
(The symbol $?$ indicates that either $\preceq$ or $\not\preceq$ may hold between these columns.)

First, it is possible that $\preceq$ holds between $1$ and $\beta^{(1)}$. In this case,
$c_1 c_{\beta^{(1)}}\cdots c_{\beta^{(m)}}$ is irreducible and so no rewriting occurs. So assume that $\preceq$ does not
hold between $1$ and $\beta^{(1)}$. Then a rewriting rule applies to $c_1c_{\beta^{(1)}}$. By
\fullref{Lemma}{lem:combined:palphabeta}, $P(1\beta^{(1)})$ has at most two columns. Further, again by
\fullref{Lemma}{lem:combined:palphabeta}, if it has exactly two columns, its right-hand column would be strictly
shorter than the one-symbol column $1$, which is impossible. Thus $P(1\beta^{(1)})$ has at most one column.

If $P(1\beta^{(1)})$ has zero columns (that is, is empty), then the rewriting rule that applies is
$c_1c_{\beta^{(1)}} \to \emptyword$. That is,
\[
c_1 c_{\beta^{(1)}}c_{\beta^{(2)}}\cdots c_{\beta^{(m)}} \imreduces c_{\beta^{(2)}}\cdots c_{\beta^{(m)}}.
\]
Since $\beta^{(i+1)} \preceq \beta^{(i)}$ for $i=2,\ldots,m-1$, the word $c_{\beta^{(2)}}\cdots c_{\beta^{(m)}}$ is
irreducible and no further rewriting occurs.

So assume $P(1\beta^{(1)})$ has exactly one column $\gamma^{(1)}$. Since $\gamma^{(1)}\beta^{(2)}\cdots\beta^{(m)}$ is
highest-weight, \fullref{Lemma}{lem:highestweightfactors} and \fullref{Lemma}{lem:highestweightableauchar} show in
particular that either $\gamma^{(1)} = 1\cdots s$ for some $s \in \aX[1,n]$, or $X = D_n$ and
$\gamma^{(1)} = 1\cdots (n-1)\bar{n}$. Furthermore, since $\gamma^{(1)}$ is one
of these forms and $\gamma^{(1)}$ and
$1\beta^{(1)}$ have the same weight, it follows that $\gamma^{(1)}$ contains at least two symbols. That is,
$\gamma^{(1)}$ contains $2$ or $\gamma^{(1)} = 1\bar{2}$, with the latter possible only when $X = D_n$ and $n=2$.

We now need to know about the column $\beta^{(2)}$:

\begin{lemma}
  \label{lem:leftmult:beta21}
  The column $\beta^{(2)}$ begins with $1$.
\end{lemma}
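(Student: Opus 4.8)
The plan is to deduce the result from the two-column lemmata of \fullref{Section}{sec:twocolumnlemmata}, applied this time with the column $\gamma^{(1)}$ (rather than the single symbol $1$) in the role of $\alpha$. The first step is to note that $\gamma^{(1)}\beta^{(2)}$ is a highest-weight word: since $1\beta^{(1)} =_{\Pl(X)} \gamma^{(1)}$ we have $\gamma^{(1)}\beta^{(2)} \sim_X 1\beta^{(1)}\beta^{(2)}$, and $1\beta^{(1)}\beta^{(2)}$ is of highest weight because it is a prefix of the highest-weight word $1\beta^{(1)}\cdots\beta^{(m)}$ (\fullref{Lemma}{lem:highestweightfactors}), the highest-weight property being preserved by $\sim_X$. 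Recall also that $\gamma^{(1)}$ is a prefix of $1\cdots n$, or (in type $D_n$) of $1\cdots(n-1)\bar{n}$, so it begins with $1$. Suppose, for contradiction, that $\beta^{(2)}$ does not begin with $1$.

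If $\beta^{(2)} \preceq \gamma^{(1)}$, then $\tikz[tableau]\matrix{\beta^{(2)} \& \gamma^{(1)}\\};$ is a tableau with reading $\gamma^{(1)}\beta^{(2)}$, and hence equals $P(\gamma^{(1)}\beta^{(2)}) = P(1\beta^{(1)}\beta^{(2)})$; as $\gamma^{(1)}\beta^{(2)}$ is of highest weight, this is a highest-weight tableau, and by \fullref{Lemma}{lem:highestweightableauchar} each of its nonempty columns begins with $1$ --- in particular $\beta^{(2)}$ does, a contradiction. So I may assume $\beta^{(2)} \not\preceq \gamma^{(1)}$, and apply \fullref{Lemma}{lem:an:betafirstsymbol} (respectively \fullref{Lemma}{lem:cn:betafirstsymbol}, \ref{lem:bn:betafirstsymbol}, \ref{lem:dn:betafirstsymbol}) with $\gamma^{(1)}$ in place of $\alpha$ and $\beta^{(2)}$ in place of $\beta$: since $\gamma^{(1)}\beta^{(2)}$ is of highest weight and $\beta^{(2)}\not\preceq\gamma^{(1)}$, the first symbol of $\beta^{(2)}$, being assumed not to be $1$, must lie strictly above every unbarred symbol appearing in $\gamma^{(1)}$ --- concretely it is $t+1$ (where $\gamma^{(1)} = 1\cdots t$), or a barred symbol, or $0$, or, only in the $D_n$ shape $1\cdots(n-1)\bar{n}$, the symbol $n$.

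On the other hand, the tableau condition $\beta^{(2)}\preceq\beta^{(1)}$ bounds that first symbol from above by the first symbol of $\beta^{(1)}$: indeed $r$ leaves the top entry of a column unchanged (outside a type-$D_n$ exception treated below) and $\ell(\beta^{(1)})\leq\beta^{(1)}$, so the top entry of $r(\beta^{(2)})$, which is the first symbol of $\beta^{(2)}$, is at most the top entry of $\ell(\beta^{(1)})$, which is at most the first symbol of $\beta^{(1)}$. The hypothesis $\gamma^{(1)}\neq\emptyword$ excludes the possibility $\beta^{(1)}=\bar{1}$ (for then $1\bar{1} =_{\Pl(X)}\emptyword$, so $\gamma^{(1)}$ would be empty), so by the $\alpha=1$ case of the same first-symbol lemmata the first symbol of $\beta^{(1)}$ is $2$ (or $\bar{2}$ in the degenerate $n=2$ case of type $D_n$). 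Comparing the two bounds pins $\gamma^{(1)}$ down to a very short column; the finer two-column lemmata (\fullref{Lemma}{lem:cn:alpha1topbetapplus1fewersymbolsoronecolumn} and its $B_n$ and $D_n$ analogues \ref{lem:bn:alpha1topbetapplus1fewersymbolsoronecolumn}, \ref{lem:dn:alpha1topbetapplus1fewersymbolsoronecolumn}, applied with $\alpha=1$) then show that $\gamma^{(1)}$ would nonetheless have to contain both $1$ and $2$, which is the desired contradiction; and the remaining low-rank $D_n$ subcase, where $\gamma^{(1)}=1\cdots(n-1)\bar{n}$ and the first symbol of $\beta^{(2)}$ would be $n$, is ruled out using the incomparability of $n$ and $\bar{n}$ in $\aD_n$.

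Everything up to and including the first-symbol constraints is routine; the real work, and the main obstacle, is the final comparison of the weight constraint (from the two-column lemma with $\alpha=\gamma^{(1)}$) against the shape constraint (from $\beta^{(2)}\preceq\beta^{(1)}$) in the tight cases where the two nearly coincide, which requires careful bookkeeping with the operators $r$ and $\ell$, column admissibility, and the type-$D_n$ column-splitting conventions.
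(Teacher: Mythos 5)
Your overall strategy --- feeding $\gamma^{(1)}$ back into the two-column machinery in the role of $\alpha$ --- is a reasonable idea and genuinely different from the paper's, and the first half is sound: $\gamma^{(1)}\beta^{(2)}$ is indeed of highest weight, the case $\beta^{(2)}\preceq\gamma^{(1)}$ is immediate, and the first-symbol lemmata do restrict the first symbol of $\beta^{(2)}$ as you say when $\gamma^{(1)}=1\cdots t$. The gap is in the closing step. Having forced $t=1$ and the first symbol of $\beta^{(2)}$ to be $2$, you claim that \fullref{Lemma}{lem:cn:alpha1topbetapplus1fewersymbolsoronecolumn} and its analogues, applied with $\alpha=1$ (so $p=1$), show that $\gamma^{(1)}=P(1\beta^{(1)})$ must contain both $1$ and $2$. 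They do not: their ``furthermore'' clauses only apply for $q\leq p=1$, so the most they can give is that $\gamma^{(1)}$ contains the single symbol $1$; and the stronger claim is actually false. Take $X=C_2$ and $\beta^{(1)}=2\bar{2}$: this is an admissible column, $12\bar{2}$ is of highest weight, and $12\bar{2}=_{\Pl(C_2)}1$ by a relation from $\drel{R}_5^{C_n}$, so $\gamma^{(1)}=1$ does not contain $2$. In this configuration your argument produces no contradiction, yet it is precisely the configuration you need to exclude.

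The lemma is still true there, but for a reason your proof has discarded: $\beta^{(2)}\preceq\beta^{(1)}$ means $r(\beta^{(2)})\leq\ell(\beta^{(1)})$, and when $\beta^{(1)}$ contains the pair $(2,\bar{2})$ the column $\ell(\beta^{(1)})$ begins with $1$ (here $\ell(2\bar{2})=1\bar{2}$), which forces the first symbol of $r(\beta^{(2)})$, and hence of $\beta^{(2)}$, to be $1$. By weakening the comparison to ``first symbol of $\beta^{(2)}\leq$ first symbol of $\beta^{(1)}$'' you throw away exactly the information that closes this case. The paper's proof works directly with $1\beta^{(1)}\beta^{(2)}$ and splits on whether $\beta^{(1)}$ contains $\bar{2}$: if it does, the $\ell(\beta^{(1)})$ argument above applies; if it does not, $\rho_1(\beta^{(1)})$ begins with ${-}$ and the computation $\rho_1(1\beta^{(1)}\beta^{(2)})={+}{-}{-}\cdots={-}\cdots$ contradicts the highest-weight hypothesis. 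To repair your proof you would need to reinstate this $r$/$\ell$ analysis in the tight case $t=1$ rather than rely on the two-column lemmata. (Two smaller omissions: in type $B_n$ with $n=1$ the first symbol of $\beta^{(1)}$ can be $0$, a case your list misses; and the paper's own proof also covers $\beta^{(1)}=\bar{1}$ rather than excluding it via $\gamma^{(1)}\neq\emptyword$, though that difference is harmless in context.)
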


\begin{proof}
  If $\beta^{(2)} \preceq \gamma^{(1)}$, then since $\gamma^{(1)}$ begins with $1$, so does $\beta^{(2)}$. So assume
  $\beta^{(2)} \not\preceq \gamma^{(1)}$. Consider separately the cases $X = A_n, B_n, C_n, D_n$:
  \begin{itemize}
  \item Suppose $X = A_n$. Since $\gamma^{(1)}$ and $1\beta^{(1)}$ have the same weight and $\gamma^{(1)}$ contains a
    symbol $2$, it follows that the column $\beta^{(1)}$ begins with $2$. Since $\beta^{(2)} \preceq \beta^{(1)}$, the
    column $\beta^{(2)}$ must begin with either $1$ or $2$. With the aim of obtaining a contradiction, suppose it begins
    with $2$. Then $\rho_1(1\beta^{(1)}\beta^{(2)}) = {+}{-}{-}\cdots = {-}\cdots$, which contradicts
    $x\beta^{(1)}\cdots\beta^{(m)}$ being of highest weight. Thus $\beta^{(2)}$ begins with $1$.

  \item Suppose $X = B_n$ or $X =C_n$. Since $\gamma^{(1)}$ and $1\beta^{(1)}$ have the same weight and $\gamma^{(1)}$ contains a
    symbol $2$, it follows that $\beta^{(1)}$ begins with $2$ and cannot contain $\bar{2}$ or $\bar{1}$. Since
    $\beta^{(2)} \preceq \beta^{(1)}$, the column $\beta^{(2)}$ must begin with either $1$ or $2$. With the aim of
    obtaining a contradiction, suppose it begins with $2$. Then $\rho_1(\beta^{(1)})$ is ${-}$ (the $2$ at the start
    contributes ${-}$; there cannot be a further ${-}$ since there is no symbol $\bar{1}$). Hence
    $\rho_1(1\beta^{(1)}\beta^{(2)}) = {+}{-}{-}\cdots = {-}\cdots$, which contradicts $x\beta^{(1)}\ldots\beta^{(m)}$
    being of highest weight. Therefore $\beta^{(2)}$ must begin with $1$.

  \item Suppose $X = D_n$. Consider three sub-cases separately:
    \begin{itemize}
    \item Suppose that $n=2$ and $\gamma^{(1)} = 12$. Since $\gamma^{(1)}$ and $1\beta^{(1)}$ have the same
      weight, it follows that the admissible column $\beta^{(1)}$ is $2$. Then $\ell(\beta^{(1)}) = 2$. Hence either
      $\beta^{(2)} = r(\beta^{(2)}) = 2$ or $\beta^{(2)} = r(\beta^{(2)}) = 1$. With the aim of obtaining a
      contradiction, suppose the former. Then $\rho_1(1\beta^{(1)}\beta^{(2)}) = {+}{-}{-}\cdots = {-}\cdots$, which
      contradicts $x\beta^{(1)}\ldots\beta^{(m)}$ being of highest weight. Therefore $\beta^{(2)} = 1$.

    \item Suppose that $n=2$ and $\gamma^{(1)} = 1\bar{2}$. The reasoning showing that $\beta^{(2)} = 1$ proceeds in
      precisely the same way as the previous sub-case, replacing $2$ with $\bar{2}$ and $\rho_1$ with $\rho_2$.

    \item Suppose that $n > 2$. Then $\gamma^{(1)}$ contains a symbol $2$. Since $\gamma^{(1)}$ and $1\beta^{(1)}$ have
      the same weight, it follows that $\beta^{(1)}$ begins with $2$ and does not contain $\bar{2}$ or $\bar{1}$. The
      reasoning reasoning showing that $\beta^{(2)}$ begins with $1$ now proceeds in precisely the same way as for
      $X = B_n$ or $X = C_n$. \qedhere
    \end{itemize}
  \end{itemize}
\end{proof}

As a consequence of \fullref{Lemma}{lem:leftmult:beta21} and the fact that the first row of the columns $\beta^{(i)}$
must form a non-decreasing sequence since $\preceq$ holds between adjacent columns, it follows that all columns to the
left of $\beta^{(2)}$ also begin with $1$. That is, the tabloid corresponding to
$c_1 c_{\beta^{(1)}}\cdots c_{\beta^{(m)}}$ has the following form:
\[
\begin{tikzpicture}[x=4mm,y=-2mm]
  \draw (0,0) rectangle (1,10);
  \draw (1,0) rectangle (2,9);
  \draw (2,0) rectangle (3,9);
  \draw (3,0) rectangle (4,7);
  \draw (4,0) rectangle (5,6);
  \draw (5,0) rectangle (6,5);
  \draw[fill=lightgray] (6,0) rectangle (7,2);
  \foreach \x in {1,2,...,5} {
    \node[font=\scriptsize,anchor=south] at (\x,0) {$\preceq$};
  };
  \node[font=\scriptsize,anchor=south] at (6,0) {$\not\preceq$};
  \foreach \x in {1,2,...,5} {
    \path (\x,0) ++(-.5,0) node[font=\scriptsize,anchor=north] {$1$};
  };
  \path (7,0) ++(-.5,0) node[font=\scriptsize,anchor=north] {$1$};
  \path[draw] (6,8) node[font=\scriptsize,anchor=west,inner sep=.5mm] {$\beta^{(1)}$} -| (5.5,5);
\end{tikzpicture}
\]
In the situation we are considering, $P(1\beta^{(1)})$ is a single column $\gamma^{(1)}$, so after rewriting
$c_1 c_{\beta^{(1)}}\cdots c_{\beta^{(m)}} \imreduces c_{\gamma^{(1)}}c_{\beta^{(2)}}\cdots c_{\beta^{(m)}}$, the
corresponding tabloid has the form
\[
\begin{tikzpicture}[x=4mm,y=-2mm]
  \draw (0,0) rectangle (1,10);
  \draw (1,0) rectangle (2,9);
  \draw (2,0) rectangle (3,9);
  \draw (3,0) rectangle (4,7);
  \draw (4,0) rectangle (5,6);
  \draw[fill=lightgray] (5,0) rectangle (6,7);
  \foreach \x in {1,2,...,4} {
    \node[font=\scriptsize,anchor=south] at (\x,0) {$\preceq$};
  };
  \node[font=\scriptsize,anchor=south] at (5,0) {$?$};
  \foreach \x in {1,2,...,5} {
    \path (\x,0) ++(-.5,0) node[font=\scriptsize,anchor=north] {$1$};
  };
  \path (6,0) ++(-.5,0) node[font=\scriptsize,anchor=north] {$1$};
  \path[draw] (5,11) node[font=\scriptsize,anchor=west,inner sep=.5mm] {$\beta^{(2)}$} -| (4.5,6);
  \path[draw] (6,9) node[font=\scriptsize,anchor=west,inner sep=.5mm] {$\gamma^{(1)}$} -| (5.5,7);
\end{tikzpicture}
\]
If $\beta^{(2)} \preceq \gamma^{(1)}$, then $c_{\gamma^{(1)}}c_{\beta^{(2)}}\cdots c_{\beta^{(m)}}$ is
irreducible and no further rewriting occurs. So assume $\beta^{(2)} \not\preceq \gamma^{(1)}$. Note that
$\gamma^{(1)}\beta^{(2)}\ldots\beta^{(m)}$ is also a highest weight word.

For $j = 2,\ldots,m$, define $\hat\beta^{(j)}$ to be the longest contiguous prefix of $\beta^{(j)}$ containing only
symbols from $\aX[1,n-1]$. Note that because $\beta^{(j+1)} \preceq \beta^{(j)}$, each symbol of $\beta^{(j+1)}$
is less than or equal to the symbol of $\beta^{(j)}$ in the same row. Thus the prefix $\hat\beta^{(j+1)}$ must be at
least as long as the prefix $\hat\beta^{(j)}$, and so $\hat\beta^{(j+1)} \preceq \hat\beta^{(j)}$. So the situation is
as follows, where the horizontal lines in each column indicate the end of $\hat\beta^{(j)}$:
\[
\begin{tikzpicture}[x=4mm,y=-2mm]
  \draw (0,0) rectangle (1,7) rectangle (0,10);
  \draw (1,0) rectangle (2,6) rectangle (1,9);
  \draw (2,0) rectangle (3,5) rectangle (2,9);
  \draw (3,0) rectangle (4,4) rectangle (3,7);
  \draw (4,0) rectangle (5,3) rectangle (4,6);
  \draw[fill=lightgray] (5,0) rectangle (6,7);
  \foreach \x in {1,2,...,4} {
    \node[font=\scriptsize,anchor=south] at (\x,0) {$\preceq$};
  };
  \node[font=\scriptsize,anchor=south] at (5,0) {$\not\preceq$};
  \foreach \x in {1,2,...,5} {
    \path (\x,0) ++(-.5,0) node[font=\scriptsize,anchor=north] {$1$};
  };
  \path (6,0) ++(-.5,0) node[font=\scriptsize,anchor=north] {$1$};
  \path[draw] (5,11) node[font=\scriptsize,anchor=west,inner sep=.5mm] {$\beta^{(2)}$} -| (4.5,6);
  \path[draw] (6,9) node[font=\scriptsize,anchor=west,inner sep=.5mm] {$\gamma^{(1)}$} -| (5.5,7);
\end{tikzpicture}
\]
% (Also note that since $\beta^{(2)} \not\preceq \gamma^{(1)}$ and $\gamma^{(1)}\beta^{(2)}$ has highest weight, it follows
% that $\hat\beta^{(2)}$ coincides with the prefix $\beta_1$ of $\beta$ defined in \fullref{Section}{sec:twocolumnlemmata}.)

Since $\beta^{(2)}$ begins with $1$, by \fullref{Lemma}{lem:combined:hatbeta}, $P(\gamma^{(1)}\beta^{(2)})$ has two
columns, the rightmost of which is $\hat\beta^{(2)}$. Let $\gamma^{(2)}$ be the left column of
$P(\gamma^{(1)}\beta^{(2)})$. So we have
\[
c_{\gamma^{(1)}}c_{\beta^{(2)}}\ldots c_{\beta^{(m)}} \imreduces c_{\hat\beta^{(2)}}c_{\gamma^{(2)}}c_{\beta^{(3)}}\ldots c_{\beta^{(m)}}
\]
If $\beta^{(3)}\preceq\gamma^{(2)}$, the word $c_{\hat\beta^{(2)}}c_{\gamma^{(2)}}c_{\beta^{(3)}}\ldots c_{\beta^{(m)}}$
is irreducible. So suppose $\beta^{(3)} \not\preceq \gamma^{(2)}$. We claim $\gamma^{(2)}\beta^{(3)}$ is a highest
weight word. This follows since $\hat\beta^{(2)}$ is a prefix of both $\gamma^{(2)}$ and $\beta^{(3)}$ (since it is a
prefix of $\hat\beta^{(3)}$) and so commutes with both by the \fullref{Commuting columns
  lemma}{lem:commutingcolumns}. Thus
$\hat\beta^{(2)}\gamma^{(2)}\beta^{(3)} =_{\Pl(X)} \gamma^{(2)}\beta^{(3)}\hat\beta^{(2)}$ and so, by
\fullref{Lemma}{lem:highestweightfactors}, $\gamma^{(2)}\beta^{(3)}$ is highest weight. Thus, again by
\fullref{Lemma}{lem:combined:hatbeta}, $P(\gamma^{(2)}\beta^{(3)})$ has two columns, the rightmost of which is
$\hat\beta^{(3)}$.
\[
\begin{tikzpicture}[x=4mm,y=-2mm]
  \draw (0,0) rectangle (1,7) rectangle (0,10);
  \draw (1,0) rectangle (2,6) rectangle (1,9);
  \draw (2,0) rectangle (3,5) rectangle (2,9);
  \draw (3,0) rectangle (4,4) rectangle (3,7);
  \draw[fill=lightgray] (4,0) rectangle (5,6);
  \draw (5,0) rectangle (6,3);
  \foreach \x in {1,2,3,5} {
    \node[font=\scriptsize,anchor=south] at (\x,0) {$\preceq$};
  };
  \node[font=\scriptsize,anchor=south] at (4,0) {$\not\preceq$};
  \foreach \x in {1,2,...,5} {
    \path (\x,0) ++(-.5,0) node[font=\scriptsize,anchor=north] {$1$};
  };
  \path (6,0) ++(-.5,0) node[font=\scriptsize,anchor=north] {$1$};
  \path[draw] (6,5) node[font=\scriptsize,anchor=west,inner sep=.5mm] {$\hat\beta^{(2)}$} -| (5.5,3);
  \path[draw] (5,8) node[font=\scriptsize,anchor=west,inner sep=.5mm] {$\gamma^{(1)}$} -| (4.5,6);
\end{tikzpicture}
\]

Continuing in this way, we inductively obtain a sequence of admissible columns $\gamma^{(2)},\ldots,\gamma^{(k)}$ for
some maximal $k \leq m$ such that the following hold for $j=1,\ldots,k-1$:
\begin{itemize}
\item $\beta^{(j+1)} \not\preceq \gamma^{(j)}$.
\item $\gamma^{(j)}\beta^{(j+1)}$ is highest weight. This follows since $\hat\beta^{(2)},\ldots,\hat\beta^{(j)}$ are all
  prefixes of $\gamma^{(j)}$ and $\beta^{(j+1)}$, so commute with both by the \fullref{Commuting columns
    lemma}{lem:commutingcolumns}, and thus
  \[
    \hat\beta^{(2)}\cdots\hat\beta^{(j)}\gamma^{(j)}\beta^{(j+1)} =_{\Pl(X)} \gamma^{(j)}\beta^{(j+1)}\hat\beta^{(2)}\cdots\hat\beta^{(j)}
  \]
  and so, by \fullref{Lemma}{lem:highestweightfactors}, $\gamma^{(2)}\beta^{(3)}$ is highest weight.
\item $P(\gamma^{(j)}\beta^{(j+1)}) = \tableau{\gamma^{(j+1)} \& \hat\beta^{(j+1)} \\}$
\end{itemize}
Therefore rewriting proceeds as follows:
\begin{equation}
  \label{eq:rightmultrewriting}
  \begin{aligned}
    &c_{\gamma^{(1)}}c_{\beta^{(2)}}\cdots c_{\beta^{(m)}} \\
    \imreduces{}&c_{\hat\beta^{(2)}}c_{\gamma^{(2)}}c_{\beta^{(3)}}\cdots c_{\beta^{(m)}} \\
    \imreduces{}&c_{\hat\beta^{(2)}}c_{\hat\beta^{(3)}}c_{\gamma^{(3)}}c_{\beta^{(4)}}\cdots c_{\beta^{(m)}} \\
    &\quad\vdots\\
    \imreduces{}&c_{\hat\beta^{(2)}}\cdots c_{\hat\beta^{(k)}}c_{\gamma^{(k)}}c_{\beta^{(k+1)}}\cdots c_{\beta^{(m)}},
  \end{aligned}
\end{equation}
The maximality of $k$ means either that $k=m$ or $\beta^{(k+1)} \preceq \gamma^{(k)}$; in either case the word
$c_{\hat\beta^{(2)}}\cdots c_{\hat\beta^{(k)}}c_{\gamma^{(k)}}c_{\beta^{(k+1)}}\cdots c_{\beta^{(m)}}$ is irreducible
since $\beta^{(j+1)} \preceq \beta^{(j)}$ for all $j$. The corresponding
tabloid is now a tableau of the form:
\[
\begin{tikzpicture}[x=4mm,y=-2mm]
  \draw (0,0) rectangle (1,7) rectangle (0,10);
  \draw (1,0) rectangle (2,6) rectangle (1,9);
  \draw[fill=lightgray] (2,0) rectangle (3,5);
  \draw (3,0) rectangle (4,5);
  \draw (4,0) rectangle (5,4);
  \draw (5,0) rectangle (6,3);
  \foreach \x in {1,2,...,5} {
    \node[font=\scriptsize,anchor=south] at (\x,0) {$\preceq$};
  };
  \foreach \x in {1,2,...,6} {
    \path (\x,0) ++(-.5,0) node[font=\scriptsize,anchor=north] {$1$};
  };
  \path[draw] (5,7) node[font=\scriptsize,anchor=west,inner sep=.5mm] {$\hat\beta^{(k)}$} -| (3.5,5);
  \path[draw] (4,9) node[font=\scriptsize,anchor=west,inner sep=.5mm] {$\gamma^{(k)}$} -| (2.5,5);
  \path[draw] (3,11) node[font=\scriptsize,anchor=west,inner sep=.5mm] {$\beta^{(k+1)}$} -| (1.5,9);
\end{tikzpicture}
\]

Thus far, we have analyzed the rewriting that occurs \emph{at highest weight} when a tableau is left-multiplied by a
generator. However, as we shall see, we can now deduce information about the rewriting that occurs in general.

Recall that a word $c_{\alpha^{(1)}}\cdots c_{\alpha^{(k)}} \in \Sigma^*$ represents the tabloid
$\tikz[tableau]\matrix{\alpha^{(k)} \& |[dottedentry]| \& \alpha^{(1)}\\};$. As discussed following
\fullref{Lemma}{lem:efonfactors}, we can think of applying the operators $\e_i$ and $\f_i$ to a tabloid. Thus we can
think of applying them to words in $\Sigma$: the result of applying $\e_i$ or $\f_i$ to
$c_{\alpha^{(1)}}\cdots c_{\alpha^{(k)}} \in \Sigma^*$ is $c_{\beta^{(1)}}\cdots c_{\beta^{(k)}}$, where
$\tikz[tableau]\matrix{\beta^{(k)} \& |[dottedentry]| \& \beta^{(1)}\\};$ is the result of applying the operator to
$\tikz[tableau]\matrix{\alpha^{(k)} \& |[dottedentry]| \& \alpha^{(1)}\\};$. Recall that the operators $\e_i$ and $\f_i$
preserve whether the $\preceq$ relation holds between adjacent columns. Thus the operators $\e_i$ and $\f_i$ preserve
whether the $\preceq$ relation holds between adjacent subscripts of a word in $\Sigma^*$.

Now let $\beta^{(1)},\ldots,\beta^{(m)}$ be admissible $X$ columns, such that $\beta^{(i+1)} \preceq \beta^{(i)}$ for
$i=1,\ldots,m-1$ (that is, $\tikz[tableau]\matrix{\beta^{(m)} \& |[dottedentry]| \& \beta^{(1)}\\};$ is an $X$ tableau),
and let $x \in \aX$. Let $\e_{i_1},\ldots,\e_{i_k}$ be such that
$w = \e_{i_1}\cdots\e_{i_k}(c_xc_{\beta^{(1)}}\cdots c_{\beta^{(m)}})$ is highest weight. The rewriting of the word $w$
to normal form proceeds as described above, via a single left-to-right pass through the word. In particular, until the
normal form is reached, there is exactly one pair of adjacent symbols where the relation $\preceq$ does not hold between
the adjacent subscripts. Now apply $\f_{i_k}\cdots\f_{i_1}$ to every word in the sequence of rewriting; this gives a
sequence of words starting at $c_xc_{\beta^{(1)}}\cdots c_{\beta^{(m)}}$. Furthemore, since the $\f_i$ preserve whether
the $\preceq$ relation holds between adjacent subscripts, there is exactly one place in each word where a rewriting rule
can be applied. Since rewriting rules correspond to crystal isomorphisms, which are also preserved by the $\f_i$, the
rewriting rule that can be applied results in the next word in the sequence of rewriting. Thus we have a sequence of
rewriting from $c_xc_{\beta^{(1)}}\cdots c_{\beta^{(m)}}$ that also proceeds via a single left-to-right pass.

The aim is now to show that a transducer can recognize the relation consisting of pairs $(u,v)$, where
$u,v \in \Sigma^*$ are irreducible and $c_xu \reduces v$, by essentially computing this rewriting. First, note that the
transducer can check that the words on both tapes are irreducible: it simply stores the previously-read symbol in its
state and checks that the previously-read and next symbols do not form the left-hand side of a rewriting rule. We will
assume henceforth that the transducer is doing this and focus on how it
computes the rewriting.

The computation is performed as follows. It reads the word $c_{\beta^{(1)}}\cdots c_{\beta^{(m)}}$ on its first tape. It
stores one symbol in its state, starting with $c_x$. (Note that $c_x$ is not read from input; the transducer is
recognizing pairs $(u,v)$ such that $c_xu \reduces v$.) At each later step, storing some $c_\alpha$ in its state, it
reads the next symbol $c_{\beta^{(j)}}$, applies the rewriting rule $c_\alpha c_{\beta^{(j)}} \to c_\gamma c_\delta$,
checks that the next symbol on its second tape is $c_\gamma$, and replaces the stored symbol $c_\alpha$ with
$c_\delta$. In the case where $c_\alpha c_{\beta^{(j)}}$ is not the left-hand side of a rewritin rule, the transducer
simply checks that the next symbols on its second tape are $c_\alpha$ and
$c_{\beta^{(j)}}$, then reads the rest of both
tapes, checking that symbols on both tapes match. Note that this relies on rewriting proceeding as described above, via
a single left-to-right pass.

In summary, we have proven the following lemma:

\begin{lemma}
  \label{lem:anbncndn:leftmultbytrans}
  Let $X$ be one of the types $A_n$, $B_n$, $C_n$, and $D_n$ and let $\aX$ be the corresponding alphabet from $\aA_n$,
  $\aB_n$, $\aC_n$, or $\aD_n$.  Let $\Sigma$ and $T$ be alphabet and set of rewriting rules constucted for type $X$ in
  \fullref{Subsection}{subsec:rewritingconstruction}. Let $x \in \aX$. Let $L \subseteq \Sigma^*$ be the languages of
  irreducible words. Then the relation
  \[
    {}_{c_x}L = \gset[\big]{(u,v) \in L\times L}{c_xu =_{\Pl(X)} v}
  \]
  is recognized by a transducer. Furthermore, if $(u,v)$ is a pair in this relation, then the lengths of $u$ and $v$
  differ by at most $1$.
\end{lemma}

\subsubsection{\texorpdfstring{$G_2$}{G2}}

Let $x \in \aG_2$ and let $\beta^{(1)},\cdots,\beta^{(m)}$ be admissible $G_2$ columns satisfying
$\beta^{(i+1)} \preceq \beta^{(i)}$ for $i=1,\cdots,m-1$ (that is,
$\tableau{\beta^{(m)} \& |[dottedentry]| \&\beta^{(1)}\\}$ is a tableau), such that $x\beta^{(1)}\cdots\beta^{(m)}$ is a
highest-weight word. Recall that $x\beta^{(1)}\cdots\beta^{(h)}$ is a highest-weight word for all $h \leq m$ by
\fullref{Lemma}{lem:highestweightfactors}. In particular, $x$ is a highest-weight word and so $x = 1$.

We are going to analyze how $c_xc_{\beta^{(1)}}\cdots c_{\beta^{(m)}}$ rewrites to normal form. Like for $A_n$, $B_n$,
$C_n$, and $D_n$, the aim is to prove that this rewriting involves a single left-to-right pass through the
word. However, we require a fairly complicated analysis of cases, shown in \fullref{Table}{tbl:g2:leftmultcases}. In the
table, every possible admissible column is listed as a possibility for $\beta^{(1)}$. In those cases where we also have
to consider $\beta^{(2)}$ or $\beta^{(3)}$, there are fewer possibilities because of the restriction
$\beta^{(3)} \preceq \beta^{(2)} \preceq \beta^{(1)}$. Most of these cases are ruled out by the requirement that
$x\beta^{(1)}\beta^{(2)}\beta^{(3)}$ is of highest weight. For example, the case where $\beta^{(1)} = 2$ and
$\beta^{(2)}=2$ is impossible, because
\[
\begin{rhoarray}{rcccll}
  \rho_{1}(x\beta^{(1)}\beta^{(2)}\cdots) ={}& \soverbrace{{+}}^{x} &\, \soverbrace{{-}}^{\beta^{(1)}} & \soverbrace{{-}}^{\beta^{(2)}} & \cdots &{}={-}\cdots; \\
\end{rhoarray}
\]
All other cases listed as `not highest weight' in \fullref{Table}{tbl:g2:leftmultcases} are ruled out in the same way,
by considering either $\rho_1$ or $\rho_2$.

\begin{table}
  \centering
  \caption{Cases for left-multiplication in $G_2$}
  \label{tbl:g2:leftmultcases}
  \begin{tabular}{lllll}
    \toprule
    $x=1$ & $\beta^{(1)}=1$              & \multicolumn{3}{l}{Case 1}                                                 \\ \cmidrule(lr) {2-5}
          & $\beta^{(1)}=2$              & $\beta^{(2)}=1$        & \multicolumn{2}{l}{Case 2}                        \\
          &                              & $\beta^{(2)}=2$        & \multicolumn{2}{l}{Not highest weight ($\rho_1$)} \\
          &                              & $\beta^{(2)}=12$       & \multicolumn{2}{l}{Case 3}                        \\
          &                              & $\beta^{(2)}=13$       & \multicolumn{2}{l}{Case 4}                        \\
          &                              & $\beta^{(2)}=23$       & \multicolumn{2}{l}{Not highest weight ($\rho_1$)} \\
          &                              & $\beta^{(2)}=20$       & \multicolumn{2}{l}{Not highest weight ($\rho_1$)} \\
          &                              & $\beta^{(2)}=2\bar{3}$ & \multicolumn{2}{l}{Not highest weight ($\rho_1$)} \\ \cmidrule(lr) {2-5}
          & $\beta^{(1)}=3$              & \multicolumn{3}{l}{Not highest weight ($\rho_2$)}                          \\ \cmidrule(lr) {2-5}
          & $\beta^{(1)}=0$              & $\beta^{(2)}=1$        & Case 5                                            \\ \cmidrule(lr) {3-5}
          &                              & $\beta^{(2)}=2$        & $\beta^{(3)}=1$  & Case 6                         \\
          &                              &                        & $\beta^{(3)}=2$  & Not highest weight ($\rho_1$)  \\
          &                              &                        & $\beta^{(3)}=12$ & Case 7                         \\
          &                              &                        & $\beta^{(3)}=13$ & Case 8                         \\
          &                              &                        & $\beta^{(3)}=23$ & Not highest weight ($\rho_1$)  \\
          &                              &                        & $\beta^{(3)}=20$ & Not highest weight ($\rho_1$)  \\ \cmidrule(lr) {3-5}
          &                              & $\beta^{(2)}=3$        & \multicolumn{2}{l}{Not highest weight ($\rho_2$)} \\
          &                              & $\beta^{(2)}=12$       & Case 9                                            \\
          &                              & $\beta^{(2)}=13$       & \multicolumn{2}{l}{Not highest weight ($\rho_2$)} \\
          &                              & $\beta^{(2)}=23$       & Case 10                                           \\
          &                              & $\beta^{(2)}=20$       & \multicolumn{2}{l}{Not highest weight ($\rho_1$)} \\ \cmidrule(lr) {2-5}
          & $\beta^{(1)}=\bar{3}$        & \multicolumn{3}{l}{Not highest weight ($\rho_1$)}                          \\
          & $\beta^{(1)}=\bar{2}$        & \multicolumn{3}{l}{Not highest weight ($\rho_2$)}                          \\
          & $\beta^{(1)}=\bar{1}$        & Case 11                                                                    \\
          & $\beta^{(1)}=12$             & Case 12                                                                    \\
          & $\beta^{(1)}=13$             & \multicolumn{3}{l}{Not highest weight ($\rho_2$)}                          \\
          & $\beta^{(1)}=23$             & Case 13                                                                    \\
          & $\beta^{(1)}=00$             & Case 14                                                                    \\
          & $\beta^{(1)}=20$             & \multicolumn{3}{l}{Not highest weight ($\rho_1$)}                          \\
          & $\beta^{(1)}=2\bar{3}$       & \multicolumn{3}{l}{Not highest weight ($\rho_1$)}                          \\
          & $\beta^{(1)}=0\bar{3}$       & \multicolumn{3}{l}{Not highest weight ($\rho_1$)}                          \\
          & $\beta^{(1)}=3\bar{3}$       & \multicolumn{3}{l}{Not highest weight ($\rho_2$)}                          \\
          & $\beta^{(1)}=30$             & \multicolumn{3}{l}{Not highest weight ($\rho_2$)}                          \\
          & $\beta^{(1)}=3\bar{2}$       & \multicolumn{3}{l}{Not highest weight ($\rho_2$)}                          \\
          & $\beta^{(1)}=0\bar{2}$       & \multicolumn{3}{l}{Not highest weight ($\rho_2$)}                          \\
          & $\beta^{(1)}=\bar{3}\bar{2}$ & \multicolumn{3}{l}{Not highest weight ($\rho_1$)}                          \\
          & $\beta^{(1)}=\bar{3}\bar{1}$ & \multicolumn{3}{l}{Not highest weight ($\rho_1$)}                          \\
          & $\beta^{(1)}=\bar{2}\bar{1}$ & \multicolumn{3}{l}{Not highest weight ($\rho_2$)}                          \\
    \bottomrule
  \end{tabular}
\end{table}

There are fourteen remaining cases in \fullref{Table}{tbl:g2:leftmultcases}, but we reassure the reader that many of
these quickly only result in one or two rewriting steps, and in the others the rewriting behaves in a straightforward
way. Let us consider each of these cases in turn.
\begin{itemize}

\item\textit{Case 1}. $\beta^{(1)} = 1$. Then $\beta^{(1)} \preceq x$ and so no rewriting occurs: the word
$c_xc_{\beta^{(1)}}\cdots c_{\beta^{(m)}}$ is in normal form.

\item\textit{Cases 2--4}. $\beta^{(1)} = 2$ and $\beta^{(2)} \in \set{1,12,13}$. Now, since
$\beta^{(j+1)} \preceq \beta^{(j)}$ for all $j$, the columns $\beta^{(2)}$, \ldots, $\beta^{(m)}$ consist of zero or
more columns $1$, followed by zero or more columns $13$, followed by zero or more columns $12$. Notice that this
subsumes the three possibilities for $\beta^{(2)}$.  Note that $P(x\beta^{(1)}) = \tableau{12\\}$, and
$P(121) = \tableau{12 \& 1\\}$, and $P(1213) = \tableau{1 \& 1 \& 1\\}$, as can be seen from
\fullref{Table}{tbl:g2:palphabeta} (see page~\pageref{tbl:g2:palphabeta}).

When there is at least one column $13$, rewriting begins
\begin{align*}
c_1c_2c_1^{p}c_{13}^{q}c_{12}^{r}
\imreduces{}& c_{12}c_1^{p}c_{13}^{q}c_{12}^{r} \\
\reduces{}& c_1^{p}c_{12}c_{13}^{q}c_{12}^{r} \\
\imreduces{}& \begin{cases}
c_1^{p}c_{12}^{r+1} & \text{if $q = 0$,} \\
c_1^{p+3}c_{13}^{q-1}c_{12}^{r}  & \text{if $q \geq 1$.}
\end{cases}
\end{align*}
In either case, the final word is in normal form since $1 \preceq 13$ and $1 \preceq 12$.

\item\textit{Case 5}. $\beta^{(1)}=0$ and $\beta^{(2)}=1$. Then $P(x\beta^{(1)}) = \tableau{1\\}$. Since $1 \preceq 1$, the rewriting to
normal form is simply
\[
c_1c_0c_{1}c_{\beta^{(3)}}\cdots c_{\beta^{(m)}}
\imreduces c_{1}c_{1}c_{\beta^{(3)}}\cdots c_{\beta^{(m)}}.
\]

\item\textit{Cases 6--8}. $\beta^{(1)}=0$, $\beta^{(2)}=2$, and $\beta^{(3)} \in \set{1,12,13}$. Since $\beta^{(j+1)} \preceq \beta^{(j)}$
for all $j$, the columns $\beta^{(3)}$, \ldots, $\beta^{(m)}$ consist of zero or more columns $1$, followed by zero or more columns
$13$, followed by zero or more columns $12$. Since $P(1\beta^{(1)}) = \tableau{1\\}$ and $P(1\beta^{(2)}) = \tableau{12\\}$, rewriting
proceeds in one of two ways, similarly to case~2. If there is a column $13$ present, rewriting proceeds
\begin{align*}
c_1c_0c_2c_1\cdots c_1c_{13}c_{\beta^{(k)}}\cdots c_{\beta^{(m)}}
\imreduces{}& c_1c_2c_1\cdots c_1c_{13}c_{\beta^{(k)}}\cdots c_{\beta^{(m)}} \\
\imreduces{}& c_{12}c_1\cdots c_1c_{13}c_{\beta^{(k)}}\cdots c_{\beta^{(m)}} \\
\reduces{}& c_1\cdots c_1c_{12}c_{13}c_{\beta^{(k)}}\cdots c_{\beta^{(m)}} \\
\imreduces{}& c_1\cdots c_1c_1c_1c_1c_{\beta^{(k)}}\cdots c_{\beta^{(m)}}.
\end{align*}
This word is in normal form since, regardless of whether $\beta^{(k)}$ is $12$ or $13$, we have $1 \preceq \beta^{(k)}$.
When there is no column $13$, the columns $1$ are followed immediately by columns $12$, and so rewriting begins
\begin{align*}
c_1c_0c_2c_1\cdots c_1c_{12}c_{\beta^{(k)}}\cdots c_{\beta^{(m)}}
\imreduces{}& c_1c_2c_1\cdots c_1c_{12}c_{\beta^{(k)}}\cdots c_{\beta^{(m)}} \\
\imreduces{}& c_{12}c_1\cdots c_1c_{12}c_{\beta^{(k)}}\cdots c_{\beta^{(m)}} \\
\reduces{}& c_1\cdots c_1c_{12}c_{12}c_{\beta^{(k)}}\cdots c_{\beta^{(m)}},
\end{align*}
whichs is in normal form.

\item\textit{Case 9}. $\beta^{(1)}=0$ and $\beta^{(2)}=12$. Then $P(x\beta^{(1)}) = \tableau{1\\}$. Since $12 \preceq 1$, the rewriting to
normal form is simply
\[
c_1c_0c_{12}c_{\beta^{(3)}}\cdots c_{\beta^{(m)}}
\imreduces c_{1}c_{12}c_{\beta^{(3)}}\cdots c_{\beta^{(m)}}.
\]

\item\textit{Case 10}. $\beta^{(1)}=0$ and $\beta^{(2)}=23$. Now, since $\beta^{(j+1)} \preceq \beta^{(j)}$ for all $j$, the remaining
columns $\beta^{(3)}$, \ldots, $\beta^{(m)}$ consist of zero or more columns $23$, zero or more columns $13$, and zero or
more columns $12$. Note that $P(x\beta^{(1)}) = \tableau{1\\}$, and $P(1\beta^{(2)}) = \tableau{1 \& 1\\}$, as
can be seen from \fullref{Table}{tbl:g2:palphabeta}. So rewriting proceeds as follows:
\begin{align*}
c_1c_0c_{23}c_{23}\cdots c_{23}c_{\beta^{(k)}}\cdots c_{\beta^{(m)}}
\imreduces{}& c_1c_{23}c_{23}\cdots c_{23}c_{\beta^{(k)}}\cdots c_{\beta^{(m)}} \\
\imreduces{}& c_1c_1c_{23}\cdots c_{23}c_{\beta^{(k)}}\cdots c_{\beta^{(m)}} \\
\reduces{}& c_1c_1c_1\cdots c_1c_{\beta^{(k)}}\cdots c_{\beta^{(m)}}.
\end{align*}
Regardless of whether $\beta^{(k)}$ is $13$ or $12$, we have $\beta^{(k)} \preceq 1$, so this word is in normal
form. Note that there is exactly one symbol $c_1$ in the final word for each symbol $c_{23}$ in the initial word.

\item\textit{Case 11}. $\beta^{(1)}=\bar{1}$. Since $P(1\bar{1}) = \emptyword$, as can be seen from
\fullref{Table}{tbl:g2:palphabeta}, the rewriting to normal form is simply
\[
c_1c_{\bar{1}}c_{\beta^{(3)}}\cdots c_{\beta^{(m)}}
\imreduces c_{\beta^{(3)}}\cdots c_{\beta^{(m)}}.
\]

\item\textit{Case 12}. $\beta^{(1)}=12$. Since $12 \preceq 1$, the word $c_1c_{12}c_{\beta^{(2)}}\cdots c_{\beta^{(m)}}$ is in normal form.

\item\textit{Case 13}. $\beta^{(1)}=23$. Since $\beta^{(j+1)} \preceq \beta^{(j)}$ for all $j$, the remaining
columns $\beta^{(2)}$, \ldots, $\beta^{(m)}$ consist of zero or more columns $23$, zero or more columns $13$, and zero or
more columns $12$. Note that $P(x\beta^{(1)}) = \tableau{1 \& 1\\}$, as
can be seen from \fullref{Table}{tbl:g2:palphabeta}. So rewriting proceeds as follows:
\begin{align*}
c_1c_{23}c_{23}\cdots c_{23}c_{\beta^{(k)}}\cdots c_{\beta^{(m)}}
\imreduces{}& c_1c_1c_{23}\cdots c_{23}c_{\beta^{(k)}}\cdots c_{\beta^{(m)}} \\
\reduces{}& c_1c_1c_1\cdots c_1c_{\beta^{(k)}}\cdots c_{\beta^{(m)}}.
\end{align*}
Regardless of whether $\beta^{(k)}$ is $13$ or $12$, we have $\beta^{(k)} \preceq 1$, so this word is in normal form.

\item\textit{Case 14}. $\beta^{(1)}=00$. Since $\beta^{(2)} \preceq \beta^{(1)}$, it follows that $\beta^{(2)}$ is either $13$ or
$12$ (note that $00 \not\preceq 00$). Since $P(x\beta^{(1)}) = \tableau{1\\}$, as can be seen from
\fullref{Table}{tbl:g2:palphabeta}, the rewriting to normal form is simply
\[
c_1c_{00}c_{\beta^{(2)}}\cdots c_{\beta^{(m)}}
\imreduces c_1c_{\beta^{(2)}}\cdots c_{\beta^{(m)}}.
\]
Regardless of whether $\beta^{(2)}$ is $13$ or $12$, we have $\beta^{(2)} \preceq 1$, so this word is in normal form.

\end{itemize}
This completes the case analysis. Note that in each case, the lengths of $c_x c_{\beta^{(1)}}\cdots c_{\beta^{(m)}}$ and
its corresponding normal form differ by at most $2$. (The maximum difference $2$ occurs in cases 5--7 and 10.)

As in the discussion before \fullref{Lemma}{lem:anbncndn:leftmultbytrans}, the
way rewriting proceeds at highest weight
is mirrored in how it proceeds in general for type $G_2$. Thus, with the analysis above, we can prove the following
analogue of \fullref{Lemma}{lem:anbncndn:leftmultbytrans}:

\begin{lemma}
  \label{lem:g2:leftmultbytrans}
  Let $\Sigma$ and $T$ be alphabet and set of rewriting rules constucted for type $G_2$ in
  \fullref{Subsection}{subsec:rewritingconstruction}. Let $x \in \aG_2$. Let $L \subseteq \Sigma^*$ be the languages of
  irreducible words. Then the relation
  \[
    {}_{c_x}L = \gset[\big]{(u,v) \in L\times L}{c_xu =_{\Pl(G_2)} v}
  \]
  is recognized by an transducer. Furthermore, if $(u,v)$ is a pair in this relation, then the lengths of $u$ and $v$
  differ by at most $1$.
\end{lemma}

\subsection{Right-multiplication by transducer}
\label{subsec:rightmult}

We now turn our attention to right-multiplication. Unlike left-multiplication, the cases $A_n$, $B_n$, $C_n$, and $D_n$
are sufficiently different that we have to consider them separately.

\subsubsection{\texorpdfstring{$A_n$}{An}}

Let $\beta^{(1)},\ldots,\beta^{(m)}$ be admissible $A_n$ columns and let $x \in \aA_n$ be such that
$\beta^{(i+1)} \preceq \beta^{(i)}$ for $i=1,\ldots,m-1$ (that is,
$\tikz[tableau]\matrix{\beta^{(m)} \& |[dottedentry]| \& \beta^{(1)} \\};$ is an $A_n$ tableau), and such that
$\beta^{(1)}\cdots\beta^{(m)}x$ is a highest-weight word. We are going to examine how the corresponding word over
$\Sigma$ (that is, $c_{\beta^{(1)}}\cdots c_{\beta^{(m)}}c_x$) rewrites to an irreducible word. The aim is to prove that
this rewriting involves a single right-to-left pass through the word.

First, note that since the prefix $\beta^{(1)}\cdots\beta^{(m)}$ is a highest-weight word by
\fullref{Lemma}{lem:highestweightfactors}, each column $\beta^{(i)}$ is of the form $1\cdots p_i$ for some
$p_i \in \aA[1,n]$ and $p_{i+1} \geq p_i$ for $i=1,\ldots,m-1$ by \fullref{Lemma}{lem:highestweightableauchar}. That is,
the tabloid corresponding to the word $\beta^{(1)}\cdots\beta^{(m)}x$ is of the form:
\[
\begin{tikzpicture}[x=4mm,y=-2mm]
  \begin{scope}
    \draw (1,0) rectangle (2,10);
    \draw (2,0) rectangle (3,9);
    \draw (3,0) rectangle (4,9);
    \draw (4,0) rectangle (5,7);
    \draw (5,0) rectangle (6,6);
    \draw (6,0) rectangle (7,5);
    \draw (7,0) rectangle (8,5);
    \draw (8,0) rectangle (9,2);
    \draw (9,0) rectangle (10,2);
    \draw[fill=lightgray] (0,0) rectangle (1,2);
    \foreach \x in {2,3,...,9} {
      \node[font=\scriptsize,anchor=south] at (\x,0) {$\preceq$};
    };
    \node[font=\scriptsize,anchor=south] at (1,0) {$\not\preceq$};
    \foreach \x in {2,3,...,10} {
      \path (\x,0) ++(-.5,0) node[font=\scriptsize,anchor=north] {$1$};
    };
    \path (1,0) ++(-.5,0) node[font=\scriptsize,anchor=north] {$x$};
    \node[font=\tiny,anchor=south,inner sep=.4mm] at (1.5,10) {$p_9$};
    \node[font=\tiny,anchor=south,inner sep=.4mm] at (2.5,9) {$p_8$};
    \node[font=\tiny,anchor=south,inner sep=.4mm] at (3.5,9) {$p_7$};
    \node[font=\tiny,anchor=south,inner sep=.4mm] at (4.5,7) {$p_6$};
    \node[font=\tiny,anchor=south,inner sep=.4mm] at (5.5,6) {$p_5$};
    \node[font=\tiny,anchor=south,inner sep=.4mm] at (6.5,5) {$p_4$};
    \node[font=\tiny,anchor=south,inner sep=.4mm] at (7.5,5) {$p_3$};
    % \node[font=\tiny,anchor=south,inner sep=.4mm] at (8.5,4) {$p_2$};
    % \node[font=\tiny,anchor=south,inner sep=.4mm] at (9.5,3) {$p_1$};
    %
  \end{scope}
\end{tikzpicture}
\]

The assumption that $\beta^{(1)}\cdots\beta^{(m)}x$ is of highest weight puts a restriction on $x$, as the following
lemma shows:

\begin{lemma}
\label{lem:an:rightmultgen}
Either $x=1$, or $x = p_k+1$ for some $k \in \set{1,\ldots,m}$ such that $p_k < n$.
\end{lemma}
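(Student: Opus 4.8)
The plan is to use the already-proven practical description of the Kashiwara operators (the technique for computing $\rho_i$ stated after Proposition~\ref{prop:computingef}) and argue by contradiction, exactly in the style of the earlier ``first symbol of $\beta$'' lemmata (Lemmata~\ref{lem:an:betafirstsymbol}, \ref{lem:cn:betafirstsymbol}, etc.). We are given that $\tableau{\beta^{(m)} \& |[dottedentry]| \& \beta^{(1)}\\}$ is an $A_n$-tableau and $\beta^{(1)}\cdots\beta^{(m)}x$ is of highest weight, and from the observation already recorded above (via Lemma~\ref{lem:highestweightfactors} and Lemma~\ref{lem:highestweightableauchar}) each column has the form $\beta^{(i)} = 1\cdots p_i$ with $p_1 \leq p_2 \leq \cdots \leq p_m$. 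So the only data to pin down is the single symbol $x$.

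First I would write $v = \beta^{(1)}\cdots\beta^{(m)}$, so $v$ is a highest-weight word whose content is fully determined by the multiset $\{p_1,\ldots,p_m\}$, and by part~3 of Proposition~\ref{prop:computingef} (together with $\ecount_i(v)=0$) we have $\rho_i(v) = {+}^{\fcount_i(v)}$ for every $i$. Moreover $\fcount_i(v)$ equals the number of columns $\beta^{(j)}$ whose last symbol is $i$, i.e. the number of $j$ with $p_j = i$: a column $1\cdots p_j$ contributes ${+}$ to $\rho_i$ precisely when $i = p_j$ (it contributes ${-}$ when $i = p_j - 1$, but those are all cancelled since $v$ is highest weight), and it contributes nothing to $\rho_i$ for other $i$. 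Thus $\rho_i(v) \neq \emptyword$ if and only if $i \in \{p_1,\ldots,p_m\}$. Now suppose $x \notin \{1\} \cup \{p_k + 1 : p_k < n\}$. Then for $i = x-1$ the symbol $x$ contributes ${-}$ to $\rho_{x-1}(x)$, so $\rho_{x-1}(vx) = \rho_{x-1}(v)\,{-}\,\rho_{x-1}(\beta') $ — but $x$ is the whole of the appended generator, so more simply $\rho_{x-1}(vx) = {+}^{\fcount_{x-1}(v)}{-}$. If $x - 1 \notin \{p_1,\ldots,p_m\}$ this is ${-}$, and if $x-1 \in \{p_1,\ldots,p_m\}$ it is ${+}^{\fcount_{x-1}(v)}{-}$ which after deletion still ends in (indeed equals, since there is only one ${-}$ and it sits at the right) a word containing ${-}$; in either case $\ecount_{x-1}(vx) > 0$, so $\e_{x-1}(vx)$ is defined, contradicting $vx$ being of highest weight. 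The one value excluded by ``$x = p_k + 1$ for some $k$ with $p_k < n$'' that still needs checking is $x = 1$ itself (when $x - 1 = 0$ is not an index, no $\e$ applies) and the boundary case $x = n+1$ which does not exist in $\aA_n$; both are handled by the case split, so the only surviving possibilities are $x = 1$ and $x = p_k+1$ with $p_k < n$.

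The only subtlety — and the one place I expect to have to be slightly careful — is making sure the cancellation bookkeeping in $\rho_{x-1}(vx)$ is airtight: I must confirm that appending a single ${-}$ (from the symbol $x$) on the right of $\rho_{x-1}(v) = {+}^{\fcount_{x-1}(v)}$ cannot be fully cancelled, which is clear because a deletion ${+}{-}\to\emptyword$ removes the ${-}$ together with the ${+}$ immediately to its left, so after one deletion we have ${+}^{\fcount_{x-1}(v)-1}$ with no ${-}$ remaining — wait, that shows $\ecount_{x-1}(vx)=0$ in the case $x-1 \in \{p_1,\ldots,p_m\}$! So that case does \emph{not} immediately give a contradiction, and indeed it corresponds exactly to $x - 1 = p_k$ for some $k$, i.e. $x = p_k + 1$, which is one of the allowed outcomes. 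The genuine contradiction only arises when $x - 1 \notin \{p_1,\ldots,p_m\}$, in which case $\rho_{x-1}(vx) = {-}$ (no ${+}$ to cancel against), so $\ecount_{x-1}(vx) > 0$ and $\e_{x-1}(vx)$ is defined. Combining: if $x \neq 1$ then $x - 1$ is a genuine index in $\{1,\ldots,n-1\}$, and either $x - 1 \in \{p_1,\ldots,p_m\}$, giving $x = p_k+1$ with $p_k = x-1 < n$ (noting $x \leq n$ forces $p_k \leq n-1$), or $x - 1 \notin \{p_1,\ldots,p_m\}$, which is impossible by the highest-weight assumption. This establishes the claimed dichotomy.
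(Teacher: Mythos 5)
Your proof is correct and uses essentially the same argument as the paper: compute $\rho_{x-1}$ of each column $1\cdots p_j$ (which is $\emptyword$ unless $p_j = x-1$, when it is ${+}$), so that when $x\neq 1$ and $x-1\notin\set{p_1,\ldots,p_m}$ the appended symbol's ${-}$ survives and $\e_{x-1}$ is defined, contradicting highest weight. The mid-proof false start is caught and corrected, and the final case split (including the observation that $x\leq n$ forces $p_k<n$) matches the paper's reasoning.
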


\begin{proof}
  Suppose that $x \neq 1$ and $x \neq p_k+1$ for all $k$. Then for each $i$, either $\rho_{x-1}(\beta^{(k)}) =
  \emptyword$ (when $x-1 > p_k$) or $\rho_{x-1}(\beta^{(k)}) = {+}{-} = \emptyword$ (when $x-1 < p_k$), and so
  $\rho_{x-1}(\beta^{(1)}\cdots\beta^{(m)}x) = {-}$, contradicting the
assumption of highest weight.
\end{proof}

We consider the cases $x=p_k+1$ and $x=1$ separately.

First, suppose $x = p_k+1$, and assume that $k$ is maximal with this property. Then for $j > k$, we have
$x \not \preceq \beta^{(j)}$ and the symbol $x$ appears in $\beta^{(j)}$ and so by the \fullref{Commuting columns
  lemma}{lem:commutingcolumns},
$P(\tikz[tableau]\matrix{x \& \beta^{(j)} \\};) = \tikz[tableau]\matrix{\beta^{(j)} \& x\\};$, while
$P(\tikz[tableau]\matrix{x \& \beta^{(k)} \\};) = \tikz[tableau]\matrix{\beta^{(k)}x\\};$. That is, there are rewriting
rules $c_{\beta^{(j)}}c_x \to c_xc_{\beta^{(j)}}$ and $c_{\beta^{(k)}}c_x \to c_{\beta^{(k)}x}$. Further,
$\beta^{(k)}x \preceq \beta_{(k+1)}$. Therefore rewriting to normal form proceeds as follows:
\begin{align*}
&c_{\beta^{(1)}}\cdots c_{\beta^{(m)}}c_x \\
\imreduces{}&c_{\beta^{(1)}}\cdots c_{\beta^{(m-1)}}c_xc_{\beta^{(m)}} \\
&\vdots \\
\imreduces{}&c_{\beta^{(1)}}\cdots c_{\beta^{(k)}}c_xc_{\beta^{(k+1)}}\cdots c_{\beta^{(m)}} \\
\imreduces{}&c_{\beta^{(1)}}\cdots c_{\beta^{(k)}x}c_{\beta^{(k+1)}}\cdots c_{\beta^{(m)}}.
\end{align*}

Thus in the case $k=6$, in terms of the tabloid the column $x$ commutes past the columns $\beta^{(j)}$ with $j > 6$,
resulting in a tabloid of the following form:
\[
\begin{tikzpicture}[x=4mm,y=-2mm]
  \begin{scope}[yshift=-30mm]
    \draw (0,0) rectangle (1,10);
    \draw (1,0) rectangle (2,9);
    \draw (2,0) rectangle (3,9);
    \draw (4,0) rectangle (5,7);
    \draw (5,0) rectangle (6,6);
    \draw (6,0) rectangle (7,5);
    \draw (7,0) rectangle (8,5);
    \draw (8,0) rectangle (9,2);
    \draw (9,0) rectangle (10,2);
    \draw[fill=lightgray] (3,0) rectangle (4,2);
    \foreach \x in {1,2,3,5,6,7,8,9} {
      \node[font=\scriptsize,anchor=south] at (\x,0) {$\preceq$};
    };
    \node[font=\scriptsize,anchor=south] at (4,0) {$\not\preceq$};
    \foreach \x in {1,2,3,5,6,7,8,9,10} {
      \path (\x,0) ++(-.5,0) node[font=\scriptsize,anchor=north] {$1$};
    };
    \path (4,0) ++(-.5,0) node[font=\scriptsize,anchor=north] {$x$};
    \node[font=\tiny,anchor=south,inner sep=.4mm] at (.5,10) {$p_9$};
    \node[font=\tiny,anchor=south,inner sep=.4mm] at (1.5,9) {$p_8$};
    \node[font=\tiny,anchor=south,inner sep=.4mm] at (2.5,9) {$p_7$};
    \node[font=\tiny,anchor=south,inner sep=.4mm] at (4.5,7) {$p_6$};
    \node[font=\tiny,anchor=south,inner sep=.4mm] at (5.5,6) {$p_5$};
    \node[font=\tiny,anchor=south,inner sep=.4mm] at (6.5,5) {$p_4$};
    \node[font=\tiny,anchor=south,inner sep=.4mm] at (7.5,5) {$p_3$};
    % \node[font=\tiny,anchor=south,inner sep=.4mm] at (8.5,4) {$p_2$};
    % \node[font=\tiny,anchor=south,inner sep=.4mm] at (9.5,3) {$p_1$};
    %
  \end{scope}
\end{tikzpicture}
\]
The final rewriting step appends $x$ to the bottom of the column $\beta^{(6)}$, giving a tableau of the following form:
\[
\begin{tikzpicture}[x=4mm,y=-2mm]
  \begin{scope}[yshift=-60mm]
    \draw (0,0) rectangle (1,10);
    \draw (1,0) rectangle (2,9);
    \draw (2,0) rectangle (3,9);
    \draw (3,0) rectangle (4,7);
    \draw (4,0) rectangle (5,6);
    \draw (5,0) rectangle (6,5);
    \draw (6,0) rectangle (7,5);
    \draw (7,0) rectangle (8,2);
    \draw (8,0) rectangle (9,2);
    \draw[fill=lightgray] (3,7) rectangle (4,9);
    \foreach \x in {1,2,3,4,5,6,7,8} {
      \node[font=\scriptsize,anchor=south] at (\x,0) {$\preceq$};
    };
    \foreach \x in {1,2,3,4,5,6,7,8,9} {
      \path (\x,0) ++(-.5,0) node[font=\scriptsize,anchor=north] {$1$};
    };
    \path (4,0) ++(-.5,7) node[font=\scriptsize,anchor=north] {$x$};
    \node[font=\tiny,anchor=south,inner sep=.4mm] at (.5,10) {$p_9$};
    \node[font=\tiny,anchor=south,inner sep=.4mm] at (1.5,9) {$p_8$};
    \node[font=\tiny,anchor=south,inner sep=.4mm] at (2.5,9) {$p_7$};
    \node[font=\tiny,anchor=south,inner sep=.4mm] at (3.5,7) {$p_6$};
    \node[font=\tiny,anchor=south,inner sep=.4mm] at (4.5,6) {$p_5$};
    \node[font=\tiny,anchor=south,inner sep=.4mm] at (5.5,5) {$p_4$};
    \node[font=\tiny,anchor=south,inner sep=.4mm] at (6.5,5) {$p_3$};
    % \node[font=\tiny,anchor=south,inner sep=.4mm] at (7.5,4) {$p_2$};
    % \node[font=\tiny,anchor=south,inner sep=.4mm] at (8.5,3) {$p_1$};
    %
  \end{scope}
\end{tikzpicture}
\]

Now consider the other case, when $x=1$. Define $k$ to be maximal such that $\beta^{(k)} = 1$. Then for $j > k$, we have
$x \not \preceq \beta^{(j)}$ and the symbol $x$ appears in $\beta^{(j)}$ and so by the \fullref{Commuting columns
  lemma}{lem:commutingcolumns},
$P(\tikz[tableau]\matrix{x \& \beta^{(j)} \\};) = \tikz[tableau]\matrix{\beta^{(j)} \& x\\};$. That is, there are
rewriting rules $c_{\beta^{(j)}}c_x \to c_xc_{\beta^{(j)}}$. Further, $x \preceq \beta^{(k)}$. Therefore rewriting to
normal form proceeds as follows:
\begin{align*}
&c_{\beta^{(1)}}\cdots c_{\beta^{(m)}}c_x \\
\imreduces{}&c_{\beta^{(1)}}\cdots c_{\beta^{(m-1)}}c_xc_{\beta^{(m)}} \\
&\vdots \\
\imreduces{}&c_{\beta^{(1)}}\cdots c_{\beta^{(k)}}c_{\beta^{(k+1)}}c_xc_{\beta^{(k+2)}}\cdots c_{\beta^{(m)}} \\
\imreduces{}&c_{\beta^{(1)}}\cdots c_{\beta^{(k)}}c_xc_{\beta^{(k+1)}}\cdots c_{\beta^{(m)}}.
\end{align*}

Thus in the case $k=2$, in terms of the tabloid the column $x$ commutes past the columns $\beta^{(j)}$ with $j > 2$,
giving a tableau of the following form:
\[
  \begin{tikzpicture}[x=4mm,y=-2mm]
  \begin{scope}
    \draw (0,0) rectangle (1,10);
    \draw (1,0) rectangle (2,9);
    \draw (2,0) rectangle (3,9);
    \draw (3,0) rectangle (4,7);
    \draw (4,0) rectangle (5,6);
    \draw (5,0) rectangle (6,5);
    \draw (6,0) rectangle (7,5);
    \draw[fill=lightgray] (7,0) rectangle (8,2);
    \draw (8,0) rectangle (9,2);
    \draw (9,0) rectangle (10,2);
    \foreach \x in {1,2,3,4,5,6,7,8,9} {
      \node[font=\scriptsize,anchor=south] at (\x,0) {$\preceq$};
    };
    \foreach \x in {1,2,3,4,5,6,7,9,10} {
      \path (\x,0) ++(-.5,0) node[font=\scriptsize,anchor=north] {$1$};
    };
    \path (8,0) ++(-.5,0) node[font=\scriptsize,anchor=north] {$x$};
    \node[font=\tiny,anchor=south,inner sep=.4mm] at (.5,10) {$p_9$};
    \node[font=\tiny,anchor=south,inner sep=.4mm] at (1.5,9) {$p_8$};
    \node[font=\tiny,anchor=south,inner sep=.4mm] at (2.5,9) {$p_7$};
    \node[font=\tiny,anchor=south,inner sep=.4mm] at (3.5,7) {$p_6$};
    \node[font=\tiny,anchor=south,inner sep=.4mm] at (4.5,6) {$p_5$};
    \node[font=\tiny,anchor=south,inner sep=.4mm] at (5.5,5) {$p_4$};
    \node[font=\tiny,anchor=south,inner sep=.4mm] at (6.5,5) {$p_3$};
    % \node[font=\tiny,anchor=south,inner sep=.4mm] at (8.5,4) {$p_2$};
    % \node[font=\tiny,anchor=south,inner sep=.4mm] at (9.5,3) {$p_1$};
    %
  \end{scope}
\end{tikzpicture}
\]

Note that in both cases the length of the normal form word differs from $m$ by at most $1$.

As in the discussion before \fullref{Lemma}{lem:anbncndn:leftmultbytrans}, the
way rewriting proceeds at highest weight
is mirrored in how it proceeds in general. That is, rewriting can be carried out by a single right-to-left pass through
the word. The aim is now to show that a transducer can recognize the relation consisting of pairs $(u,v)$, where
$u,v \in \Sigma^*$ are irreducible and $uc_x \reduces v$, by essentially computing this rewriting. To see this, consider
a transducer that reads its input tapes right-to-left: such a transducer can carry out the rewriting in a way
symmetrical to that described in the discussion before \fullref{Lemma}{lem:anbncndn:leftmultbytrans}. Since the class of
rational relations is closed under reversal \cite[p.~65--66]{berstel_transductions}, it follows we have proven the
following analogue of \fullref{Lemma}{lem:anbncndn:leftmultbytrans} for right-multiplication in type $A_n$:

\begin{lemma}
  \label{lem:an:rightmultbytrans}
  Let $\Sigma$ and $T$ be the alphabet and set of rewriting rules constructed for type $A_n$ in
  \fullref{Subsection}{subsec:rewritingconstruction}. Let $x \in \aA_n$. Let $L \subseteq \Sigma^*$ be the languages of
  irreducible words. Then the relation
  \[
    L_{c_x} = \gset[\big]{(u,v) \in L\times L}{uc_x =_{\Pl(A_n)} v}
  \]
  is recognized by an transducer. Furthermore, if $(u,v)$ is a pair in this relation, then the lengths of $u$ and $v$
  differ by at most $1$.
\end{lemma}

\subsubsection{\texorpdfstring{$C_n$}{Cn}}

Let $\beta^{(1)},\ldots,\beta^{(m)}$ be admissible $C_n$ columns and let $x \in \aC_n$ be such that
$\beta^{(i+1)} \preceq \beta^{(i)}$ for $i=1,\ldots,m-1$ (that is,
$\tikz[tableau]\matrix{\beta^{(m)} \& |[dottedentry]| \& \beta^{(1)} \\};$ is a $C_n$ tableau), and such that
$\beta^{(1)}\ldots\beta^{(m)}x$ is a highest-weight word. As we did for type $A_n$, we are going to examine how the
corresponding word over $\Sigma$ (that is, $c_{\beta^{(1)}}\cdots c_{\beta^{(m)}}c_x$) rewrites to an irreducible
word. Again, the aim is to prove that this rewriting involves a single right-to-left pass through the word.

Since $\beta^{(1)}\ldots\beta^{(m)}$ is a highest-weight word by \fullref{Lemma}{lem:highestweightfactors}, each column
$\beta^{(i)}$ is of the form $1\cdots p_i$ for some $p_i \in \aC[1,n]$, and $p_{i+1} \geq p_i$ for $i=1,\ldots,m-1$ by
\fullref{Lemma}{lem:highestweightableauchar}.

The reasoning will proceed in a similar way to the $A_n$ case, except that there is the additional possibility that $x$
may be $\bar{p_k}$, as shown in the following lemma:

\begin{lemma}
\label{lem:cn:rightmultgen}
Either $x=1$, or $x = p_k+1$ for some $k \in \set{1,\ldots,m}$ such that $p_k < n$, or $x = \bar{p_k}$ for some
$k \in \set{1,\ldots,m}$.
\end{lemma}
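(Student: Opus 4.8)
The plan is to follow the strategy of the proof of \fullref{Lemma}{lem:an:rightmultgen}, extended to deal with the extra possibility that $x$ is a barred symbol. Recall that, as noted just before the statement, each column $\beta^{(i)}$ equals $1\cdots p_i$ with $p_i \in \aC_n[1,n]$. The first step is to record the behaviour of the maps $\rho_j$ on such a column: for any $j \in \set{1,\ldots,n}$, inspection of the crystal basis of type $C_n$ shows that inside $1\cdots p_i$ only the symbols $j$ and $j+1$ (with the convention that ``$n+1$'' stands for $\bar n$, since the edge leaving $n$ is the $n$-labelled edge into $\bar n$) can contribute to $\rho_j$, the former contributing a single ${+}$ and the latter a single ${-}$; as $j$ precedes $j+1$ in the column, these cancel, so that $\rho_j(\beta^{(i)}) = {+}$ if $p_i = j$ and $\rho_j(\beta^{(i)}) = \emptyword$ otherwise.

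Now I would argue by contradiction. Suppose $x \neq 1$, that $x \neq p_k + 1$ for every $k$ with $p_k < n$, and that $x \neq \bar{p_k}$ for every $k$; split according to whether $x$ is unbarred or barred. If $x \in \aC_n[2,n]$, set $j = x-1 \in \set{1,\ldots,n-1}$; since $x \leq n$, an equality $p_k = j$ would give $x = p_k+1$ with $p_k < n$, which is excluded, so $\rho_{x-1}(\beta^{(i)}) = \emptyword$ for all $i$ by the previous paragraph. Since $x$ is the head of an $(x-1)$-labelled edge and the source of none, $\rho_{x-1}(x) = {-}$, and as $\rho_{x-1}$ is a homomorphism into the bicyclic monoid it follows that $\rho_{x-1}(\beta^{(1)}\cdots\beta^{(m)}x) = {-}$; hence $\e_{x-1}$ is defined on this word by \fullref{Proposition}{prop:computingef}, contradicting the hypothesis that $\beta^{(1)}\cdots\beta^{(m)}x$ is of highest weight. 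If instead $x = \bar y$ with $y \in \set{1,\ldots,n}$, set $j = y$; by assumption $p_k \neq y$ for all $k$, so $\rho_y(\beta^{(i)}) = \emptyword$ for all $i$, while $\rho_y(\bar y) = {-}$ because $\bar y$ is the head of a $y$-labelled edge and the source of none. Thus $\rho_y(\beta^{(1)}\cdots\beta^{(m)}\bar y) = {-}$, so $\e_y$ is defined, again contradicting highest weight.

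Since the alphabet $\aC_n$ consists exactly of the symbols $1,\ldots,n,\bar n,\ldots,\bar 1$, the two cases above together with $x = 1$ cover all possibilities, and the lemma follows. The only genuine work lies in the first paragraph — checking the values of $\ecount_j$ and $\fcount_j$ on the individual symbols of $\aC_n$ directly from the crystal basis, with the usual care needed at the ``turnaround'' symbols $n$ and $\bar n$ — but this is entirely routine and parallels the computations already made in \fullref{Lemma}{lem:cn:betafirstsymbol}.
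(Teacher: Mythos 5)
Your proof is correct and follows essentially the same route as the paper: argue by contradiction, split into the cases of $x$ unbarred and $x$ barred, and in each case show that the relevant $\rho_j$ (namely $\rho_{x-1}$ or $\rho_{\bar{x}}$) evaluates to ${-}$ on $\beta^{(1)}\cdots\beta^{(m)}x$ because each highest-weight column $1\cdots p_i$ contributes $\emptyword$, contradicting highest weight. The paper simply delegates the unbarred case to the proof of \fullref{Lemma}{lem:an:rightmultgen}, whereas you write out the same computation explicitly; there is no substantive difference.
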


\begin{proof}
  Suppose that $x \neq 1$ and $x \neq p_k+1$ and $x \neq \bar{p_k}$ for all $k$. If $x \in \aC[1,n]$, then the same
  contradiction arises as in the proof of \fullref{Lemma}{lem:an:rightmultgen}.  If $x \in \aC[\bar{n},\bar{1}]$, then
  for each $k$, either $\rho_{\bar{x}}(\beta^{(k)}) = \emptyword$ (when $\bar{x} > p_k$) or $\rho_{\bar{x}}(\beta^{(k)})
  = {+}{-} = \emptyword$ (when $\bar{x} < p_k$), and so $\rho_{\bar{x}}(\beta^{(1)}\cdots\beta^{(m)}x) = {-}$, contradicting
  the assumption of highest weight.
\end{proof}

If $x = 1$ or $x = p_k+1$, then the rewriting proceeds in the same way as in the $A_n$ case. So suppose $x = \bar{p_k}$.
If $p_k > 1$, we will assume that $k$ is minimal with this property; if $p_k = 1$, we will assume that $k$ is maximal
with this property.

Now, $P(\tikz[tableau]\matrix{\bar{p_k} \& \beta^{(m)}\\};) = \tikz[tableau]\matrix{1\cdots (p_k-1)(p_k+1)\cdots p_{m} \\};$, since
$\beta^{(m)}\bar{p_k} =_{\drel{R}_5^{C_n}} 1\cdots (p_k-1)(p_k+1)\cdots p_{m}$. Pictorially (using $k = 6$ as an example), we have:
\[
\begin{tikzpicture}[x=4mm,y=-2mm]
  \begin{scope}
    \draw[fill=lightgray] (0,0) rectangle (1,6) rectangle (0,9);
    \draw (1,0) rectangle (2,9);
    \draw (2,0) rectangle (3,9);
    \draw (3,0) rectangle (4,7);
    \draw (4,0) rectangle (5,6);
    \draw (5,0) rectangle (6,5);
    \draw (6,0) rectangle (7,5);
    \draw (7,0) rectangle (8,2);
    \draw (8,0) rectangle (9,2);
    \draw[fill=black] (.5,5) circle[radius=1.5pt] -- +(-1,0) node[anchor=east,font=\scriptsize,inner sep=.5mm] {$p_6 - 1$};
    \draw[fill=black] (.5,7) circle[radius=1.5pt] -- +(-1,0) node[anchor=east,font=\scriptsize,inner sep=.5mm] {$p_6 + 1$};
    \foreach \x in {2,3,4,5,6,7,8} {
      \node[font=\scriptsize,anchor=south] at (\x,0) {$\preceq$};
    };
    \node[font=\scriptsize,anchor=south] at (1,0) {$\not\preceq$};
    \foreach \x in {1,2,3,4,5,6,7,8,9} {
      \path (\x,0) ++(-.5,0) node[font=\scriptsize,anchor=north] {$1$};
    };
    \node[font=\tiny,anchor=south,inner sep=.4mm] at (.5,9) {$p_9$};
    \node[font=\tiny,anchor=south,inner sep=.4mm] at (1.5,9) {$p_8$};
    \node[font=\tiny,anchor=south,inner sep=.4mm] at (2.5,9) {$p_7$};
    \node[font=\tiny,anchor=south,inner sep=.4mm] at (3.5,7) {$p_6$};
    \node[font=\tiny,anchor=south,inner sep=.4mm] at (4.5,6) {$p_5$};
    \node[font=\tiny,anchor=south,inner sep=.4mm] at (5.5,5) {$p_4$};
    \node[font=\tiny,anchor=south,inner sep=.4mm] at (6.5,5) {$p_3$};
    % \node[font=\tiny,anchor=south,inner sep=.4mm] at (7.5,4) {$p_2$};
    % \node[font=\tiny,anchor=south,inner sep=.4mm] at (8.5,3) {$p_1$};
    %
  \end{scope}
\end{tikzpicture}
\]

Now, for $m > j > k$, we have
\begin{align*}
&\beta^{(j)}1\cdots (p_k-1)(p_k+1)\cdots p_{j+1} \\
={}& \underbrace{1\cdots p_j}\underbrace{1\cdots (p_k-1)(p_k+1)\cdots p_j}(p_j+1)\cdots p_{j+1} \\
& \qquad\text{[by the \fullref{Commuting columns lemma}{lem:commutingcolumns}]} \\
=_{\Pl(C_n)}{}& \overbrace{1\cdots (p_k-1)(p_k+1)\cdots p_j}\overbrace{1\cdots p_j}(p_j+1)\cdots p_{j+1} \\
={}& 1\cdots (p_k-1)(p_k+1)\cdots p_j \beta^{(j+1)}.
\end{align*}
Write $\beta_*^{(j)}$ for $\beta^{(j)}$ with the symbol $p_k$ deleted. Then we have
$P(\tikz[tableau]\matrix{\beta_*^{(j+1)} \& \beta^{(j)}\\};) = \tikz[tableau]\matrix{\beta^{(j+1)} \& \beta_*^{(j)}\\};$
for all $j = k+1,\ldots,m-1$. (Note that when $p_k = 1$, we know from the maximality of $k$ that $\beta^{(j)} \neq 1$.)
Thus we have rewriting rules $c_{\beta^{(j)}}c_{\beta_*^{(j+1)}} \to c_{\beta_*^{(j)}}c_{\beta^{(j+1)}}$.

When $p_k \neq 1$, we have $\beta_*^{(k)} = 1\cdots (p_k-1).$ Thus $\beta_*^{(k)} \preceq \beta^{(k-1)}$ since by the
minimality of $k$ we have $p_{k-1} < p_k$. Thus in this case rewriting to normal form proceeds as follows:
\begin{align*}
&c_{\beta^{(1)}}\cdots c_{\beta^{(m)}}c_{\bar{p_k}} \\
\imreduces{}&c_{\beta^{(1)}}\cdots c_{\beta^{(m-1)}}c_{\beta_*^{(m)}} \\
\imreduces{}&c_{\beta^{(1)}}\cdots c_{\beta_*^{(m-1)}}c_{\beta^{(m)}} \\
&\vdots \\
\imreduces{}&c_{\beta^{(1)}}\cdots c_{\beta^{(k-1)}}c_{\beta_*^{(k)}}c_{\beta^{(k+1)}}\cdots c_{\beta^{(m)}}.
\end{align*}

In the case $k=6$ with $p_k \neq 1$, in terms of the tabloid the `gap' in the columns moves from left to right through
the tabloid:
\[
\begin{tikzpicture}[x=4mm,y=-2mm,baseline=-10mm]
  \begin{scope}
    \draw (0,0) rectangle (1,10);
    \draw[fill=lightgray] (1,0) rectangle (2,6) rectangle (1,8);
    \draw (2,0) rectangle (3,9);
    \draw (3,0) rectangle (4,7);
    \draw (4,0) rectangle (5,6);
    \draw (5,0) rectangle (6,5);
    \draw (6,0) rectangle (7,5);
    \draw (7,0) rectangle (8,2);
    \draw (8,0) rectangle (9,2);
    \draw[fill=black] (1.5,5) circle[radius=1.5pt] -- +(-2,0) node[anchor=east,font=\scriptsize,inner sep=.5mm] {$p_6 - 1$};
    \draw[fill=black] (1.5,7) circle[radius=1.5pt] -- +(-2,0) node[anchor=east,font=\scriptsize,inner sep=.5mm] {$p_6 + 1$};
    \foreach \x in {1,3,4,5,6,7,8} {
      \node[font=\scriptsize,anchor=south] at (\x,0) {$\preceq$};
    };
    \node[font=\scriptsize,anchor=south] at (2,0) {$\not\preceq$};
    \foreach \x in {1,2,3,4,5,6,7,8,9} {
      \path (\x,0) ++(-.5,0) node[font=\scriptsize,anchor=north] {$1$};
    };
    \node[font=\tiny,anchor=south,inner sep=.4mm] at (.5,10) {$p_9$};
    % \node[font=\tiny,anchor=south,inner sep=.4mm] at (1.5,8) {$p_8$};
    \node[font=\tiny,anchor=south,inner sep=.4mm] at (2.5,9) {$p_7$};
    \node[font=\tiny,anchor=south,inner sep=.4mm] at (3.5,7) {$p_6$};
    \node[font=\tiny,anchor=south,inner sep=.4mm] at (4.5,6) {$p_5$};
    \node[font=\tiny,anchor=south,inner sep=.4mm] at (5.5,5) {$p_4$};
    \node[font=\tiny,anchor=south,inner sep=.4mm] at (6.5,5) {$p_3$};
    % \node[font=\tiny,anchor=south,inner sep=.4mm] at (7.5,4) {$p_2$};
    % \node[font=\tiny,anchor=south,inner sep=.4mm] at (8.5,3) {$p_1$};
    %
  \end{scope}
\end{tikzpicture}
\; \to \;
\begin{tikzpicture}[x=4mm,y=-2mm,baseline=-10mm]
  \begin{scope}
    \draw (0,0) rectangle (1,10);
    \draw (1,0) rectangle (2,9);
    \draw[fill=lightgray] (2,0) rectangle (3,6) rectangle (2,8);
    \draw (3,0) rectangle (4,7);
    \draw (4,0) rectangle (5,6);
    \draw (5,0) rectangle (6,5);
    \draw (6,0) rectangle (7,5);
    \draw (7,0) rectangle (8,2);
    \draw (8,0) rectangle (9,2);
    \draw[fill=black] (2.5,5) circle[radius=1.5pt] -- +(-3,0) node[anchor=east,font=\scriptsize,inner sep=.5mm] {$p_6 - 1$};
    \draw[fill=black] (2.5,7) circle[radius=1.5pt] -- +(-3,0) node[anchor=east,font=\scriptsize,inner sep=.5mm] {$p_6 + 1$};
    \foreach \x in {1,2,4,5,6,7,8} {
      \node[font=\scriptsize,anchor=south] at (\x,0) {$\preceq$};
    };
    \node[font=\scriptsize,anchor=south] at (3,0) {$\not\preceq$};
    \foreach \x in {1,2,3,4,5,6,7,8,9} {
      \path (\x,0) ++(-.5,0) node[font=\scriptsize,anchor=north] {$1$};
    };
    \node[font=\tiny,anchor=south,inner sep=.4mm] at (.5,10) {$p_9$};
    \node[font=\tiny,anchor=south,inner sep=.4mm] at (1.5,9) {$p_8$};
    % \node[font=\tiny,anchor=south,inner sep=.4mm] at (2.5,8) {$p_7$};
    \node[font=\tiny,anchor=south,inner sep=.4mm] at (3.5,7) {$p_6$};
    \node[font=\tiny,anchor=south,inner sep=.4mm] at (4.5,6) {$p_5$};
    \node[font=\tiny,anchor=south,inner sep=.4mm] at (5.5,5) {$p_4$};
    \node[font=\tiny,anchor=south,inner sep=.4mm] at (6.5,5) {$p_3$};
    % \node[font=\tiny,anchor=south,inner sep=.4mm] at (7.5,4) {$p_2$};
    % \node[font=\tiny,anchor=south,inner sep=.4mm] at (8.5,3) {$p_1$};
    %
  \end{scope}
\end{tikzpicture}
\]
The rewriting continues until the `gap' reaches the column $\beta^{(k)}$, at which point a tableau is obtained:
\[
\begin{tikzpicture}[x=4mm,y=-2mm]
  \begin{scope}
    \draw (0,0) rectangle (1,10);
    \draw (1,0) rectangle (2,9);
    \draw (2,0) rectangle (3,9);
    \draw[fill=lightgray] (3,0) rectangle (4,6);
    \draw (4,0) rectangle (5,6);
    \draw (5,0) rectangle (6,5);
    \draw (6,0) rectangle (7,5);
    \draw (7,0) rectangle (8,2);
    \draw (8,0) rectangle (9,2);
    \draw[fill=black] (3.5,5) circle[radius=1.5pt];
    \draw (3.5,5) |- +(1,2) node[anchor=west,font=\scriptsize,inner sep=.5mm] {$p_6 - 1$};
    \foreach \x in {1,2,...,8} {
      \node[font=\scriptsize,anchor=south] at (\x,0) {$\preceq$};
    };
    \foreach \x in {1,2,3,4,5,6,7,8,9} {
      \path (\x,0) ++(-.5,0) node[font=\scriptsize,anchor=north] {$1$};
    };
    \node[font=\tiny,anchor=south,inner sep=.4mm] at (.5,10) {$p_9$};
    \node[font=\tiny,anchor=south,inner sep=.4mm] at (1.5,9) {$p_8$};
    \node[font=\tiny,anchor=south,inner sep=.4mm] at (2.5,9) {$p_7$};
    % \node[font=\tiny,anchor=south,inner sep=.4mm] at (3.5,7) {$p_6$};
    \node[font=\tiny,anchor=south,inner sep=.4mm] at (4.5,6) {$p_5$};
    \node[font=\tiny,anchor=south,inner sep=.4mm] at (5.5,5) {$p_4$};
    \node[font=\tiny,anchor=south,inner sep=.4mm] at (6.5,5) {$p_3$};
    % \node[font=\tiny,anchor=south,inner sep=.4mm] at (7.5,4) {$p_2$};
    % \node[font=\tiny,anchor=south,inner sep=.4mm] at (8.5,3) {$p_1$};
    %
  \end{scope}
\end{tikzpicture}
\]

When $p_k = 1$, we have $\beta_*^{(k+1)} = 2\cdots p_{k+1}$ (and we know $p_{k+1} > 1$ by the maximality of $k$) and so
$P(\tikz[tableau]\matrix{\beta_*^{(k+1)} \& \beta^{(k)}\\};) = \tikz[tableau]\matrix{\beta^{(k+1)}\\};$ and so there is
a rewriting rule $c_{\beta^{(k)}}c_{\beta_*^{(k+1}} \to c_{\beta^{(k+1)}}$. Thus in this case rewriting to normal form proceeds as follows:
\begin{align*}
&c_{\beta^{(1)}}\cdots c_{\beta^{(m)}}c_{\bar{p_k}} \\
\imreduces{}&c_{\beta^{(1)}}\cdots c_{\beta^{(m-1)}}c_{\beta_*^{(m)}} \\
\imreduces{}&c_{\beta^{(1)}}\cdots c_{\beta_*^{(m-1)}}c_{\beta^{(m)}} \\
&\vdots \\
\imreduces{}&c_{\beta^{(1)}}\cdots c_{\beta^{(k-1)}}c_{\beta^{(k)}}c_{\beta_*^{(k+1)}}\cdots c_{\beta^{(m)}} \\
\imreduces{}&c_{\beta^{(1)}}\cdots c_{\beta^{(k-1)}}c_{\beta^{(k+1)}}\cdots c_{\beta^{(m)}}.
\end{align*}

In the case $k=2$ with $p_k = 1$, in terms of the tabloid the `gap' in the columns moves from left to right through
the tabloid, just as in the other case, but then there is a final rewriting step:
\[
\begin{tikzpicture}[x=4mm,y=-2mm,baseline=-10mm]
  \begin{scope}
    \draw (0,0) rectangle (1,10);
    \draw (1,0) rectangle (2,9);
    \draw (2,0) rectangle (3,9);
    \draw (3,0) rectangle (4,7);
    \draw (4,0) rectangle (5,6);
    \draw (5,0) rectangle (6,5);
    \draw[fill=lightgray] (6,0) rectangle (7,4);
    \draw (7,0) rectangle (8,2);
    \draw (8,0) rectangle (9,2);
    \foreach \x in {1,2,...,6,8} {
      \node[font=\scriptsize,anchor=south] at (\x,0) {$\preceq$};
    };
    \node[font=\scriptsize,anchor=south] at (7,0) {$\not\preceq$};
    \foreach \x in {1,2,3,4,5,6,8,9} {
      \path (\x,0) ++(-.5,0) node[font=\scriptsize,anchor=north] {$1$};
    };
    \path (7,0) ++(-.5,0) node[font=\scriptsize,anchor=north] {$2$};
    \node[font=\tiny,anchor=south,inner sep=.4mm] at (.5,10) {$p_9$};
    \node[font=\tiny,anchor=south,inner sep=.4mm] at (1.5,9) {$p_8$};
    \node[font=\tiny,anchor=south,inner sep=.4mm] at (2.5,9) {$p_7$};
    \node[font=\tiny,anchor=south,inner sep=.4mm] at (3.5,7) {$p_6$};
    \node[font=\tiny,anchor=south,inner sep=.4mm] at (4.5,6) {$p_5$};
    \node[font=\tiny,anchor=south,inner sep=.4mm] at (5.5,5) {$p_4$};
    \node[font=\tiny,anchor=south,inner sep=.4mm] at (6.5,4) {$p_3$};
    % \node[font=\tiny,anchor=south,inner sep=.4mm] at (7.5,4) {$p_2$};
    % \node[font=\tiny,anchor=south,inner sep=.4mm] at (8.5,3) {$p_1$};
    %
  \end{scope}
\end{tikzpicture}
\to
\begin{tikzpicture}[x=4mm,y=-2mm,baseline=-10mm]
  \begin{scope}
    \draw (0,0) rectangle (1,10);
    \draw (1,0) rectangle (2,9);
    \draw (2,0) rectangle (3,9);
    \draw (3,0) rectangle (4,7);
    \draw (4,0) rectangle (5,6);
    \draw (5,0) rectangle (6,5);
    \draw[fill=lightgray] (6,0) rectangle (7,5);
    \draw (7,0) rectangle (8,2);
    \foreach \x in {1,2,...,6,7} {
      \node[font=\scriptsize,anchor=south] at (\x,0) {$\preceq$};
    };
    \foreach \x in {1,2,3,4,5,6,7,8} {
      \path (\x,0) ++(-.5,0) node[font=\scriptsize,anchor=north] {$1$};
    };
    \node[font=\tiny,anchor=south,inner sep=.4mm] at (.5,10) {$p_9$};
    \node[font=\tiny,anchor=south,inner sep=.4mm] at (1.5,9) {$p_8$};
    \node[font=\tiny,anchor=south,inner sep=.4mm] at (2.5,9) {$p_7$};
    \node[font=\tiny,anchor=south,inner sep=.4mm] at (3.5,7) {$p_6$};
    \node[font=\tiny,anchor=south,inner sep=.4mm] at (4.5,6) {$p_5$};
    \node[font=\tiny,anchor=south,inner sep=.4mm] at (5.5,5) {$p_4$};
    \node[font=\tiny,anchor=south,inner sep=.4mm] at (6.5,5) {$p_3$};
    % \node[font=\tiny,anchor=south,inner sep=.4mm] at (7.5,4) {$p_2$};
    % \node[font=\tiny,anchor=south,inner sep=.4mm] at (8.5,3) {$p_1$};
    %
  \end{scope}
\end{tikzpicture}
\]

Note that in each case the length of the normal form word differs from $m$ by at most $1$.

As in the discussion before \fullref{Lemma}{lem:an:rightmultbytrans}, the way
rewriting proceeds at highest weight is
mirrored in how it proceeds in general and so, using the same argument, we have proven the following analogue of
\fullref{Lemma}{lem:an:rightmultbytrans} for type $C_n$:

\begin{lemma}
  \label{lem:cn:rightmultbytrans}
  Let $\Sigma$ and $T$ be the alphabet and set of rewriting rules constructed for type $C_n$ in
  \fullref{Subsection}{subsec:rewritingconstruction}. Let $x \in \aC_n$. Let $L \subseteq \Sigma^*$ be the languages of
  irreducible words. Then the relation
  \[
    L_{c_x} = \gset[\big]{(u,v) \in L\times L}{uc_x =_{\Pl(C_n)} v}
  \]
  is recognized by an transducer. Furthermore, if $(u,v)$ is a pair in this relation, then the lengths of $u$ and $v$
  differ by at most $1$.
\end{lemma}

\subsubsection{\texorpdfstring{$B_n$}{Bn}}

Let $\beta^{(1)},\ldots,\beta^{(m)}$ be admissible $B_n$ columns and let $x \in \aB_n$ be such that
$\beta^{(i+1)} \preceq \beta^{(i)}$ for $i=1,\ldots,m-1$ (that is,
$\tikz[tableau]\matrix{\beta^{(m)} \& |[dottedentry]| \& \beta^{(1)} \\};$ is a $B_n$ tableau), and such that
$\beta^{(1)}\ldots\beta^{(m)}x$ is a highest-weight word. As we did for types $A_n$ and $C_n$, we are going to examine
how the corresponding word over $\Sigma$ (that is, $c_{\beta^{(1)}}\cdots c_{\beta^{(m)}}c_x$) rewrites to an
irreducible word. In fact, the analysis reduces almost entirely to the $C_n$ case: there is only one easy extra
case. Again, the aim is to prove that this rewriting involves a single right-to-left pass through the word.

Since $\beta^{(1)}\ldots\beta^{(m)}$ is a highest-weight tableau, each column $\beta^{(i)}$ is of the form $1\cdots p_i$
for some $p_i \in \aB[1,n]$, and $p_{i+1} \geq p_i$ for $i=1,\ldots,m-1$ by
\fullref{Lemma}{lem:highestweightableauchar}.

\begin{lemma}
\label{lem:bn:rightmultgen}
One of the following holds:
\begin{enumerate}
\item $x=1$;
\item $x = p_k+1$ for some $k \in \set{1,\ldots,m}$ such that $p_k < n$;
\item $x=0$ (only if $p_m = n$);
\item $x = \bar{p_k}$ for some $k \in \set{1,\ldots,m}$.
\end{enumerate}
\end{lemma}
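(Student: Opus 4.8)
The plan is to mirror the proof of \fullref{Lemma}{lem:cn:rightmultgen} (which itself strengthens the analysis behind \fullref{Lemma}{lem:bn:betafirstsymbol}), adding the single extra possibility coming from the symbol $0$ in the alphabet $\aB_n$. Recall that by \fullref{Lemma}{lem:highestweightfactors} the prefix $\beta^{(1)}\cdots\beta^{(m)}$ is itself a highest-weight word, and by \fullref{Lemma}{lem:highestweightableauchar} each column has the form $\beta^{(i)} = 1\cdots p_i$ with $p_1 \leq p_2 \leq \cdots \leq p_m$; this is the starting point. So I would write the whole tabloid as a staircase of columns $1\cdots p_i$ with the extra symbol $x$ appended on the left, and compute $\rho_j(\beta^{(1)}\cdots\beta^{(m)}x)$ for the relevant index $j$ depending on $x$, using the practical method for computing $\ecount$ described after \fullref{Proposition}{prop:computingef}.

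Concretely, the argument splits on the nature of $x$. If $x \in \aB_n[1,n]$, then for each column $\beta^{(k)} = 1\cdots p_k$ we have $\rho_{x-1}(\beta^{(k)}) = \emptyword$ when $x-1 > p_k$ (the symbol $x$ is absent) and $\rho_{x-1}(\beta^{(k)}) = {+}{-} = \emptyword$ when $x-1 < p_k$ (both $x-1$ and $x$ appear consecutively). Hence if $x \neq 1$ and $x \neq p_k+1$ for every $k$, then every column contributes $\emptyword$ to $\rho_{x-1}$ while the prepended $x$ contributes a single ${-}$, giving $\rho_{x-1}(\beta^{(1)}\cdots\beta^{(m)}x) = {-}$, contradicting highest weight; this yields possibilities (i) and (ii). If $x = 0$, then unless $p_m = n$ we still have every column $1\cdots p_k$ contributing ${-}{+}$ or $\emptyword$ to $\rho_n$ in a way that leaves a surviving ${-}$ contributed by $x$ (as in the computation $x=0 \implies \rho_n(\alpha\beta) = {-}{+}\cdots$ in the proof of \fullref{Lemma}{lem:bn:betafirstsymbol}); one checks that this forces $p_m = n$, giving (iii). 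Finally if $x \in \aB_n[\bar{n},\bar{1}]$, then, exactly as in the $C_n$ proof, for each $k$ we have $\rho_{\bar{x}}(\beta^{(k)}) = \emptyword$ when $\bar{x} > p_k$ and $\rho_{\bar{x}}(\beta^{(k)}) = {+}{-} = \emptyword$ when $\bar{x} < p_k$, so unless $x = \bar{p_k}$ for some $k$ we obtain $\rho_{\bar{x}}(\beta^{(1)}\cdots\beta^{(m)}x) = {-}$, again contradicting highest weight; this yields (iv).

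I expect the only mildly delicate point to be the $x = 0$ case: one must be careful that the alternation of ${-}$ and ${+}$ produced by the various columns (each $1\cdots p_k$ with $p_k = n$ contributes a $+$ via the symbol $n$, reading $\rho_n(n) = {-}{+}$ in $\aB_n$, while the symbol $0$ contributes $\rho_n(0) = {-}{+}$ as well) cancels correctly, and to confirm that the surviving symbol after deleting all ${+}{-}$ pairs is a ${-}$ precisely when not all of $\beta^{(1)},\ldots,\beta^{(m)}$ reach height $n$. Since the columns are nested, this reduces to checking whether the \emph{last} (rightmost, i.e.\ leftmost-written as a factor) column $\beta^{(1)}$ — wait, the relevant one is $\beta^{(m)}$, the one adjacent to $x$ in the word — does or does not contain $n$; tracking which column is adjacent to the prepended $x$ and hence controls the leading sign of $\rho_n$ is the one spot that needs care. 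Everything else is a routine variation on the already-proven $C_n$ and $B_n$ two-column computations, so the lemma follows.

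\begin{proof}
  Since $\beta^{(1)}\cdots\beta^{(m)}$ is a highest-weight word by \fullref{Lemma}{lem:highestweightfactors}, each
  column $\beta^{(i)}$ is of the form $1\cdots p_i$ for some $p_i \in \aB_n[1,n]$, with $p_{i+1}\geq p_i$ for
  $i=1,\ldots,m-1$, by \fullref{Lemma}{lem:highestweightableauchar}.

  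Suppose none of (i)--(iv) holds. We derive a contradiction by exhibiting an index $j$ such that
  $\rho_j(\beta^{(1)}\cdots\beta^{(m)}x)$ begins with ${-}$, which by the observation following
  \fullref{Example}{eg:kashiwara} shows that $\e_j$ is defined on $\beta^{(1)}\cdots\beta^{(m)}x$, contradicting the
  assumption that this word is of highest weight.

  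If $x \in \aB_n[1,n]$, put $j = x-1$. For each $k$, the column $\beta^{(k)} = 1\cdots p_k$ contributes
  $\rho_{x-1}(\beta^{(k)}) = \emptyword$ when $x-1 > p_k$ and $\rho_{x-1}(\beta^{(k)}) = {+}{-} = \emptyword$ when
  $x - 1 < p_k$. Since $x \neq 1$ and, by the failure of (ii), $x \neq p_k+1$ for all $k$, these are the only cases, so
  $\rho_{x-1}(\beta^{(1)}\cdots\beta^{(m)}) = \emptyword$. The prepended symbol $x$ contributes a single ${-}$, so
  $\rho_{x-1}(\beta^{(1)}\cdots\beta^{(m)}x) = {-}$.

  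If $x = 0$, then since (iii) fails we have $p_m < n$, so no column $\beta^{(k)}$ contains $n$ and hence
  $\rho_n(\beta^{(k)}) = \emptyword$ for all $k$. The symbol $0$ contributes $\rho_n(0) = {-}{+}$, and since it is the
  leftmost contribution, $\rho_n(\beta^{(1)}\cdots\beta^{(m)}x)$ begins with ${-}$.

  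If $x \in \aB_n[\bar{n},\bar{1}]$, put $j = \bar{x}$. For each $k$, the column $\beta^{(k)} = 1\cdots p_k$ contributes
  $\rho_{\bar{x}}(\beta^{(k)}) = \emptyword$ when $\bar{x} > p_k$ and $\rho_{\bar{x}}(\beta^{(k)}) = {+}{-} = \emptyword$
  when $\bar{x} < p_k$. Since (iv) fails, $x \neq \bar{p_k}$ for all $k$, so these are the only cases and
  $\rho_{\bar{x}}(\beta^{(1)}\cdots\beta^{(m)}) = \emptyword$. The prepended symbol $x = \bar{\bar{x}}$ contributes a
  single ${-}$ to $\rho_{\bar{x}}$, so $\rho_{\bar{x}}(\beta^{(1)}\cdots\beta^{(m)}x) = {-}$.

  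In every case we have contradicted the assumption that $\beta^{(1)}\cdots\beta^{(m)}x$ is of highest weight. Hence one
  of (i)--(iv) holds.
\end{proof}
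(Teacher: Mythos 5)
Your proof is correct and follows essentially the same route as the paper's: the paper disposes of the unbarred and barred cases by citing the identical $\rho_{x-1}$ and $\rho_{\bar{x}}$ computations from the $A_n$ and $C_n$ right-multiplication lemmata, and handles $x=0$ exactly as you do, via $\rho_n(\beta^{(1)}\cdots\beta^{(m)}0)={-}{+}$ when $p_m\neq n$. The only blemishes are cosmetic: the symbol $x$ is appended, not ``prepended'' (immaterial here since every column contributes $\emptyword$), and for $x=\bar{n}$ the contribution to $\rho_n$ is ${-}{-}$ rather than a single ${-}$, which still begins with ${-}$ and yields the same contradiction.
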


\begin{proof}
  Suppose that $x \neq 1$, $x \neq p_k+1$, $x \neq 0$, and $x \neq \bar{p_k}$ for all $k$. If $x \in \aB_n[1,n]$, then the
  same contradiction arises as in the proof of \fullref{Lemma}{lem:an:rightmultgen}.  If $x \in \aB_n[\bar{n},\bar{1}]$,
  then the same contradiction arises as in the proof of \fullref{Lemma}{lem:cn:rightmultgen}.

  Finally, suppose $x = 0$. If $p_m \neq n$, then $\rho_n(\beta^{(k)}) = \emptyword$ for each $k$ and so
  $\rho_{n}(\beta^{(1)}\cdots\beta^{(m)}0) = {-}{+}$, contradicting the
assumption of highest weight.
\end{proof}

If $x = 1$ or $x = p_k+1$, then the rewriting proceeds in the same way as in the $A_n$ case, and if $x = \bar{p_k}$,
then the rewriting proceeds in the same way as the $C_n$ case. So suppose $x=0$. Then $p_m = n$ and so
$\beta^{(m)}0 = 1\cdots n0 =_{\drel{R}_5^{B_n}} 1\cdots n = \beta^{(m)}$; thus
$P(\tikz[tableau]\matrix{0 \& \beta^{(m)}\\};) = \tikz[tableau]\matrix{\beta^{(m)}\\};$. So there is a rewriting rule
$c_{\beta^{(m)}}c_0 = c_{\beta^{(m)}}$ and so rewriting to normal form is as follows:
\[
c_{\beta^{(1)}}\cdots c_{\beta^{(m)}}c_{0} \imreduces c_{\beta^{(1)}}\cdots c_{\beta^{(m)}}.
\]

Note that in each case the length of the normal form word differs from $m$ by at most $1$.

As in the discussion before \fullref{Lemma}{lem:an:rightmultbytrans}, the way
rewriting proceeds at highest weight is
mirrored in how it proceeds in general and so, using the same argument, we have proven the following analogue of
\fullref{Lemma}{lem:an:rightmultbytrans} for type $B_n$:

\begin{lemma}
  \label{lem:bn:rightmultbytrans}
  Let $\Sigma$ and $T$ be the alphabet and set of rewriting rules constructed for type $B_n$ in
  \fullref{Subsection}{subsec:rewritingconstruction}. Let $x \in \aB_n$. Let $L \subseteq \Sigma^*$ be the languages of
  irreducible words. Then the relation
  \[
    L_{c_x} = \gset[\big]{(u,v) \in L\times L}{uc_x =_{\Pl(B_n)} v}
  \]
  is recognized by an transducer. Furthermore, if $(u,v)$ is a pair in this relation, then the lengths of $u$ and $v$
  differ by at most $1$.
\end{lemma}

\subsubsection{\texorpdfstring{$D_n$}{Dn}}

Let $\beta^{(1)},\ldots,\beta^{(m)}$ be admissible $D_n$ columns and let $x \in \aD_n$ be such that
$\beta^{(i+1)} \preceq \beta^{(i)}$ for $i=1,\ldots,m-1$ (that is,
$\tikz[tableau]\matrix{\beta^{(m)} \& |[dottedentry]| \& \beta^{(1)} \\};$ is a $D_n$ tableau), and such that
$\beta^{(1)}\ldots\beta^{(m)}x$ is a highest-weight word. As for the other types, we are going to examine how the
corresponding word over $\Sigma$ (that is, $c_{\beta^{(1)}}\cdots c_{\beta^{(m)}}c_x$) rewrites to an irreducible
word. As before, the aim is to prove that this rewriting involves a single right-to-left pass through the word.

Since $\beta^{(1)}\ldots\beta^{(m)}$ is a highest-weight word by \fullref{Lemma}{lem:highestweightfactors}, each column
$\beta^{(i)}$ is of the form $1\cdots p_i$ for some $p_i \in \aD[1,n] \cup \aD[1,\bar{n}]$, and $p_{i+1} \geq p_i$ for
$i=1,\ldots,m-1$ by \fullref{Lemma}{lem:highestweightableauchar}.

\begin{lemma}
\label{lem:dn:rightmultgen}
One of the following holds:
\begin{enumerate}
\item $x=1$;
\item $x = p_k+1$ for some $k \in \set{1,\ldots,m}$ such that $p_k < n-1$;
\item $x=n$ (only if $\beta^{(k)} = 1\cdots (n-1)$ for some $k$ or $\beta^{(m)} = 1\cdots
(n-1)\bar{n}$);
\item $x=\bar{n}$ (only if $\beta^{(k)} = 1\cdots (n-1)$ for some $k$ or $\beta^{(m)} = 1\cdots n$);
\item $x = \bar{p_k}$ for some $k \in \set{1,\ldots,m}$ such that $p_k \leq n-1$.
\end{enumerate}
\end{lemma}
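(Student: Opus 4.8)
The plan is to mimic the proofs of the analogous generator-restriction lemmata for the previous types (\fullref{Lemma}{lem:an:rightmultgen}, \ref{lem:cn:rightmultgen}, and \ref{lem:bn:rightmultgen}), using the practical method for computing $\rho_i$ from \fullref{Proposition}{prop:computingef}. First I would record the shape of the columns $\beta^{(i)}$: since $\beta^{(1)}\cdots\beta^{(m)}$ is a highest-weight word, each $\beta^{(i)}$ is a highest-weight column, so by \fullref{Lemma}{lem:highestweightableauchar} each is of the form $1\cdots p_i$ with $p_i \in \aD_n[1,n] \cup \{1\cdots\bar{n}\}$ — that is, $\beta^{(i)} = 1\cdots p_i$ for some $p_i \leq n-1$, or $\beta^{(i)} = 1\cdots (n-1)n$, or $\beta^{(i)} = 1\cdots(n-1)\bar{n}$ — and the first-row entries are weakly increasing along the tableau, so the $\beta^{(i)}$ are nested (the sequence $p_i$ is `increasing' in the natural sense that a shorter prefix of $1\cdots n$ precedes a longer one, with $1\cdots(n-1)n$ and $1\cdots(n-1)\bar{n}$ both extending $1\cdots(n-1)$ but incomparable to each other — so at most one of these two can appear, and if it does it must be $\beta^{(m)}$).

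Next I would argue by contradiction: suppose $x \in \aD_n$ is none of the symbols listed in cases (1)--(5). I would split on where $x$ sits in the alphabet. If $x \in \aD_n[2,n-1]$ and $x \neq p_k+1$ for every $k$, then for each column $\beta^{(k)} = 1\cdots p_k$ we have $\rho_{x-1}(\beta^{(k)}) = \emptyword$: either $x-1 > p_k$ (the column contributes nothing to $\rho_{x-1}$) or $x-1 < p_k$ (the column contributes ${+}{-} = \emptyword$), using the crystal basis for $D_n$ to see which symbols contribute ${+}$ or ${-}$ under $\ecount_{x-1},\fcount_{x-1}$. The only subtlety is the column $1\cdots(n-1)n$ or $1\cdots(n-1)\bar{n}$, but since $x-1 \leq n-2$, the relevant entries of such a column are just $1\cdots(n-1)$, so the same cancellation applies. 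Hence $\rho_{x-1}(\beta^{(1)}\cdots\beta^{(m)}x) = \rho_{x-1}(x) = {-}$, contradicting the highest-weight assumption. For $x \in \aD_n[\bar{n-1},\bar{1}]$, say $x = \bar{y}$ with $y \leq n-1$, and $x \neq \bar{p_k}$ for every $k$, the same reasoning as in the $C_n$ case applies: $\rho_y(\beta^{(k)}) = \emptyword$ for each $k$ (either $y > p_k$ or $y < p_k$ after accounting for the bottom entry of a possible $\bar{n}$- or $n$-terminated column, noting $\bar{y}$ with $y \le n-1$ interacts with operator $\ecount_y$), so $\rho_y(\beta^{(1)}\cdots\beta^{(m)}x) = {-}$, again a contradiction.

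Then I would dispatch the two remaining `special' symbols. If $x = n$ and neither $\beta^{(k)} = 1\cdots(n-1)$ for some $k$ nor $\beta^{(m)} = 1\cdots(n-1)\bar{n}$ holds, then every column either has $p_k \leq n-2$, or ends in $n$ (i.e.\ is $1\cdots n$). In the first case $\rho_{n-1}(\beta^{(k)})$ and $\rho_n(\beta^{(k)})$ are empty in the relevant way; in the second, $\beta^{(k)} = 1\cdots n$ contributes $\rho_n(\beta^{(k)}) = {+}{+}$ while $x = n$ contributes $\rho_n(n) = \cdots{-}$ — but tracking $\rho_{n-1}$ instead, a column $1\cdots n$ gives $\rho_{n-1} = {+}{-} = \emptyword$ and $x = n$ gives $\rho_{n-1}(n) = {-}$, producing $\rho_{n-1}(\beta^{(1)}\cdots\beta^{(m)}x) = {-}\cdots$, contradicting highest weight. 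A symmetric argument (interchanging the roles of $n$ and $\bar{n}$, and using $\rho_{n-1}$ or $\rho_n$ as appropriate) handles $x = \bar{n}$ when neither $\beta^{(k)} = 1\cdots(n-1)$ nor $\beta^{(m)} = 1\cdots n$ occurs; I would also note that $x = 0$ is impossible since $0 \notin \aD_n$.

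The main obstacle I anticipate is the careful bookkeeping in the $x = n$ and $x = \bar{n}$ cases, where the incomparability of $n$ and $\bar{n}$ in $\aD_n$ and the structure of $D_n$ columns (a single $\bar{n}n$ or alternating block at the bottom) means one must choose the right index $i$ for $\rho_i$ and verify that the highest-weight tableau shape genuinely forces the claimed alternatives — this is exactly the point where the $D_n$ argument diverges from the uniform $A_n$/$B_n$/$C_n$ pattern, and it requires invoking \fullref{Lemma}{lem:highestweightableauchar} and the definition of $D_n$ columns precisely. Once the restriction on $x$ is established, the subsequent analysis of the actual rewriting (the right-to-left pass, the `moving gap', and the transducer, paralleling \fullref{Lemma}{lem:cn:rightmultbytrans}) should follow the $C_n$ template closely, with the new case $x = n$ handled much like $x = \bar{p_k}$ with $p_k = 1$, and $x = \bar{n}$ appended via the relation $1\cdots n\bar{n} =_{\drel{R}_5^{D_n}} 1\cdots(n-1)$ together with \fullref{Lemma}{lem:dn:alpha1tonbetanbarfewersymbols}.
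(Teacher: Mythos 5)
Your proposal is correct and follows essentially the same route as the paper: reduce $x\in\aD_n[1,n-1]$ and $x\in\aD_n[\bar{n-1},\bar{1}]$ to the $A_n$ and $C_n$ arguments respectively, then kill the remaining $x=n$ case with $\rho_{n-1}$ (where $1\cdots n$ contributes ${+}{-}=\emptyword$ and $n$ contributes ${-}$) and the $x=\bar{n}$ case symmetrically with $\rho_n$. The only slip is the aside claiming $\rho_n(n)$ ends in ${-}$ (in fact $\rho_n(n)={+}$ for the $D_n$ basis), but since you abandon that line and correctly switch to $\rho_{n-1}$, the argument is unaffected.
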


\begin{proof}
  Suppose that $x \neq 1$, $x \neq p_k+1$, $x \neq n$, $x \neq \bar{n}$, and $x \neq \bar{p_k}$ for all $k$. If $x \in
  \aD_n[1,n-1]$ then the same contradiction arises as in the proof of \fullref{Lemma}{lem:an:rightmultgen}.  If $x \in
  \aD_n[\bar{n-1},\bar{1}]$, then the same contradiction arises as in the proof of \fullref{Lemma}{lem:cn:rightmultgen}.

  Now, suppose $x = n$. If $\beta^{(k)} \neq 1\cdots (n-1)$ for all $k$ and $\beta^{(m)} \neq 1\cdots (n-1)\bar{n}$, then
  $\rho_{n}(\beta^{(j)}) = \emptyword$ (when $\beta^{(j)} = 1\cdots p_j$ for $p_j \leq n-2$) and $\rho_{n-1}(\beta^{(j)})
  = {+}{-} = \emptyword$ (when $\beta^{(j)} = 1\cdots n$) and so $\rho_{n-1}(\beta^{(1)}\cdots\beta^{(m)}n) = {-}$,
  contradicting the assumption of highest weight.

  Similar reasoning shows that $x=\bar{n}$ only if $\beta^{(k)} = 1\cdots (n-1)$ for some $k$ or
  $\beta^{(m)} = 1\cdots n$, using $\rho_n$ to get the contradictions.
\end{proof}

If cases (1) or (2) of \fullref{Lemma}{lem:dn:rightmultgen} hold, or case (3) holds with $\beta^{(k)} = 1\cdots (n-1)$
for some $k$, then the rewriting proceeds in the same way as in the $A_n$ case. If case (5) holds, or case (4) holds
with $\beta^{(m)} = 1\cdots n$, then the rewriting proceeds in the same way as the $C_n$ case.

We thus have two remaining cases: case (3) with $x=n$ and $\beta^{(m)} = 1\cdots (n-1)\bar{n}$, or case (4) with
$x=\bar{n}$ and $\beta^{(k)} = 1\cdots (n-1)$ for some $k$.

Suppose $x = \bar{n}$ and $\beta^{(k)} = 1\cdots (n-1)$ for some $k$. In the case where $\beta^{(m)} = 1\cdots n$,
rewriting proceeds as in the $C_n$ case. So, by the definition of $\preceq$, either $\beta^{(m)} = 1\cdots (n-1)$ or
$\beta^{(m)} = 1\cdots (n-1)\bar{n}$. Consider these cases separately:
\begin{enumerate}
\item $\beta^{(m)} = 1\cdots (n-1)$. So $P(\beta^{(m)}\bar{n})$ is the single column $\beta^{(m)}\bar{n}$ and so there
  is a rewriting rule $c_{\beta^{(m)}}c_{\bar{n}} \to c_{\beta^{(m)}\bar{n}}$ and so rewriting to normal form proceeds
as follows:
  \[
  c_{\beta^{(1)}}\cdots c_{\beta^{(m)}}c_{\bar{n}} \imreduces c_{\beta^{(1)}}\cdots c_{\beta^{(m)}\bar{n}}.
  \]
\item $\beta^{(m)} = 1\cdots (n-1)\bar{n}$. Then rewriting proceeds in the same way as in the $A_n$ case, but with
  $\bar{n}$ in place of $n$.
\end{enumerate}

Finally, suppose $x = n$ and $\beta^{(m)} = 1\cdots (n-1)\bar{n}$. It is easy to see that rewriting is symmetric to
the case $C_n$ where $x = \bar{n}$ and $\beta^{(m)} = 1\cdots n$.

Note that in each case the length of the normal form word differs from $m$ by at most $1$.

As in the discussion before \fullref{Lemma}{lem:an:rightmultbytrans}, the way
rewriting proceeds at highest weight is
mirrored in how it proceeds in general and so, using the same argument, we have proven the following analogue of
\fullref{Lemma}{lem:an:rightmultbytrans} for type $D_n$:

\begin{lemma}
  \label{lem:dn:rightmultbytrans}
  Let $\Sigma$ and $T$ be the alphabet and set of rewriting rules constructed for type $D_n$ in
  \fullref{Subsection}{subsec:rewritingconstruction}. Let $x \in \aD_n$. Let $L \subseteq \Sigma^*$ be the languages of
  irreducible words. Then the relation
  \[
    L_{c_x} = \gset[\big]{(u,v) \in L\times L}{uc_x =_{\Pl(D_2)} v}
  \]
  is recognized by an transducer. Furthermore, if $(u,v)$ is a pair in this relation, then the lengths of $u$ and $v$
  differ by at most $1$.
\end{lemma}

\subsubsection{\texorpdfstring{$G_2$}{G2}}
\label{subsubsec:g2:rightmult}

Let $\beta^{(1)},\ldots,\beta^{(m)}$ be admissible $G_2$ columns and let $x \in \aG_2$ be such that
$\beta^{(i+1)} \preceq \beta^{(i)}$ for $i=1,\ldots,m-1$ (that is,
$\tikz[tableau]\matrix{\beta^{(m)} \& |[dottedentry]| \& \beta^{(1)} \\};$ is a $G_2$ tableau), and such that
$\beta^{(1)}\ldots\beta^{(m)}x$ is a highest-weight word. Since $\beta^{(1)}\ldots\beta^{(m)}$ is a highest-weight word,
each column $\beta^{(i)}$ is either $1$ or $12$ by \fullref{Lemma}{lem:highestweightableauchar}. Notice that, by the
definition of $\preceq$ for type $G_2$, some $\beta^{(j)}$ is $12$ if and only if the leftmost column $\beta^{(m)}$ is
$12$, and some $\beta^{(j)}$ is $1$ if and only if the rightmost column $\beta^{(1)}$ is $1$. As for the other types, we
are going to examine how the corresponding word over $\Sigma$ (that is, $c_{\beta^{(1)}}\cdots c_{\beta^{(m)}}c_x$)
rewrites to an irreducible word. As before, the aim is to prove that this rewriting involves a single right-to-left pass
through the word.

We first prove the following lemma, which tells us about the possible cases for $x$ and the restrictions this puts on
the columns $\beta^{(j)}$. We will then consider separately the rewriting that occurs according to whether some
$\beta^{(j)}$ is the column $12$.

\begin{lemma}
  \label{lem:g2:rightmultgen}
  The generator $x$ can be
  \begin{enumerate}
  \item $1$;
  \item $2$, only if there is at least one column $1$ among the $\beta^{(j)}$;
  \item $3$, only if there is at least one column $12$ among the $\beta^{(j)}$;
  \item $0$, only if there is at least one column $1$ among the $\beta^{(j)}$;
  \item $\bar{3}$, only if there are at least two columns $1$ among the $\beta^{(j)}$;
  \item $\bar{2}$, only if there is at least one column $12$ among the $\beta^{(j)}$;
  \item $\bar{1}$, only if there is at least one column $1$ among the $\beta^{(j)}$.
  \end{enumerate}
\end{lemma}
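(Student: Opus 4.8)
The plan is to mirror the method used for the other types in this subsection: since $\beta^{(1)}\cdots\beta^{(m)}$ is a highest-weight word, each $\beta^{(j)}$ is either $1$ or $12$ by \fullref{Lemma}{lem:highestweightableauchar}, and the columns $12$ (if any) form a prefix of the tabloid reading from the left (equivalently, appear as the leftmost columns) while the columns $1$ (if any) appear as the rightmost columns, by the definition of $\preceq$ for type $G_2$. So the word $\beta^{(1)}\cdots\beta^{(m)}$ is of the form $(12)^a 1^b$ read left-to-right, i.e.\ $\beta^{(1)}\cdots\beta^{(m)} = 1^b$ if $a=0$, and more generally the reading is $11\cdots1\,121212\cdots$. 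I would first record this shape explicitly, since it is what drives all the case distinctions.

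Next, for each candidate first symbol $x$ of the appended generator, I would compute $\rho_1$ or $\rho_2$ of $\beta^{(1)}\cdots\beta^{(m)}x$ and show that, unless the stated restriction on the $\beta^{(j)}$ holds, the resulting word begins with ${-}$, contradicting the assumption that $\beta^{(1)}\cdots\beta^{(m)}x$ is of highest weight (using the observation, recorded after \fullref{Example}{eg:kashiwara}, that a word whose $\rho_i$-image begins with ${-}$ has $\e_i$ defined). Concretely: each column $1$ contributes ${+}$ to $\rho_1$; each column $12$ contributes ${+}{-}$ to $\rho_1$, hence $\emptyword$, and contributes ${+}$ to $\rho_2$; the symbols $x \in \set{2,3,0,\bar 3,\bar 2,\bar 1}$ contribute $\rho_i(x)$ as read off the crystal basis \eqref{eq:g2:crystalbasis}. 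For instance, if $x = 2$ then $\rho_1(\beta^{(1)}\cdots\beta^{(m)}2) = {+}^b\,{-}\cdots$ (reading right-to-left the $12$ columns contribute nothing to $\rho_1$ after cancellation, and the $2$ contributes ${-}$), which collapses to begin with ${-}$ unless $b \geq 1$, i.e.\ unless there is at least one column $1$; this gives case~(2). Similarly $x=3$ contributes ${-}$ to $\rho_2$, so $\rho_2(\beta^{(1)}\cdots\beta^{(m)}3)$ begins with ${-}$ unless some $\beta^{(j)}$ is $12$, giving case~(3); $x=0$ contributes ${-}{+}$ to $\rho_1$, forcing $b \geq 1$ for case~(4); $x=\bar 3$ contributes ${-}{-}$ to $\rho_1$, forcing $b \geq 2$ for case~(5); $x=\bar 2$ contributes ${-}$ to $\rho_2$, forcing a column $12$ for case~(6); and $x=\bar 1$ contributes ${-}$ to $\rho_1$, forcing $b \geq 1$ for case~(7). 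The remaining symbol $x = 1$ (case~(1)) is always possible, since $1$ contributes ${+}$ to both $\rho_1$ and $\rho_2$.

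The only mild subtlety, and the step I would be most careful about, is the bookkeeping of which $\rho_i$ to use and the cancellation of ${+}{-}$ subwords in the middle of the word: the columns $12$ sit to the left of the columns $1$, so when computing $\rho_1$ of the whole word with the appended symbol one must track that the $b$ trailing ${+}$'s from the columns $1$, together with whatever $x$ contributes at the far right, determine the leftmost surviving symbol only after all the ${+}{-}$ cancellations from the $12$-block on the left have been carried out. Since the $12$-block contributes only cancelling pairs to $\rho_1$, this is clean, but for $x=\bar 3$ one needs the full ${-}{-}$ contribution to cancel two of the trailing ${+}$'s, hence the requirement of \emph{two} columns $1$. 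I would present the argument as a single short case analysis over the seven possible values of $x$ (the only highest-weight-compatible first symbols, all others being immediately excluded exactly as in \fullref{Lemma}{lem:g2:alphabetachar}), in each non-trivial case exhibiting the $\rho_i$-computation that forces the stated restriction. No new machinery is needed beyond \fullref{Lemma}{lem:highestweightableauchar}, the crystal basis \eqref{eq:g2:crystalbasis}, and the practical $\rho_i$-computation described after \fullref{Proposition}{prop:computingef}.
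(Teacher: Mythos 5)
Your proof is correct and is essentially the paper's own argument: $\rho_1$ (resp.\ $\rho_2$) of the highest-weight tableau word is a block of symbols ${+}$ whose length counts the columns $1$ (resp.\ $12$), and each case then follows by counting how many of these ${+}$'s are needed to cancel the ${-}$'s contributed by $\rho_i(x)$. One small slip: the reading is $1^b(12)^a$, not $(12)^a1^b$, since the shorter columns are rightmost and hence read first (you correct this yourself a line later), but the ordering is immaterial to the $\rho_i$ computation, which only depends on the counts.
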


\begin{proof}
  Note first that $\rho_1(1) = {+}$, $\rho_1(12) = {+}{-} = \emptyword$, $\rho_2(1) = \emptyword$, $\rho_2(12) = {+}$,
  so $\rho_1(\beta^{(1)}\cdots\beta^{(m)})$ consists of a string of symbols ${+}$ whose length is the number of
  columns $1$ among the $\beta^{(j)}$, and $\rho_2(\beta^{(1)}\cdots\beta^{(m)})$ consists of a string of symbols
  ${+}$ whose length is the number of columns $12$ among the $\beta^{(j)}$. The result now follows by considering how
  many symbols ${+}$ are required to cancel symbols ${-}$ in $\rho_i(x)$:
  \begin{enumerate}
  \item Nothing to prove.
  \item Since $\rho_1(2) = {-}$, there must be at least one column $1$ among the $\beta^{(j)}$;
  \item Since $\rho_2(3) = {-}$, there must be at least one column $12$ among the $\beta^{(j)}$;
  \item Since $\rho_1(0) = {-}{+}$, there must be at least one column $1$ among the $\beta^{(j)}$;
  \item Since $\rho_1(\bar{3}) = {-}{-}$, there must be at least two columns $1$ among the $\beta^{(j)}$;
  \item Since $\rho_2(\bar{2}) = {-}$, there must be at least one column $12$ among the $\beta^{(j)}$;
  \item Since $\rho_1(\bar{1}) = {-}$, there must be at least one column $1$ among the $\beta^{(j)}$. \qedhere
  \end{enumerate}
\end{proof}

Consider first the case where there is no column $12$ among the $\beta^{(j)}$. That is,
$\beta^{(1)}\cdots\beta^{(m)}x = 1^m x$. In this case, $x$ can be $1$, $2$, $0$, $\bar{3}$ (only if $m \geq
2$), or $\bar{1}$ by \fullref{Lemma}{lem:g2:rightmultgen}, and so:
\begin{align*}
&\tikz[tableau]\matrix{x \& 1 \& 1 \& 1 \& |[dottedentry]| \& 1 \\}; \\
&\qquad=_{\Pl(G_2)}
\begin{cases}
\tikz[tableau]\matrix{1 \& 1 \& 1 \& 1 \& |[dottedentry]| \& 1 \\}; & \text{if $x = 1$;} \\[1mm]
\tikz[tableau]\matrix{1 \& 1 \& 1 \& |[dottedentry]| \& 1 \\2\\}; & \text{if $x = 2$;} \\[3.5mm]
\tikz[tableau]\matrix{1 \& 1 \& 1 \& |[dottedentry]| \& 1 \\}; & \text{if $x = 0$, since $P(10) = \tableau{1\\}$;} \\[1mm]
\tikz[tableau]\matrix{1 \& 1 \& |[dottedentry]| \& 1 \\2\\}; & \text{if $x = \bar{3}$, since $P(1\bar{3}) = \tableau{2\\}$;} \\[3.5mm]
\tikz[tableau]\matrix{1 \& 1 \& |[dottedentry]| \& 1 \\}; & \text{if $x = \bar{1}$ since $P(1\bar{1})$ is empty.} \\
\end{cases}
\end{align*}
In the first case, $c_{\beta^{(1)}}\cdots
  c_{\beta^{(m)}}c_{x}$ is in normal form; in the other four cases (respectively) the rewriting to normal form proceeds as follows:
\begin{align*}
  &c_{\beta^{(1)}}\cdots c_{\beta^{(m)}}c_{x} = c_1\cdots c_1c_1c_1c_x\\
  &\qquad\to
    \begin{cases}
      c_1\cdots c_1c_1c_{12} & \text{using $c_1c_2 \to c_{12}$ since $P(12) = \tableau{12\\}$;}\\
      c_1\cdots c_1c_1c_1 & \text{using $c_1c_0 \to c_{1}$ since $P(10) = \tableau{1\\}$;}\\
      c_1\cdots c_1c_1c_2 & \text{using $c_1c_{\bar{3}} \to c_{2}$ since $P(1\bar{3}) = \tableau{2\\}$;}\\
      \to c_1\cdots c_1c_{12} & \text{using $c_1c_2 \to c_{12}$ since $P(12) = \tableau{12\\}$;}\\
      c_1\cdots c_1c_1 & \text{using $c_1c_{\bar{1}} \to \emptyword$ since $P(1\bar)$ is empty.}\\
    \end{cases}
\end{align*}
In each case, rewriting $c_{\beta^{(1)}}\cdots c_{\beta^{(m)}}c_{x}$ to normal form involves at most two rewriting steps
at the right-hand end of the word. Note that the length of the normal form differs from $m$ by at most $2$.

Next consider the case where there is at least one column $12$ among the $\beta^{(j)}$. That is,
$\beta^{(1)}\cdots\beta^{(m)}x = 1^h(12)^k x$, with $k \geq 1$ and $h \geq 0$. By \fullref{Lemma}{lem:g2:rightmultgen},
$x$ can be $1$, $2$ (only if $h\geq 1$), $3$, $0$ (only if $h\geq 1$), $\bar{3}$ (only if $h \geq 2$), $\bar{2}$, or
$\bar{1}$ (only if $h\geq 1$). Consider each case in turn:
\begin{enumerate}

\item $x=1$. Then since $121 =_{\drel{R}_3^{G_2}} 112$, we have
  $P(\tableau[topalign]{1 \& 1 \\ \& 2\\}) = \tableau[topalign]{1 \& 1 \\ 2\\}$ and so $c_{12}c_1 \to c_1c_{12}$. Thus,
  using this rule at each step, rewriting to normal form is as follows:
\begin{align*}
c_{1}\cdots c_{1}c_{12}\cdots c_{12}c_{12}c_1
&\imreduces c_{1}\cdots c_{1}c_{12}\cdots c_{12}c_1c_{12} \\
&\qquad\vdots \\
&\imreduces c_{1}\cdots c_{1}c_1c_{12}\cdots c_{12}c_{12}.
\end{align*}

% \begin{align*}
% \begin{tikzpicture}[baseline=(tableau-1-1.base)]
% \matrix[tableaumatrix] (tableau) {1 \& 1 \& 1 \& |[dottedentry]| \& 1 \& 1 \& 1 \& |[dottedentry]| \& 1 \\ \& 2 \& 2 \& |[dottedentry]| \& 2 \\};
% \foreach \x in {2,3,4,5,6,7,8} {
%   \node[font=\scriptsize,anchor=south] at (tableau-1-\x.north east) {$\preceq$};
% };
% \node[font=\scriptsize,anchor=south] at (tableau-1-1.north east) {$\not\preceq$};
% \end{tikzpicture}
% =_{\Pl(G_2)} {}&
% \begin{tikzpicture}[baseline=(tableau-1-1.base)]
% \matrix[tableaumatrix]{1 \& 1 \& 1 \& |[dottedentry]| \& 1 \& 1 \& 1 \& |[dottedentry]| \& 1 \\ 2 \&  \& 2 \& |[dottedentry]| \& 2 \\};
% \foreach \x in {1,3,4,5,6,7,8} {
%   \node[font=\scriptsize,anchor=south] at (tableau-1-\x.north east) {$\preceq$};
% };
% \node[font=\scriptsize,anchor=south] at (tableau-1-2.north east) {$\not\preceq$};
% \end{tikzpicture}
% \\
% & \vdots \\
% =_{\Pl(G_2)} {}&
% \begin{tikzpicture}[baseline=(tableau-1-1.base)]
% \matrix[tableaumatrix]{1 \& 1 \& 1 \& |[dottedentry]| \& 1 \& 1 \& 1 \& |[dottedentry]| \& 1 \\ 2 \& 2 \& |[dottedentry]| \& 2 \\};
% \foreach \x in {1,2,3,4,5,6,7,8} {
%   \node[font=\scriptsize,anchor=south] at (tableau-1-\x.north east) {$\preceq$};
% };
% \end{tikzpicture}
% \end{align*}

\item $x=2$. Then since $122 =_{\drel{R}_3^{G_2}} 212$, we have
  $P(\tableau[topalign]{2 \& 1 \\ \& 2\\}) = \tableau[topalign]{1 \& 2\\ 2
\\}$, and so $c_{12}c_2 \to c_2c_{12}$ is a rewriting
  rule. As noted above, there is at least one column $1$ present. So the rewriting to normal form proceeds as follows:
\begin{align*}
c_{1}\cdots c_{1}c_{1}c_{12}\cdots c_{12}c_{12}c_2
&\imreduces c_{1}\cdots c_{1}c_{1}c_{12}\cdots c_{12}c_2c_{12} && \text{using $c_{12}c_2 \to c_2c_{12}$}\\
&\qquad\vdots \\
&\imreduces c_{1}\cdots c_{1}c_{1}c_2c_{12}\cdots c_{12}c_{12} && \text{using $c_{12}c_2 \to c_2c_{12}$}\\
&\imreduces c_{1}\cdots c_{1}c_{12}c_{12}\cdots c_{12}c_{12}. && \text{using $c_1c_2 \to c_{12}$}\\
\end{align*}

\item $x=3$. Then since $123 =_{\drel{R}_4^{G_2}} 110 =_{\drel{R}_1^{G_2}} 11$ and
  $1211 =_{\drel{R}_3^{G_2}} 1121 =_{\drel{R}_3^{G_2}} 1112$, we have
  $P(\tikz[tableau,topalign]\matrix{3 \& 1 \\ \& 2\\};) = \tikz[tableau,topalign]\matrix{1 \& 1 \\};)$ So
  $c_{12}c_3 \to c_1c_1$ is a rewriting rule. Furthermore,
  $P(\tikz[tableau,topalign]\matrix{1 \& 1 \& 1 \\ \& \& 2\\};) = \tikz[tableau,topalign]\matrix{1 \& 1 \& 1 \\ 2\\};$.
  Thus add the extra rewriting rule $c_{12}c_1c_1 \imreduces c_1c_1c_{12}$. Now rewriting to normal form is
\begin{align*}
c_{1}\cdots c_{1}c_{12}\cdots c_{12}c_{12}c_{12}c_3
&\imreduces c_{1}\cdots c_{1}c_{12}\cdots c_{12}c_{12}c_1c_1 && \text{using $c_{12}c_3 \to c_1c_1$} \\
&\imreduces c_{1}\cdots c_{1}c_{12}\cdots c_{12}c_1c_1c_{12} && \text{using $c_1c_1c_{12} \to c_{12}c_1c_1$} \\
&\qquad\vdots \\
&\imreduces c_{1}\cdots c_{1}c_1c_1c_{12}\cdots c_{12}c_{12}. && \text{using $c_1c_1c_{12} \to c_{12}c_1c_1$} \\
\end{align*}

\item $x=0$. Then since $120 =_{\drel{R}_4^{G_2}} 210 =_{\drel{R}_1^{G_2}} 21$ and
  $1221 =_{\drel{R}_3^{G_2}} 2121 =_{\drel{R}_3^{G_2}} 2112$ and $121 =_{\drel{R}_3^{G_2}} 112$, we have
  $P(\tikz[tableau,topalign]\matrix{0 \& 1 \\ \& 2\\};) = \tikz[tableau,topalign]\matrix{1 \& 2\\};$ and so
  $c_{12}c_0 \to c_2c_1$ is a rewriting rule. Furthermore,
  $P(\tikz[tableau,topalign]\matrix{1 \& 2 \& 1\\ \& \& 2\\};) = \tikz[tableau,topalign]\matrix{1 \& 1 \& 2 \\ 2\\};$
  and $P(\tikz[tableau,topalign]\matrix{1 \& 2 \& 1 \\};) = \tikz[tableau,topalign]\matrix{1 \& 1 \\ 2\\};$. Thus, we
  add the extra rewriting rules $c_{12}c_2c_1 \imreduces c_2c_1c_{12}$ and $c_1c_2c_1 \imreduces c_1c_{12}$,

  As noted above, there is at least one column $1$ present. Rewriting to normal form is therefore as follows:
  \begin{align*}
    c_{1}\cdots c_{1}c_{12}\cdots c_{12}c_{12}c_{12}c_0
    &\imreduces c_{1}\cdots c_{1}c_{12}\cdots c_{12}c_{12}c_2c_1 && \text{using $c_{12}c_0 \to c_2c_1$}\\
    &\imreduces c_{1}\cdots c_{1}c_{12}\cdots c_{12}c_2c_1c_{12} && \text{using $c_{12}c_2c_1 \imreduces c_2c_1c_{12}$} \\
    &\qquad\vdots \\
    &\imreduces c_{1}\cdots c_{1}c_2c_1c_{12}\cdots c_{12}c_{12} && \text{using $c_{12}c_2c_1 \imreduces c_2c_1c_{12}$} \\
    &\imreduces c_{1}\cdots c_{1}c_{12}c_{12}\cdots c_{12}c_{12}. && \text{using $c_1c_2c_1 \imreduces c_1c_{12}$} \\
  \end{align*}

\item $x=\bar{3}$. Then since $12\bar{3} =_{\drel{R}_4^{G_2}} 21\bar{3} =_{\drel{R}_1^{G_2}} 22$, so
  $c_{12}c_{\bar{3}} \imreduces c_2c_2$ is a rewriting rule. Furthermore,
  $1222 =_{\drel{R}_3^{G_2}} 2122 =_{\drel{R}_3^{G_2}} 2212$ and $1122 =_{\drel{R}_3^{G_2}} 1212$, we have
  $P(\tikz[tableau,topalign]\matrix{\bar{3} \& 1 \\ \& 2\\};) = \tikz[tableau,topalign]\matrix{2 \& 2 \\};)$ and
  $P(\tikz[tableau,topalign]\matrix{2 \& 2 \& 1 \\ \& \& 2\\};) = \tikz[tableau,topalign]\matrix{1 \& 2 \& 2 \\ 2\\};$
  and $P(\tikz[tableau,topalign]\matrix{2 \& 2 \& 1 \& 1 \\};) = \tikz[tableau,topalign]\matrix{1 \& 1 \\ 2 \&
    2\\};$. Thus, we add the extra rewriting rules $c_{12}c_2c_2 \imreduces c_2c_2c_{12}$ and
  $c_2c_2c_1c_1 \imreduces c_{12}c_{12}$.

  As noted above, there are at least two columns $1$ present. Rewriting to normal form is therefore as follows:
  \begin{align*}
    c_{1}\cdots c_1c_1c_{1}c_{12}\cdots c_{12}c_{12}c_{12}c_{\bar{3}}
    &\imreduces c_{1}\cdots c_1c_1c_{1}c_{12}\cdots c_{12}c_{12}c_2c_2 && \text{using $c_{12}c_{\bar{3}} \imreduces c_2c_2$} \\
    &\imreduces c_{1}\cdots c_1c_1c_{1}c_{12}\cdots c_{12}c_2c_2c_{12} && \text{using $c_{12}c_2c_2 \imreduces c_2c_2c_{12}$}\\
    &\qquad\vdots \\
    &\imreduces c_{1}\cdots c_1c_1c_{1}c_2c_2c_{12}\cdots c_{12}c_{12} && \text{using $c_{12}c_2c_2 \imreduces c_2c_2c_{12}$}  \\
    &\imreduces c_{1}\cdots c_1c_{12}c_{12}c_{12}\cdots c_{12}c_{12} && \text{using $c_2c_2c_1c_1 \imreduces c_{12}c_{12}$}.
  \end{align*}

\item $x=\bar{2}$. Then since $12\bar{2} =_{\drel{R}_1^{G_2}} 10 =_{\drel{R}_1^{G_2}} 1$ and
  $121 =_{\drel{R}_3^{G_2}} 112$, we have
  $P(\tikz[tableau,topalign]\matrix{\bar{2} \& 1 \\ \& 2\\};) = \tikz[tableau,topalign]\matrix{1\\};$ and
  $P(\tikz[tableau,topalign]\matrix{1 \& 1 \\ \& 2\\};) = \tikz[tableau,topalign]\matrix{1 \& 1 \\ 2\\};$ and so
  $c_{12}c_{\bar{2}} \imreduces c_1$ and $c_{12}c_1 \imreduces c_1c_{12}$ are rewriting rules.

  Thus rewriting to normal form proceeds as follows:
  \begin{align*}
    c_{1}\cdots c_{1}c_{12}\cdots c_{12}c_{12}c_{12}c_{\bar{2}}
    &\imreduces c_{1}\cdots c_{1}c_{12}\cdots c_{12}c_{12}c_1 && \text{using $c_{12}c_{\bar{2}} \imreduces c_1$} \\
    &\imreduces c_{1}\cdots c_{1}c_{12}\cdots c_{12}c_1c_{12} && \text{using $c_{12}c_1 \imreduces c_1c_{12}$} \\
    &\qquad\vdots \\
    &\imreduces c_{1}\cdots c_{1}c_1c_{12}\cdots c_{12}c_{12}. && \text{using $c_{12}c_1 \imreduces c_1c_{12}$}
  \end{align*}

\item $x=\bar{1}$. Then since $12\bar{1} =_{\drel{R}_1^{G_2}} = 1\bar{3} =_{\drel{R}_1^{G_2}} 2$ and
  $122 =_{\drel{R}_3^{G_2}} 212$, we have
  $P(\tikz[tableau,topalign]\matrix{\bar{1} \& 1 \\ \& 2\\};) = \tikz[tableau,topalign]\matrix{2\\};$ and
  $P(\tikz[tableau,topalign]\matrix{2 \& 1 \\ \& 2\\};) = \tikz[tableau,topalign]\matrix{1 \& 2 \\ 2\\};$, so
  $c_{12}c_{\bar{1}} \imreduces c_2$ and $c_{12}c_2 \imreduces c_2c_{12}$.

  As noted above, there is at least one column $1$ present. Rewriting to normal form therefore proceeds as follows:
  \begin{align*}
    c_{1}\cdots c_1c_{1}c_{12}\cdots c_{12}c_{12}c_{12}c_{\bar{1}}
    &\imreduces c_{1}\cdots c_1c_{1}c_{12}\cdots c_{12}c_{12}c_{2} && \text{using $c_{12}c_{\bar{1}} \imreduces c_2$} \\
    &\imreduces c_{1}\cdots c_1c_{1}c_{12}\cdots c_{12}c_2c_{12} && \text{using $c_{12}c_2 \imreduces c_2c_{12}$} \\
    &\qquad\vdots \\
    &\imreduces c_{1}\cdots c_1c_{1}c_2c_{12}\cdots c_{12}c_{12} && \text{using $c_{12}c_2 \imreduces c_2c_{12}$} \\
    &\imreduces c_{1}\cdots c_1c_{12}c_{12}\cdots c_{12}c_{12}.&& \text{using $c_{1}c_2 \imreduces c_{12}$}
  \end{align*}

\end{enumerate}

Let $\Sigma$ and $T$ be the alphabet and set of rewriting rules constucted for type $G_2$ in
\fullref{Subsection}{subsec:rewritingconstruction}

Let $T'$ consist of the rules in $T$ and by rules corresponding to:
\begin{itemize}
\item  tabloids with shape $\tikz[shapetableau]\matrix{\null \& \null \& \null \\ \& \& \null\\};$ rewriting to tableaux with shape $\tikz[shapetableau]\matrix{\null \& \null \& \null
  \\ \null\\};$ (corresponding to extra rules in cases 3, 4, and 5 above);
\item tabloids with shape $\tikz[shapetableau]\matrix{\null \& \null \& \null \\};$ rewriting to tableaux with shape
  $\tikz[shapetableau]\matrix{\null \& \null \\ \null\\};$ (corresponding to an extra rule in case 4 above);
\item  tabloids with shape $\tikz[shapetableau]\matrix{\null \& \null \& \null \& \null\\};$ rewriting to tableaux with shape $\tikz[shapetableau]\matrix{\null \& \null \\ \null \& \null\\};$ (corresponding to an extra rule in case 5 above).
\end{itemize}
Note that the language of irreducible words is the same for the sets of rules
$T$ and $T'$, since a left-hand side of
some rule in $T$ must appear as a subword of the left-hand side of each rule in $T'$.

Let $u \in \Sigma^*$ and $c_x \in G_2$. By the analysis above, rewriting $uc_x$ to normal form using $T'$
proceeds via a single right-to-left pass, since rewriting at highest weight using $T'$ mirrors how rewriting
proceeds in general. Note that in each case the length of the normal form word differs from $m$ by at most $2$.

As in the discussion before \fullref{Lemma}{lem:an:rightmultbytrans}, the
relation consisting of pairs $(u,v)$ such that
$uc_x$ rewrites to $v$ can be recognized by a transducer. The only modification to the argument is that the transducer
that reads its input tapes right-to-left must stores the previous three symbols read from its first tape, so as to apply
the rule in $T' \setminus T$, and will always give these new rules precedence. With that change, the same argument proves the following
analogue of \fullref{Lemma}{lem:an:rightmultbytrans} for type $G_2$:

\begin{lemma}
  \label{lem:g2:rightmultbytrans}
  Let $\Sigma$ and $T'$ be as above. Let $x \in \aG_2$. Let $L \subseteq \Sigma^*$ be the language of irreducible words
  with respect to $T'$ (which is equal to the language of irreducible words with respect to $T$). Then the relation
  \[
    L_{c_x} = \gset[\big]{(u,v) \in L\times L}{uc_x =_{\Pl(G_2)} v}
  \]
  is recognized by an transducer. Furthermore, if $(u,v)$ is a pair in this relation, then the lengths of $u$ and $v$
  differ by at most $2$.
\end{lemma}

\section{Building the biautomatic structure}
\label{sec:biautomaticity}

Equipped with the lemmata from \fullref{Subsections}{subsec:leftmult} and \ref{subsec:rightmult}, we are now ready to
prove biautomaticity for the plactic monoids. First, we recall the essential definitions in
\fullref{Subsection}{subsec:biautopreliminaries}. We also state a result that allows us to discuss rational relations
rather than synchronous rational relations, which helps avoids some technical reasoning
(\fullref{Proposition}{prop:rationalbounded}). In \fullref{Subsection}{subsec:biautoconstruct}, we then proceed to build
the biautomatic structures and to examine some consequences and applications of biautomaticity.

\subsection{Preliminaries}
\label{subsec:biautopreliminaries}

This subsection contains the definitions and basic results from the theory of automatic and biautomatic monoids needed
hereafter. For further information on automatic semigroups, see~\cite{campbell_autsg}. We assume familiarity with basic
notions of automata and regular languages (see, for example, \cite{hopcroft_automata}).

\begin{definition}
Let $A$ be an alphabet and let $\$$ be a new symbol not in $A$. Define
the mapping $\rpad : A^* \times A^* \to ((A\cup\{\$\})\times (A\cup
\{\$\}))^*$ by
\[
(u_1\cdots u_m,v_1\cdots v_n) \mapsto
\begin{cases}
(u_1,v_1)\cdots(u_m,v_n) & \text{if }m=n,\\
(u_1,v_1)\cdots(u_n,v_n)(u_{n+1},\$)\cdots(u_m,\$) & \text{if }m>n,\\
(u_1,v_1)\cdots(u_m,v_m)(\$,v_{m+1})\cdots(\$,v_n) & \text{if }m<n,
\end{cases}
\]
and the mapping $\lpad : A^* \times A^* \to ((A\cup\{\$\})\times (A\cup \{\$\}))^*$ by
\[
(u_1\cdots u_m,v_1\cdots v_n) \mapsto
\begin{cases}
(u_1,v_1)\cdots(u_m,v_n) & \text{if }m=n,\\
(u_1,\$)\cdots(u_{m-n},\$)(u_{m-n+1},v_1)\cdots(u_m,v_n) & \text{if }m>n,\\
(\$,v_1)\cdots(\$,v_{n-m})(u_1,v_{n-m+1})\cdots(u_m,v_n) & \text{if }m<n,
\end{cases}
\]
where $u_i,v_i \in A$.
\end{definition}

\begin{definition}
\label{def:autstruct}
Let $M$ be a monoid. Let $A$ be a finite alphabet representing a set
of generators for $M$ and let $L \subseteq A^*$ be a regular language such
that every element of $M$ has at least one representative in $L$.  For
each $a \in A \cup \{\emptyword\}$, define the relations
\begin{align*}
L_a &= \{(u,v): u,v \in L, {ua} =_M {v}\}\\
{}_aL &= \{(u,v) : u,v \in L, {au} =_M {v}\}.
\end{align*}
The pair $(A,L)$ is an \defterm{automatic structure} for $M$ if $L_a\rpad$ is a regular language over $(A\cup\{\$\})
\times (A\cup\{\$\})$ for all $a \in A \cup \{\emptyword\}$. A monoid $M$ is \defterm{automatic} if it admits an
automatic structure with respect to some generating set.

The pair $(A,L)$ is a \defterm{biautomatic structure} for $M$ if $L_a\rpad$, ${}_aL\rpad$, $L_a\lpad$, and ${}_aL\lpad$
are regular languages over $(A\cup\{\$\}) \times (A\cup\{\$\})$ for all $a \in A \cup \{\emptyword\}$. A monoid $M$ is
\defterm{biautomatic} if it admits a biautomatic structure with respect to some generating set. [Note that
biautomaticity implies automaticity.]
\end{definition}

Unlike the situation for groups, biautomaticity for monoids and semigroups, like automaticity, is dependent on the
choice of generating set \cite[Example~4.5]{campbell_autsg}. However, for monoids, biautomaticity and automaticity are
independent of the choice of \emph{semigroup} generating sets \cite[Theorem~1.1]{duncan_change}.

Hoffmann \& Thomas have made a careful study of biautomaticity for semigroups \cite{hoffmann_biautomatic}. They
distinguish four notions of biautomaticity for semigroups, which are all equivalent for groups and more generally for
cancellative semigroups \cite[Theorem~1]{hoffmann_biautomatic} but distinct for semigroups \cite[Remark~1 \&
\S~4]{hoffmann_biautomatic}. In the sense used in this paper, `biautomaticity' implies \emph{all four} of these notions
of biautomaticity.

In proving that $R\rpad$ or $R\lpad$ is regular, where $R$ is a relation on $A^*$, a useful strategy is to prove that
$R$ is a rational relation (that is, is recognized by a transducer) and then apply the following result, which is a
combination of \cite[Corollary~2.5]{frougny_synchronized} and \cite[Proposition~4]{hoffmann_biautomatic}:

\begin{proposition}
\label{prop:rationalbounded}
If $R \subseteq A^* \times A^*$ is rational relation and there is a constant $k$ such that $\bigl||u|-|v|\bigr| \leq k$
for all $(u,v) \in R$, then $R\rpad$ and $R\lpad$ are regular.
\end{proposition}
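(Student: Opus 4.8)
\textbf{Proof proposal for Proposition~\ref{prop:rationalbounded}.}

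The plan is to deduce this from the two cited references rather than to reprove anything from scratch, since the statement is explicitly flagged in the excerpt as ``a combination of \cite[Corollary~2.5]{frougny_synchronized} and \cite[Proposition~4]{hoffmann_biautomatic}''. First I would recall the key notion of a \emph{bounded-length-difference} (or ``bounded'') relation: a relation $R \subseteq A^* \times A^*$ such that there is a constant $k$ with $\bigl||u| - |v|\bigr| \leq k$ for all $(u,v) \in R$. The hypothesis of the proposition is precisely that $R$ is both rational and bounded in this sense. The conclusion to be proved is that the padded versions $R\rpad$ and $R\lpad$ are regular languages over the product alphabet $(A \cup \set{\$}) \times (A \cup \set{\$})$.

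The argument proceeds in two steps. The first step is to upgrade ``rational'' to ``synchronous rational'' using the bounded-length-difference hypothesis: by \cite[Corollary~2.5]{frougny_synchronized}, a rational relation whose pairs have length difference bounded by a constant is in fact a \emph{synchronous} rational relation, meaning it is recognised by a transducer that reads both tapes in lockstep (advancing one symbol on each tape per transition, with padding symbols once one tape is exhausted). Equivalently, $R\rpad$ (reading left-to-right with right-padding) is recognised by a finite automaton over the product alphabet — this is essentially the definition of synchronous rationality. The second step handles left-padding: here I would invoke \cite[Proposition~4]{hoffmann_biautomatic}, which says that for a bounded-length-difference relation, regularity of $R\rpad$ and regularity of $R\lpad$ are equivalent. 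Intuitively this is because, when the length difference is at most $k$, the block of padding symbols occurs within distance $k$ of one end of the padded word, so an automaton reading right-padded words can be converted into one reading left-padded words by a finite amount of bookkeeping (buffering at most $k$ symbols), and vice versa. Combining the two steps: $R$ rational and bounded $\implies$ $R\rpad$ regular (Step~1) $\implies$ $R\lpad$ regular (Step~2), which is exactly the assertion.

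The only genuine subtlety — and hence the ``main obstacle'', though it is a mild one since both ingredients are cited results — is checking that the bounded-length-difference hypothesis is the common hypothesis under which \emph{both} cited statements apply, so that they can be chained without gap. In particular one must make sure that the constant $k$ appearing in the hypothesis is the same $k$ that controls the buffering in Step~2 and the synchronisation delay in Step~1; this is automatic because in all three places $k$ is just ``the bound on $\bigl||u|-|v|\bigr|$''. No new combinatorial work is needed: the proposition is a bookkeeping combination of the two references, and I would state it as such, writing essentially ``This is immediate from \cite[Corollary~2.5]{frougny_synchronized} together with \cite[Proposition~4]{hoffmann_biautomatic}.''
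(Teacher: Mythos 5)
Your proposal is correct and takes exactly the same route as the paper, which likewise presents the proposition as an immediate combination of \cite[Corollary~2.5]{frougny_synchronized} (rational plus bounded length difference implies $R\rpad$ regular) and \cite[Proposition~4]{hoffmann_biautomatic} (transferring regularity from $R\rpad$ to $R\lpad$ under the same boundedness hypothesis). The paper gives no further proof beyond the citation, so your added explanation of how the two ingredients chain together is, if anything, slightly more detailed than the original.
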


\subsection{Construction}
\label{subsec:biautoconstruct}

In \fullref{Subsections}{subsec:leftmult} and \ref{subsec:rightmult}, we studied the rewriting that occurs when a normal
form word is left- or right-multiplied by a generator. We now turn to building biautomatic structures for the plactic
monoids of each type. Most of the work has been done; all that remains is to put together the pieces.

\begin{theorem}
  \label{thm:biautomaticity}
  The plactic monoids $\Pl(A_n)$, $\Pl(B_n)$, $\Pl(C_n)$, $\Pl(D_n)$, and $\Pl(G_2)$ are biautomatic.
\end{theorem}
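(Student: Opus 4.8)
The plan is to take the language $L$ of normal form words of the rewriting system $(\Sigma, T)$ constructed in Section~\ref{sec:rewriting} as the language of representatives, with respect to the generating set $\Sigma$ of admissible columns. This language is regular, since it is precisely the set of words over $\Sigma$ that avoid the finitely many left-hand sides of rules in $T$ — equivalently, the words $c_{\beta^{(1)}}\cdots c_{\beta^{(m)}}$ with $\beta^{(i+1)} \preceq \beta^{(i)}$ throughout — and every element of $\Pl(X)$ has a unique representative in $L$ by completeness of the rewriting system and \fullref{Theorem}{thm:tableauxcrossection}. The task is then to show, for each $a \in \Sigma \cup \set{\emptyword}$, that the four relations $L_a$, ${}_aL$ (with their padding) are regular. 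Since the identity case $a = \emptyword$ is trivial and multiplication by $c_\sigma$ for a column $\sigma$ of height $k$ factors as $k$ successive multiplications by length-$1$ generators (via the rules \eqref{eq:length1} that assemble a column from its symbols), it suffices to treat $a = c_x$ with $x \in \aX$.

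The core idea is that \fullref{Lemma}{lem:rationalbounded} reduces each required regularity statement to showing that the relation $\gset{(u,v)}{u \in L,\ v \in L,\ c_x u =_{\Pl(X)} v}$ (and its left/right variants) is a \emph{rational} relation with bounded length difference. The bounded length difference is exactly what \fullref{Lemmata}{lem:anbncndn:leftmultbytrans}, \ref{lem:g2:leftmultbytrans}, \ref{lem:an:rightmultbytrans}, \ref{lem:cn:rightmultbytrans}, \ref{lem:bn:rightmultbytrans}, \ref{lem:dn:rightmultbytrans}, and \ref{lem:g2:rightmultbytrans} provide (the difference is at most $1$, or at most $2$ in the $G_2$ right-multiplication case). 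Rationality will follow by exhibiting the transducer: those same lemmata assert that, \emph{in the highest-weight case}, the rewriting of $c_x c_{\beta^{(1)}}\cdots c_{\beta^{(m)}}$ (respectively $c_{\beta^{(1)}}\cdots c_{\beta^{(m)}} c_x$) to normal form is carried out by a single left-to-right (respectively right-to-left) pass executable by a finite transducer that carries a bounded amount of state (the current `carry' column $\gamma^{(j)}$, or the position of the `gap'). For the right-multiplication transducer in type $G_2$ one must use $T$ augmented by the handful of extra rules listed before \fullref{Lemma}{lem:g2:rightmultbytrans}; these extra rules do not change the monoid and their left-hand sides are still avoided by normal forms, so they do not enlarge $L$, and one checks that the augmented system is still confluent on the relevant words.

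The remaining — and genuinely essential — step is to promote the highest-weight analysis to arbitrary normal-form words. Here the plan is to invoke the crystal structure exactly as flagged in the introduction: if $w = c_{\beta^{(1)}}\cdots c_{\beta^{(m)}}$ is an arbitrary normal form and $w^0$ its highest-weight version in the crystal component, then by \fullref{Lemma}{lem:efonfactors} each $\beta^{(i)}$ differs from the corresponding column $\beta^{(i),0}$ of $w^0$ only through applications of a single Kashiwara operator in a controlled position, and crucially $\f_i$ and $\e_i$ preserve the shapes of columns and the $\preceq$ relation between adjacent columns. Combining this with \fullref{Lemma}{lem:efonfactors} applied to $P(\sigma\tau)$ for each rewriting rule, the \emph{shape} of every intermediate tabloid — and hence the entire sequence of rewriting steps, described purely in terms of column shapes and the positions at which rules fire — is identical whether one starts from $w$ or from $w^0$. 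Therefore the transducer built for the highest-weight case performs the correct bookkeeping of shapes in general; to make it output the actual columns rather than shapes, one lets the transducer additionally track, within its (still finite) state, the bounded `crystal offset' data relating each current column to its highest-weight counterpart, which is determined locally. This is the step I expect to be the main obstacle: making precise the claim that `the rewriting proceeds in the same way' — i.e.\ that the sequence of rule applications, and in particular the columns produced, is a function of the highest-weight trajectory plus bounded local crystal data — and checking this uniformly across the structural differences between the left-multiplication lemmata (one carry column, a clean left-to-right sweep) and the right-multiplication lemmata (a `gap' propagating right-to-left, with the $C_n$/$D_n$/$G_2$ cases needing separate attention). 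Once that is in hand, rationality and bounded length difference give regularity of all four relations via \fullref{Proposition}{prop:rationalbounded}, and so $(\Sigma, L)$ is a biautomatic structure for each of $\Pl(A_n)$, $\Pl(B_n)$, $\Pl(C_n)$, $\Pl(D_n)$, $\Pl(G_2)$.
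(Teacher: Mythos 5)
Your proposal is correct and follows essentially the same route as the paper's proof: the language of $T$-irreducible words over the column alphabet $\Sigma$, reduction of $L_{c_\sigma}$ to a composition of relations $L_{\sigma_i}$ for single-letter generators, the one-pass transducer lemmata for highest-weight words, transfer to arbitrary normal forms via \fullref{Lemma}{lem:efonfactors} (the paper does this by applying the relevant composition of operators $\f_{i_1}\cdots\f_{i_m}$ termwise to the highest-weight rewriting sequence, which is a slightly cleaner packaging of your ``bounded crystal offset'' bookkeeping), and finally \fullref{Proposition}{prop:rationalbounded} together with the bounded length difference to pass from rational to synchronous rational relations.
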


\begin{proof}
Let $X$ be one of the types $A_n$, $B_n$, $C_n$, $D_n$, and $G_2$ and let $\aX$ be the corresponding alphabet from
$\aA_n$, $\aB_n$, $\aC_n$, $\aD_n$, or $\aG_2$. Let $(\Sigma,T)$ be the rewriting system constructed in
\fullref{Section}{sec:rewriting} for $\Pl(X)$. Let $L \subseteq \Sigma^*$ be the language of irreducible words.

Let $x \in \aX$. By \fullref{Lemmata}{lem:anbncndn:leftmultbytrans} and \ref{lem:g2:leftmultbytrans}, the relation
\[
  {}_{c_x}L = \gset[\big]{(u,v) \in L\times L}{c_xu =_{\Pl(X)} v}
\]
is a rational relation. By \fullref{Lemmata}{lem:an:rightmultbytrans}, \ref{lem:cn:rightmultbytrans}, \ref{lem:bn:rightmultbytrans},
\ref{lem:dn:rightmultbytrans}, and \ref{lem:g2:rightmultbytrans}, the relation
\[
  L_{c_x} = \gset[\big]{(u,v) \in L\times L}{uc_x =_{\Pl(X)} v}
\]
is a rational relation.

Now let $c_\sigma \in \Sigma$. So $\sigma$ is an admissible $X$ column and $\sigma = \sigma_1\cdots\sigma_k$ for some
$\sigma_i \in \aX$, with $k \leq n$ when $X \in \set{A_n,B_n,C_n,D_n}$ and $k \leq 2$ when $X = G_2$. So
\begin{equation}
  \label{eq:composingrationalrelations}
  \begin{aligned}
    {}_{c_\sigma}L &= {}_{c_{\sigma_1}}L\circ \cdots \circ {}_{c_{\sigma_k}}L, \\
    L_{c_\sigma} &= L_{c_{\sigma_1}}\circ \cdots \circ L_{c_{\sigma_k}}.
  \end{aligned}
\end{equation}
Since the composition of a rational relation is a rational relation, $L_{c_\sigma}$ and ${}_{c_\sigma}L$ are rational
relations for any $c_\sigma \in \Sigma$.

By \fullref{Lemmata}{lem:anbncndn:leftmultbytrans} and \ref{lem:g2:leftmultbytrans}, if $(u,v) \in {}_{c_x}L$ then
$\bigl||u| - |v|\bigr| \leq 1$. Hence if $(u,v) \in {}_{c_\sigma}L$ them $\bigl||u| - |v|\bigr| \leq n$. Therefore
${}_{c_\sigma}L\rpad$ and ${}_{c_\sigma}L\lpad$ are both regular.

By \fullref{Lemmata}{lem:an:rightmultbytrans}, \ref{lem:cn:rightmultbytrans}, \ref{lem:bn:rightmultbytrans},
\ref{lem:dn:rightmultbytrans}, and \ref{lem:g2:rightmultbytrans}, if $(u,v) \in L_{c_x}$ then
$\bigl||u| - |v|\bigr| \leq 1$. Hence if $(u,v) \in L_{c_\sigma}$ them $\bigl||u| - |v|\bigr| \leq n$. Therefore
$L_{c_\sigma}\rpad$ and $L_{c_\sigma}\lpad$ are both regular.

Therefore $(\Sigma,L)$ is a biautomatic structure for $\Pl(X)$.
\end{proof}

\fullref{Theorem}{thm:biautomaticity} has several important consequences for the plactic monoids $\Pl(A_n)$, $\Pl(B_n)$,
$\Pl(C_n)$, $\Pl(D_n)$, and $\Pl(G_2)$. First, an automatic monoid has decidable right-divisibility problem. (This
result is well-known but does not seem to be explicitly stated in the literature; it follows from the decidability of
the first-order theory of the left Cayley graph of an automatic monoid \cite[\S~5]{lohrey_decidability}.) Combining this
result and its dual with \fullref{Theorem}{thm:biautomaticity} proves the following result:

\begin{corollary}
  \label{corol:divisibility}
  The plactic monoids $\Pl(A_n)$, $\Pl(B_n)$, $\Pl(C_n)$, $\Pl(D_n)$, and $\Pl(G_2)$ have soluble left- and
  right-divisibilty problems.
\end{corollary}

An immediate consequence of \fullref{Corollary}{corol:divisibility} is the following:

\begin{corollary}
  The plactic monoids $\Pl(A_n)$, $\Pl(B_n)$, $\Pl(C_n)$, $\Pl(D_n)$, and $\Pl(G_2)$ have soluble Green's relation
  $\mathcal{L}$ and $\mathcal{R}$.
\end{corollary}

There are also several very important crystal-theoretic consequences of the biautomaticity of the plactic monoids:

\begin{corollary}
  \label{corol:wordproblem}
  For the crystal graphs of types $A_n$, $B_n$, $C_n$, $D_n$, or $G_2$, there is a quadratic-time algorithm that takes
  as input two vertices and decides whether they lie in the same position in isomorphic components.
\end{corollary}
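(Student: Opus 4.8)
The plan is to reduce the decision problem about vertices of the crystal graph to the word problem in the plactic monoid, and then invoke the fact that the plactic monoids in question are biautomatic and hence have word problem soluble in quadratic time. Recall that, by the definition of the congruence $\sim$ and the discussion in \fullref{Section}{sec:relfromcrystals}, two words $u, v \in \aX^*$ lie in the same position of isomorphic components of $\Gamma_X$ precisely when $u \sim_X v$, that is, when $u$ and $v$ represent the same element of $\Pl(X) = \aX^*/{\sim_X}$. So the input to the algorithm (two vertices, i.e.\ two words over the finite alphabet $\aX$) is exactly an instance of the word problem for $\Pl(X)$.

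First I would make precise what `input' means: a vertex of $\Gamma_X$ is a word $w \in \aX^*$, so two vertices are two words $u, v$, whose combined length $N = |u| + |v|$ is the size of the input. By \fullref{Theorem}{thm:biautomaticity}, $\Pl(X)$ is biautomatic for each $X \in \set{A_n, B_n, C_n, D_n, G_2}$, and by \cite[Corollary~3.7]{campbell_autsg} every automatic (in particular biautomatic) monoid has word problem soluble in quadratic time with respect to any generating set; in our case we use the finite generating set $\aX$ (or equivalently the column generating set $\Sigma$, since the two are related by a monoid homomorphism computable in linear time, as in the proof that $\pres{\Sigma}{T}$ presents $\Pl(X)$). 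Thus, given $u$ and $v$, one computes in quadratic time whether $u =_{\Pl(X)} v$, which is the same as deciding whether $u \sim_X v$, which by the above is the same as deciding whether $u$ and $v$ lie in the same position of isomorphic components.

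Concretely, the algorithm can be phrased in terms of the normal-form machinery: using the finite complete rewriting system $(\Sigma, T)$ of \fullref{Section}{sec:rewriting}, reduce each of $u$ and $v$ (viewed as words over $\Sigma$ after replacing each letter $x$ by $c_x$) to its normal form, which is the reading of the tableau $P(u)$, respectively $P(v)$; then $u \sim_X v$ if and only if these normal forms coincide. The biautomatic structure guarantees that this reduction, and hence the whole decision procedure, runs in quadratic time. The main (and essentially only) obstacle here is simply citing the correct pieces: that $\sim_X$-equivalence is literally the word problem of $\Pl(X)$, and that biautomaticity gives quadratic-time word problem uniformly across the five types. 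Since both \fullref{Theorem}{thm:biautomaticity} and the interpretation of $\sim_X$ are already established in the excerpt, the corollary follows with no further work.

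\begin{proof}
  By the definition of the congruence $\sim_X$ (see \fullref{Section}{sec:relfromcrystals} and \fullref{Proposition}{prop:isomdefinescong}), two words $u, v \in \aX^*$ label vertices lying in the same position of isomorphic connected components of the crystal graph $\Gamma_X$ if and only if $u \sim_X v$, that is, if and only if $u$ and $v$ represent the same element of the plactic monoid $\Pl(X) = \aX^*/{\sim_X}$. Deciding this is precisely the word problem for $\Pl(X)$ with respect to the finite generating set $\aX$. By \fullref{Theorem}{thm:biautomaticity}, $\Pl(X)$ is biautomatic for each $X \in \set{A_n, B_n, C_n, D_n, G_2}$, and hence in particular automatic; by \cite[Corollary~3.7]{campbell_autsg}, an automatic monoid has word problem soluble in quadratic time. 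Therefore there is a quadratic-time algorithm taking as input two vertices $u, v$ of $\Gamma_X$ (that is, two words over $\aX$) and deciding whether $u =_{\Pl(X)} v$, equivalently whether $u$ and $v$ lie in the same position of isomorphic components.
\end{proof}
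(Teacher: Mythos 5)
Your proof is correct and follows essentially the same route as the paper: identify lying in the same position of isomorphic components with equality in $\Pl(X)$, then invoke \fullref{Theorem}{thm:biautomaticity} together with the quadratic-time word problem for automatic monoids from \cite[Corollary~3.7]{campbell_autsg}. The extra remarks about translating between the generating sets $\aX$ and $\Sigma$ and about normal forms are harmless elaborations of what the paper leaves implicit.
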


\begin{proof}
  Two vertices lie in the same position in isomorphic connected components if and only if they represent the same
  element of the plactic monoid of the given type. This monoid is biautomatic by \fullref{Theorem}{thm:biautomaticity},
  and biautomatic (and automatic) monoids have word problem soluble in quadradic time
  \cite[Corollary~3.7]{campbell_autsg}.
\end{proof}

Note in passing that \fullref{Corollary}{corol:wordproblem} cannot be deduced directly from tableaux insertion
algorithms except in the $A_n$ case. Schensted's insertion algorithm (see \cite[Chapter~5]{lothaire_algebraic}) can
solve the word problem in $\Pl(A_n)$ in quadratic time because inserting a single symbol into a tableau takes linear
time. However, in types $B_n$, $C_n$, and $D_n$ inserting a single symbol into a tableau may take more that linear time
(see \cite[\S~4]{lecouvey_cn} and \cite[\S~3.3]{lecouvey_bndn}), because in certain cases a recursion arises that
requires inserting an entire column symbol by symbol into the remainder of the tableau.

\begin{corollary}
  \label{corol:isomorphiccomponents}
  For the crystal graphs of types $A_n$, $B_n$, $C_n$, $D_n$, or $G_2$, there is a
  quadratic-time algorithm that takes as input two vertices and decides that whether they lie in isomorphic components.
\end{corollary}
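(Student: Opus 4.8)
The plan is to reduce Corollary~\ref{corol:isomorphiccomponents} to \fullref{Corollary}{corol:wordproblem} together with the finiteness of the crystal bases and the computability of weights. The key observation is that two connected components $B(u)$ and $B(v)$ are isomorphic if and only if their unique highest-weight vertices $u^0$ and $v^0$ are $\sim$-related (equivalently, lie in the same position), using the fact that each component of $\Gamma_X$ contains a unique highest-weight vertex. Indeed, if $u^0 \sim v^0$ then there is a crystal isomorphism between $B(u^0) = B(u)$ and $B(v^0) = B(v)$; conversely, any crystal isomorphism $B(u) \to B(v)$ preserves weights and hence must carry the unique highest-weight vertex of $B(u)$ to the unique highest-weight vertex of $B(v)$, so $u^0 \sim v^0$. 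This reduces the problem to checking whether $u^0 \sim v^0$, which is the word problem of the plactic monoid and is solvable in quadratic time by \fullref{Corollary}{corol:wordproblem}.

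First I would argue that the highest-weight vertex $w^0$ of $B(w)$ can be computed from $w$ in quadratic time. Since the crystal basis is finite, the alphabet $\aX$ is fixed and the operators $\e_i$ are computable on words via the $\rho_i$ method described after \fullref{Proposition}{prop:computingef}: computing $\rho_i(w)$ and one application of $\e_i$ take linear time in $|w|$, and this does not increase the length of the word. One reaches $w^0$ by repeatedly applying available operators $\e_i$; because each $\e_i$ strictly raises weight and weights lie in a free abelian group of finite rank with a bounded coordinate sum (bounded by $|w|$ up to constants coming from the fixed weight function), the number of applications needed is $O(|w|)$. Hence computing $w^0$ takes $O(|w|^2)$ time.

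With $u^0$ and $v^0$ in hand, I would then invoke \fullref{Corollary}{corol:wordproblem} (equivalently, biautomaticity via \fullref{Theorem}{thm:biautomaticity} and \cite[Corollary~3.7]{campbell_autsg}) to decide whether $u^0 \sim v^0$ in quadratic time in $\max(|u^0|,|v^0|) \le \max(|u|,|v|)$. Combining the two phases, the whole procedure runs in quadratic time, and it correctly decides whether $B(u)$ and $B(v)$ are isomorphic by the reduction above.

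The main obstacle is the termination and complexity bound for computing $w^0$: one must be careful that the sequence of applications of Kashiwara operators $\e_i$ to reach the highest weight has length linear in $|w|$. This follows from the existence of the weight function with values in $\zset^n$ (or $\zset^2$) where $\e_i$ strictly increases the weight in the fixed partial order, together with the fact that the total variation of the weight coordinates along such a chain is bounded linearly in $|w|$ because $\e_i$ does not change word length and each symbol contributes a bounded amount to the weight; a short argument using \fullref{Lemma}{lem:highestweightfactors} or direct bookkeeping of $\ecount_i$ values pins this down. Everything else is routine: the alphabet and weight function are fixed, weights in a finite-rank free abelian group are trivially computable, and the quadratic-time word problem is already established.
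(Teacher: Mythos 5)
Your proposal is correct and follows essentially the same route as the paper: reduce to comparing the unique highest-weight vertices, compute them by repeated application of the $\e_i$ in quadratic time, and then invoke \fullref{Corollary}{corol:wordproblem}. The only cosmetic difference is your justification for the linear bound on the number of $\e_i$-applications (a weight/monovariant argument rather than the paper's observation that each symbol can be altered only a bounded number of times); both are valid.
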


\begin{proof}
  Let $B(u_1)$ and $B(u_2)$ be two components of the crystal graph, where $u_1$ and $u_2$ are any vertices of these
  components. Apply operators $\e_i$ to transform $u_1$ and $u_2$ to highest-weight words $v_1$ and $v_2$
  respectively. It is easy to see that each application of $\e_i$ takes linear time in the length of the word. Each
  symbol of the word can be altered a bounded number of times by the various $\e_i$, so computing $v_1$ and $v_2$ takes
  at most quadratic time in the lengths of $u_1$ and $v_1$. Then $B(u_1)$ and $B(u_2)$ are isomorphic if and only if $v_1$ and
  $v_2$ lie in the same position in $B(u_1)$ and $B(u_2)$, which can be decided in quadratic time by
  \fullref{Corollary}{corol:wordproblem}.
\end{proof}

\bibliographystyle{alphaabbrv}
\bibliography{\jobname}

\end{document}